 \newcommand{\bR}{\mathbb{R}}
 \newcommand{\bE}{\mathbb{{E}}}
 \newcommand{\cA}{\mathcal{A}}
 \newcommand{\cS}{\mathcal{S}}
  \newcommand{\Prtheta}{\textup{Pr}^{\theta}}
    \newcommand{\NEgap}{\textup{\texttt{NE-gap}}}
\newcommand{\limtinf}{\lim_{t\rightarrow+\infty}}
 \newcommand{\limsuptinf}{\limsup_{t\rightarrow+\infty}}
 \newcommand{\liminftinf}{\liminf_{t\rightarrow+\infty}}
 \newcommand{\diag}{\textup{diag}}
 \DeclareMathOperator*{\argmax}{argmax}
 \theoremstyle{plain}
\newtheorem{theorem}{Theorem}
\newtheorem{lemma}[theorem]{Lemma}
\newtheorem{corollary}[theorem]{Corollary}
\newtheorem{definition}{Definition}
\newtheorem{assumption}{Assumption}
\theoremstyle{remark}
\newtheorem{remark}{Remark}
\title{On the Global Convergence Rates of Decentralized Softmax Gradient Play in Markov Potential Games}
\author{%
  Runyu Zhang\\
  Harvard University\\
  \texttt{\small runyuzhang@fas.harvard.edu}\\
   \And
  Jincheng Mei \\
  Google Research, Brain Team\\
  \texttt{\small jcmei@google.com}
  \AND
  Bo Dai \\
  Google Research, Brain Team\\
  \texttt{\small bodai@google.com}
  \And
  Dale Schuurmans \\
  University of Alberta\\
  Google Research, Brain Team\\
  \texttt{schuurmans@google.com}
  \And
  Na Li \\
  Harvard University \\
  \texttt{nali@seas.harvard.edu}
}
\begin{document}

\maketitle

\begin{abstract}
  Softmax policy gradient is a popular algorithm for policy optimization in single-agent reinforcement learning, particularly since projection is not needed for each gradient update. However, in multi-agent systems, the lack of central coordination introduces significant additional difficulties in the convergence analysis. Even for a stochastic game with identical interest, there can be multiple Nash Equilibria (NEs), which disables  proof techniques that rely on the existence of a unique global optimum. Moreover, the softmax parameterization introduces non-NE policies with zero gradient, making it difficult for gradient-based algorithms in seeking NEs. In this paper, we study the finite time convergence of decentralized softmax gradient play in a special form of game, Markov Potential Games (MPGs), which includes the identical interest game as a special case. We investigate both gradient play and natural gradient play, with and without $\log$-barrier regularization. The established convergence rates for the unregularized cases contain a trajectory dependent constant that can be \emph{arbitrarily large}, whereas  the $\log$-barrier regularization overcomes this drawback, with the cost of slightly worse dependence on other factors such as the action set size. An empirical study on an identical interest matrix game confirms the theoretical findings.
\end{abstract}

\section{Introduction}\label{sec:intro}
Multi-agent systems encounter vast application in real world scenarios, such as network routing \citep{tao2001,claes2011}, social and economic decision making \citep{ventre2013,roscia2013}, and robotic swarms \citep{liu2018,inigo2012}. In these problems, a system  consists of a group of agents interacting in a shared environment. Given the recent success of reinforcement learning (RL), increasing attention has  been drawn to the possibility of applying RL algorithms, such as policy gradient, to multi-agent systems. However, the theoretical foundations for multi-agent reinforcement learning (MARL) remain limited. Unlike single-agent RL, the actions of other agents affect the dynamics and the decision making outcome for each individual in the system, raising additional theoretical challenges when analyzing joint performance. 

The stochastic game (SG) is a classical multi-agent model that has received extensive attention in recent MARL studies. In a stochastic game, the environment is represented by a state space that evolves based on the joint actions of agents. Each agent in a stochastic game tries to maximize its own total reward by making decisions \textit{independently}, based on  state information shared between agents. The stochastic game model was first introduced in \citep{shapley53}, with a series of followup works proposing NE-seeking algorithms, particularly in the RL setting (e.g. \citep{Littman94,  Bowling00, Shoham03,Bucsoniu10,Lanctot17, Zhang19} and citations therein). Given  recent progress in the underlying theory of RL,  many recent works have investigated finite time iteration and sample complexity for learning NE or other general equilibria notions, such as correlated  and coarse correlated equilibria (e.g. \citep{song2021}). 

There are different types of SGs, some with  attributes that merit special attention; for example, two-player zero sum games \citep{bai2020,daskalakis2021}, which are widely used to model two player competitive games such as GO. In this paper, we will focus on another type of SG, the Markov potential game (MPG)~\citep{Macua18,mguni2021learning,zhang21,leonardos21}, which includes the identical interest game as a special case. The  structure of a MPG enables efficient learning through the use of gradient-based algorithms such as gradient play. Recent work \citep{zhang21,leonardos21} has  focused on the iteration and sample complexity of finding a NE in an MPG under the \emph{direct} policy parameterization, which is not practical in most real world scenarios, given the cost of projecting back to the probability simplex on every iteration. This drawback has motivated consideration of the \emph{softmax} parameterization, which bypasses the projection step in the gradient update, and is perhaps the most popular approach to parameterizing policies in practice. \citep{Fox21} have studied natural gradient play for MPG 
under softmax parameterization, but only address asymptotic behavior and leave finite time complexity open. 


From the perspective of analysis and practical performance, the extension from the direct to the softmax parameterization in policies is nontrivial. Even in the single agent case, as shown by \citep{agarwal2020,Mei20}, there are policies in the softmax parameterization that have near-zero gradient   and yet are far from being globally optimal, which creates difficulty for a gradient-based algorithm  to escape suboptimal points. A similar issue  exists for MPGs: due to the more complex interaction between agents, there is even a greater set of policies that obtain small gradient norm but are far from a NE. Based on our analysis and numerical results, even for natural gradient play---which is known to enjoy dimension free convergence in single agent learning \citep{agarwal2020}---we find  in the multiagent setting that it can still become stuck in these undesirable regions. 
Such evidence suggests that preconditioning according to the Fisher information matrix \cite{rao1992information,amari2012differential} is not sufficient to ensure fast convergence in multi-agent learning. A stronger form of regularization is required, which motivates the introduction of $\log$-barrier regularization to avoid undesirable regions of policy space.


\begin{table*}[htbp]
    \centering
    \begin{tabular}{|c|c|c|}
    \hline
      Algorithm   &  Single-agent MDP & Multi-agent MPG\\
    \hline
       {Gradient play,} & $O\left(\frac{|\cA|M^2}{(1-\gamma)^4\epsilon^2}\right)$ & $O\left(\frac{(\phi_{\max} - \phi_{\min})\sum_{i=1}^n|\cA_i|M^2}{(1-\gamma)^4\epsilon^2}\right)$\\
         direct parameterization&\citep{agarwal2020} &\!\!\!\citep{zhang21,leonardos21}\!\!\!\\
    \hline
       {Gradient play, } &$O\left(\frac{M^2 }{(1-\gamma)^3c^2 \epsilon}\right)$&\multirow{2}{*}{$\boldsymbol{O\left(\frac{n\max_i|\cA_i|(\phi_{\max} - \phi_{\min})M^2 }{(1-\gamma)^4c^2 \epsilon^2}\right)^*}$}\\
        softmax parameterization& \citep{Mei20} & \\
    \hline
        {Natural gradient play, } &$O\left(\frac{1}{(1-\gamma)^2\epsilon}\right)$&\multirow{2}{*}{$\boldsymbol{ O\left(\frac{n(\phi_{\max}-\phi_{\min})^2M}{(1-\gamma)^3c\epsilon^2}\right)}^*$}\\
       softmax parameterization & \citep{agarwal2020} & \\
    \hline
        {Gradient play + $\log$-barrier reg.},& {$O\left(\frac{|\cA|^2M^2}{(1-\gamma)^4\epsilon^2}\right)$}&\multirow{2}{*}{$\boldsymbol{O\left(\frac{n\max|\cA_i|^2 (\phi_{\max} - \phi_{\min})M^2}{(1-\gamma)^4\epsilon^2}\right)}$}\\ 
        {softmax parameterization} & \citep{agarwal2020}& \\
    \hline
        {\!\!\!\small Natural gradient play + $\log$-barrier reg.\!\!},& \multirow{2}{*}{Unknown}&\multirow{2}{*}{$\boldsymbol{O\left(\frac{n\max_i|\cA_i|(\phi_{\max}-\phi_{\min})M^2}{(1-\gamma)^4\epsilon^2}\right)}$}\\ {softmax parameterization} && \\
    \hline
    \end{tabular}
    \caption{Summary of known convergence rate results for gradient based methods in Markov decision processes (MDPs) and MPGs respectively. The new results proved in this paper for MPGs are displayed in bold font. Complexity bounds with `*' depend on an additional assumption on the MPG (See Theorem~\ref{thm:asymptotic-convergence} and \ref{thm:unregularized-PG}). The definitions of variables $M$ and $c$ appearing in some bounds can be found in \eqref{eq:def-M} and \eqref{eq:def-c}. \emph{Note that the definition of $M$ is slightly different from the ``distribution mismatch coefficient'' $D_\infty$ defined in \citep{agarwal2020} (see more details in descriptions that follows Assumption \ref{assump:like-ergodicity}). To make the complexity results more comparable, we slightly modify and re-derive the results in \citep{agarwal2020,Mei20,zhang21,leonardos21}.}}
    \label{tab:summary}
\end{table*}


\textbf{Our contribution:}
    In this paper, we provide \emph{finite time} iteration complexity results for gradient play and natural gradient play under the softmax parameterization, considering both unregularized and $\log$-barrier regularized dynamics.  We summarize the convergence rates and compare them to existing results for the direct parameterization and to the corresponding single agent cases in Table \ref{tab:summary}. These findings  suggest that regularization is crucial for obtaining fast convergence to a NE under the softmax parameterization in a MPG.  In Table \ref{tab:summary}, the results for the two unregularized algorithms in the multi-agent case rely on the assumption that the set of stationary policies is isolated (which is also assumed in ~\cite{Fox21} when establishing the asymptotical convergence for natural policy gradient), and the corresponding complexity bounds contain an initialization dependent factor $c$.  By contrast, the $\log$-barrier regularized algorithms overcome both drawbacks, but as a tradeoff, their bounds incur a slightly worse dependence  on $|\cA_i|$ and $M$. We observe numerically that the $\log$-barrier regularized algorithms are indeed more robust against becoming trapped near undesirable non-NE stationary points. To the best of our knowledge, the finite-time iteration complexity results are the first such results for MPGs under the softmax parameterization. Though the analysis for the gradient play follows their single-agent counterparts \cite{agarwal2020,Mei20}, the results for natural gradient play are highly non-trivial, requiring very different analysis tools which have their own merits to the literature (see Remark~\ref{rem:proof-unregularized} and Remark~\ref{rem:proof-regularized} for more details on the technical novelty in the analysis). Our results also convey the following two messages. First, finding the NE of a multi-agent MPG is harder than finding the global optimum for the single-agent case, because multi-agent learning suffers greater risk of becoming trapped near undesirable stationary points. This is reflected in the dependence of the complexity bounds on $\epsilon$  in Table~\ref{tab:summary}.
Second, natural gradient play outperforms gradient play counterparts, suggesting that natural gradient play captures useful information about the geometry of the parameter space that accelerates the learning process.

\if0 
\textbf{Our contribution:} In this paper, we provide \emph{finite time} iteration complexity for gradient play as well as natural gradient play under softmax parameterization for both unregularized and $\log$-barrier regularized dynamics, which, to the best of our knowledge, is the first result. We summarize the new results and compares them with direct parameterization and corresponding algorithms in single agent learning in Table \ref{tab:summary}. Our results convey the following three main messages. \runyu{Firstly, finding the NE of a multi-agent MPG is harder than finding the global optimal of a MDP. As mentioned earlier, multi-agent learning suffers more severely from the being trapped at undesirable stationary points.  Our complexity bounds also indicates that all gradient-based algorithms enjoy complexities that scale with $O(\frac{1}{\epsilon^2})$ which is worse than $O(\frac{1}{\epsilon})$ \citep{agarwal2020} or even exponential rate \citep{Khodadadian2021,Mei2021} for single agent case. 
(I understand that a fair comparison should be in terms of lower bound, but I guess we can be not as rigorous in the introduction part? I also add the numerical evidence)}
\Bo{we usually compare the difficulties in terms of lower bound. The upper bound comparison may caused by proof derivation. }
Secondly, natural gradient play algorithms outperform their gradient play counterparts, suggesting that natural gradient play captures useful information about the geometry of the parameter space that can accelerate the learning process. Lastly, regularization is crucial for softmax parameterized multi-agent learning. Results for the two unregularized algorithms both rely on the assumption that the set of stationary policies is isolated (Assumption \ref{assump:isolated-stationary-points}) and their complexity bounds contain a factor $c$ that has a unpleasant dependency on initializations, whereas the $\log$-barrier regularized algorithms do not have these problems. As a tradeoff, the dependency on $|\cA_i|$ and $M$ is slightly worse for regularized algorithms. It can also be observed numerically that $\log$-barrier regularized algorithms indeed suffer less from getting stuck at undesireable stationary points described earlier.

\fi

\section{Problem settings}
We consider an infinite time horizon $n$-agent stochastic game (SG, \citep{shapley53}) $\mathcal{M} \!=\! (N, \cS, \cA \!=\! \cA_1\!\times\!\dots\!\times \!\cA_n, P, r \!=\! (r_1, \!\dots, \!r_n), ~\gamma, \rho)$ which is specified by an agent set $N\!=\! \left\{1,2,\dots,n\right\}$, a finite state space $\cS$, a finite action space $\mathcal{A}_i$ for each agent $i\in N$,  a transition model $P$ (such that $P(s'|s,a) = P(s'|s,a_1, \dots, a_n)$ is the probability of transitioning into state $s'$ upon taking action $a:=(a_1,\ldots,a_n)$ in state $s$ where $a_i\in\cA_i$ is action of agent $i$), a reward function $r_i: \cS\times\cA \rightarrow [0,1]$ for each agent $i$, a discount factor $\gamma \in [0,1)$, and an initial state distribution $\rho$ over $\cS$. We use $s(t)\in \cS$ to denote the state at time step $t$, and $a(t) = (a_{1}(t),\dots,a_{n}(t))\in\cA$ to denote the total action.

A stochastic policy $\pi: \cS\rightarrow\Delta(\cA)$ (where $\Delta(\cA)$ is the probability simplex over $\cA$) specifies a strategy, where agents choose their actions \textit{jointly} based on the current state in a stochastic fashion; i.e. $\Pr(a(t)|s(t)) = \pi(a(t)|s(t))$. 
A \emph{decentralized} stochastic policy is a special subclass of stochastic policies with $\pi = \pi_1\times\ldots\times\pi_n$, such that $\pi_i: \cS \rightarrow \Delta(\cA_i)$, where $\pi_i$ is agent $i$'s own local policy. For decentralized stochastic policies, each agent takes its action based on the current state $s$ \textit{independently of} other agents' action choices; i.e.,
\begin{equation*}
\textstyle
 \Pr(a(t)|s(t)) \!=\! \pi(a(t)|s(t)) \!=\! \prod_{i=1}^n \pi_i(a_i(t)|s(t)). 
\end{equation*}
For notation simplicity, we define~ $\!\pi_{I}(a_I|s)\!\!:=\!\! \prod_{i\in I}\! \pi_i(a_i|s)\!,$ where $I\subseteq N$ is an index set. Further, we use the notation $-i$ to denote the index set $N\backslash \{i\}$. In this paper we focus on tabular softmax parameterization for a policy, where  policy $\pi_{\theta} = (\pi_{\theta_1},\dots,\pi_{\theta_n})$ is parameterized by a set of parameters $\theta = (\theta_1,\dots,\theta_n)$, with $\theta_i = \{\theta_{s,a_i}\}_{s\in\cS,a_i\in\cA_i}$, and where
\begin{equation}
    \pi_{\theta_i}(a_i|s) = \frac{\exp{(\theta_{s, a_i})}}{\sum_{a_i'}\exp{(\theta_{s,a_i'})}}.
\end{equation}
We denote agent $i$'s total reward starting from initial states $s(0)\!\sim\! \rho$ as:
    $J_i(\theta) \!:=\!\!\! \bE_{s(0)\!\sim\!\rho} \left[\sum_{t=0}^\infty \gamma^t r_i(s(t),a(t))\big|~\pi_\theta,s(0)=s\right].$
Agent $i$'s objective is to maximize its own total reward $J_i$.  A Nash equilibrium (NE) is often used to characterize the equilibrium (a joint policy) where no agent has a unilateral incentive to deviate from it.

\begin{definition} \label{def:NE}
{(Nash equilibrium)}
A policy $\theta^* = (\theta_1^*, \dots, \theta_n^*)$ is called a (Markov perfect) Nash equilibrium (NE) if 
\begin{equation}\label{eq:NE-def}
\textstyle
    J_i(\theta_i^*, \theta_{-i}^*) \ge J_i(\theta_i', \theta_{-i}^*), \quad \forall \theta_i',\quad i\in N
\end{equation}
Further, we define the `NE-gap' of a policy $\theta$ to be:
\begin{align*}
\textstyle
    \NEgap_i(\theta):= \sup_{\theta_i'} J_i(\theta_i', \theta_{-i}) - J_i(\theta_i, \theta_{-i}); \quad 
    \NEgap(\theta):= \max_i \NEgap_i(\theta).
\end{align*}
A policy $\theta$ is an $\epsilon$-Nash equilibrium if:
   ~~$ \NEgap(\theta) \le \epsilon.$
\end{definition}

We define the value function with respect to stage cost $r_i$ as: 
$$
\textstyle
    V_i^\theta(s) := \bE \left[\sum_{t=0}^\infty \gamma^t r_i(s(t),a(t))\big|~\pi_\theta,s(0)=s\right].$$
 We define agent $i$'s $Q$-function and advantage function $Q_i^\theta, A_i^\theta: \cS\times\cA \rightarrow \bR$, 
\begin{align*}
\textstyle
    Q_i^\theta(s,a) \!:=\! \bE\! \left[\sum_{t=0}^\infty \gamma^t r_i(s(t),a(t))\big|~\pi_\theta,s(0)\!=\!s, a(0)\!=a\right]\!,~~~~
    A_i^\theta(s,a)\!:=\! Q_i^\theta(s,a) - V_i^\theta(s).
\end{align*}
We further define agent $i$'s \textit{`averaged' Q-function} $\overline {Q_i^{\theta}}: \cS\times\cA_i \rightarrow \bR$ and \textit{`averaged' advantage-function} $\overline {A_i^{\theta}}: \cS\times\cA_i \rightarrow \bR$ as:
\begin{align*}
\textstyle
\overline {Q_i^{\theta}}(s,a_i):=\sum_{a_{-i}}\pi_{\theta_{-i}}(a_{-i}|s)Q_i^\theta(s,a_i,a_{-i}),~~~~
\overline {A_i^{\theta}}(s,a_i):=\sum_{a_{-i}}\pi_{\theta_{-i}}(a_{-i}|s)A_i^\theta(s,a_i,a_{-i}).
\end{align*}
Finally, define the \textit{discounted state visitation distribution} $d_\theta$ of a policy $\pi_\theta$ given an initial state distribution $\rho$ as:
\begin{equation}\label{eq:discounted state visitation distribution}
\textstyle
    d_\theta(s) := \bE_{s(0)\sim\rho} (1-\gamma) \sum_{t=0}^\infty\gamma^t \Prtheta(s(t)=s|s(0)),
\end{equation}
where $\Prtheta(s(t)=s|s(0))$ is the state visitation probability that $s(t)=s$ when executing $\pi_\theta$ starting at state $s(0)$. 
From the policy gradient theorem~\citep{Sutton1999}, we have that (proof given in Appendix \ref{apdx:gradient}): 
\begin{equation}\label{eq:gradient-formula}
\begin{split}
    \frac{\partial J_i(\theta)}{\partial\theta_{s,a_i}}&=\frac{1}{1-\gamma}d_\theta(s)\pi_{\theta_i}(a_i|s)\overline{A_i^\theta}(s,a_i).
\end{split}
\end{equation}
For the remainder of the paper, we make the following assumptions on the stochastic games we study.
\begin{assumption}\label{assump:like-ergodicity}
The stochastic game $\mathcal{M}$ satisfies:~
$  \inf_\theta\min_{s\in\cS}  d_{\theta}(s) > 0$. 
\end{assumption}
Assumption \ref{assump:like-ergodicity} requires that every state is visited with positive probability for any policy, which is a standard assumption for convergence proofs in the RL literature (e.g. \citep{agarwal2020, Mei20}). We will use $M$ to denote the following quantity 
\begin{equation}\label{eq:def-M}
\textstyle
    M := \sup_{\theta}\max_s\frac{1}{d_\theta(s)}.
\end{equation}
Note that $M$ can be viewed as a measure of exploration sufficiency in the stochastic game, which is slightly different from the ``distributional mismatch coefficient'' introduced in \citep{agarwal2020} defined by $\sup_{\theta,\theta'}\max_s\frac{d_{\theta'}(s)}{d_\theta(s)}$; however, both can be upper bounded by $\max_s\frac{1}{(1-\gamma) \rho(s)}$.

We primarily focus on the following subclass of stochastic games in this paper:
\begin{definition}
A stochastic game is called a Markov potential game (MPG, \citep{zazo16,Macua18,zhang21,leonardos21,mguni20}) if there exists a potential function $\phi: \cS \times \cA_1\times\cdots\times\cA_n \rightarrow \mathbb{R}$ such that for any agent $i$ and any pair of policy parameters $(\theta_i',\theta_{-i}), (\theta_i,\theta_{-i})$ :
{\small
\begin{align}
\textstyle
   & \!\bE \!\left[\sum_{t=0}^\infty \!\gamma^t r_i(s(t),a(t))\big|\pi\! =\! (\theta_i',\theta_{-i}),s(0)\!=\!s\right] \!-\!\bE\! \left[\sum_{t=0}^\infty\! \gamma^t r_i(s(t),a(t))\big|\pi \!=\! (\theta_i,\theta_{-i}),s(0)\!=\!
   s\right]\nonumber\\
   = 
    & \!\bE \!\left[\sum_{t=0}^\infty \!\gamma^t \phi(s(t),a(t))\big|\pi\!=\! (\theta_i',\theta_{-i}),s(0)\!=\!s\right] \! -\! \bE \!\left[\sum_{t=0}^\infty \!\gamma^t \phi(s(t),a(t))\big|\pi 
    \!=\! (\theta_i,\theta_{-i}),s(0)\!=\!s\right]\!,~\forall~s. \label{eq:def-mpg}
\end{align}
}
\end{definition}
 Without loss of generality, we assume that 
~$\phi_{\min} \le \phi(s,a)\le \phi_{\max}$ for all $(s,a)$. The definition of MPG is a generalization of the notion potential game in the one-shot setting \citep{Monderer96}. Note that identical reward game where agents share a same reward function naturally satisfies the above condition and serves as one important special case of MPG. For non-identical reward settings, \citep{Macua18,Gonzalez13} found that continuous MPGs can model applications such as the great fish war \citep{Levhari80}, the stochastic lake game \citep{Dechert06}, medium access control \citep{Macua18} etc. For tablular MPGs, \citep{zhang21, leonardos21} also discuss necessary/sufficient conditions that implies a MPG, as well as its application and counterexamples.

Given a MPG, we define the \textit{total potential function} $\Phi$ as:
\begin{equation*}
    \textstyle
    \Phi(\theta) := \bE_{s(0)\sim\rho} \left[\sum_{t=0}^\infty \gamma^t \phi(s(t),a(t))\big|~\pi_\theta,s(0)=s\right].
\end{equation*}
Given the property in (\ref{eq:def-mpg}), it is straightforward to verify that the NE condition \eqref{eq:NE-def} is equivalent to $
    \Phi(\theta_i^*, \theta_{-i}^*) \ge \Phi(\theta_i', \theta_{-i}^*), \forall \theta_i', i\in N
$ and that for the policy gradient, $\frac{\partial J_i(\theta)}{\partial \theta_{s,a_i}}=\frac{\partial \Phi(\theta)}{\partial \theta_{s,a_i}}$ for all $i$, $s$, $a_i$.

\begin{remark}[\textbf{Differences between MPG and single-agent/centralized MDP}]\label{rem:mpg}
Because of the existence of the total potential function $\Phi$, it is natural to ask whether MPG renders the multi-agent policy gradient similar to single agent policy gradient and thus results and analysis tools developed for single agent policy gradient in e.g., \citep{agarwal2020, Mei20} would be easily extended to the multiagent case.  Unfortunately, this is not the case. To illustrate how it differs from single-agent/centralized case, we can focus on the special type of MPGs where every agent has the same reward function, namely the identical interest case. In the single agent/centralized case, there is a unique global optimal solution which corresponds to the convergent stationary policy. However, in the multiagent case, even if the rewards are identical, because the policy is \textit{decentralized}, i.e., agents taking \textit{independent} policies $\pi:=\pi_1\times\ldots\times\pi_n$, we loose the connection between stationary policies and optimal policies. As we shown later, the convergent stationary policies are Nash equilibria, which are unfortunately non-unique even for the identical interest case.  Moreover, a key condition that is used in establishing the convergence rate,  Łojasiewicz condition (Lemma~\ref{lemma:non-uniform-gradient-domination}), is also much weaker for the multiagent case compared to single agent \cite{Mei20}: the left hand side is the \textit{Nash gap}  $\max_{i,\theta_i^*}\Phi(\theta_i^*,\theta_{-i}) -\Phi(\theta)$ instead of the \textit{optimality gap} $\max_{\theta^*}\Phi(\theta^*) -\Phi(\theta)$). Note that zero Nash gap does not imply zero optimality gap, as there exists \textit{many} NEs of \textit{different} values. These differences disable many proof technique used for single agent case and make the analysis harder and lead to different performance results, as demonstrated in the rest of the paper. 
\end{remark}
\section{Relationship between first order stationary point and Nash equilibrium}\label{section:stationary-point-NE-relationship}

Before studying convergence performance of gradient play algorithms, it is important to first understand the relationship between the stationary points and the NEs. Unfortunately, equivalence cannot be established in this setting. Standard optimization theory guarantees that all NEs are stationary points, but unfortunately not vice versa. Under softmax parameterization, there exist non-NE stationary points. For example, from the gradient formulation \eqref{eq:gradient-formula}, it can be shown that any non-NE deterministic policies are also stationary points. However, the notion of NE and stationarity are indeed closely related. This section aims to characterize some differences between NE and non-NE stationary points. This differentiation of the NE and non-NE stationary points is established by the non-uniform Łojasiewicz condition (also known as gradient domination) for stochastic games.
\begin{lemma}(Non-uniform Łojasiewicz inequality; proof given in Appendix \ref{apdx:non-uniform-gradient-domination})\label{lemma:non-uniform-gradient-domination}
Define
\begin{equation}\label{eq:c-theta}
\textstyle
    M(\theta):=\max_s \frac{1}{d_\theta(s)}, \quad c(\theta):= \min_s \sum_{a_i^* \in \argmax_{a_i} \overline{Q_i^{\theta}}(s,a_i)}\pi_{\theta_i}(a_i^*|s).
\end{equation}
Then we have that
\begin{align*}
&\NEgap_i(\theta)\le \frac{\sqrt{|\cA_i|}M(\theta)}{c(\theta)} \|\nabla_{\theta_i}\!J_i(\theta)\|_2
.
\end{align*}
\end{lemma}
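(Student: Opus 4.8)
The plan is to reduce the claim to a single-agent gradient-domination argument applied to the \emph{induced MDP} that agent $i$ faces when $\theta_{-i}$ is frozen, and then to convert the resulting advantage-based expression into the gradient norm via the explicit formula \eqref{eq:gradient-formula}. Fixing $\theta_{-i}$, agent $i$ sees a standard discounted MDP with effective reward $\sum_{a_{-i}}\pi_{\theta_{-i}}(a_{-i}|s)r_i(s,a_i,a_{-i})$ and effective transition $\sum_{a_{-i}}\pi_{\theta_{-i}}(a_{-i}|s)P(\cdot|s,a_i,a_{-i})$; in this MDP the averaged functions $\overline{Q_i^\theta}$ and $\overline{A_i^\theta}$ play exactly the roles of the $Q$- and advantage functions. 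First I would invoke the performance difference lemma in this induced MDP: letting $\bar\pi_i^*$ be a (deterministic) best response to $\theta_{-i}$ and $d^*$ its discounted visitation distribution, continuity of $J_i$ gives $\NEgap_i(\theta)=J_i(\bar\pi_i^*,\theta_{-i})-J_i(\theta_i,\theta_{-i}) = \frac{1}{1-\gamma}\sum_s d^*(s)\sum_{a_i}\bar\pi_i^*(a_i|s)\overline{A_i^\theta}(s,a_i)$, which is upper bounded by $\frac{1}{1-\gamma}\sum_s d^*(s)\max_{a_i}\overline{A_i^\theta}(s,a_i)$ since $\bar\pi_i^*(\cdot|s)$ is a distribution. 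Note $\max_{a_i}\overline{A_i^\theta}(s,a_i)\ge 0$, because $V_i^\theta(s)=\sum_{a_i}\pi_{\theta_i}(a_i|s)\overline{Q_i^\theta}(s,a_i)$ is an average, and the maximizing actions are exactly $\argmax_{a_i}\overline{Q_i^\theta}(s,a_i)$.

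The second step converts $\max_{a_i}\overline{A_i^\theta}(s,a_i)$ into gradient entries. For every $a_i^*\in\argmax_{a_i}\overline{Q_i^\theta}(s,a_i)$ the advantage $\overline{A_i^\theta}(s,a_i^*)$ equals the common maximum value, so summing \eqref{eq:gradient-formula} over these actions yields $\sum_{a_i^*}\frac{\partial J_i}{\partial\theta_{s,a_i^*}}=\frac{1}{1-\gamma}d_\theta(s)\,c_s\,\max_{a_i}\overline{A_i^\theta}(s,a_i)$ with $c_s:=\sum_{a_i^*\in\argmax_{a_i}\overline{Q_i^\theta}(s,a_i)}\pi_{\theta_i}(a_i^*|s)$, whence $\max_{a_i}\overline{A_i^\theta}(s,a_i)=\frac{1-\gamma}{d_\theta(s)c_s}\sum_{a_i^*}\frac{\partial J_i}{\partial\theta_{s,a_i^*}}$. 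Substituting into the bound from the first step and using $d^*(s)\le 1$, $1/d_\theta(s)\le M(\theta)$ and $1/c_s\le 1/c(\theta)$ (all terms being nonnegative, as each surviving gradient entry has a nonnegative advantage factor) gives $\NEgap_i(\theta)\le \frac{M(\theta)}{c(\theta)}\sum_s d^*(s)\sum_{a_i^*}\frac{\partial J_i}{\partial\theta_{s,a_i^*}}$.

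Finally I would apply Cauchy--Schwarz twice. Over actions, for each fixed $s$, $\sum_{a_i^*}\frac{\partial J_i}{\partial\theta_{s,a_i^*}}\le \sqrt{|\cA_i|}\,\|\nabla_{\theta_{i,s}}J_i\|_2$, where $\nabla_{\theta_{i,s}}J_i$ collects the $|\cA_i|$ entries $\{\partial J_i/\partial\theta_{s,a_i}\}_{a_i}$. Over states, since $d^*$ is a probability distribution, $\sum_s d^*(s)\|\nabla_{\theta_{i,s}}J_i\|_2\le \sqrt{\sum_s d^*(s)}\sqrt{\sum_s d^*(s)\|\nabla_{\theta_{i,s}}J_i\|_2^2}\le \|\nabla_{\theta_i}J_i\|_2$, using $d^*(s)\le 1$ inside the second radical and $\sum_s d^*(s)=1$ for the first. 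Chaining these bounds produces exactly $\NEgap_i(\theta)\le \frac{\sqrt{|\cA_i|}M(\theta)}{c(\theta)}\|\nabla_{\theta_i}J_i\|_2$.

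I expect the subtle points to be the careful treatment of ties in $\argmax_{a_i}\overline{Q_i^\theta}(s,a_i)$, which is precisely what the quantity $c_s$ (and hence $c(\theta)$) is designed to absorb, and ensuring the two-stage Cauchy--Schwarz keeps the action-set dependence at $\sqrt{|\cA_i|}$ rather than $\sqrt{|\cS|\,|\cA_i|}$; the state dependence is eliminated by weighting the outer Cauchy--Schwarz with the best-response visitation distribution $d^*$ rather than a crude uniform bound over states. The one place needing a short continuity/density argument is justifying $\NEgap_i(\theta)=J_i(\bar\pi_i^*,\theta_{-i})-J_i(\theta_i,\theta_{-i})$, i.e.\ that the softmax supremum attains the induced-MDP optimal value, which is achievable at a deterministic best response approached by softmax policies.
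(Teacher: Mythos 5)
Your argument is correct and follows essentially the same route as the paper's proof: performance difference lemma to reduce $\NEgap_i(\theta)$ to $\frac{1}{1-\gamma}\max_{a_i}\overline{A_i^\theta}(s,a_i)$, the tie-aware normalization by $c_s=\sum_{a_i^*}\pi_{\theta_i}(a_i^*|s)$ to convert that maximum into gradient entries, Cauchy--Schwarz in the action coordinate to pay only $\sqrt{|\cA_i|}$, and $1/d_\theta(s)\le M(\theta)$ to finish. The only (immaterial) differences are that you sum the gradient formula over the argmax set to get an exact identity where the paper upper-bounds $c_s\max_{a_i}\overline{A_i^\theta}(s,a_i)$ by $\sum_{a_i}|\pi_{\theta_i}(a_i|s)\overline{A_i^\theta}(s,a_i)|$ over all actions, and that you keep the best-response visitation weights through a second Cauchy--Schwarz over states where the paper simply collapses to $\max_{s,a_i}$ first.
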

The Łojasiewicz condition (gradient domination) implies that the NE-gap of a policy can be bounded by the norm of its gradient, whereas the term `non-uniform' refers to the factor $\frac{\sqrt{|\cA_i|}M(\theta)}{c(\theta)}$, which cannot be bounded uniformly for all $\theta$. The counterpart of Lemma \ref{lemma:non-uniform-gradient-domination} for a single-agent MDP was first introduced in \citep[Lemma 8]{Mei20}. One major difference between Lemma \ref{lemma:non-uniform-gradient-domination} and \citep[Lemma 8]{Mei20} is how $c(\theta)$ is defined. In \citep{Mei20}, $c(\theta):= \min_s\pi_{\theta}(a^*(s)|s)$, where $a^*(s)$ is the optimal action on state $s$ (i.e., $a^* = \argmax_a Q^*(s,a)$), whereas in MPG, because there's no globally defined $Q^*$, the $a_i^*$ in \eqref{eq:c-theta} is chosen as the \emph{greedy} optimal action of the \emph{current} averaged $Q$-function (i.e., $a_i^* \in \argmax_{a_i} \overline{Q_i^{\theta}}(s,a_i)$).

Note that because $c(\theta)$ on the denominator can be zero for certain policies (e.g. one can verify that any non-NE deterministic policy have $c(\theta)=0$), 
which implies that a $\theta$ with gradient norm close to zero is not necessarily near a NE. Given this observation, we could differentiate the non-NE stationary points with NEs by whether $c(\theta^*)$ equals to zero, which is formally stated in the following lemma:

\begin{lemma}\label{lemma:c-NE-equal-1} (Proof given in Appendix \ref{apdx:non-uniform-gradient-domination})
Suppose $\theta^*$ is a stationary point, i.e. $\|\nabla\Phi(\theta^*)\| = 0$, then $\theta^*$ is a NE if and only if $c(\theta^*) = 1$, $\theta^*$ is not a NE if and only if $c(\theta^*) = 0$.
\end{lemma}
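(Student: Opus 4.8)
The plan is to combine the gradient formula \eqref{eq:gradient-formula} with the non-uniform Łojasiewicz inequality of Lemma~\ref{lemma:non-uniform-gradient-domination}, and to prove the equivalence in two stages: first show that at a stationary point $c(\theta^*)$ can only take the values $0$ or $1$, then match these two values to the NE / non-NE dichotomy. I would begin by translating stationarity into a support condition. Since $\nabla_{\theta_i}J_i=\nabla_{\theta_i}\Phi$ and $\|\nabla\Phi(\theta^*)\|=0$, every partial derivative vanishes, so by \eqref{eq:gradient-formula} together with $d_{\theta^*}(s)>0$ from Assumption~\ref{assump:like-ergodicity} we obtain $\pi_{\theta_i^*}(a_i|s)\,\overline{A_i^{\theta^*}}(s,a_i)=0$ for all $i,s,a_i$. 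Hence any action $a_i$ in the support of $\pi_{\theta_i^*}(\cdot|s)$ satisfies $\overline{A_i^{\theta^*}}(s,a_i)=0$, i.e. $\overline{Q_i^{\theta^*}}(s,a_i)=V_i^{\theta^*}(s)$; every supported action attains the averaged value $V_i^{\theta^*}(s)$, which is itself the policy-weighted average $V_i^{\theta^*}(s)=\sum_{a_i}\pi_{\theta_i^*}(a_i|s)\overline{Q_i^{\theta^*}}(s,a_i)$ and therefore never exceeds $\max_{a_i}\overline{Q_i^{\theta^*}}(s,a_i)$.

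Next I would establish the dichotomy $c(\theta^*)\in\{0,1\}$. Fix $(i,s)$ and examine $\sum_{a_i^*\in\argmax_{a_i}\overline{Q_i^{\theta^*}}(s,a_i)}\pi_{\theta_i^*}(a_i^*|s)$. There are two exhaustive cases. If $\max_{a_i}\overline{Q_i^{\theta^*}}(s,a_i)=V_i^{\theta^*}(s)$, then every supported action is a maximizer, so the $\argmax$ set contains the entire support and the sum equals $1$. If instead $\max_{a_i}\overline{Q_i^{\theta^*}}(s,a_i)>V_i^{\theta^*}(s)$, then every maximizing action strictly exceeds $V_i^{\theta^*}(s)$ and hence lies outside the support, so the sum equals $0$. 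Because $c(\theta^*)$ is the minimum of these per-$(i,s)$ quantities, it equals $1$ exactly when all pairs fall in the first case and equals $0$ as soon as one pair falls in the second.

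For the matching step, the direction $c(\theta^*)=1\Rightarrow$ NE follows from Lemma~\ref{lemma:non-uniform-gradient-domination}: at a stationary point $\|\nabla_{\theta_i}J_i(\theta^*)\|_2=0$ while $M(\theta^*)<\infty$ by Assumption~\ref{assump:like-ergodicity}, so $\NEgap_i(\theta^*)\le \frac{\sqrt{|\cA_i|}M(\theta^*)}{c(\theta^*)}\cdot 0=0$ for all $i$, giving a NE. For the converse $c(\theta^*)=0\Rightarrow$ not NE, the case analysis above produces some $(i,s)$ with an action $\tilde a_i$ satisfying $\overline{Q_i^{\theta^*}}(s,\tilde a_i)>V_i^{\theta^*}(s)$; viewing agent $i$ against the frozen $\theta_{-i}^*$ as a single-agent MDP, a performance-difference / policy-improvement argument (valid since $d_{\theta^*}(s)>0$) shows that shifting mass onto $\tilde a_i$ at $s$ strictly increases $J_i$, so $\NEgap_i(\theta^*)>0$. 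Combining these two implications with the dichotomy of the previous step yields all four statements: $\theta^*$ is a NE iff $c(\theta^*)=1$, and $\theta^*$ is not a NE iff $c(\theta^*)=0$.

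The main obstacle I anticipate is the dichotomy step, namely justifying that $c(\theta^*)$ is never strictly between $0$ and $1$. The crucial observation is that softmax-supported actions at a stationary point all share the common value $V_i^{\theta^*}(s)$, so the averaged-$Q$ maximizer is either attained exactly on the support (total mass $1$) or strictly above it and thus off the support (mass $0$). A related subtlety is that the $c=0$ case can only occur for boundary/deterministic policies where some $\pi_{\theta_i^*}(a_i|s)=0$; these must be handled through the product condition $\pi_{\theta_i^*}\cdot\overline{A_i^{\theta^*}}=0$ rather than by assuming full support, and the beneficial-deviation argument in the final step must be made rigorous via the performance difference lemma rather than a purely local comparison of values.
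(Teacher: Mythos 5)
Your proof is correct and follows essentially the same route as the paper's: the non-uniform Łojasiewicz inequality (Lemma~\ref{lemma:non-uniform-gradient-domination}) handles the direction from positive $c(\theta^*)$ to NE, and the performance difference lemma handles the converse. The only organizational difference is that you make the dichotomy $c(\theta^*)\in\{0,1\}$ explicit by reading off $\pi_{\theta_i^*}(a_i|s)\,\overline{A_i^{\theta^*}}(s,a_i)=0$ from stationarity, whereas the paper reaches the same conclusion implicitly by proving NE $\Rightarrow c(\theta^*)=1$ directly from $\max_{a_i}\overline{A_i^{\theta^*}}(s,a_i)=0$ together with the identity $\sum_{a_i}\pi_{\theta_i^*}(a_i|s)\overline{A_i^{\theta^*}}(s,a_i)=0$.
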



\section{Unregularized gradient play}\label{sec:unregularized}
We first investigate the convergence to NE for gradient and natural gradient play, respectively. Under the softmax parameterization, the two schemes are given by
\begin{align}
     \textit{{Gradient Play:  }}\quad~& \theta_i^{(t+1)}= \theta_i^{(t)}\!+\!\eta\nabla_{\theta_i}J_i(\theta_i^{(t)})\label{eq:unregularized-PG}\\
     \textit{{Natural Gradient Play:}}~~~& \theta_i^{(t+1)} = \theta_i^{(t)}+\!\eta F_i(\theta^{(t)})^\dagger\nabla_{\!\theta_i}J_i(\theta_i^{(t)})\label{eq:unregularized-NPG}
\end{align}
where $\dagger$ denotes the Moore-Penrose inverse and $F_i\!(\theta)$ is the Fisher information matrix for $\pi_{\theta_i}$:
$$\!\!F_i(\theta):=\bE_{s\!\sim\! d_\theta(\cdot)}\bE_{a_i\!\sim\!\pi_{\theta_i}(\cdot|s)}\!\left[\nabla_{\theta_i} \log\pi_{\theta_i}(a_i|s)\nabla_{\theta_i} \log\pi_\theta(a_i|s)^{\!\top}\right].$$
For notational simplicity,  we  abbreviate the variables $d_{\theta^{(t)}}$, $A_i^{\theta^{(t)}}$ and $\overline {A_i^{\theta^{(t)}}}$ as $d^{(t)}$, $A_i^{(t)}$ and $\overline {A_i^{(t)}}$ respectively; and denote $\pi_{\theta^{(t)}}(a|s)$ and $\pi_{\theta_i^{(t)}}(a_i|s)$ as $ \pi^{(t)}(a|s)$ and $\pi_i^{(t)}(a_i|s)$ respectively.
For the softmax parameterization, we can establish the equivalence of natural gradient play and soft Q-learning \citep{haarnoja2017}, formally stated in the following lemma.
\begin{lemma} (Proof given in Appendix \ref{apdx:NPG-derivation})
Natural gradient play is equivalent to
\begin{equation}\label{eq:unregularized-NPG-2}
\textstyle
    \pi_i^{(t+1)}(a_i|s)\propto \pi_i^{(t)}(a_i|s)\exp\left({\eta \overline{A_i^{(t)}}(s,a_i)}{\big/}({1-\gamma})\right)
\end{equation}
\end{lemma}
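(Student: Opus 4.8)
The plan is to exploit the block structure of the Fisher information matrix under the softmax parameterization, exhibit an explicit solution of the linear system $F_i(\theta^{(t)})\,w = \nabla_{\theta_i} J_i(\theta^{(t)})$, and then argue that this particular solution and the genuine Moore--Penrose solution induce the \emph{same} policy after applying softmax. The whole computation decouples across states, so the argument reduces to a per-state linear algebra fact.

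First I would observe that $\nabla_{\theta_i}\log\pi_{\theta_i}(a_i|s)$ has nonzero entries only among the coordinates $\{\theta_{s,a_i'}\}_{a_i'}$ sharing the same state $s$, so $F_i(\theta)$ is block diagonal across states. Using $\partial_{\theta_{s,a_i'}}\log\pi_{\theta_i}(a_i|s) = \delta_{a_i,a_i'}-\pi_{\theta_i}(a_i'|s)$ for the softmax map, each per-state block evaluates to
\begin{equation*}
F_{i,s}(\theta) = d_\theta(s)\left(\diag\lrs{\pi_{\theta_i}(\cdot|s)} - \pi_{\theta_i}(\cdot|s)\,\pi_{\theta_i}(\cdot|s)^\top\right).
\end{equation*}

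Next, recalling the gradient formula \eqref{eq:gradient-formula}, I would propose the candidate $w_{s,a_i} := \frac{1}{1-\gamma}\overline{A_i^\theta}(s,a_i)$ and verify $F_{i,s}(\theta)\,w_s = \nabla_{\theta_{s,\cdot}} J_i(\theta)$ directly. The crucial cancellation is that the rank-one term $\pi_{\theta_i}\pi_{\theta_i}^\top$ contributes a factor $\pi_{\theta_i}(a_i|s)\sum_{a_i'}\pi_{\theta_i}(a_i'|s)\overline{A_i^\theta}(s,a_i')$, and the inner sum equals $\sum_a \pi_\theta(a|s) A_i^\theta(s,a) = 0$ since the expected advantage under the acting policy vanishes. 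What survives is exactly $\frac{d_\theta(s)}{1-\gamma}\pi_{\theta_i}(a_i|s)\overline{A_i^\theta}(s,a_i)$, which matches \eqref{eq:gradient-formula}, so $w_s$ is indeed a solution of the per-state system.

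Finally I would reconcile this candidate with the Moore--Penrose inverse appearing in \eqref{eq:unregularized-NPG}. The null space of each block $F_{i,s}(\theta)$ is precisely $\mathrm{span}(\mathbf{1})$, so any two solutions of the per-state system differ only by a state-dependent constant added uniformly to $\{\theta_{s,a_i}\}_{a_i}$, which is exactly the shift-invariance of the softmax map. Hence the true update satisfies $\theta_{s,a_i}^{(t+1)} = \theta_{s,a_i}^{(t)} + \frac{\eta}{1-\gamma}\overline{A_i^{(t)}}(s,a_i) + c_s$ for some constant $c_s$, and applying softmax makes $c_s$ cancel in the normalization, yielding \eqref{eq:unregularized-NPG-2}. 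The only delicate point is the pseudoinverse/null-space bookkeeping needed to justify that the policy does not depend on which solution one picks; once the zero-expected-advantage identity is in hand, the remaining arithmetic is routine.
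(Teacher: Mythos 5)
Your proposal is correct and follows essentially the same route as the paper's Appendix \ref{apdx:NPG-derivation}: both compute the per-state Fisher block $d_\theta(s)\bigl(\diag\{\pi_{\theta_i,s}\}-\pi_{\theta_i,s}\pi_{\theta_i,s}^\top\bigr)$, exhibit $\frac{1}{1-\gamma}\overline{A_i^{\theta}}(s,\cdot)$ as an explicit solution of the per-state linear system (the paper via its general Lemma \ref{lemma:NPG-derivation} for any $g$ with zero column sums, you via the zero-mean-advantage identity, which is the same cancellation), and then absorb the $\mathrm{span}(\mathbf{1})$ ambiguity of the Moore--Penrose solution into the softmax normalization. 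The only difference is organizational: the paper abstracts the pseudoinverse computation into a reusable lemma so it also covers the $\log$-barrier-regularized update, whereas you specialize directly to $\nabla_{\theta_i}J_i$.
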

\paragraph{Asymptotic convergence to Nash Equilibrium.}As stated in Section \ref{section:stationary-point-NE-relationship}, there exist stationary points that are not NEs. It is not immediately obvious why running gradient methods can avoid converging to these points, thus before studying convergence rate to NE, it is necessary to first examine whether asymptotic convergence holds. Moreover, the asymptotic convergence result is used to establish the finite time convergence rate results later (see the subsection~\ref{subsec:finite-time}).

\begin{theorem}\label{thm:asymptotic-convergence}
(Proof given in Appendix \ref{apex:asymptotic-convergence}) Suppose Assumption \ref{assump:like-ergodicity} holds and that the stationary policies are isolated,   gradient play \eqref{eq:unregularized-PG} with $\eta \le \frac{(1-\gamma)^3}{6n}$  guarantees that
$    \limtinf \theta^{(t)} = \theta^{(\infty)}$, where $\theta^{(\infty)}$ is a NE. The same argument also holds for natural gradient play \eqref{eq:unregularized-NPG-2} 
with $\eta \le \frac{(1-\gamma)^2}{2n(\phi_{\max}-\phi_{\min})}$.
\end{theorem}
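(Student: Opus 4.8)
The plan is to recognize gradient play as block gradient ascent on a single common objective, the total potential $\Phi$, and then run the standard monotone-improvement machinery; the only genuinely new difficulty is that the limiting stationary policy might fail to be a NE. First I would use the identity $\partial J_i/\partial\theta_{s,a_i}=\partial\Phi/\partial\theta_{s,a_i}$ noted just after \eqref{eq:def-mpg} to rewrite the update \eqref{eq:unregularized-PG} as full gradient ascent $\theta^{(t+1)}=\theta^{(t)}+\eta\nabla\Phi(\theta^{(t)})$. Next I would prove an $L$-smoothness bound for $\Phi$ under the softmax parameterization; because each of the $n$ agent blocks contributes a term of order $(1-\gamma)^{-3}$, the smoothness constant is $L=O\!\big(n/(1-\gamma)^3\big)$, which is exactly what makes the prescribed step size $\eta\le(1-\gamma)^3/(6n)$ satisfy $\eta\le 1/L$. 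The ascent lemma then yields $\Phi(\theta^{(t+1)})\ge\Phi(\theta^{(t)})+\tfrac{\eta}{2}\|\nabla\Phi(\theta^{(t)})\|_2^2$. Since $\phi_{\min}\le\phi\le\phi_{\max}$ forces $\Phi$ to be bounded, $\{\Phi(\theta^{(t)})\}$ is monotone and convergent, and telescoping gives $\sum_t\|\nabla\Phi(\theta^{(t)})\|_2^2<\infty$; hence $\|\nabla\Phi(\theta^{(t)})\|_2\to 0$ and, because the increment equals $\eta\nabla\Phi$, also $\|\theta^{(t+1)}-\theta^{(t)}\|_2\to 0$.

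With vanishing gradients in hand I would pass to the compact space of policies. Any limit point of $\{\pi^{(t)}\}$ is a stationary policy, and the fact that successive iterates become arbitrarily close implies the set of limit points is connected; since the stationary policies are assumed isolated, a connected subset of an isolated set is a singleton, so $\pi^{(t)}$ converges to a single stationary policy $\pi^{(\infty)}$ (equivalently, up to the softmax shift ambiguity, $\theta^{(t)}\to\theta^{(\infty)}$).

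It remains to show $\pi^{(\infty)}$ is a NE, and this is the step I expect to be the main obstacle. By Lemma \ref{lemma:c-NE-equal-1} a stationary policy is a NE exactly when $c(\theta^{(\infty)})=1$ and is a non-NE exactly when $c(\theta^{(\infty)})=0$, so I must rule out the latter. The idea is that non-NE stationary points are not attracting for the dynamics: if $c(\theta^{(\infty)})=0$, then there is an agent $i$ and a state $s$ whose $\overline{Q_i}$-greedy action $a_i^*$ has $\pi_i^{(\infty)}(a_i^*|s)=0$, while, by continuity and non-stationarity, the averaged advantage $\overline{A_i^\theta}(s,a_i^*)$ stays bounded away from zero in a neighborhood. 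From the gradient formula \eqref{eq:gradient-formula} the coordinate $\theta_{s,a_i^*}$ is then driven upward, and a careful tracking of $\pi_i^{(t)}(a_i^*|s)$, in the spirit of the single-agent escaping arguments of \citep{agarwal2020,Mei20}, shows this probability cannot converge to zero, contradicting $c(\theta^{(\infty)})=0$. The delicacy is that the driving gradient itself carries a factor $\pi_i^{(t)}(a_i^*|s)$, so the increments vanish as the probability does, and one must show the accumulated increase still dominates; Assumption \ref{assump:like-ergodicity} (which keeps $d^{(t)}(s)$ bounded below) is what prevents this push from degenerating.

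For natural gradient play I would run the same three-stage scheme, replacing the smoothness-based ascent lemma by the monotone-improvement estimate for the multiplicative soft-$Q$ update \eqref{eq:unregularized-NPG-2}. Using its mirror-descent/performance-difference structure one again obtains $\Phi(\theta^{(t+1)})\ge\Phi(\theta^{(t)})$ together with a summable per-step improvement once $\eta\le(1-\gamma)^2/\big(2n(\phi_{\max}-\phi_{\min})\big)$, and then the boundedness of $\Phi$, the connectedness-of-limit-set argument, the isolated-stationary-point hypothesis, and the $c(\theta^{(\infty)})\in\{0,1\}$ dichotomy of Lemma \ref{lemma:c-NE-equal-1} close the proof in the same way.
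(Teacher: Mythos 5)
Your first two stages match the paper's proof: the smoothness constant $6n/(1-\gamma)^3$, the ascent inequality $\Phi(\theta^{(t+1)})-\Phi(\theta^{(t)})\ge\tfrac{\eta}{2}\|\nabla\Phi(\theta^{(t)})\|_2^2$, the conclusion $\|\nabla\Phi(\theta^{(t)})\|_2\to 0$, and the use of the isolation assumption to extract a single limiting stationary policy are exactly what the paper does (Lemma \ref{lemma:monotonicity-value-function} and the surrounding discussion).

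The gap is in your third stage, which is the heart of the theorem. You propose to rule out a non-NE limit by arguing that the coordinate $\theta_{s,a_i^+}$ of an action $a_i^+$ with positive limiting averaged advantage is ``driven upward,'' so $\pi_i^{(t)}(a_i^+|s)$ cannot vanish. But a softmax probability depends only on differences of logits, and monotone increase of $\theta_{s,a_i^+}^{(t)}$ is perfectly compatible with $\pi_i^{(t)}(a_i^+|s)\to 0$: the logits of the zero-advantage actions in $I_0^{i,s}$ can (and in the paper's analysis some must) diverge to $+\infty$ faster. Indeed the paper proves both that $\theta_{s,a_i^+}^{(t)}$ is bounded below for $a_i^+\in I_+^{i,s}$ (Lemma \ref{lemma:go-to-infinity-I-minus}) and that $\pi_i^{(t)}(a_i|s)\to 0$ for every action with nonzero limiting advantage (Corollary \ref{coro:measure-on-three-sets}), so the contradiction cannot come from the upward push alone. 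The paper's actual argument requires three further ingredients your sketch does not supply: the conservation law $\sum_{a_i}\theta_{s,a_i}^{(t)}=\sum_{a_i}\theta_{s,a_i}^{(0)}$ (the gradient sums to zero over $a_i$), an ordering-preservation lemma (Lemma \ref{lemma:bound-I-0-by-I-plus}) showing that once an $I_0^{i,s}$ action's probability falls below that of $a_i^+$ it stays below forever, and the bookkeeping of Lemma \ref{lemma:I-plus-empty} showing that the never-dominated $I_0^{i,s}$ actions have $\sum\theta_{s,a_i}^{(t)}$ bounded above, so their mass cannot outgrow $\pi_i^{(t)}(a_i^+|s)$ --- yet the total $I_0^{i,s}$ mass must, since it tends to $1$ while $\pi_i^{(t)}(a_i^+|s)\to 0$. (Assumption \ref{assump:like-ergodicity} is not what rescues the argument here.) For natural gradient play your ``same way'' is closer to workable, but for a reason you do not state: the update \eqref{eq:unregularized-NPG-2} is multiplicative with exponent proportional to $\overline{A_i^{(t)}}(s,a_i)$ and \emph{not} weighted by the current probability, so the ratio $\pi_i^{(t)}(a_i^+|s)/\pi_i^{(t)}(a_i^0|s)$ against any action $a_i^0$ of positive limiting mass is eventually nondecreasing, immediately contradicting $\pi_i^{(t)}(a_i^+|s)\to 0$; this is the paper's NPG argument, and it is precisely the step that fails for (non-natural) gradient play.
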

 The proof of Theorem \ref{thm:asymptotic-convergence} resembles the technique used in \citep{agarwal2020} for the single agent case, where the additional assumption on the isolated stationary policies is introduced due to some specific technical difficulties encountered in multi-agent learning (see more discussion in Appendix \ref{apex:asymptotic-convergence}, which is also introduced in \citep{Fox21} for establishing the asymptotic convergence of NPG. We believe it is a conservative condition for ensuring the asymptotic convergence. It remains an interesting open question to establish convergence without this assumption.
\subsection{Finite time convergence rate}\label{subsec:finite-time}
This section considers finite time convergence rate for gradient play and natural gradient play. Corresponding results for the single-agent setting can be found in \citep{Mei20} (for gradient play) and \citep{agarwal2020,Khodadadian2021,Mei2021} (for natural gradient play). Some aspects of these analyses can be carried over to the multi-agent MPG setting; however, as will be discussed later, there are several fundamental differences that make the multi-agent case more challenging. 

Our convergence results rely on the observation from Section \ref{section:stationary-point-NE-relationship} and the asymptotic convergence to NE. Combining Theorem \ref{thm:asymptotic-convergence} and Lemma \ref{lemma:c-NE-equal-1}, we know that $c(\theta^{(t)})$ asymptotically converges to 1 for (natural) gradient play, and since $c(\theta^{(t)}) > 0$ for any softmax policy (because $\pi_{\theta_i}(a_i|s) >0$), 
\begin{equation}\label{eq:def-c}
\textstyle
    c:=\inf_t c(\theta^{(t)}) > 0.
\end{equation}
We are now ready to give formal convergence rates for gradient  and natural gradient play respectively.
\begin{theorem}\label{thm:unregularized-PG} (Gradient play and natural gradient play; proof given in \ref{apdx:unregularized-PG-NPG})
Suppose Assumption \ref{assump:like-ergodicity} holds and that the stationary policies are isolated,  gradient play \eqref{eq:unregularized-PG} with $\eta = \frac{(1-\gamma)^3}{6n}$  will guarantee that for all $T$,
\begin{equation}
    \frac{\sum_{t=0}^{T-1}\NEgap(\theta^{(t)})^2}{T} \lesssim O\left( \frac{n\max_i|\cA_i|(\phi_{\max} - \phi_{\min})M^2 }{(1-\gamma)^4c^2 T}\right),
\end{equation}
Natural gradient play \eqref{eq:unregularized-NPG-2} with $\eta = \frac{(1-\gamma)^2}{2n(\phi_{\max}-\phi_{\min})}$ will guarantee that for all $T$,
\begin{equation}
     \frac{\sum_{t=0}^{T-1}\NEgap(\theta^{(t)})^2}{T}\lesssim O\left( \frac{n(\phi_{\max}-\phi_{\min})^2M}{(1-\gamma)^3c T}\right).
\end{equation}
Here $O(\cdot)$ hides constant factors, $M$ and $c$ are defined as in \eqref{eq:def-M} and \eqref{eq:def-c}, respectively.
\end{theorem}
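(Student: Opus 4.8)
The two bounds share a common backbone: reduce $\NEgap$ to a quantity that telescopes against the total potential $\Phi$. Since $\NEgap(\theta)=\max_i\NEgap_i(\theta)$, we always have $\NEgap(\theta)^2\le\sum_i\NEgap_i(\theta)^2$, so it suffices to control $\sum_{t}\sum_i\NEgap_i(\theta^{(t)})^2$. Because $\phi_{\min}\le\phi\le\phi_{\max}$, the potential is bounded, $|\Phi(\theta)-\Phi(\theta')|\le(\phi_{\max}-\phi_{\min})/(1-\gamma)$, which caps the total increase of $\Phi$ along any trajectory. The plan for both algorithms is therefore: (i) show each step increases $\Phi$ by at least a constant times a ``progress'' quantity, (ii) telescope to bound the summed progress by $(\phi_{\max}-\phi_{\min})/(1-\gamma)$, and (iii) lower-bound the progress quantity by $\NEgap_i^2$ up to the trajectory constants $M$ and $c$, where $c=\inf_tc(\theta^{(t)})>0$ is exactly what Theorem~\ref{thm:asymptotic-convergence} together with Lemma~\ref{lemma:c-NE-equal-1} guarantee.

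For gradient play, the key structural fact from the MPG definition is that $\partial J_i/\partial\theta_{s,a_i}=\partial\Phi/\partial\theta_{s,a_i}$, so the simultaneous update \eqref{eq:unregularized-PG} is exactly joint gradient ascent $\theta^{(t+1)}=\theta^{(t)}+\eta\nabla\Phi(\theta^{(t)})$ on a single scalar objective. First I would establish $\beta$-smoothness of $\Phi$ in $\theta$ with $\beta=\Theta(n/(1-\gamma)^3)$: the diagonal Hessian blocks mimic the single-agent softmax smoothness, while bounding the cross-agent blocks $\partial^2\Phi/\partial\theta_i\partial\theta_j$ is the new ingredient and is what produces the factor $n$ (hence $\eta=(1-\gamma)^3/(6n)$). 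Smoothness gives the ascent lemma $\Phi(\theta^{(t+1)})-\Phi(\theta^{(t)})\ge\frac{\eta}{2}\|\nabla\Phi(\theta^{(t)})\|^2$, and telescoping yields $\sum_{t=0}^{T-1}\|\nabla\Phi(\theta^{(t)})\|^2\le\frac{2}{\eta}\cdot\frac{\phi_{\max}-\phi_{\min}}{1-\gamma}$. Finally I apply the non-uniform \L ojasiewicz inequality (Lemma~\ref{lemma:non-uniform-gradient-domination}) agentwise, $\NEgap_i(\theta)^2\le\frac{|\cA_i|M^2}{c^2}\|\nabla_{\theta_i}J_i(\theta)\|^2$, and sum over $i$ using $\sum_i\|\nabla_{\theta_i}J_i\|^2=\|\nabla\Phi\|^2$; substituting $\eta$ gives the stated rate.

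For natural gradient play the reduction to $\Phi$ is subtler because the soft-$Q$/multiplicative-weights update \eqref{eq:unregularized-NPG-2} is \emph{not} ascent on $\Phi$. Here I would (a) use the performance-difference lemma to write $\NEgap_i(\theta^{(t)})=\frac{1}{1-\gamma}\bE_{s\sim\tilde d_i^{(t)}}[\max_{a_i}\overline{A_i^{(t)}}(s,a_i)]$, where $\tilde d_i^{(t)}$ is the visitation of agent $i$'s best response, and a Cauchy--Schwarz step that trades the distribution mismatch for $\sqrt{M}$ to obtain $\NEgap_i(\theta^{(t)})^2\le\frac{M}{(1-\gamma)^2}\bE_{s\sim d^{(t)}}[(\max_{a_i}\overline{A_i^{(t)}})^2]$ (the source of the \emph{linear} $M$). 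Next (b) a mirror-descent/MWU improvement lemma lower-bounds the single-agent value increase by the advantage variance, $\gtrsim\frac{\eta}{(1-\gamma)^2}\bE_{s\sim d^{(t)}}[\sum_{a_i}\pi_i^{(t)}(a_i|s)\overline{A_i^{(t)}}(s,a_i)^2]$; keeping only the greedy action and invoking the definition of $c(\theta^{(t)})$ bounds this below by $\frac{\eta c}{(1-\gamma)^2}\bE_{s\sim d^{(t)}}[(\max_{a_i}\overline{A_i^{(t)}})^2]$. Combining (a) and (b) makes each agent's one-step improvement at least $\frac{\eta c}{M}\NEgap_i^2$. Step (c) then converts the agentwise gains into a single potential increase: since all agents move simultaneously, $\Phi(\theta^{(t+1)})-\Phi(\theta^{(t)})$ equals the sum of one-agent-at-a-time gains minus a second-order coupling error that I would absorb using smoothness of $\Phi$, forcing $\eta\le(1-\gamma)^2/(2n(\phi_{\max}-\phi_{\min}))$ and injecting the factor $n$. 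Telescoping $\Phi$ and summing then gives the second bound.

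The main obstacle is step (c) of the natural gradient analysis. Unlike gradient play, where the update is literally ascent on $\Phi$, the MWU update improves each agent's \emph{own} value while the others move at the same time, so the clean single-objective telescoping is unavailable and the usual Kullback--Leibler telescoping of single-agent NPG fails because the best-response comparator $\tilde d_i^{(t)}$ drifts with $t$. Controlling the simultaneous-move cross terms tightly enough to still extract a net increase of $\Phi$ proportional to $\sum_i\NEgap_i^2$, with the correct dependence on $n$, $M$, and $1-\gamma$, is the crux, and it is where the smoothness bound and the positivity $c>0$ (via the asymptotic convergence of Theorem~\ref{thm:asymptotic-convergence}) must be combined most carefully.
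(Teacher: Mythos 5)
Your gradient-play argument is exactly the paper's proof: $\beta$-smoothness of $\Phi$ with $\beta=\frac{6n}{(1-\gamma)^3}$ (Lemma~\ref{lemma:smoothness-unregularized}), the ascent inequality $\Phi(\theta^{(t+1)})-\Phi(\theta^{(t)})\ge\frac{\eta}{2}\|\nabla\Phi(\theta^{(t)})\|^2$, telescoping against $(\phi_{\max}-\phi_{\min})/(1-\gamma)$, and Lemma~\ref{lemma:non-uniform-gradient-domination} to convert $\|\nabla\Phi\|$ into $\NEgap$; your extra step $\NEgap^2\le\sum_i\NEgap_i^2$ is unnecessary (the paper uses $\NEgap=\max_i\NEgap_i$ and $\|\nabla_{\theta_i}J_i\|\le\|\nabla\Phi\|$) but harmless. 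For natural gradient play, your steps (a) and (b) also track the paper: Lemma~\ref{lemma:NE-gap-bound} gives $\NEgap_i\le\frac{1}{1-\gamma}\max_{s,a_i}\overline{A_i^{(t)}}(s,a_i)$, the factor $M$ enters via $\max_s x_s^2\le M\sum_s d(s)x_s^2$, and Lemma~\ref{lemma:unregularized-NPG-2} keeps the second-order term of $e^x$ only on the greedy actions so that $c(\theta^{(t)})$ appears exactly as you describe.

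The gap is in your step (c). You propose to absorb the simultaneous-move coupling error ``using smoothness of $\Phi$,'' but this is precisely what Remark~\ref{rem:proof-unregularized} warns cannot work: the NPG parameter increment is $\theta^{(t+1)}_{s,a_i}-\theta^{(t)}_{s,a_i}=\frac{\eta}{1-\gamma}\overline{A_i^{(t)}}(s,a_i)$ (up to a per-state shift), so the smoothness penalty $\frac{\beta}{2}\|\theta^{(t+1)}-\theta^{(t)}\|^2$ is an \emph{unweighted} sum of $\overline{A_i^{(t)}}(s,a_i)^2$ over all $(i,s,a_i)$, whereas the first-order gain $\langle\nabla\Phi,\Delta\theta\rangle=\frac{\eta}{(1-\gamma)^2}\sum_{i,s}d^{(t)}(s)\sum_{a_i}\pi_i^{(t)}(a_i|s)\overline{A_i^{(t)}}(s,a_i)^2$ carries the weights $d^{(t)}(s)\pi_i^{(t)}(a_i|s)$. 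Absorbing the former into the latter would require $\eta\lesssim\min_{s,a_i}d^{(t)}(s)\pi_i^{(t)}(a_i|s)/\beta$, and $\pi_i^{(t)}(a_i|s)$ has no uniform lower bound along the trajectory (that is the whole point of the $\log$-barrier in the regularized case); at best one pays extra $|\cS|$ and $|\cA_i|$ factors absent from the claimed rate. Note also that parameter-space smoothness would force $\eta\lesssim(1-\gamma)^3/n$, not the stated $\eta=(1-\gamma)^2/(2n(\phi_{\max}-\phi_{\min}))$, so the step size you quote is not the one your mechanism would produce. What the paper actually does (Lemma~\ref{lemma:unregularized-NPG-1}) is a mirror-descent argument: apply the performance difference lemma to $\Phi(\theta^{(t+1)})-\Phi(\theta^{(t)})$, telescope over agents inside the advantage via $\widetilde{A_{i,\phi}^{(t)}}$, and observe that the leading term, after substituting $\overline{A_{i,\phi}^{(t)}}=\frac{1-\gamma}{\eta}(\log\frac{\pi_i^{(t+1)}}{\pi_i^{(t)}}+\log Z_t^{i,s})$, produces \emph{positive} KL divergences $\frac{1}{\eta}\sum_{i,s}d^{(t+1)}(s)\textup{KL}(\pi_{i,s}^{(t+1)}\|\pi_{i,s}^{(t)})$ in addition to the $\log Z_t^{i,s}$ terms. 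The cross-agent error is bounded by $\frac{n(\phi_{\max}-\phi_{\min})}{(1-\gamma)^2}\sum_s d^{(t+1)}(s)\sum_i\|\pi_{i,s}^{(t+1)}-\pi_{i,s}^{(t)}\|_1^2$ and converted into the same KL quantities by Pinsker's inequality, so the choice $\eta\le\frac{(1-\gamma)^2}{2n(\phi_{\max}-\phi_{\min})}$ makes the KL terms dominate the error and leaves $\Phi(\theta^{(t+1)})-\Phi(\theta^{(t)})\ge\frac{1}{\eta}\sum_{i,s}d^{(t+1)}(s)\log Z_t^{i,s}\ge\frac{c\eta}{3M}\NEgap(\theta^{(t)})^2$. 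You correctly identified step (c) as the crux, but the tool you reached for is the one that fails; the KL self-absorption is the missing idea.
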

\begin{remark}[\textbf{Proof sketch and novelty}]\label{rem:proof-unregularized} The proof for gradient play is relatively straightforward from the non-uniform Łojasiewicz inequality and standard non-convex optimization results, which we refer readers to the appendix for more details. However, the proof for natural gradient play is more involved and existing analysis on NPG cannot be generalized to this setting. For single-agent MDP, the analysis on NPG leverages the unique existence of optimal value function $V^*$ so that similar analysis for mirror-descent can also carry over to NPG analysis, and thus obtain dimension free convergence. However, in the multi-agent setting, there's no well-defined $V^*$ as NEs can be non-unique with different potential values, thus, we need to further deploy additional structures of the total potential function $\Phi$. Our analysis rely on the sufficient ascent lemma (Lemma \ref{lemma:unregularized-NPG-1}) that lower bounds the ascent amount $\Phi(\theta^{(t+1)}) - \Phi(\theta^{(t)})$ for each natural gradient step (we would like to further note that this sufficient ascent lemma cannot be trivially obtained by the smoothness of $\Phi$). Then, we further lower bound the ascent amount in terms of $\NEgap$ (Lemma \ref{lemma:unregularized-NPG-2}). Lastly, the theorem follows by conducting standard telescoping techniques.
\end{remark}
\noindent\textbf{Discussion on $\frac{1}{c}$:} The complexity results in Theorem \ref{thm:unregularized-PG} both depend on $\frac{1}{c}$. However, this term can become arbitrarily large. In fact, \citep{Li2021softmax} show that $c$ can be exponentially small in terms of the number of states $|\cS|$ for a general finite MDP, even under uniform initialization, hence convergence can be very slow. This conclusion is also confirmed by numerical evidence. As pointed out by \citep{Mei20}, even for single agent settings, policy gradient can get stuck at regions with small gradient yet far from being global optimal. Similar or even worse phenomena can be observed for multi-agent MPG, as shown in Figure \ref{fig:numerics-main-text}(a)-(c): even for a single state game ($|\cS| = 1$) with uniform initialization, unregularized gradient based algorithms can still enter regions with a relatively large $\NEgap$ while the gradient norm and $c(\theta)$ are close to zero.

\noindent \textbf{More comparison with learning for single-agent MDP:} 
For gradient play, we have established an iteration complexity of $O\left(\frac{n\max_i|\cA_i|(\phi_{\max} - \phi_{\min})M^2}{(1-\gamma)^4c^2\epsilon^2}\right)$ to find an $\epsilon$-NE, whereas \citep{Mei20} show a complexity of $O\left(\frac{(\phi_{\max} - \phi_{\min})M^2}{(1-\gamma)^4c^2\epsilon}\right)$ to reach an $\epsilon$-global optimum for policy gradient in a single agent MDP. The dependence on  $\frac{1}{\epsilon}$ is better in the single agent case because of the existence of a global optimal policy $\pi^*$ and optimal total reward $V^*$, which justify the definition of optimality gap $\delta_t = V(\theta^{(t)}) - V^*$. This, combined with the non-uniform Łojasiewicz condition which bounds $\delta_t$ by the gradient norm, allows one to use  techniques  from convex smooth analysis to show that $\delta_t$ is on the scale of $\frac{1}{t}$. By contrast, for multi-agent learning, there can be multiple NEs with different values, hence  $\delta_t$ is ill-defined. Further, note that the $\NEgap$ is different from the optimality gap, hence gradient ascent no longer guarantees monotonic decreasing of $\NEgap$ (Figure \ref{fig:numerics-main-text}(a)), and we can only exploit non-convex optimization techniques that yield $O(\frac{1}{\epsilon^2})$ complexities.

For the same reason, the rate of convergence we obtain for natural gradient play is $O\left(\frac{n(\phi_{\max}-\phi_{\min})^2M}{(1-\gamma)^3c\epsilon^2}\right)$, which is worse than the dimension free convergence rate of $O\left(\frac{1}{(1-\gamma)^2\epsilon}\right)$ given in \citep{agarwal2020} for single-agent MDPs.  (A better exponential convergence rate for natural PG has also been proved in \citep{Khodadadian2021, Mei2021}  with the exponential factor being problem dependent.) 
Nevertheless, the dependence on $\frac{1}{c}$, $\frac{1}{1-\gamma}$ and $M$ is better than gradient play, suggesting that the preconditioning of natural gradient play at least partially captures the geometry of the parameter space. We  also note that the quadratic dependence on $(\phi_{\max} - \phi_{\min})$ might be a proof artifact. It remains an open question whether this can be reduced to a linear dependence. 


\section{Gradient play with $\log$-barrier regularization}
The previous section has shown that, for unregularized objectives, the convergence rate for gradient based algorithms  depends on a factor $\frac{1}{c} $ that can be arbitrarily large for bad initializations. This motivates us to investigate  regularization,  in hopes of removing the dependence on $\frac{1}{c}$. For this purpose,  we consider  $\log$-barrier regularization:
\begin{equation*}
    \widetilde{J}_i(\theta) = J_i(\theta) + \lambda \sum_{s,a_i}\log\pi_{\theta_i}(a_i|s).
\end{equation*}
Define:
\begin{equation}\label{eq:widetilde-Phi}
    \widetilde{\Phi}(\theta) = \Phi(\theta) + \lambda\sum_{i=1}^n \sum_{s,a_i}\log\pi_{\theta_i}(a_i|s). 
\end{equation}
It is not hard to verify that the gradient with respect to $J_i$ is:
\begin{align*}
     \frac{\partial \widetilde{J}_i(\theta)}{\partial\theta_{s,a_i}} = \frac{\partial \widetilde{\Phi}(\theta)}{\partial\theta_{s,a_i}} = \frac{1}{1-\gamma}d_\theta(s)\pi_{\theta_i}(a_i|s)\overline{A_i^\theta}(s,a_i) + \lambda - \lambda|\cA_i|\pi_{\theta_i}(a_i|s).
\end{align*}
{\textbf{Discussion on the choice of the regularizer:}} Before analyzing the resulting algorithm we  first discuss the motivation for this regularizer. First,  note  that for each agent, the additional regularizer only depends on an agent's own local policy, which is desirable for multiagent RL. As an alternative, one might impose regularization by choosing 
\begin{equation*}
\textstyle
    \widetilde\Phi(\theta) = \Phi(\theta) + \lambda \bE_{s\sim d_\theta(\cdot)}\sum_{i=1}^n\sum_{a_i}\log\pi_{\theta_i}(a_i|s);
\end{equation*}
i.e., so that the regularization weight imposed on a state $s$ depends on the state visitation probability $d_\theta(s)$. However, in this case the gradient of the $i$-th agent $\nabla_{\theta_i}\widetilde\Phi(\theta)$ will not only depend on its own policy parameter $\theta_i$, but also on other parameters of the other agents' policies $\theta_{-i}$. Thus, running gradient based algorithms with such a regularization scheme can no longer be executed in a fully decentralized manner using local policy information. Therefore, we prefer   regularization \eqref{eq:widetilde-Phi} which does not depend on $d_\theta(s)$.
Secondly, {we adopt the $\log$-barrier instead of entropy regularization due to technical rather than practical considerations. Although entropy regularization achieves fast exponential convergence in single agent learning \citep{cen2021,Mei20}, for multi-agent learning, we haven't been able to obtain results as strong as the $\log$-barrier regularization. Intuitively, the $\log$-barrier regularized gradient field  repels the trajectory from regions with small $\pi_i(a_i|s)$ values (where the geometry becomes close to singular) more strongly, which enables us to obtain our current analysis. However, we emphasize that our result does not imply that log-barrier is better than entropy regularization in practice. It remains future work to determine whether entropy regularization, or other methods such as trust region based methods, can achieve the same, or even better convergence rates.} 

\subsection{Gradient play}
We first consider gradient play algorithm, i.e.,
\begin{equation}\label{eq:log-barrier-PG}
\textstyle
    \theta_i^{(t+1)} = \theta_i^{(t)} + \eta \nabla_{\theta_i}\widetilde{J}_i(\theta^{(t)}).
\end{equation}
{Fortunately, similar analysis from \cite{agarwal2020} for single-agent MDP can be generalized to MPG with slight modifications. Here we only state the result and defer the proof to Appendix \ref{apdx:log-barrier-pg}.}
\begin{theorem}\label{thm:log-barrier-PG}
Under Assumption \ref{assump:like-ergodicity}, for $\eta = \frac{(1-\gamma)^3}{6n + 2\lambda\max_i|\cA_i|(1-\gamma)^3}$, and  $\lambda = \frac{\epsilon}{M\max_i|\cA_i|}$, let $\theta^{(0)}$ be the uniform random policy, i.e., $\theta^{(0)} = \mathbf{0}$, then running  gradient play \eqref{eq:log-barrier-PG} for $T$ steps, where $
    T \gtrsim O\left(\frac{n\max_i|\cA_i|^2 (\phi_{\max} - \phi_{\min})M^2}{(1-\gamma)^4\epsilon^2}\right)$
will guarantee that
$
    \min_{0\le t\le T-1} \NEgap(\theta^{(t)}) \le \epsilon.
$
\end{theorem}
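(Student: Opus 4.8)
The plan is to exploit the fact that, because the game is an MPG and the log-barrier term for agent $i$ depends only on $\theta_i$, we have $\nabla_{\theta_i}\widetilde{J}_i(\theta)=\nabla_{\theta_i}\widetilde{\Phi}(\theta)$, so the $n$ per-agent updates \eqref{eq:log-barrier-PG} assemble into a single joint gradient-ascent step $\theta^{(t+1)}=\theta^{(t)}+\eta\nabla\widetilde{\Phi}(\theta^{(t)})$ on the scalar (non-concave but bounded-above) objective $\widetilde\Phi$. The proof then reduces to the two standard ingredients of smooth non-convex optimization plus a stationarity-to-NE conversion. First I would establish that $\widetilde\Phi$ is $\beta$-smooth with $\beta=\frac{6n}{(1-\gamma)^3}+2\lambda\max_i|\cA_i|$: the $\frac{6n}{(1-\gamma)^3}$ piece is the smoothness of $\Phi$ already implicit in the step-size of Theorem~\ref{thm:asymptotic-convergence}, while the regularizer contributes a block-diagonal Hessian whose $(i,s)$-block is $-\lambda|\cA_i|(\diag(\pi_{\theta_i}(\cdot|s))-\pi_{\theta_i}(\cdot|s)\pi_{\theta_i}(\cdot|s)^\top)$, of spectral norm at most $\lambda|\cA_i|$. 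With $\eta=1/\beta$ (which equals the stated step size) this yields the ascent lemma $\widetilde\Phi(\theta^{(t+1)})-\widetilde\Phi(\theta^{(t)})\ge\frac{1}{2\beta}\|\nabla\widetilde\Phi(\theta^{(t)})\|_2^2$.

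Next I would telescope. Since $\log\pi_{\theta_i}\le 0$ we have $\widetilde\Phi(\theta^{(T)})\le\Phi(\theta^{(T)})\le\frac{\phi_{\max}}{1-\gamma}$, and at the uniform initialization $\widetilde\Phi(\theta^{(0)})\ge\frac{\phi_{\min}}{1-\gamma}-\lambda|\cS|\sum_i|\cA_i|\log|\cA_i|$, so summing the ascent lemma gives $\min_{0\le t\le T-1}\|\nabla\widetilde\Phi(\theta^{(t)})\|_2^2\le\frac{2\beta D}{T}$ with $D=\frac{\phi_{\max}-\phi_{\min}}{1-\gamma}+\lambda|\cS|\sum_i|\cA_i|\log|\cA_i|$. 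Because $\|\nabla_{\theta_i}\widetilde J_i\|_2=\|\nabla_{\theta_i}\widetilde\Phi\|_2\le\|\nabla\widetilde\Phi\|_2$, there is an iterate $t^\star$ at which \emph{every} agent's gradient norm is at most $\epsilon_{opt}:=\sqrt{2\beta D/T}$.

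The technical heart is to show that a small joint gradient forces a small $\NEgap$. Fix $t^\star$ and a state $s$, and let $a_i^+\in\argmax_{a_i}\overline{A_i^\theta}(s,a_i)$, so $\overline{A_i^\theta}(s,a_i^+)\ge 0$ (using $\sum_{a_i}\pi_{\theta_i}(a_i|s)\overline{A_i^\theta}(s,a_i)=0$). Writing the regularizer gradient as $\lambda|\cA_i|(\tfrac{1}{|\cA_i|}-\pi_{\theta_i}(a_i^+|s))$, I would split into two cases according to whether $\pi_{\theta_i}(a_i^+|s)\ge\frac{1}{2|\cA_i|}$. When it is, dividing the per-coordinate bound $|\partial\widetilde J_i/\partial\theta_{s,a_i^+}|\le\epsilon_{opt}$ by $\frac{d_\theta(s)\pi_{\theta_i}(a_i^+|s)}{1-\gamma}$ and using $\epsilon_{opt}\le\lambda/2$ gives $\overline{A_i^\theta}(s,a_i^+)\le 2(1-\gamma)\lambda|\cA_i|/d_\theta(s)$; when $\pi_{\theta_i}(a_i^+|s)<\frac{1}{2|\cA_i|}$ the regularizer term alone exceeds $\epsilon_{opt}$, forcing $\overline{A_i^\theta}(s,a_i^+)\le 0$, hence $=0$. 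In both cases $\max_{a_i}\overline{A_i^\theta}(s,a_i)\le 2(1-\gamma)\lambda|\cA_i|/d_\theta(s)$. Finally I would invoke the performance-difference identity for agent $i$'s induced single-agent MDP (whose advantage is exactly $\overline{A_i^\theta}$), namely $\NEgap_i(\theta)=\frac{1}{1-\gamma}\sum_s d_{(\pi_i^*,\pi_{-i})}(s)\sum_{a_i}\pi_i^*(a_i|s)\overline{A_i^\theta}(s,a_i)$, bound the inner sum by $\max_{a_i}\overline{A_i^\theta}(s,a_i)$, and use $1/d_\theta(s)\le M$ to obtain $\NEgap(\theta^{(t^\star)})\le 2\lambda\max_i|\cA_i|M$.

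It then remains to balance parameters: choosing $\lambda$ of order $\epsilon/(M\max_i|\cA_i|)$ makes the NE-gap bound $O(\epsilon)$, while the constraint $\epsilon_{opt}\le\lambda/2$ combined with $\epsilon_{opt}^2\le 2\beta D/T$ forces $T\gtrsim 8\beta D/\lambda^2$; substituting $\beta\lesssim\frac{n}{(1-\gamma)^3}$ and $D\lesssim\frac{\phi_{\max}-\phi_{\min}}{1-\gamma}$ (the regularization offset being lower order) yields exactly $T\gtrsim O\big(\frac{n\max_i|\cA_i|^2(\phi_{\max}-\phi_{\min})M^2}{(1-\gamma)^4\epsilon^2}\big)$. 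I expect the main obstacle to be the stationarity-to-NE step: unlike the single-agent case one cannot use an optimality gap, so one must argue per-agent through the averaged advantage $\overline{A_i^\theta}$ and the induced-MDP performance-difference lemma, and the case analysis must be tuned so the admissible threshold scales as $\epsilon_{opt}\propto\lambda$ rather than $\lambda^2$ — this is precisely what keeps the final complexity at $\epsilon^{-2}$ instead of $\epsilon^{-4}$. Verifying the $\beta$-smoothness of $\Phi$ with the factor $6n$ in the genuinely multi-agent (joint-parameter) sense is the other place requiring care beyond the single-agent argument of \citep{agarwal2020}.
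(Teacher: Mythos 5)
Your proposal is correct and follows essentially the same route as the paper: $\beta$-smoothness of $\widetilde\Phi$ with $\beta=\frac{6n}{(1-\gamma)^3}+2\lambda\max_i|\cA_i|$ and step size $\eta=1/\beta$, the ascent-plus-telescoping argument driving $\min_{t}\|\nabla\widetilde\Phi(\theta^{(t)})\|$ below $\lambda$, and a conversion lemma turning a small regularized gradient into $\NEgap_i\le O(\lambda|\cA_i|M)$ via the averaged advantage and the performance-difference identity, followed by the same choice $\lambda\propto\epsilon/(M\max_i|\cA_i|)$. The only cosmetic difference is in that conversion step: the paper's Lemma \ref{lemma:log-barrier-pg} avoids your two-case analysis on $\pi_{\theta_i}(a_i^+|s)$ by writing the coordinate gradient as $\pi_{\theta_i}(a_i|s)\bigl(\tfrac{1}{1-\gamma}d_\theta(s)\overline{A_i^\theta}(s,a_i)-\lambda|\cA_i|\bigr)+\lambda$, so that bounding it by $\lambda$ and dividing by the strictly positive softmax probability immediately yields $\overline{A_i^\theta}(s,a_i)\le\lambda|\cA_i|(1-\gamma)M$ for every action, giving the same final complexity up to constants.
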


Note that compared to the unregularized case in~Theorem \ref{thm:unregularized-PG}, it only requires Assumption~\ref{assump:like-ergodicity}, while the convergence rate is accelerated by eliminating the dependence on $\frac{1}{c}$. However, as a (worthy) tradeoff, the dependence on the action space size $\max_i|\cA_i|$ now becomes quadratic. The key reason for these differences is that $\log$-barrier regularization assures that any policy with sufficiently small gradient norm cannot be close to the boundary of the probability simplex where the non-uniform Łojasiewicz constant is large. 
\subsection{Natural gradient play}
In the unregularized setting, we have seen that natural gradient play enjoys a better convergence rate than gradient play, which motivates us to consider whether a similar advantage still holds for the regularized case. In this section we consider natural gradient play
\begin{equation}\label{eq:log-barrier-NPG}
\textstyle
    \theta_i^{(t+1)} = \theta_i^{(t)}+\eta F_i(\theta^{(t)})^\dagger\nabla_{
    \theta_i}\widetilde{J}_i(\theta_i^{(t)}),
\end{equation}
which is equivalent to (see the proof in Appendix \ref{apdx:NPG-derivation})
\begin{equation}\label{eq:log-barrier-NPG-2}
    \pi_i^{(t+1)}(a_i|s)\propto \pi_i^{(t)}(a_i|s)\exp\left(\frac{\eta}{1-\gamma}\overline{A_i^{(t)}}(s,a_i)+\frac{\eta\lambda}{d^{(t)}(s)\pi_i^{(t)}(a_i|s)} - \frac{\eta\lambda|\cA_i|}{d^{(t)}(s)}\right).
\end{equation}
\begin{theorem}\label{thm:log-barrier-NPG} (Proof given in Appendix \ref{apdx:log-barrier-NPG})
Under Assumption \ref{assump:like-ergodicity}, for \\$\eta\! =\! \min\!\left\{\!\frac{1}{15\left(\frac{1}{(1\!-\!\gamma)^2} + \lambda|\cA_i|M \right)},\frac{1}{4\left(4\lambda\max_i|\cA_i|M^2 + \frac{4M}{(1-\gamma)^2} + \frac{3nM}{(1-\gamma)^3}\right)}\right\}$, the natural gradient play \eqref{eq:log-barrier-NPG-2} will guarantee that
$   \frac{\sum_{t=0}^{T-1}\NEgap(\theta^{(t)})}{T}\le \frac{9\left(\widetilde{\Phi}(\theta^{(T)}) - \widetilde{\Phi}(\theta^{(0)})\right)}{\eta\lambda T} + \lambda \max_i|\cA_i|M,
$
 Further, by setting $\lambda = \frac{\epsilon}{2\max_i|\cA_i|M}$, $\theta^{(0)} = \mathbf{0}$,  for $
    T\gtrsim O\left(\frac{n\max_i|\cA_i|(\phi_{\max}-\phi_{\min})M^2}{(1-\gamma)^4\epsilon^2}\right)$,
we have
$    \frac{\sum_{t=0}^{T-1}\NEgap(\theta^{(t)})}{T}\le\epsilon.
$
\end{theorem}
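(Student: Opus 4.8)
The plan is to follow the same three-stage template used for unregularized natural gradient play in Theorem~\ref{thm:unregularized-PG} (sufficient ascent, then lower-bounding the ascent by the $\NEgap$, then telescoping), but carried out for the regularized potential $\widetilde\Phi$ of~\eqref{eq:widetilde-Phi}. The entire theorem reduces to the per-step inequality
\begin{equation*}
\widetilde\Phi(\theta^{(t+1)}) - \widetilde\Phi(\theta^{(t)}) \ge \frac{\eta\lambda}{9}\left(\NEgap(\theta^{(t)}) - \lambda\max_i|\cA_i|M\right),
\end{equation*}
after which summing over $t=0,\dots,T-1$, telescoping the left-hand side to $\widetilde\Phi(\theta^{(T)}) - \widetilde\Phi(\theta^{(0)})$, and dividing by $T$ yields the first displayed bound. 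The second bound is then bookkeeping: choosing $\lambda = \frac{\epsilon}{2\max_i|\cA_i|M}$ makes the bias term $\lambda\max_i|\cA_i|M$ exactly $\epsilon/2$, and at the uniform start one bounds $\widetilde\Phi(\theta^{(T)}) \le \frac{\phi_{\max}}{1-\gamma}$ (using $\log\pi\le 0$) against $\widetilde\Phi(\theta^{(0)}) \ge \frac{\phi_{\min}}{1-\gamma} - \lambda n|\cS|\max_i|\cA_i|\log\max_i|\cA_i|$; substituting the chosen $\eta$ (whose dominant scale is $\frac{(1-\gamma)^3}{nM}$) then produces $T\gtrsim O\big(\frac{n\max_i|\cA_i|(\phi_{\max}-\phi_{\min})M^2}{(1-\gamma)^4\epsilon^2}\big)$.

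First I would establish a \emph{sufficient-ascent lemma} for $\widetilde\Phi$, the regularized analogue of the ascent lemma behind Theorem~\ref{thm:unregularized-PG} (discussed in Remark~\ref{rem:proof-unregularized}). The update~\eqref{eq:log-barrier-NPG-2} is a KL-mirror-ascent step in each $\pi_i(\cdot|s)$, so I would combine the first-order optimality of the step with a smoothness estimate of $\widetilde\Phi$ to show each iteration strictly increases $\widetilde\Phi$ and to lower-bound the increase by a weighted sum of the per-state movements (e.g.\ KL divergences) of the update. The smoothness constant of $\widetilde\Phi$ has two pieces, the value-function part at scale $\tfrac{n}{(1-\gamma)^3}$ and the log-barrier part at scale $\lambda\max_i|\cA_i|$, and the two-term $\min$ defining the admissible $\eta$ is precisely what forces the quadratic smoothness penalty to be dominated by the first-order gain. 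I expect this step to parallel the unregularized case, with the extra log-barrier terms handled by the same per-coordinate estimates used to differentiate $\widetilde\Phi$.

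The main obstacle is the second step: lower-bounding the per-step ascent by the $\NEgap$. Using the policy-gradient/performance-difference identity together with~\eqref{eq:gradient-formula}, agent $i$'s gap is $\NEgap_i(\theta^{(t)}) = \frac{1}{1-\gamma}\sup_{\pi_i'}\sum_s d_{(\pi_i',\pi_{-i}^{(t)})}(s)\sum_{a_i}\pi_i'(a_i|s)\overline{A_i^{(t)}}(s,a_i)$, whose supremum is attained at a best-response policy carrying its \emph{own} visitation distribution. As flagged in Remark~\ref{rem:proof-unregularized} for the unregularized NPG, there is no global optimum to anchor a mirror-descent potential, so the single-agent argument does not transfer; instead I would run a three-point/mirror comparison of $\pi_i^{(t+1)}$ against the best-response policy and convert the best-response visitation distribution into the on-trajectory $d^{(t)}$ at the cost of a factor $M$. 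The log-barrier is essential here: it keeps every $\pi_i^{(t)}(a_i|s)$ bounded away from the simplex boundary at a scale governed by $\lambda$, which both controls the movement terms in the comparison and supplies the useful surplus. Tracking the residual of this comparison is what I expect to produce the additive bias $\lambda\max_i|\cA_i|M$ (the cost of softening the unregularized, possibly deterministic, best response) together with the factor $\lambda$ multiplying the gap (morally, the log-barrier gradient contributes a repelling force of order $\lambda$ to the ascent), giving exactly the target per-step inequality. Getting this $\eta\lambda$ scaling and the sharp bias constant correct is the delicate, genuinely new part of the analysis.

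Finally, the telescoping and parameter-tuning step is routine once the per-step inequality is in hand. The only subtlety is that the initialization-dependent contribution $\lambda n|\cS|\max_i|\cA_i|\log\max_i|\cA_i|$ inside $\widetilde\Phi(\theta^{(0)})$ is harmless: after division by $\eta\lambda T$ its factor $\lambda$ cancels, so it decays like $\frac{1}{\eta T}$ and affects only lower-order-in-$\tfrac1\epsilon$ terms of the iteration count, while the dominant contribution comes from $\frac{\phi_{\max}-\phi_{\min}}{1-\gamma}$, consistent with the stated rate. The crux of the whole argument, and the step I expect to require estimates beyond the single-agent literature, remains the gap-to-ascent bound of the preceding paragraph.
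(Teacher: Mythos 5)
Your high-level architecture is the right one, and in fact the per-step inequality you reduce everything to, $\widetilde\Phi(\theta^{(t+1)})-\widetilde\Phi(\theta^{(t)})\ge\frac{\eta\lambda}{9}\bigl(\NEgap(\theta^{(t)})-\lambda\max_i|\cA_i|M\bigr)$, is exactly what the paper proves (as the composition of Lemmas \ref{lemma:NPG-log-barrier-sufficient-ascent}, \ref{lemma:NPG-log-barrier-Delta-to-f} and \ref{lemma:NPG-log-barrier-f-to-NEgap}), and your telescoping/tuning step is the paper's. But both technical mechanisms you propose for establishing that inequality differ from the paper's, and each has a concrete problem. For the sufficient-ascent step, a smoothness bound on $\widetilde\Phi$ in $\theta$ does not work for the \emph{natural} gradient update: the increment $F_i^\dagger\nabla\widetilde J_i$ has squared norm $\sum f^2$ while the first-order gain is $\sum d\pi f^2$, so dominating the quadratic penalty forces $\eta\lesssim\min_{s,a_i}d^{(t)}(s)\pi_i^{(t)}(a_i|s)/\beta=O(\lambda)$, which degrades the final rate to $O(1/\epsilon^3)$ (the paper explicitly notes in Remark~\ref{rem:proof-unregularized} that the ascent lemma ``cannot be trivially obtained by the smoothness of $\Phi$''). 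The paper instead proves Lemma \ref{lemma:NPG-log-barrier-sufficient-ascent} by a performance-difference decomposition along intermediate policies $\widetilde\theta^{i,(t)}$ into four parts, controlled by an inductively-proven uniform lower bound $\pi_i^{(t)}(a_i|s)\ge\frac{\lambda}{4(\lambda|\cA_i|M+(1-\gamma)^{-2})}$ (Lemma \ref{lemma:NPG-bounded-probability}) — this lower bound is where the log-barrier really enters, not in a mirror-map argument.

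For the gap-to-ascent step, your three-point/mirror comparison against the best response runs into the changing-comparator obstruction you yourself flag earlier: without a fixed comparator the KL terms do not telescope, and the per-step residual $\mathrm{KL}(\pi^{\mathrm{BR}}\Vert\pi^{(t)})$ is of order $\log(1/\lambda)$, not of order $\lambda$, so it cannot produce the additive bias $\lambda\max_i|\cA_i|M$ with the $\eta\lambda$ prefactor. The paper avoids any comparison with a best-response trajectory: it first bounds $\NEgap_i(\theta)\le\frac{1}{1-\gamma}\max_{s,a_i}\overline{A_i^\theta}(s,a_i)$ (Lemma \ref{lemma:NE-gap-bound}, which disposes of the best-response visitation distribution in one line), and then uses a purely pointwise AM--GM identity on the regularized gradient field $f_i^{(t)}=\frac{1}{1-\gamma}\overline{A_i^{(t)}}+\frac{\lambda}{d^{(t)}\pi_i^{(t)}}-\frac{\lambda|\cA_i|}{d^{(t)}}$, namely $d\pi f^2\ge 4\lambda\bigl(\frac{1}{1-\gamma}\overline{A}-\frac{\lambda|\cA_i|}{d}\bigr)$, where the $\frac{\lambda^2}{d\pi}$ cross term is precisely the ``surplus'' you anticipated and the factor $M$ enters only through $\frac{1}{d^{(t)}(s)}\le M$. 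This elementary algebraic step is the actual crux, and it is absent from your plan.
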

\begin{remark}(\textbf{Proof sketch and novelty})\label{rem:proof-regularized} As also stated for unregularized natural gradient play, there's no direct analysis tools we could borrow from literature for the analysis of natural gradient play. Our analysis depends on two key lemmas. The first is a sufficient ascent lemma on $\widetilde\Phi(\theta^{(t+1)}) - \widetilde\Phi(\theta^{(t)})$ for each natural gradient step (Lemma \ref{lemma:NPG-log-barrier-sufficient-ascent}). Another key lemma (Lemma \ref{lemma:NPG-bounded-probability}) states that the algorithm implicitly ensures that the policies never go near the boundary of the probability simplex, i.e., it can be uniformly lower-bounded by $\pi_i^{(t)}(a_i|s)\!\ge\!\frac{\lambda}{4\left(\lambda |\cA_i|M + \frac{1}{(1-\gamma)^2}\right)}, ~\forall t$. Combining the two lemmas, it can be concluded that the ascent value $\widetilde\Phi(\theta^{(t+1)}) - \widetilde\Phi(\theta^{(t)})$ can be bounded by $\NEgap(\theta^{(t)})$ plus a $\lambda \max_i|\cA_i|M$ bias term (Lemma \ref{lemma:NPG-log-barrier-Delta-to-f} and \ref{lemma:NPG-log-barrier-f-to-NEgap}), thus the proof is finished by standard telescoping technique and choosing an appropriate $\lambda$.
\end{remark}

Compared with gradient play, natural gradient play manages to reduce the time complexity by a $\max_i|\cA_i|$ factor. Further, gradient play only guarantees the minimal NE-gap smaller than $\epsilon$, while natural gradient play guarantees the average NE-gap along the trajectory smaller than $\epsilon$. To the best of our knowledge, this is the best time complexity bound for the softmax parameterization in a MPG.

\section{An Illustrative example}\label{sec:numerics}
\begin{table*}[t]
\centering
\begin{minipage}{.3\linewidth}
\centering
\begin{tabular}{|c||c|c|}
\hline
 & \!\!$a_2 \!= \!1$\!\! & \!\!$a_2 \!= \!2$\!\!\\
\hline\hline
\!\!$a_1 \!= \!1$\!\! & -1& 0.14\\
\hline
\!\!$a_1 \!=\! 2$\!\! &0.16& 0.15\\
\hline
\!\!$a_1 \!=\! 3$\!\! &0.2& -1\\
\hline
\end{tabular}
\vspace{10pt}
{Reward table}
    \label{tab:reward_table}
\end{minipage}
\begin{minipage}{.28\linewidth}
\centering
\includegraphics[width = \linewidth]{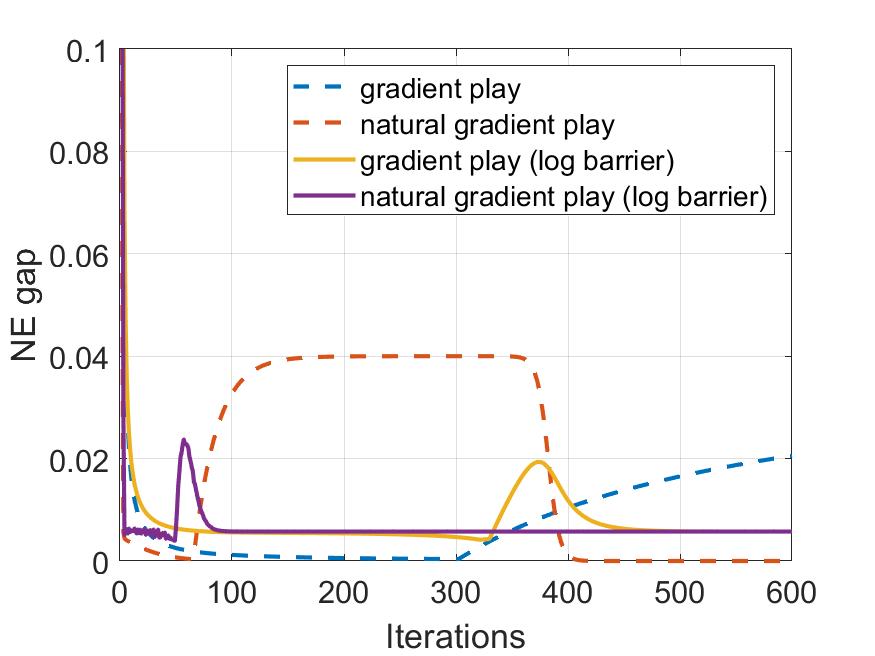}\\
\vspace{-5pt}
{\small(a)}
\end{minipage}
\begin{minipage}{.28\linewidth}
\centering
\includegraphics[width = \linewidth]{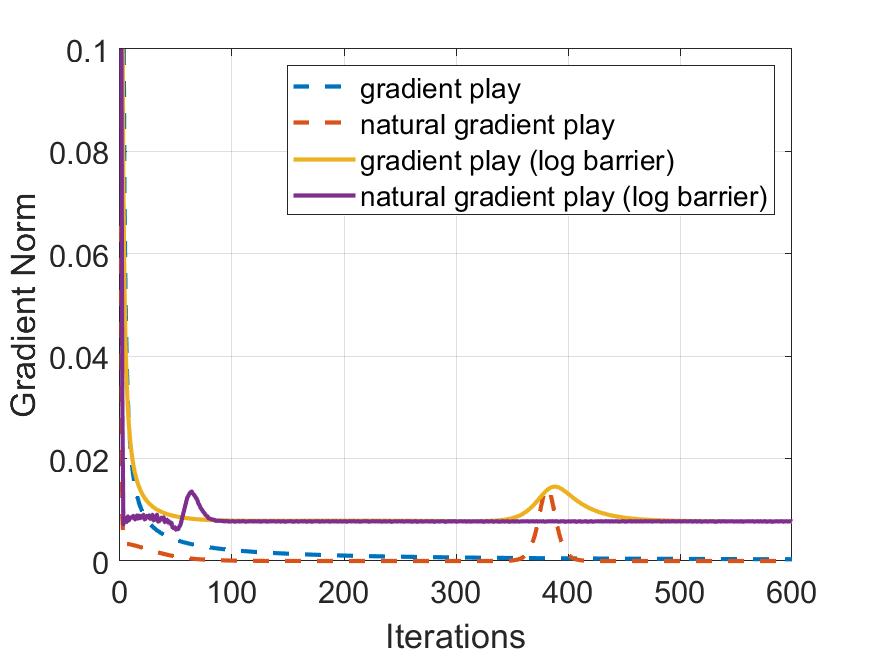}\\
\vspace{-5pt}
{\small(b)}
\end{minipage}\\
\begin{minipage}{.52\linewidth}
\centering
\includegraphics[width = \linewidth]{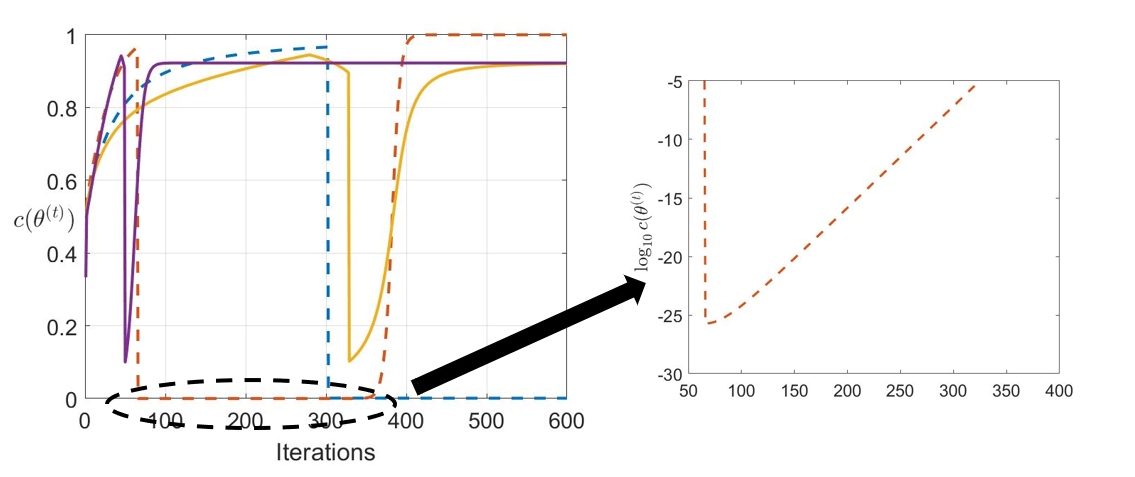}\\
\vspace{-5pt}
{\small(c)}
\end{minipage}
\hfill
\begin{minipage}{.47\linewidth}
\centering
\includegraphics[width = \linewidth]{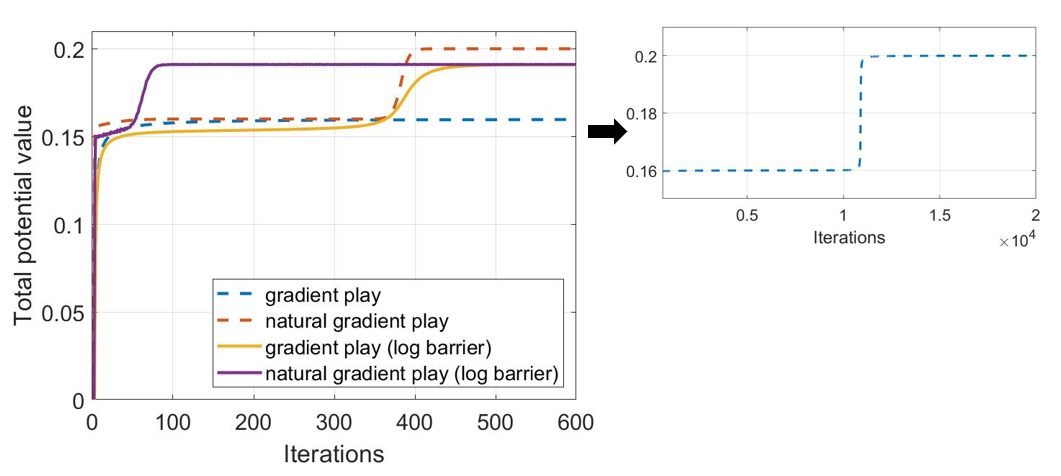}\\
\vspace{-8pt}
{\small(d)}
\end{minipage}
\captionof{figure}{We consider a two-player identical reward matrix game as shown in the reward table. We run gradient play and natural gradient play (with and without $\log$-barrier regularization) with initial policies being the uniform distribution (i.e., $\pi_1 = [\frac{1}{3},\frac{1}{3},\frac{1}{3}]$, $\pi_2 = [\frac{1}{2},\frac{1}{2}]$). The subfigures (a)-(d) show how the $\NEgap(\theta^{(t)})$, $\|\nabla_\theta\Phi(\theta^{(t)})\|_2$, $c(\theta^{(t)})$ (defined in \eqref{eq:c-theta}) and $\Phi(\theta^{(t)})$ change with each iteration respectively. In Figure (c), we zoom in on the $\log_{10}c(\theta)$ factor for natural gradient play. In Figure (d), we also zoom out the trajectory for running gradient play to iteration $2\times 10^4$. Here the step sizes were chosen to be $\eta = 5$ while the regularization weight $\lambda$ was chosen to be $\lambda = 0.003$.  In consideration of numerical stability issues, we truncate the update step of natural gradient play with $\log$-barrier regularization by a maximum absolute value of $1$ for each entry. For more numerical results and corresponding analysis see Appendix \ref{apdx:Numerics}.}
\label{fig:numerics-main-text}
\vspace{-13pt}
\end{table*}

This section aims to gain a better understanding of the four gradient play algorithms, \eqref{eq:unregularized-PG}, \eqref{eq:unregularized-NPG-2}, \eqref{eq:log-barrier-PG}, and \eqref{eq:log-barrier-NPG-2}. To better justify our theoretical results and provide additional insights, we choose a carefully designed simple two-player game so that our theoretical results can be easily revealed from the empirical observations. However the four algorithms also works for settings with more agents. \footnote{Code can be found in \url{https://github.com/DianYu420376/NeurIPS2022-softmax-MPG}} Due to space limits, we defer the simulation with more agents in Appendix \ref{apdx:Numerics}.

The reward table as well as the performance of the four algorithms are shown in Figure \ref{fig:numerics-main-text}. Comparing the $\log$-barrier regularized algorithms to the unregularized counterparts, one can see that the regularized dynamics converge faster but with a bias induced by the regularizer.
This finding  corroborates the analyses given in Theorem \ref{thm:log-barrier-PG} and \ref{thm:log-barrier-NPG}. By contrast, the unregularized dynamics are able to find a policy with zero $\NEgap$ asymptotically, but tend to get stuck in regions where $c(\theta^{(t)})$ is very close to zero, as illustrated in Fig~\ref{fig:numerics-main-text}(a)(b). Specifically unregularized natural gradient play  gets stuck around iteration 100-400 in a region where the gradient norm and $c(\theta^{(t)})$ are both close to zero while the $\NEgap$ is not. This corroborates the finding in Lemma \ref{lemma:non-uniform-gradient-domination}. Similar behavior can be observed for gradient play if we keep running the algorithm. 
In comparing the natural gradient play to gradient play algorithms, natural gradient play generally converges faster, which matches with our complexity analysis. However, natural gradient play with $\log$-barrier regularization can suffer from numerical instability  due to the $1/{\pi_i^{(t)}(a_i|s)}$ term in the exponential factor. In this case, the stepsize needs to be chosen carefully. To bypass the numerical instability, we truncate the update step of natural gradient play with $\log$-barrier regularization by a maximum absolute value of $1$ for each entry.
 \section{Discussions and conclusions}\label{sec:con}
We have established finite time iteration complexity bounds for gradient and natural gradient play under the softmax parameterization, considering both unregularized and $\log$-barrier regularized dynamics, in the Markov potential game setting. 
To our best knowledge, these are the first finite time global convergence results for softmax gradient play for MPGs. However, our work suffers from the following limitations: firstly, the paper mainly focuses on MPG settings, which limits its application to general-sum Markov games; secondly, convergence results for the unregularized case relies on an extra assumption that the stationary points are isolated; thirdly, for the regularized case, we consider $\log$-barrier regularization, which is admittedly a stronger regularization compared with entropy regularization which is more frequently used in practice. Some limitations are due to technical challenges, some might be caused by the fundamental difficulties of multi-agent learning. It remains interesting open questions to sharpen the analysis, derive similar or better bounds for other regularizations, and to develop more fundamental understandings of multi-agent learning.
\section*{Acknowledgment}
Runyu Zhang would like to thank Shicong Cen for enlightening discussions. Runyu Zhang is supported by NSF AI institute: 2112085, ONR YIP: N00014-19-1-2217, NSF CNS: 2003111 and NSF CPS: 2038603.



\bibliographystyle{abbrvnat}
\bibliography{bib.bib}

\begin{thebibliography}{39}
\providecommand{\natexlab}[1]{#1}
\providecommand{\url}[1]{\texttt{#1}}
\expandafter\ifx\csname urlstyle\endcsname\relax
  \providecommand{\doi}[1]{doi: #1}\else
  \providecommand{\doi}{doi: \begingroup \urlstyle{rm}\Url}\fi

\bibitem[Absil et~al.(2005)Absil, Mahony, and Andrews]{absil2005}
P.-A. Absil, R.~Mahony, and B.~Andrews.
\newblock Convergence of the iterates of descent methods for analytic cost
  functions.
\newblock \emph{SIAM Journal on Optimization}, 16\penalty0 (2):\penalty0
  531--547, 2005.

\bibitem[Agarwal et~al.(2020)Agarwal, Kakade, Lee, and Mahajan]{agarwal2020}
A.~Agarwal, S.~M. Kakade, J.~D. Lee, and G.~Mahajan.
\newblock On the theory of policy gradient methods: Optimality, approximation,
  and distribution shift, 2020.

\bibitem[Amari(2012)]{amari2012differential}
S.-i. Amari.
\newblock \emph{Differential-geometrical methods in statistics}, volume~28.
\newblock Springer Science \& Business Media, 2012.

\bibitem[Bai and Jin(2020)]{bai2020}
Y.~Bai and C.~Jin.
\newblock Provable self-play algorithms for competitive reinforcement learning.
\newblock In \emph{International Conference on Machine Learning}, pages
  551--560. PMLR, 2020.

\bibitem[Bowling and Veloso(2000)]{Bowling00}
M.~Bowling and M.~Veloso.
\newblock An analysis of stochastic game theory for multiagent reinforcement
  learning.
\newblock Technical report, Carnegie-Mellon Univ Pittsburgh Pa School of
  Computer Science, 2000.

\bibitem[Bu{\c{s}}oniu et~al.(2010)Bu{\c{s}}oniu, Babu{\v{s}}ka, and
  De~Schutter]{Bucsoniu10}
L.~Bu{\c{s}}oniu, R.~Babu{\v{s}}ka, and B.~De~Schutter.
\newblock Multi-agent reinforcement learning: An overview.
\newblock \emph{Innovations in multi-agent systems and applications-1}, pages
  183--221, 2010.

\bibitem[Cen et~al.(2021)Cen, Cheng, Chen, Wei, and Chi]{cen2021}
S.~Cen, C.~Cheng, Y.~Chen, Y.~Wei, and Y.~Chi.
\newblock Fast global convergence of natural policy gradient methods with
  entropy regularization.
\newblock \emph{Operations Research}, 2021.

\bibitem[Claes et~al.(2011)Claes, Holvoet, and Weyns]{claes2011}
R.~Claes, T.~Holvoet, and D.~Weyns.
\newblock A decentralized approach for anticipatory vehicle routing using
  delegate multiagent systems.
\newblock \emph{IEEE Transactions on Intelligent Transportation Systems},
  12\penalty0 (2):\penalty0 364--373, 2011.

\bibitem[Daskalakis et~al.(2021)Daskalakis, Foster, and
  Golowich]{daskalakis2021}
C.~Daskalakis, D.~J. Foster, and N.~Golowich.
\newblock Independent policy gradient methods for competitive reinforcement
  learning.
\newblock \emph{arXiv preprint arXiv:2101.04233}, 2021.

\bibitem[Dechert and O’Donnell(2006)]{Dechert06}
W.~D. Dechert and S.~O’Donnell.
\newblock The stochastic lake game: A numerical solution.
\newblock \emph{Journal of Economic Dynamics and Control}, 30\penalty0
  (9-10):\penalty0 1569--1587, 2006.

\bibitem[Fox et~al.(2021)Fox, McAleer, Overman, and Panageas]{Fox21}
R.~Fox, S.~McAleer, W.~Overman, and I.~Panageas.
\newblock Independent natural policy gradient always converges in markov
  potential games.
\newblock \emph{CoRR}, abs/2110.10614, 2021.

\bibitem[Gonz{\'a}lez-S{\'a}nchez and Hern{\'a}ndez-Lerma(2013)]{Gonzalez13}
D.~Gonz{\'a}lez-S{\'a}nchez and O.~Hern{\'a}ndez-Lerma.
\newblock \emph{Discrete--time stochastic control and dynamic potential games:
  the Euler--Equation approach}.
\newblock Springer Science \& Business Media, 2013.

\bibitem[Haarnoja et~al.(2017)Haarnoja, Tang, Abbeel, and Levine]{haarnoja2017}
T.~Haarnoja, H.~Tang, P.~Abbeel, and S.~Levine.
\newblock Reinforcement learning with deep energy-based policies.
\newblock In \emph{International Conference on Machine Learning}, pages
  1352--1361. PMLR, 2017.

\bibitem[I{\~n}igo-Blasco et~al.(2012)I{\~n}igo-Blasco, Diaz-del Rio,
  Romero-Ternero, Cagigas-Mu{\~n}iz, and Vicente-Diaz]{inigo2012}
P.~I{\~n}igo-Blasco, F.~Diaz-del Rio, M.~C. Romero-Ternero,
  D.~Cagigas-Mu{\~n}iz, and S.~Vicente-Diaz.
\newblock Robotics software frameworks for multi-agent robotic systems
  development.
\newblock \emph{Robotics and Autonomous Systems}, 60\penalty0 (6):\penalty0
  803--821, 2012.

\bibitem[Kakade and Langford(2002)]{Kakade02}
S.~M. Kakade and J.~Langford.
\newblock Approximately optimal approximate reinforcement learning.
\newblock In C.~Sammut and A.~G. Hoffmann, editors, \emph{Machine Learning,
  Proceedings of the Nineteenth International Conference {(ICML} 2002),
  University of New South Wales, Sydney, Australia, July 8-12, 2002}, pages
  267--274. Morgan Kaufmann, 2002.

\bibitem[Khodadadian et~al.(2021)Khodadadian, Jhunjhunwala, Varma, and
  Maguluri]{Khodadadian2021}
S.~Khodadadian, P.~R. Jhunjhunwala, S.~M. Varma, and S.~T. Maguluri.
\newblock On the linear convergence of natural policy gradient algorithm.
\newblock \emph{arXiv preprint arXiv:2105.01424}, 2021.

\bibitem[Lanctot et~al.(2017)Lanctot, Zambaldi, Gruslys, Lazaridou, Tuyls,
  P{\'e}rolat, Silver, and Graepel]{Lanctot17}
M.~Lanctot, V.~Zambaldi, A.~Gruslys, A.~Lazaridou, K.~Tuyls, J.~P{\'e}rolat,
  D.~Silver, and T.~Graepel.
\newblock A unified game-theoretic approach to multiagent reinforcement
  learning.
\newblock \emph{arXiv preprint arXiv:1711.00832}, 2017.

\bibitem[Leonardos et~al.(2021)Leonardos, Overman, Panageas, and
  Piliouras]{leonardos21}
S.~Leonardos, W.~Overman, I.~Panageas, and G.~Piliouras.
\newblock Global convergence of multi-agent policy gradient in markov potential
  games.
\newblock \emph{arXiv preprint arXiv:2106.01969}, 2021.

\bibitem[Levhari and Mirman(1980)]{Levhari80}
D.~Levhari and L.~Mirman.
\newblock The great fish war: An example using a dynamic cournot-nash solution.
\newblock \emph{Bell Journal of Economics}, 11\penalty0 (1):\penalty0 322--334,
  1980.

\bibitem[Li et~al.(2021)Li, Wei, Chi, Gu, and Chen]{Li2021softmax}
G.~Li, Y.~Wei, Y.~Chi, Y.~Gu, and Y.~Chen.
\newblock Softmax policy gradient methods can take exponential time to
  converge.
\newblock \emph{CoRR}, abs/2102.11270, 2021.

\bibitem[Littman(1994)]{Littman94}
M.~L. Littman.
\newblock Markov games as a framework for multi-agent reinforcement learning.
\newblock In \emph{Machine learning proceedings 1994}, pages 157--163.
  Elsevier, 1994.

\bibitem[Liu and Wu(2018)]{liu2018}
J.~Liu and J.~Wu.
\newblock \emph{Multiagent robotic systems}.
\newblock CRC press, 2018.

\bibitem[Macua et~al.(2018)Macua, Zazo, and Zazo]{Macua18}
S.~V. Macua, J.~Zazo, and S.~Zazo.
\newblock Learning parametric closed-loop policies for markov potential games.
\newblock \emph{CoRR}, abs/1802.00899, 2018.

\bibitem[Mei et~al.(2020)Mei, Xiao, Szepesvari, and Schuurmans]{Mei20}
J.~Mei, C.~Xiao, C.~Szepesvari, and D.~Schuurmans.
\newblock On the global convergence rates of softmax policy gradient methods.
\newblock In H.~D. III and A.~Singh, editors, \emph{Proceedings of the 37th
  International Conference on Machine Learning}, volume 119 of
  \emph{Proceedings of Machine Learning Research}, pages 6820--6829. PMLR,
  13--18 Jul 2020.

\bibitem[Mei et~al.(2021)Mei, Dai, Xiao, Szepesvari, and Schuurmans]{Mei2021}
J.~Mei, B.~Dai, C.~Xiao, C.~Szepesvari, and D.~Schuurmans.
\newblock Understanding the effect of stochasticity in policy optimization.
\newblock \emph{Advances in Neural Information Processing Systems}, 34, 2021.

\bibitem[Mguni(2020)]{mguni20}
D.~Mguni.
\newblock Stochastic potential games.
\newblock \emph{arXiv preprint arXiv:2005.13527}, 2020.

\bibitem[Mguni et~al.(2021)Mguni, Wu, Du, Yang, Wang, Li, Wen, Jennings, and
  Wang]{mguni2021learning}
D.~Mguni, Y.~Wu, Y.~Du, Y.~Yang, Z.~Wang, M.~Li, Y.~Wen, J.~Jennings, and
  J.~Wang.
\newblock Learning in nonzero-sum stochastic games with potentials.
\newblock \emph{arXiv preprint arXiv:2103.09284}, 2021.

\bibitem[Monderer and Shapley(1996)]{Monderer96}
D.~Monderer and L.~S. Shapley.
\newblock Potential games.
\newblock \emph{Games and economic behavior}, 14\penalty0 (1):\penalty0
  124--143, 1996.

\bibitem[Rao(1992)]{rao1992information}
C.~R. Rao.
\newblock Information and the accuracy attainable in the estimation of
  statistical parameters.
\newblock In \emph{Breakthroughs in statistics}, pages 235--247. Springer,
  1992.

\bibitem[Roscia et~al.(2013)Roscia, Longo, and Lazaroiu]{roscia2013}
M.~Roscia, M.~Longo, and G.~C. Lazaroiu.
\newblock Smart city by multi-agent systems.
\newblock In \emph{2013 International Conference on Renewable Energy Research
  and Applications (ICRERA)}, pages 371--376. IEEE, 2013.

\bibitem[Shapley(1953)]{shapley53}
L.~S. Shapley.
\newblock Stochastic games.
\newblock \emph{Proceedings of the national academy of sciences}, 39\penalty0
  (10):\penalty0 1095--1100, 1953.

\bibitem[Shoham et~al.(2003)Shoham, Powers, and Grenager]{Shoham03}
Y.~Shoham, R.~Powers, and T.~Grenager.
\newblock Multi-agent reinforcement learning: a critical survey.
\newblock Technical report, Technical report, Stanford University, 2003.

\bibitem[Song et~al.(2021)Song, Mei, and Bai]{song2021}
Z.~Song, S.~Mei, and Y.~Bai.
\newblock When can we learn general-sum markov games with a large number of
  players sample-efficiently?, 2021.

\bibitem[Sutton et~al.(1999)Sutton, McAllester, Singh, Mansour,
  et~al.]{Sutton1999}
R.~S. Sutton, D.~A. McAllester, S.~P. Singh, Y.~Mansour, et~al.
\newblock Policy gradient methods for reinforcement learning with function
  approximation.
\newblock In \emph{NIPs}, volume~99, pages 1057--1063. Citeseer, 1999.

\bibitem[Tao et~al.(2001)Tao, Baxter, and Weaver]{tao2001}
N.~Tao, J.~Baxter, and L.~Weaver.
\newblock A multi-agent, policy-gradient approach to network routing.
\newblock In \emph{In: Proc. of the 18th Int. Conf. on Machine Learning}.
  Citeseer, 2001.

\bibitem[Ventre et~al.(2013)Ventre, Maturo, Ho{\v{s}}kov{\'a}-Mayerov{\'a}, and
  Kacprzyk]{ventre2013}
A.~G. Ventre, A.~Maturo, {\v{S}}.~Ho{\v{s}}kov{\'a}-Mayerov{\'a}, and
  J.~Kacprzyk.
\newblock \emph{Multicriteria and Multiagent Decision Making with Applications
  to Economics and Social Sciences}, volume 305.
\newblock Springer, 2013.

\bibitem[Zazo et~al.(2016)Zazo, Macua, S{\'a}nchez-Fern{\'a}ndez, and
  Zazo]{zazo16}
S.~Zazo, S.~V. Macua, M.~S{\'a}nchez-Fern{\'a}ndez, and J.~Zazo.
\newblock Dynamic potential games with constraints: Fundamentals and
  applications in communications.
\newblock \emph{IEEE Transactions on Signal Processing}, 64\penalty0
  (14):\penalty0 3806--3821, 2016.

\bibitem[Zhang et~al.(2019)Zhang, Yang, and Ba{\c{s}}ar]{Zhang19}
K.~Zhang, Z.~Yang, and T.~Ba{\c{s}}ar.
\newblock Multi-agent reinforcement learning: A selective overview of theories
  and algorithms.
\newblock \emph{arXiv preprint arXiv:1911.10635}, 2019.

\bibitem[Zhang et~al.(2021)Zhang, Ren, and Li]{zhang21}
R.~Zhang, Z.~Ren, and N.~Li.
\newblock Gradient play in multi-agent markov stochastic games: stationary
  points, convergence, and sample complexity.
\newblock \emph{CoRR}, abs/2106.00198, 2021.

\end{thebibliography}

\section{Numerical Simulations}\label{apdx:Numerics}

This section provides more material for the numerical example shown in Section \ref{sec:numerics}. Figure \ref{fig:numerics-apdx} displays numerical performance for different initialization policies. All four algorithms perform well given a good initialization, i.e., initial policy close to a stable NE. However for bad initialization that is close to a non-NE stationary point, $\log$-barrier regularized algorithms can escape bad regions and converge to NE much faster than unregularized dynamics.

To examine why multi-agent learning suffers more from getting stuck at undesirable stationary points, we plot out the trajectory for $\overline{Q_i^{(t)}}(a_i), \pi_i^{(t)}(a_i)$ for both agents in Figure \ref{figure:NPG-detail-dynamic}. We will mainly focus our attention on the two plots on the left. Note that for the first few steps, $\overline{Q_1^{(t)}}(a_1 = 2)$ is much larger than $\overline{Q_1^{(t)}}(a_1 = 3)$, thus the natural gradient play scheme \eqref{eq:unregularized-NPG-2} will drive $\pi_1^{(t)}(a_1=2)$ close to $1$ and $\pi_1^{(t)}(a_1=3)$ close to $0$ very quickly. However, at around iteration $70$, $\overline{Q_1^{(t)}}(a_1 = 3)$ becomes slightly larger than $\overline{Q_1^{(t)}}(a_1 = 2)$. Unfortunately, at this stage, most of the probability is assigned to the suboptimal action $a_1=2$ and the optimal action receives $\pi_1^{(t)}(a_1=3)$ close to zero. Thus it will take more steps to bring $\pi_1^{(t)}(a_1=2)$ from $1$ to $0$ and $\pi_1^{(t)}(a_1=3)$ from $0$ to $1$, which reflects as the trajectory being stuck at the non-NE stationary policy with $\pi_1(a_1=3) =1$ in numerical behavior. From this simulation, we may conclude that one important reason for natural gradient play to get stuck at undesirable stationary points is due to the fact that the value of averaged $Q$-functions $\overline{Q_i^{(t)}}$'s for different actions might switch order during the learning process. In contrast, for single agent bandit learning, the averaged $Q$-function as well as the $Q$-function itself is the same as the reward value of a certain action $r(a)$, and thus will not change order, which explains why it can achieve dimension free convergence in single agent learning.
\begin{wrapfigure}{r}{.62\textwidth}
\vspace{-10pt}
    \centering
\begin{minipage}{.3\textwidth}
\centering
\includegraphics[width=.9\textwidth]{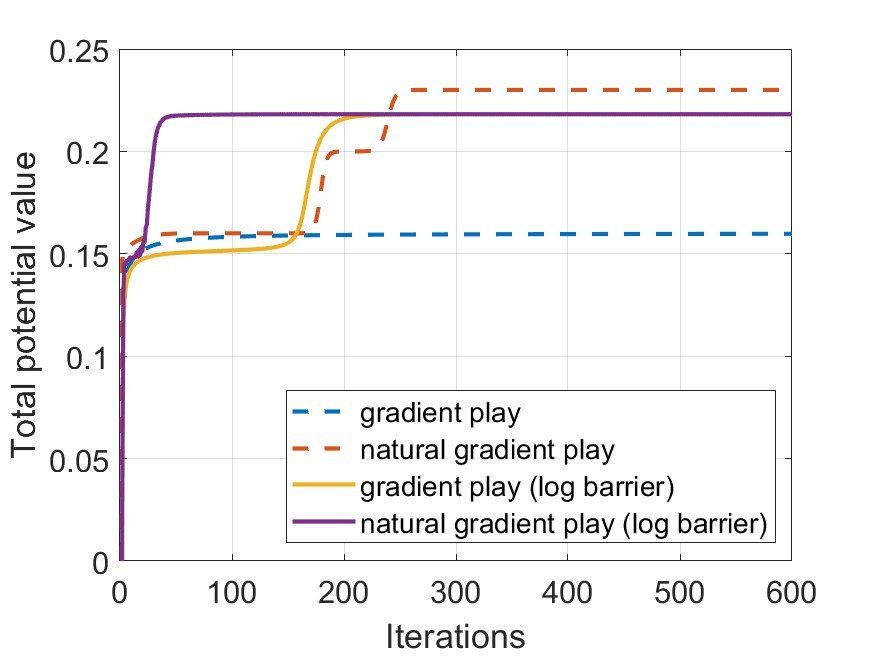}
\vspace{-5pt}
\caption{A 3-agent Example}\label{fig:3-agent}
\end{minipage}
\hspace{-10pt}
\begin{minipage}{.3\textwidth}
\centering
\includegraphics[width=.9\textwidth]{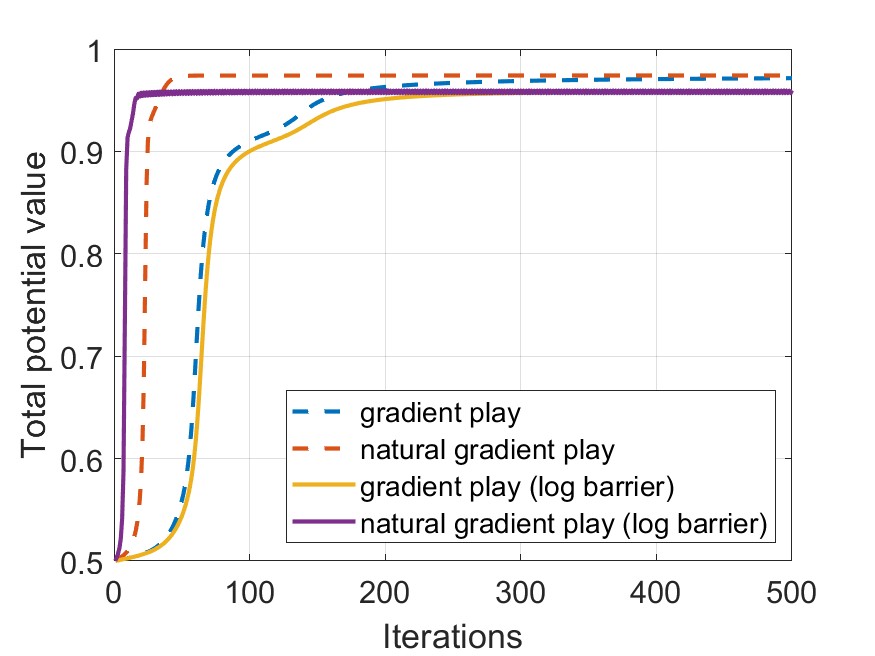}
\vspace{-5pt}
\caption{An 8-agent Example}\label{fig:8-agent}
\end{minipage}
\hspace{-20pt}
\vspace{-25pt}
\end{wrapfigure}

Additionally, we would like to remark that the algorithms considered in this paper also generalizes to settings with more agents, and similar phenomenon will still be observed. See Figure \ref{fig:3-agent} and \ref{fig:8-agent} for numerical simulations on a 3-agent example and an 8-agent example.

Running trajectories for all the four algorithms for one set of initializations takes approximately 2.04 seconds of CPU running time (Intel(R) Core(TM) i5-8250U CPU @ 1.60GHz 1.80 GHz).

\section{Derivation of Gradient and Performance Difference Lemma}\label{apdx:gradient}
\begin{proof} (of Equation \ref{eq:gradient-formula})
According to policy gradient theorem \citep{Sutton1999}:
\begin{align*}
    \frac{\partial J_i(\theta)}{\partial{\theta_{s,a_i}}} &= \frac{1}{1-\gamma}\sum_{s'} \sum_{a'} d_{\theta}(s') \pi_\theta(a'|s') \frac{\partial \log\pi_\theta(a'|s')}{\partial \theta_{s,a_i}}Q_i^\theta(s,a)
\end{align*}
Since for softmax parameterization:
\begin{align*}
    \frac{\partial \log\pi_\theta(a'|s')}{\partial \theta_{s,a_i}} =  \frac{\partial \log\pi_{\theta_i}(a_i'|s')}{\partial \theta_{s,a_i}} &= \mathbf{1}\{a_i' = a_i, s' = s\} - \mathbf{1}\{s' = s\}\pi_{\theta_i}(a_i|s)
\end{align*}
Thus we have that:
\begin{align*}
      &\quad \frac{\partial J_i(\theta)}{\partial{\theta_{s,a_i}}} = \frac{1}{1-\gamma}\sum_{s'} \sum_{a'} d_{\theta}(s') \pi_\theta(a'|s') \left( \mathbf{1}\{a_i' = a_i, s' = s\} -\mathbf{1}\{s' = s\}  \pi_{\theta_i}(a_i|s)\right) Q_i^\theta(s,a')\\
    &= \frac{1}{1\!-\!\gamma}d_\theta (s) \pi_{\theta_i}(a_i|s)\sum_{a_{-i}'}\pi_{\theta_{-i}}(a_{-i}'|s) Q_i^\theta(s,a_i, a_{-i}') - \frac{1}{1\!-\!\gamma}d_\theta(s)\pi_{\theta_i}(a_i|s) \sum_{a'}\pi_\theta(a'|s')Q_i^\theta(s,a')\\
    &=\frac{1}{1-\gamma}d_\theta (s) \pi_{\theta_i}(a_i|s)\overline{Q_i^\theta}(s,a_i, a_{-i}') - \frac{1}{1-\gamma}d_\theta(s)\pi_{\theta_i}(a_i|s) V_i^\theta(s)\\
    &= \frac{1}{1-\gamma}d_{\theta}(s)\pi_{\theta_i}(a_i|s)  \overline {A_i^{\theta}}(s, a_i)
\end{align*}
\end{proof}
We also introduce a useful lemma used throughout the proof which is derived from the performance difference lemma in MDP \citep{Kakade02}.
\begin{lemma}\label{lemma:performance-difference-lemma} Let $\theta' = (\theta_i', \theta_{-i}) $,
\begin{align*}
    J_i(\theta_i', \theta_{-i})- J_i(\theta_i, \theta_{-i}) = \frac{1}{1-\gamma} \sum_{s,a_i} d_{\theta'}(s) \pi_{\theta'_i}(a_i|s)\overline{A_i^{\theta}}(s, a_i)
\end{align*}
\begin{proof} From performance difference lemma \citep{Kakade02}
\begin{align*}
    J_i(\theta_i', \theta_{-i})- J_i(\theta_i, \theta_{-i}) &= \frac{1}{1-\gamma} \sum_{s,a} d_{\theta'}(s) \pi_{\theta'}(a|s) A_i^{\theta}(s, a)\\
    &=\frac{1}{1-\gamma} \sum_{s,a_i} d_{\theta'}(s) \pi_{\theta'_i}(a_i|s)\sum_{a_{-i}} \pi_{\theta_{-i}}(a_{-i}|s)A_i^{\theta}(s, a_i,a_{-i})\\
     &=\frac{1}{1-\gamma} \sum_{s,a_i} d_{\theta'}(s) \pi_{\theta'_i}(a_i|s)\overline{A_i^{\theta}}(s, a_i). \qedhere
\end{align*}
\end{proof}
\end{lemma}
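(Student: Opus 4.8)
The plan is to reduce the claim to the classical single-agent performance difference lemma of \citep{Kakade02} by viewing agent $i$'s problem, with $\theta_{-i}$ held fixed, as an MDP in which the ``action'' is the \emph{joint} action $a = (a_i, a_{-i})$. First I would fix $\theta_{-i}$ and regard $J_i$ as the value function of an MDP whose transition kernel and reward are obtained from the stochastic game by marginalizing through the other agents' (fixed) policies $\pi_{\theta_{-i}}$. Under this identification, switching agent $i$'s policy from $\theta_i$ to $\theta_i'$ amounts to changing the MDP policy from $\pi_\theta = \pi_{\theta_i}\times\pi_{\theta_{-i}}$ to $\pi_{\theta'} = \pi_{\theta_i'}\times\pi_{\theta_{-i}}$, while the baseline advantage $A_i^{\theta}$ and the visitation distribution $d_{\theta'}$ keep their usual single-agent meaning.

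With this reduction in hand, the standard performance difference lemma yields directly
\begin{equation*}
    J_i(\theta_i', \theta_{-i}) - J_i(\theta_i, \theta_{-i}) = \frac{1}{1-\gamma} \sum_{s,a} d_{\theta'}(s)\, \pi_{\theta'}(a|s)\, A_i^{\theta}(s,a),
\end{equation*}
where the advantage is taken against the baseline joint policy $\theta$ and the state-visitation weight $d_{\theta'}$ against the perturbed policy $\theta'$. The next step is to exploit the decentralized (product) structure: since $\theta$ and $\theta'$ share the same $\theta_{-i}$, the perturbed policy factorizes as $\pi_{\theta'}(a|s) = \pi_{\theta_i'}(a_i|s)\,\pi_{\theta_{-i}}(a_{-i}|s)$. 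Substituting this factorization, separating the sum over $a = (a_i, a_{-i})$, and summing over $a_{-i}$ collapses the inner sum into the averaged advantage defined in the problem setup, namely $\sum_{a_{-i}} \pi_{\theta_{-i}}(a_{-i}|s) A_i^{\theta}(s, a_i, a_{-i}) = \overline{A_i^{\theta}}(s, a_i)$, which is precisely the stated identity.

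The only point that requires genuine care is the reduction in the first step: one must check that fixing $\theta_{-i}$ really induces a well-defined, time-homogeneous MDP for agent $i$, so that the single-agent lemma applies verbatim. This is in fact immediate, because the marginalized transition $\sum_{a_{-i}} \pi_{\theta_{-i}}(a_{-i}|s) P(s'|s, a_i, a_{-i})$ and the marginalized reward depend only on the current state and agent $i$'s own action, so the Markov property is preserved and no real obstacle arises. The remaining manipulations are purely algebraic marginalization, so the bulk of the work is conceptual (setting up the correct MDP view) rather than computational.
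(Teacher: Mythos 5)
Your proposal is correct and follows essentially the same route as the paper: both apply the Kakade--Langford performance difference lemma to the joint policy change $\pi_\theta \to \pi_{\theta'}$ and then use the product structure $\pi_{\theta'}(a|s) = \pi_{\theta_i'}(a_i|s)\,\pi_{\theta_{-i}}(a_{-i}|s)$ to marginalize over $a_{-i}$ and obtain $\overline{A_i^{\theta}}(s,a_i)$. The extra discussion of the induced single-agent MDP is a fine justification but does not change the argument.
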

\section{Derivation of Natural Gradient Play}\label{apdx:NPG-derivation}
\begin{lemma}\label{lemma:fisher-info-matrix}
\begin{equation*}
    \bE_{a\sim\pi_\theta(\cdot|s)}\left[\nabla_{\theta_{i,s}} \ \log\pi_{\theta_i}(a_i|s)\nabla_{\theta_{i,s}} \ \log\pi_{\theta_i}(a_i|s)^\top\right] = \diag\{\pi_{\theta_i,s}\} - \pi_{\theta_i,s}\pi_{\theta_i,s}^\top:= F_{i,s}(\theta_{i,s}),
\end{equation*}
where $\diag\{\cdot\}$ denotes the diagonal matrix generated by the corresponding vector, and $\pi_{\theta_i,s}\in \bR^{|\cA_i|}$ is the vector that denotes $\pi_{\theta_i}(\cdot|s)$. Further, $F_{i,s}(\theta_{i,s})$ is a semi-positive definite matrix, where the eigenvalue $0$ has the eigenspace of dimension 1 that is the span of the all one-vector $\mathbf{1}$.
\begin{proof} Calculating the gradient using chain rule we have
\begin{align*}
    \frac{\partial \log\pi_{\theta_i}(a_i|s)}{\partial \theta_{a_i',s}} = \mathbf{1}\{a_i' = a_i\} - \pi_{\theta_i}(a_i'|s).
\end{align*}
Let $\mathbf{1}_{a_i} \in \bR^{|\cA_i|}$ denote the vector where the entry corresponds to $a_i$ is 1 and other entries are zero. Then
\begin{align*}
    &\nabla_{\theta_{i,s}} \log\pi_{\theta_i}(a|s) = \mathbf{1}_{a_i} - \pi_{\theta_i,s}\\
    \Longrightarrow~~ &\nabla_{\theta_{i,s}} \ \log\pi_{\theta_i}(a_i|s)\nabla_{\theta_{i,s}}\ \log\pi_{\theta_i}(a_i|s)^\top = \diag\{\mathbf{1}_{a_i}\} - \pi_{\theta_i,s}\mathbf{1}_{a_i}^\top - \mathbf{1}_{a_i}\pi_{\theta_i,s}^\top + \pi_{\theta_i,s}\pi_{\theta_i,s}^\top
\end{align*}
Taking the expectation $\bE_{a\sim\pi_{\theta_i}(\cdot|s)}$ we have
\begin{align*}
    \bE_{a\sim\pi_{\theta_i}(\cdot|s)}\!\left[\nabla_{\theta_{i,s}}\!\! \log\pi_{\theta_i}(a|s)\nabla_{\theta_{i,s}}\!\! \log\pi_{\theta_i}(a|s)^{\!\top}\right] &= \diag\{\pi_{\theta_i,s}\} \!-\! \pi_{\theta_i,s}\pi_{\theta_i,s}^\top \!-\! \pi_{\theta_i,s}\pi_{\theta_i,s}^\top \!+\! \pi_{\theta_i,s}\pi_{\theta_i,s}^\top\\
    & = \diag\{\pi_{\theta_i,s}\} - \pi_{\theta_i,s}\pi_{\theta_i,s}^\top
\end{align*}
Further, for softmax parameterization, $\pi_{\theta_i}(a_i|s) > 0, ~\forall a_i$. Thus $F_{i,s}(\theta_{i,s})$ is a (non-strict) diagonally dominant matrix with diagonal entries all being positive and off-diagonal entries all being negative, in which case the all-one vector $\mathbf{1}$ is the only eigenvector for eigenvalue $0$.
\end{proof}
\end{lemma}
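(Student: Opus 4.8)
The plan is to prove the two claims separately: first the explicit identity for the Fisher matrix, then its spectral structure. For the identity I would begin by differentiating the log-softmax. Writing $\pi_{\theta_i}(a_i|s) = \exp(\theta_{s,a_i})/\sum_{a_i'}\exp(\theta_{s,a_i'})$ and taking $\partial/\partial\theta_{s,a_i'}$ of $\log\pi_{\theta_i}(a_i|s)$ gives the score $\partial_{\theta_{s,a_i'}}\log\pi_{\theta_i}(a_i|s) = \mathbf{1}\{a_i'=a_i\} - \pi_{\theta_i}(a_i'|s)$, so that the score vector is $\nabla_{\theta_{i,s}}\log\pi_{\theta_i}(a_i|s) = \mathbf{1}_{a_i} - \pi_{\theta_i,s}$, where $\mathbf{1}_{a_i}$ is the indicator vector with a single $1$ in the coordinate indexed by $a_i$. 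I would then expand the outer product $(\mathbf{1}_{a_i}-\pi_{\theta_i,s})(\mathbf{1}_{a_i}-\pi_{\theta_i,s})^\top$ into four terms and take $\bE_{a_i\sim\pi_{\theta_i}(\cdot|s)}$. Using $\bE[\mathbf{1}_{a_i}\mathbf{1}_{a_i}^\top] = \diag\{\pi_{\theta_i,s}\}$ (the nonzero entries of $\mathbf{1}_{a_i}\mathbf{1}_{a_i}^\top$ sit only on the diagonal and occur with probability $\pi_{\theta_i}(a_i|s)$) together with $\bE[\mathbf{1}_{a_i}] = \pi_{\theta_i,s}$, the two cross terms and the constant term collapse, leaving exactly $\diag\{\pi_{\theta_i,s}\} - \pi_{\theta_i,s}\pi_{\theta_i,s}^\top$.

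For positive semidefiniteness and the null space I would argue via the variance interpretation. Writing $p := \pi_{\theta_i,s}$ and $F := \diag\{p\} - pp^\top$, for any $x\in\bR^{|\cA_i|}$ one computes $x^\top F x = \sum_{a_i} p_{a_i} x_{a_i}^2 - \left(\sum_{a_i} p_{a_i} x_{a_i}\right)^2 = \bE_{a_i\sim p}[x_{a_i}^2] - (\bE_{a_i\sim p}[x_{a_i}])^2 = \mathrm{Var}_{a_i\sim p}[x_{a_i}]\ge 0$, which yields the PSD claim. Moreover $x^\top F x = 0$ forces the map $a_i\mapsto x_{a_i}$ to be constant $p$-almost surely; since the softmax guarantees $p_{a_i}>0$ for every $a_i$, this upgrades to $x_{a_i}$ being the same across all coordinates, i.e. $x\in\mathrm{span}\{\mathbf{1}\}$. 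Conversely $F\mathbf{1} = p - p(p^\top\mathbf{1}) = p - p = 0$ because $p^\top\mathbf{1}=1$, so the kernel is exactly the one-dimensional span of $\mathbf{1}$.

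This lemma is a routine computation, so there is no substantive obstacle; the only point requiring care is pinning down the \emph{exact} dimension of the kernel rather than merely exhibiting $\mathbf{1}$ as one null vector. The variance identity settles this cleanly, since strict positivity of every softmax probability is precisely what turns ``constant $p$-almost surely'' into ``constant everywhere.'' An equivalent route, which is the one the paper takes, is to observe that $F$ is a non-strictly diagonally dominant matrix with positive diagonal and nonpositive off-diagonal entries, for which $\mathbf{1}$ is the unique null direction.
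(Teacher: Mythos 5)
Your proof is correct. The computation of the identity $F_{i,s} = \diag\{\pi_{\theta_i,s}\} - \pi_{\theta_i,s}\pi_{\theta_i,s}^\top$ is the same as the paper's: differentiate the log-softmax to get the score $\mathbf{1}_{a_i} - \pi_{\theta_i,s}$, expand the outer product, and take expectations. Where you genuinely diverge is the spectral claim. The paper asserts that $F_{i,s}$ is a non-strictly diagonally dominant matrix with positive diagonal and negative off-diagonal entries and concludes that $\mathbf{1}$ spans the kernel, essentially citing a known fact about such matrices without spelling out why the null space cannot be larger. Your variance argument, $x^\top F x = \bE_{a_i\sim p}[x_{a_i}^2] - (\bE_{a_i\sim p}[x_{a_i}])^2 = \mathrm{Var}_{a_i\sim p}[x_{a_i}]$, is self-contained: it gives positive semidefiniteness for free, and the strict positivity of every softmax probability is exactly the hypothesis needed to upgrade ``constant $p$-almost surely'' to ``constant everywhere,'' pinning the kernel to $\mathrm{span}\{\mathbf{1}\}$ with no appeal to matrix-theoretic facts. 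The two routes buy the same conclusion, but yours makes the role of $\pi_{\theta_i}(a_i|s)>0$ explicit and would survive more scrutiny as written; the paper's is shorter but leans on an unproved (though standard) claim about diagonally dominant matrices.
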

\begin{corollary}
\begin{equation*}
    F_i(\theta) = \textup{blkdiag}\{d_\theta(s)F_{i,s}(\theta_{i,s})\}_{s\in \cS},
\end{equation*}
where $\textup{blkdiag}\{\cdot\}$ denotes the block-diagonal matrix generated by corresponding sub-matrices.
\begin{proof}This is a direct corollary of Lemma \ref{lemma:fisher-info-matrix} , since
\begin{align*}
    \frac{\partial \log\pi_{\theta_i}(a_i|s)}{\partial \theta_{a_i',s'}} = 0, ~~\textup{for } s'\neq s,
\end{align*}
we have that
\begin{equation*}
F_i(\theta) = \bE_{s\!\sim\! d_\theta(\cdot)}\bE_{a_i\!\sim\!\pi_{\theta_i}(\cdot|s)}\left[\nabla_{\theta_i}\ \log\pi_{\theta_i}(a_i|s)\nabla_{\theta_i} \log\pi_\theta(a_i|s)^{\top}\right] = \textup{blkdiag}\{d_\theta(s)F_{i,s}(\theta_{i,s})\}    
\end{equation*}
\end{proof}
\end{corollary}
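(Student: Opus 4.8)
The plan is to exploit the separable structure of the tabular softmax parameterization across states, reducing the global Fisher matrix $F_i(\theta)$ to a block-diagonal assembly of the per-state Fisher matrices already computed in Lemma \ref{lemma:fisher-info-matrix}. First I would organize the parameter vector $\theta_i = \{\theta_{s,a_i}\}_{s\in\cS,a_i\in\cA_i}$ into blocks indexed by state, writing $\theta_i = (\theta_{i,s})_{s\in\cS}$ with $\theta_{i,s} = \{\theta_{s,a_i}\}_{a_i\in\cA_i}$. The crucial observation is that under the softmax, the local policy $\pi_{\theta_i}(a_i|s)$ depends only on the parameters attached to the conditioning state $s$, so that $\frac{\partial \log\pi_{\theta_i}(a_i|s)}{\partial \theta_{a_i',s'}} = 0$ whenever $s'\neq s$.

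Next I would use this to pin down the support of the score vector. Since $\nabla_{\theta_i}\log\pi_{\theta_i}(a_i|s)$ has nonzero entries only within the $s$-block, the outer product $\nabla_{\theta_i}\log\pi_{\theta_i}(a_i|s)\nabla_{\theta_i}\log\pi_{\theta_i}(a_i|s)^\top$ is a matrix whose only nonzero block is the diagonal $(s,s)$ block; all cross-state blocks vanish identically, \emph{before} taking any expectation. Taking the inner expectation $\bE_{a_i\sim\pi_{\theta_i}(\cdot|s)}$ then fills the $(s,s)$ block with $F_{i,s}(\theta_{i,s}) = \diag\{\pi_{\theta_i,s}\} - \pi_{\theta_i,s}\pi_{\theta_i,s}^\top$ by Lemma \ref{lemma:fisher-info-matrix}, while leaving every other block at zero.

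Finally, I would take the outer expectation over $s\sim d_\theta(\cdot)$. Because each sampled state $s$ contributes a matrix supported only on its own $(s,s)$ block, the averaging simply reweights each diagonal block by $d_\theta(s)$ and never introduces any off-diagonal coupling, yielding $F_i(\theta) = \textup{blkdiag}\{d_\theta(s)F_{i,s}(\theta_{i,s})\}_{s\in\cS}$. There is no substantive obstacle here: the statement is a direct corollary of Lemma \ref{lemma:fisher-info-matrix}, and the only point requiring care is the bookkeeping of the block structure—specifically, verifying that the score vector is genuinely state-local so that no cross-state blocks survive the two expectations. The entire argument is a structural computation rather than any kind of estimate.
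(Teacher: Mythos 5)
Your argument is correct and is essentially the paper's own proof, just spelled out in more detail: both rest on the observation that $\frac{\partial \log\pi_{\theta_i}(a_i|s)}{\partial \theta_{a_i',s'}} = 0$ for $s'\neq s$, so the score outer product is supported on the $(s,s)$ block and the two expectations yield the block-diagonal form with weights $d_\theta(s)$. No issues.
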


\begin{lemma}\label{lemma:NPG-derivation}
For vector $g:\cS\times\cA_i\rightarrow\bR$, with $\sum_{a_i}g(s,a_i)=0,~\forall s\in\cS$, we have that
\begin{equation*}
    \left[F_i(\theta)^\dagger g\right]_{(s,a_i)} = \frac{1}{d_\theta(s)\pi_{\theta_i}(a_i|s)}g(s,a_i) + c(s),
\end{equation*}
where $c(s)$ is a function that depend on state $s$ but not on $a_i$.
\begin{proof}
Since $F_i(\theta)$ is a block diagonal matrix,
\begin{align*}
    \left[F_i(\theta)^\dagger g\right]_{(s,\cdot)} = \frac{1}{d_\theta(s)}F_{i,s}(\theta_{i,s})^\dagger g(s,\cdot).
\end{align*}
From Lemma \ref{lemma:fisher-info-matrix}, since $F_{i,s}$ only has a one-dimensional eigenspace for eigenvalue $0$, and the eigenspace is the span of the all-one vector $\mathbf{1}$, we have that
\begin{equation*}
    F_{i,s}(\theta_{i,s})^\dagger F_{i,s}(\theta_{i,s}) = I - \frac{1}{|\cA_i|}\mathbf{1}\mathbf{1}^\top.
\end{equation*}
Let $f(s,a_i): = \frac{1}{d_\theta(s)\pi_{\theta_i}(a_i|s)}g(s,a_i)$
\begin{align*}
   d_\theta(s) \left[F_{i,s}(\theta_{i,s})f(s,\cdot)\right]_{a_i} &= d_\theta(s)\left(\pi_{\theta_i}(a_i|s)f(s,a_i) - \pi_{\theta_i}(a_i|s)\sum_{a_i'}\pi_{\theta_i}(a_i'|s)f(s,a_i')\right)\\
   &= g(s,a_i) - \pi_{\theta_i}(a_i|s)\sum_{a_i'}g(s,a_i') = g(s,a_i),
\end{align*}
i.e.,
\begin{align*}
    d_\theta(s)F_{i,s}(\theta_{i,s})f(s,\cdot) &= g(s,\cdot)\\
    \Longrightarrow~~ \frac{1}{d_\theta(s)}F_{i,s}(\theta_{i,s})^\dagger g(s,\cdot) &= F_{i,s}(\theta_{i,s})^\dagger F_{i,s}(\theta_{i,s})f(s,\cdot)\\
    &= \left(I - \frac{1}{|\cA_i|}\mathbf{1}\mathbf{1}^\top\right)f(s,\cdot)\\
    &= f(s,\cdot) - c(s)\mathbf{1},
\end{align*}
i.e.,
\begin{align*}
    \left[F_i(\theta)^\dagger g\right]_{(s,a_i)}= f(s,a_i) - c(s),
\end{align*}
which completes the proof.
\end{proof}
\end{lemma}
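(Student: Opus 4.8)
The plan is to exploit the block-diagonal structure of $F_i(\theta)$ together with the explicit characterization of its per-state null space from Lemma~\ref{lemma:fisher-info-matrix}, thereby reducing the entire computation to verifying that one natural candidate vector is mapped to $g$ by $F_{i,s}$. First I would invoke the block-diagonal decomposition $F_i(\theta) = \text{blkdiag}\{d_\theta(s)F_{i,s}(\theta_{i,s})\}_{s}$ established in the preceding corollary. Since the Moore--Penrose inverse of a block-diagonal matrix is the block-diagonal matrix of the pseudoinverses, and since $(cA)^\dagger = c^{-1}A^\dagger$ for any scalar $c\neq 0$, the action of $F_i(\theta)^\dagger$ decouples across states: $[F_i(\theta)^\dagger g]_{(s,\cdot)} = \frac{1}{d_\theta(s)}F_{i,s}(\theta_{i,s})^\dagger g(s,\cdot)$. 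This reduces everything to analyzing the single symmetric PSD matrix $F_{i,s} = \diag\{\pi_{\theta_i,s}\} - \pi_{\theta_i,s}\pi_{\theta_i,s}^\top$ acting on the fixed-state slice $g(s,\cdot)$.

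Working state by state, I would introduce the candidate $f(s,a_i) := g(s,a_i)/\pi_{\theta_i}(a_i|s)$ and directly verify that $F_{i,s}f(s,\cdot) = g(s,\cdot)$; the cross term contributes $-\pi_{\theta_i}(a_i|s)\sum_{a_i'}g(s,a_i')$, which vanishes precisely because of the hypothesis $\sum_{a_i}g(s,a_i)=0$. Then, since Lemma~\ref{lemma:fisher-info-matrix} tells us that $F_{i,s}$ is symmetric with null space exactly $\text{span}\{\mathbf{1}\}$, its range is $\mathbf{1}^\perp$ and the product $F_{i,s}^\dagger F_{i,s}$ equals the orthogonal projector onto that range, namely $I - \frac{1}{|\cA_i|}\mathbf{1}\mathbf{1}^\top$. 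Applying this projector to the identity $F_{i,s}f = g$ gives $F_{i,s}^\dagger g(s,\cdot) = F_{i,s}^\dagger F_{i,s} f(s,\cdot) = f(s,\cdot) - \frac{1}{|\cA_i|}\big(\sum_{a_i}f(s,a_i)\big)\mathbf{1}$, i.e. $f$ shifted by a state-dependent constant.

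Finally I would re-attach the $1/d_\theta(s)$ factor pulled out in the first step and fold all $a_i$-independent terms into a single function $c(s)$, which yields $[F_i(\theta)^\dagger g]_{(s,a_i)} = \frac{1}{d_\theta(s)\pi_{\theta_i}(a_i|s)}g(s,a_i) + c(s)$ as claimed. The main obstacle will be the pseudoinverse bookkeeping rather than any hard estimate: one must justify that $F_{i,s}^\dagger F_{i,s}$ is the orthogonal projection onto $\mathbf{1}^\perp$ (which hinges on $F_{i,s}$ being symmetric and on its null space being \emph{exactly} one-dimensional, as guaranteed by Lemma~\ref{lemma:fisher-info-matrix}), and that the hypothesis $\sum_{a_i}g(s,a_i)=0$ is exactly what places $g(s,\cdot)$ in the range of $F_{i,s}$ so that $F_{i,s}^\dagger g$ genuinely solves the corresponding linear system. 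The constant $c(s)$ is only pinned down up to this projection, which is consistent with the statement leaving it an unspecified $a_i$-independent function.
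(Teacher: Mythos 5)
Your proposal is correct and follows essentially the same route as the paper's proof: block-diagonal decoupling across states, the explicit candidate $f = g/\pi$ verified to solve $F_{i,s}f = g$ using $\sum_{a_i}g(s,a_i)=0$, and the projector identity $F_{i,s}^\dagger F_{i,s} = I - \frac{1}{|\cA_i|}\mathbf{1}\mathbf{1}^\top$ from the null-space characterization in Lemma~\ref{lemma:fisher-info-matrix}. The only cosmetic difference is that you pull the $1/d_\theta(s)$ factor out before verifying the linear system rather than carrying it through, which changes nothing.
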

\begin{lemma}
Scheme \eqref{eq:unregularized-NPG} and \eqref{eq:unregularized-NPG-2} are equivalent. Similarly, \eqref{eq:log-barrier-NPG} and \eqref{eq:log-barrier-NPG-2} are equivalent.
\begin{proof}
It is not hard to check that $\nabla_{\theta_i}J_i(\theta), \nabla_{\theta_i}\widetilde J_i(\theta)$ satisfies
\begin{equation*}
    \sum_{a_i}\left[\nabla_{\theta_i}J_i(\theta)\right]_{(s,a_i)} =0,\quad \sum_{a_i}\left[\nabla_{\theta_i}\widetilde J_i(\theta)\right]_{(s,a_i)} =0,
\end{equation*}
thus we can apply Lemma \ref{lemma:NPG-derivation} and conclude
\begin{align*}
F_i(\theta^{(t)})^\dagger\nabla_{
    \theta_i}{J}_i(\theta_i^{(t)}) &= \frac{\overline{A_i^{(t)}}(s,a_i)}{1-\gamma} + c(s)\\
    F_i(\theta^{(t)})^\dagger\nabla_{
    \theta_i}\widetilde{J}_i(\theta_i^{(t)}) &= \frac{\overline{A_i^{(t)}}(s,a_i)}{1-\gamma}+\frac{\lambda}{d^{(t)}(s)\pi_i^{(t)}(a_i|s)} - \frac{\lambda|\cA_i|}{d^{(t)}(s)} + c(s),
\end{align*}
which completes the proof.
\end{proof}
\end{lemma}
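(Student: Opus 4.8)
The plan is to reduce each natural gradient update to an update on the softmax logits $\theta_{s,a_i}$, evaluate the Moore--Penrose pseudoinverse via Lemma~\ref{lemma:NPG-derivation}, and then exponentiate to recover the multiplicative-weights form. The only nontrivial ingredient is checking that Lemma~\ref{lemma:NPG-derivation} is applicable, which requires the relevant gradient vector to sum to zero over $a_i$ for each fixed state $s$.

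First I would verify this zero-sum condition. For the unregularized objective, the gradient formula~\eqref{eq:gradient-formula} gives $[\nabla_{\theta_i}J_i(\theta)]_{(s,a_i)} = \frac{1}{1-\gamma}d_\theta(s)\pi_{\theta_i}(a_i|s)\overline{A_i^\theta}(s,a_i)$, so summing over $a_i$ reduces (up to the positive constant $\frac{1}{1-\gamma}d_\theta(s)$) to $\sum_{a_i}\pi_{\theta_i}(a_i|s)\overline{A_i^\theta}(s,a_i) = \sum_a \pi_\theta(a|s)A_i^\theta(s,a) = 0$, by the definition of the advantage function. For the $\log$-barrier objective the extra gradient term is $\lambda - \lambda|\cA_i|\pi_{\theta_i}(a_i|s)$; summing over $a_i$ gives $\lambda|\cA_i| - \lambda|\cA_i|\sum_{a_i}\pi_{\theta_i}(a_i|s) = 0$. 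Hence both $\nabla_{\theta_i}J_i$ and $\nabla_{\theta_i}\widetilde J_i$ satisfy the hypothesis of Lemma~\ref{lemma:NPG-derivation}.

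Next I would apply Lemma~\ref{lemma:NPG-derivation} directly. In the unregularized case the factor $\frac{1}{d_\theta(s)\pi_{\theta_i}(a_i|s)}$ cancels against the same factor in the gradient, leaving $[F_i(\theta^{(t)})^\dagger\nabla_{\theta_i}J_i]_{(s,a_i)} = \frac{1}{1-\gamma}\overline{A_i^{(t)}}(s,a_i) + c(s)$ for some state-dependent constant $c(s)$. Substituting into the logit update $\theta_{s,a_i}^{(t+1)} = \theta_{s,a_i}^{(t)} + \eta[F_i(\theta^{(t)})^\dagger\nabla_{\theta_i}J_i]_{(s,a_i)}$ and passing through the softmax, the key observation is that $\eta c(s)$ is independent of $a_i$, so it multiplies both the numerator $\exp(\theta_{s,a_i}^{(t+1)})$ and the normalizing denominator by the common factor $\exp(\eta c(s))$ and therefore cancels. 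This yields $\pi_i^{(t+1)}(a_i|s)\propto \pi_i^{(t)}(a_i|s)\exp(\eta\overline{A_i^{(t)}}(s,a_i)/(1-\gamma))$, i.e.\ \eqref{eq:unregularized-NPG-2}. The $\log$-barrier case is identical: Lemma~\ref{lemma:NPG-derivation} now carries along the additional terms $\frac{\lambda}{d^{(t)}(s)\pi_i^{(t)}(a_i|s)} - \frac{\lambda|\cA_i|}{d^{(t)}(s)}$ (again up to a harmless state-dependent constant), and exponentiating reproduces~\eqref{eq:log-barrier-NPG-2}.

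The computation is routine once Lemma~\ref{lemma:NPG-derivation} is in hand, so the only conceptual point I would emphasize is the role of the free constant $c(s)$. This constant reflects the fact that $F_{i,s}$ is singular with null space spanned by the all-ones vector (Lemma~\ref{lemma:fisher-info-matrix}), so the pseudoinverse pins down the update direction only up to a shift along $\mathbf{1}$; the softmax parameterization is invariant to exactly such state-wise shifts of the logits, which is what makes the two schemes coincide. I do not anticipate a genuine obstacle here---the only care needed is bookkeeping the $c(s)$ terms and confirming they drop out after normalization in both the regularized and unregularized settings.
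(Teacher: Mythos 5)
Your proposal is correct and takes the same route as the paper: verify that the gradients sum to zero over $a_i$ for each state, invoke Lemma~\ref{lemma:NPG-derivation} to evaluate the pseudoinverse, and observe that the state-dependent constant $c(s)$ cancels under the softmax normalization. You actually spell out the zero-sum verification and the cancellation of $c(s)$ more explicitly than the paper does, which only leaves these steps as ``not hard to check.''
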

\section{Proof of Lemma \ref{lemma:non-uniform-gradient-domination} and Lemma \ref{lemma:c-NE-equal-1}}\label{apdx:non-uniform-gradient-domination}
\begin{lemma}\label{lemma:NE-gap-bound}
\begin{equation*}
    \NEgap_i(\theta) \le \frac{1}{1-\gamma} \max_{s, a_i}\overline{A_i^{\theta}}(s, a_i), \quad \NEgap(\theta) \le \frac{1}{1-\gamma} \max_i\max_{s, a_i}\overline{A_i^{\theta}}(s, a_i).
\end{equation*}
\begin{proof}
From performance difference lemma
\begin{align*}
  J_i(\theta_i', \theta_{-i})- J_i(\theta_i, \theta_{-i})
     &=\frac{1}{1-\gamma} \sum_{s,a_i} d_{\theta'}(s) \pi_{\theta'_i}(a_i|s)\overline{A_i^{\theta}}(s, a_i) \quad \textup{(Lemma \ref{lemma:performance-difference-lemma})}\\
     &\le \frac{1}{1-\gamma} \sum_{s} d_{\theta'}(s) \max_{a_i}\overline{A_i^{\theta}}(s, a_i)\\\\
     &\le \frac{1}{1-\gamma} \sum_{s} d_{\theta'}(s) \max_{a_i}\overline{A_i^{\theta}}(s, a_i)\\
     &\le \frac{1}{1-\gamma} \max_{s, a_i}\overline{A_i^{\theta}}(s, a_i).
\end{align*}
Thus we have that
\begin{equation*}
    \NEgap_i(\theta) \le \frac{1}{1-\gamma} \max_{s, a_i}\overline{A_i^{\theta}}(s, a_i), \quad \NEgap(\theta) \le \frac{1}{1-\gamma} \max_i\max_{s, a_i}\overline{A_i^{\theta}}(s, a_i).
\end{equation*}
\end{proof}
\end{lemma}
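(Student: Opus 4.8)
The plan is to start from the definition $\NEgap_i(\theta) = \sup_{\theta_i'}\left[J_i(\theta_i',\theta_{-i}) - J_i(\theta_i,\theta_{-i})\right]$ and to bound the improvement term uniformly over every deviating policy $\theta_i'$. The natural tool is the performance difference lemma (Lemma \ref{lemma:performance-difference-lemma}), which rewrites, for $\theta' = (\theta_i',\theta_{-i})$,
\begin{equation*}
J_i(\theta_i',\theta_{-i}) - J_i(\theta_i,\theta_{-i}) = \frac{1}{1-\gamma}\sum_{s}d_{\theta'}(s)\sum_{a_i}\pi_{\theta_i'}(a_i|s)\,\overline{A_i^{\theta}}(s,a_i).
\end{equation*}
The advantage inside is $\overline{A_i^{\theta}}$ evaluated at the \emph{current} policy $\theta$, while the averaging weights come from the \emph{deviating} policy $\theta'$; this asymmetry is what ultimately lets the bound be made free of $\theta_i'$.

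The key observation I would exploit is that both weightings are convex combinations: for each fixed $s$, the collection $\{\pi_{\theta_i'}(a_i|s)\}_{a_i}$ is nonnegative and sums to one, and likewise $\{d_{\theta'}(s)\}_s$ is a probability distribution over states. Hence each averaging step can be dominated by the corresponding maximum. First I would replace the inner action-average by $\max_{a_i}\overline{A_i^{\theta}}(s,a_i)$, giving $\sum_s d_{\theta'}(s)\max_{a_i}\overline{A_i^{\theta}}(s,a_i)$; then I would replace the remaining state-average against $d_{\theta'}$ by $\max_s$, yielding the uniform bound $\tfrac{1}{1-\gamma}\max_{s,a_i}\overline{A_i^{\theta}}(s,a_i)$. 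Since this quantity depends only on $\theta$ and not on $\theta_i'$, it serves as a single upper bound valid for every competitor policy.

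Because the bound is independent of $\theta_i'$, taking the supremum over $\theta_i'$ in the definition of $\NEgap_i(\theta)$ is immediate and gives the first inequality, and taking $\max_i$ then gives the second. There is no substantive obstacle here: the argument is a double application of ``convex average $\le$ maximum'' layered on top of the performance difference lemma. The only point worth stating explicitly in the write-up is precisely that the right-hand side does not involve the deviating policy, which is what makes the passage to the supremum trivial; this is also the structural reason the estimate will be useful later when $\max_{s,a_i}\overline{A_i^\theta}(s,a_i)$ is in turn controlled by the gradient norm to establish the non-uniform \L{}ojasiewicz inequality of Lemma \ref{lemma:non-uniform-gradient-domination}.
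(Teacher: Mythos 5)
Your proposal is correct and follows essentially the same route as the paper: apply the performance difference lemma to express $J_i(\theta_i',\theta_{-i}) - J_i(\theta_i,\theta_{-i})$ in terms of $\overline{A_i^\theta}$, then dominate the two convex averages (over $a_i$ under $\pi_{\theta_i'}(\cdot|s)$ and over $s$ under $d_{\theta'}$) by the corresponding maxima, obtaining a bound independent of $\theta_i'$ so the supremum is immediate. No gaps.
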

\begin{proof}[Proof of Lemma \ref{lemma:non-uniform-gradient-domination}]
From Lemma \ref{lemma:NE-gap-bound} we have that
\begin{equation*}
    \NEgap_i(\theta) \le \frac{1}{1-\gamma} \max_{s, a_i}\overline{A_i^{\theta}}(s, a_i).
\end{equation*}
Since
\begin{align*}
    \max_{a_i}\overline{A_i^{\theta}}(s, a_i)&\le \frac{1}{\sum_{a_i^* \in \argmax_{a_i} \overline{Q_i^{\theta}}(s,a_i)}\pi_{\theta_i}(a_i^*|s)} \sum_{a_i}|\pi_{\theta_i}(a_i|s)\overline{A_i^\theta}(s,a_i)|\\
    &\le \frac{\sqrt{|\cA_i|}}{\sum_{a_i^* \in \argmax_{a_i} \overline{Q_i^{\theta}}(s,a_i)}\pi_{\theta_i}(a_i^*|s)} \sqrt{\sum_{a_i}\left(\pi_{\theta_i}(a_i|s)\overline{A_i^\theta}(s,a_i)\right)^2}\\
    &=  \frac{\sqrt{|\cA_i|}}{\sum_{a_i^* \in \argmax_{a_i} \overline{Q_i^{\theta}}(s,a_i)}\pi_{\theta_i}(a_i^*|s)}\frac{1-\gamma}{d_\theta(s)} \sqrt{\sum_{a_i}\left(\frac{1}{1-\gamma}d_\theta(s)\pi_{\theta_i}(a_i|s)\overline{A_i^\theta}(s,a_i)\right)^2}\\
    &\le \frac{(1-\gamma)M(\theta)\sqrt{|\cA_i|}}{c(\theta)}\|\nabla_{\theta_i}J_i(\theta)\|_2.
\end{align*}
Thus
\begin{align*}
     \NEgap_i(\theta) &\le\frac{1}{1-\gamma} \max_{s, a_i}\overline{A_i^{\theta}}(s, a_i)\\
     &\le\frac{\sqrt{|\cA_i|}M(\theta)}{c(\theta)}\|\nabla_{\theta_i}J_i(\theta)\|_2
\end{align*}
\end{proof}
\begin{remark}
{ A similar bound to Lemma \ref{lemma:non-uniform-gradient-domination} can be obtained by leveraging equation (259) in \citep{Mei20}
\begin{equation}\label{eq:non-uniform-gradient-domination-different}
    \NEgap_i(\theta) \le \frac{\sqrt{|\cS||\cA_i|} D_\infty}{c(\theta)}\|\nabla_{\theta_i}J_i(\theta)\|_2,~~\textup{where } D_\infty = \sup_{\theta, \theta'}\max_s \frac{d_{\theta'}(s)}{d_\theta(s)}.
\end{equation}
Notice that there's an additional $\sqrt{S}$ dependency on the right hand side compared with Lemma \ref{lemma:non-uniform-gradient-domination}, while replacing the term $M(\theta)$ by $D_\infty$. We remark that there's no fundamental difference between these two bounds. There's no significant difference in the proof techniques and it is hard to tell which one is better. One can also easily re-derive the set of analysis in the paper using \eqref{eq:non-uniform-gradient-domination-different}, with bounds that depends on $D_\infty$ instead of $M$ and slightly differs in the dependency on $\cS$ from our current result.}
\end{remark}
\begin{proof}[Proof of Lemma \ref{lemma:c-NE-equal-1}]
Firstly, it is straightforward to see that if $c(\theta^*)\neq 0$, $\theta^*$ is a NE by applying Lemma ~\ref{lemma:non-uniform-gradient-domination}. So we only need to focus on proving that if $\theta^*$ is a NE, then $c(\theta^*)=1$. 

From performance difference lemma, let $\theta' := (\theta_i', \theta_{-i}^*)$ 
\begin{align*}
     J_i(\theta_i', \theta_{-i}^*)- J_i(\theta_i^*, \theta_{-i}^*) =\frac{1}{1-\gamma} \sum_{s,a_i} d_{\theta'}(s) \pi_{\theta'_i}(a_i|s)\overline{A_i^{\theta^*}}(s, a_i)
\end{align*}
Select $a_i^*(s) \in \argmax_{a_i} \overline{A_i^{\theta^*}}(s,a_i)$ and set:
\begin{equation*}
    \pi_{\theta_i'}(a_i|s) =  \mathbf{1}\{a_i = a_i^*(s)\},
\end{equation*}
then
\begin{align*}
    J_i(\theta_i', \theta_{-i}^*)- J_i(\theta_i^*, \theta_{-i}^*) &=\frac{1}{1-\gamma} \sum_{s,a_i} d_{\theta'}(s) \pi_{\theta'_i}(a_i|s)\overline{A_i^{\theta^*}}(s, a_i)\\
    &=\frac{1}{1-\gamma} \sum_{s}d_\theta(s)\max_{a_i}\overline{A_i^{\theta^*}}(s, a_i) \ge 0.
\end{align*}
Since $\theta^*$ is a NE,
\begin{equation*}
    \Longrightarrow \max \overline{A_i^{\theta^*}}(s,a_i) = 0, ~~\forall~s, ~\forall~i.
\end{equation*}
Let $\Delta:= \min_s\min_{a_i\notin\argmax_{a_i} \overline{A_i^{\theta^*}}(s,a_i)} |\overline{A_i^{\theta^*}}(s,a_i)|$.
Since $\sum_{a_i}\pi_{\theta_i^*}(a_i|s)\overline{A_i^{\theta^*}}(s,a_i) = 0$
\begin{align*}
    \Longrightarrow~0&=  \sum_{a_i\in\argmax_{a_i} \overline{A_i^{\theta^*}}(s,a_i)} \pi_{\theta_i^*}(a_i|s)\max_{a_i}\overline{A_i^{\theta^*}}(s,a_i) +  \sum_{a_i\notin\argmax_{a_i} \overline{A_i^{\theta^*}}(s,a_i)} \pi_{\theta_i^*}(a_i|s)\overline{A_i^{\theta^*}}(s,a_i)\\
    &\le -\Delta \sum_{a_i\notin\argmax_{a_i} \overline{A_i^{\theta^*}}(s,a_i)} \pi_{\theta_i^*}(a_i|s)\\
    \Longrightarrow~& \sum_{a_i\notin\argmax_{a_i} \overline{A_i^{\theta^*}}(s,a_i)} \pi_{\theta_i^*}(a_i|s) = 0\\
    \Longrightarrow~& \sum_{a_i\in\argmax_{a_i} \overline{A_i^{\theta^*}}(s,a_i)} \pi_{\theta_i^*}(a_i|s) = 1\\
    \Longrightarrow~& \sum_{a_i\in\argmax_{a_i} \overline{Q_i^{\theta^*}}(s,a_i)} \pi_{\theta_i^*}(a_i|s) = 1\\
    \Longrightarrow~& c(\theta^*) = 1 \qedhere
\end{align*}
\end{proof}

\section{Proof of Theorem \ref{thm:asymptotic-convergence}}\label{apex:asymptotic-convergence}
\subsubsection{Asymptotic convergence for gradient play}
\begin{lemma}\label{lemma:monotonicity-value-function}
For $\eta \le \frac{(1-\gamma)^3}{6n}$, running scheme \eqref{eq:unregularized-PG} will guarantee that \( \lim_{t\rightarrow+\infty}\nabla\Phi(\theta^{(t)}) = 0\).
\begin{proof}
Since $\Phi(\theta)$ is $\beta$-smooth w.r.t. $\theta$, where $\beta = \frac{6n}{(1-\gamma)^3}$
\begin{align*}
    \Phi(\theta^{(t+1)}) - \Phi(\theta^{(t)})&\ge \left<\nabla\Phi(\theta^{(t)}), \theta^{(t+1)}-\theta^{(t)}\right>-\frac{\beta}{2}\|\theta^{(t+1)}-\theta^{(t)}\|_2^2\\
    &\ge \frac{\eta}{2}\|\nabla\Phi(\theta^{(t)}\|_2^2 \ge 0
\end{align*}
which proves the monotonicity of $\Phi(\theta^{(t)})$. Since $\phi$ is a bounded function, this gives:
\begin{equation*}
    \lim_{t\rightarrow+\infty}\|\nabla\Phi(\theta^{(t)})\|_2 = 0. \qedhere
\end{equation*}
\end{proof}
\end{lemma}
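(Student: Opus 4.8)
The plan is to exploit the defining property of an MPG to recast the decentralized dynamics as ordinary gradient ascent on a single scalar objective. As noted immediately after the definition of $\Phi$, we have $\frac{\partial J_i(\theta)}{\partial\theta_{s,a_i}} = \frac{\partial\Phi(\theta)}{\partial\theta_{s,a_i}}$ for every $i,s,a_i$. Since in \eqref{eq:unregularized-PG} each agent $i$ updates \emph{only} its own block $\theta_i$ by $\eta\nabla_{\theta_i}J_i(\theta^{(t)}) = \eta\nabla_{\theta_i}\Phi(\theta^{(t)})$, stacking the blocks shows that the joint iterate obeys the single update $\theta^{(t+1)} = \theta^{(t)} + \eta\nabla\Phi(\theta^{(t)})$. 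The claim thus reduces to the textbook fact that gradient ascent on a smooth, bounded objective drives the gradient to zero.

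The quantitative ingredient I would establish first is that $\Phi$ is $\beta$-smooth in $\theta$ with $\beta = \frac{6n}{(1-\gamma)^3}$, and this is where the real work lies. I would bound the spectral norm of $\nabla^2\Phi$ by controlling both the within-agent and the cross-agent second derivatives of the softmax-parameterized value. Each agent's value is $O\!\big(1/(1-\gamma)^3\big)$-smooth by the single-agent estimates of \citep{agarwal2020,Mei20}, and summing the contributions over the $n$ agents yields the stated $n/(1-\gamma)^3$ scaling. The hard part will be tracking how a perturbation of one agent's parameters propagates through the product policy $\pi_\theta = \prod_i\pi_{\theta_i}$ into every other agent's value, and arguing that the accumulated effect still grows only linearly in $n$; this is precisely what makes the multi-agent estimate differ from the single-agent one.

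Once smoothness is in hand, the remainder is routine. I would invoke the descent lemma for $\beta$-smooth functions, $\Phi(\theta^{(t+1)}) - \Phi(\theta^{(t)}) \ge \langle \nabla\Phi(\theta^{(t)}),\, \theta^{(t+1)} - \theta^{(t)}\rangle - \frac{\beta}{2}\|\theta^{(t+1)} - \theta^{(t)}\|_2^2$, substitute $\theta^{(t+1)} - \theta^{(t)} = \eta\nabla\Phi(\theta^{(t)})$ to obtain $\Phi(\theta^{(t+1)}) - \Phi(\theta^{(t)}) \ge \big(\eta - \tfrac{\beta\eta^2}{2}\big)\|\nabla\Phi(\theta^{(t)})\|_2^2$, and note that the choice $\eta \le \frac1\beta = \frac{(1-\gamma)^3}{6n}$ makes the coefficient at least $\frac\eta2$. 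Hence $\Phi(\theta^{(t)})$ is monotonically nondecreasing.

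Finally, because $\phi_{\min}\le\phi\le\phi_{\max}$, the total potential is uniformly bounded (e.g.\ $\Phi(\theta)\le\frac{\phi_{\max}}{1-\gamma}$), so the monotone nondecreasing sequence $\Phi(\theta^{(t)})$ converges and its consecutive increments vanish. Telescoping the per-step ascent gives $\frac\eta2\sum_{t\ge0}\|\nabla\Phi(\theta^{(t)})\|_2^2 \le \lim_{t}\Phi(\theta^{(t)}) - \Phi(\theta^{(0)}) < \infty$, which forces $\|\nabla\Phi(\theta^{(t)})\|_2 \to 0$ and yields the desired conclusion. Note that, unlike the finite-time results, this asymptotic statement needs only boundedness of $\Phi$ and smoothness, so the exploration constant $M$ plays no role here.
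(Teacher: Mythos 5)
Your proposal is correct and follows essentially the same route as the paper: establish $\beta$-smoothness of $\Phi$ with $\beta=\frac{6n}{(1-\gamma)^3}$ (the paper defers this to Lemma~\ref{lemma:smoothness-unregularized}), apply the ascent inequality with $\eta\le 1/\beta$ to get $\Phi(\theta^{(t+1)})-\Phi(\theta^{(t)})\ge\frac{\eta}{2}\|\nabla\Phi(\theta^{(t)})\|_2^2$, and conclude from boundedness of $\Phi$ that the gradient norms vanish. Your additional remarks---that the decentralized updates stack into joint gradient ascent on $\Phi$ via $\nabla_{\theta_i}J_i=\nabla_{\theta_i}\Phi$, and the telescoping of the summed squared gradient norms---merely make explicit what the paper leaves implicit.
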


From Lemma \ref{lemma:monotonicity-value-function} and that the stationary points are isolated, we know that the limit for $\theta^{(t)}$ exists, i.e., it is valid to define
\begin{equation*}
    \theta^{(\infty)}:= \limtinf \theta^{(t)}.
\end{equation*}
We abbreviate the related functions with respect to $\theta^{(\infty)}$ as follows:
\begin{align*}
    Q_i^{(\infty)}(s,a)&:= Q_i^{\theta^{(\infty)}}(s,a),\qquad 
    V_i^{(\infty)}(s) := V_i^{\theta^{(\infty)}}(s),\qquad A_i^{(\infty)}(s,a) := Q_i^{(\infty)}(s,a) - V_i^{(\infty)}(s)\\
    \overline{Q_i^{(\infty)}}(s,a_i) &:= \sum_{a_{-i}}\pi_{-i}^{(\infty)}(a_{-i}|s)Q^{(\infty)}(s,a_i,a_{-i}), \quad \overline{A_i^{(\infty)}}(s,a_i) := \sum_{a_{-i}}\pi_{-i}^{(\infty)}(a_{-i}|s)A^{(\infty)}(s,a_i,a_{-i})
\end{align*}

Since $\theta^{(\infty)}$ is the limit of $\theta^{(t)}$, we have that:
\begin{equation}\label{eq:convergence-average-Q-A}
    \limtinf \overline{Q_i^{(t)}}(s,a_i) =\overline{Q_i^{(\infty)}}(s,a_i), \quad 
    \limtinf \overline{A_i^{(t)}}(s,a_i) = \overline{A_i^{(\infty)}}(s,a_i)
\end{equation}
Define:
\begin{align*}
    I_0^{i,s}&:=\{a_i|\overline{Q_i^{(\infty)}}(s,a_i) = V^{(\infty)}(s)\} = \{a_i|\overline{A_i^{(\infty)}}(s,a_i) = 0\}\\
    I_+^{i,s}&:=\{a_i|\overline{Q_i^{(\infty)}}(s,a_i) > V^{(\infty)}(s)\} = \{a_i|\overline{A_i^{(\infty)}}(s,a_i) >0\}\\
    I_-^{i,s}&:=\{a_i|\overline{Q_i^{(\infty)}}(s,a_i) < V^{(\infty)}(s)\} = = \{a_i|\overline{A_i^{(\infty)}}(s,a_i) < 0\}\\
\end{align*}
Let
\begin{equation}\label{eq:def-Delta}
    \Delta:=\min_i\min_{\{s,a_i|A_i^{(\infty)}(s,a_i)\neq 0\}}|A_i^{(\infty)}(s,a_i)|
\end{equation}
From Lemma \ref{lemma:NE-gap-bound}, it is sufficient to show that $I_+^{i,s}=\emptyset, ~\forall~ i, s$.

From the Lemma \ref{lemma:monotonicity-value-function} and the above definitions we have the following corollaries:
\begin{corollary}\label{coro:average-A-bound-T_1} There exists $T_1$, such that $\forall t>T_1, ~\forall s\in\cS,~\forall i\in\{1,2,\dots,n\}$,
\begin{align*}
    A_i^{(t)}(s,a_i) &< -\frac{\Delta}{4},~~\forall a_i\in I_-^{i,s}\\
    A_i^{(t)}(s,a_i) &> \frac{\Delta}{4},~~\forall a_i\in I_+^{i,s}\\
    |A_i^{(t)}(s,a_i)|& < \frac{\Delta}{4},~~\forall a_i\in I_0^{i,s}
\end{align*}
\begin{proof}
This is a direct corollary from \eqref{eq:convergence-average-Q-A} and \eqref{eq:def-Delta}.
\end{proof}
\end{corollary}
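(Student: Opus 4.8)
The plan is to reduce the statement to a single uniform-$\varepsilon$ argument: the three inequalities are all of the form ``$\overline{A_i^{(t)}}(s,a_i)$ is within $\frac{\Delta}{4}$ of its limiting value,'' so it suffices to force every averaged advantage to be simultaneously close to its limit once $t$ is large enough. (The relevant quantity here is the \emph{averaged} advantage $\overline{A_i^{(t)}}(s,a_i)$, since $a_i\in\cA_i$ is a single-agent action; this is exactly the object whose convergence is recorded in \eqref{eq:convergence-average-Q-A}, and whose sign pattern at the limit defines $I_0^{i,s}, I_+^{i,s}, I_-^{i,s}$.)

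First I would fix the finite index set of triples $(i,s,a_i)$ with $i\in N$, $s\in\cS$, $a_i\in\cA_i$. For each such triple, \eqref{eq:convergence-average-Q-A} gives $\limtinf \overline{A_i^{(t)}}(s,a_i) = \overline{A_i^{(\infty)}}(s,a_i)$, so there is a threshold beyond which $|\overline{A_i^{(t)}}(s,a_i) - \overline{A_i^{(\infty)}}(s,a_i)| < \frac{\Delta}{4}$. Because $N$, $\cS$ and each $\cA_i$ are finite there are only finitely many triples, so I can take $T_1$ to be the maximum of these finitely many thresholds, obtaining a single $T_1$ valid for all triples at once.

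It then remains to split by the membership of $a_i$. For $a_i\in I_0^{i,s}$ the limit vanishes, so the uniform bound gives $|\overline{A_i^{(t)}}(s,a_i)| < \frac{\Delta}{4}$ directly. For $a_i\in I_+^{i,s}$ the definition of $\Delta$ in \eqref{eq:def-Delta} forces $\overline{A_i^{(\infty)}}(s,a_i)\ge \Delta$, and subtracting the error term yields $\overline{A_i^{(t)}}(s,a_i) > \frac{3\Delta}{4} > \frac{\Delta}{4}$; the case $a_i\in I_-^{i,s}$ is symmetric. There is no real obstacle here; the only point needing care is that a \emph{single} $T_1$ and a \emph{single} accuracy level $\frac{\Delta}{4}$ must serve all triples and all three cases simultaneously, which is legitimate precisely because the index set is finite and because $\frac{\Delta}{4}$ — the accuracy dictated by the tight $I_0^{i,s}$ case — still leaves the comfortable margin $\frac{3\Delta}{4}$ in the $I_\pm^{i,s}$ cases.
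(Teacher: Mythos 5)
Your argument is correct and is exactly the elaboration of what the paper leaves implicit: its proof simply declares the corollary to be a direct consequence of \eqref{eq:convergence-average-Q-A} and \eqref{eq:def-Delta}, and your uniform-threshold-over-a-finite-index-set step plus the case split on $I_0^{i,s}, I_+^{i,s}, I_-^{i,s}$ (using that $\Delta$ lower-bounds every nonzero limiting averaged advantage, so the $\frac{\Delta}{4}$ error leaves a $\frac{3\Delta}{4}$ margin) is precisely how that consequence is obtained. No gap; same approach as the paper, just written out.
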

\begin{corollary}\label{coro:measure-on-three-sets}
\begin{align*}
    \limtinf \sum_{a_i\in I_0^{i,s}}\pi_i^{(t)}(a_i|s) &= 1\\
    \limtinf \sum_{a_i\in I_+^{i,s}\cup I_-^{i,s}}\pi_i^{(t)}(a_i|s) &= 0
\end{align*}

\begin{proof}
This is a direct corollary from Lemma \ref{lemma:monotonicity-value-function},
\begin{align*}
    &\limtinf\nabla\Phi(\theta^{(t)}) = 0\\
    \Longrightarrow &\limtinf \frac{\partial \Phi(\theta^{(t)})}{\partial \theta_{s,a_i}} = \limtinf \frac{1}{1-\gamma}d^{(t)}(s)\pi_i^{(t)}(a_i|s)\overline{A_i^{(t)}}(s,a_i) =0\\
    \Longrightarrow &\limtinf \pi_i^{(t)}(a_i|s) \limtinf \overline{A_i^{(t)}}(s,a_i) = 0\\
    \Longrightarrow &\limtinf \pi_i^{(t)}(a_i|s) = 0,~~\forall a_i \notin I_0^{i,s}\\
    \Longrightarrow &\limtinf \sum_{a_i\in I_+^{i,s}\cup I_-^{i,s}}\pi_i^{(t)}(a_i|s) = 0\\
    \Longrightarrow   &\limtinf \sum_{a_i\in I_0^{i,s}}\pi_i^{(t)}(a_i|s) = 1 - \limtinf \sum_{a_i\in I_+^{i,s}\cup I_-^{i,s}}\pi_i^{(t)}(a_i|s) = 1 \qedhere
\end{align*}
\end{proof}

\end{corollary}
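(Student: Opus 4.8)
The plan is to read off the two limit identities directly from the vanishing of the gradient established in Lemma~\ref{lemma:monotonicity-value-function}, combined with the uniform lower bound on the visitation distribution from Assumption~\ref{assump:like-ergodicity} and the continuity of the averaged advantage function. First I would invoke Lemma~\ref{lemma:monotonicity-value-function} to obtain $\limtinf \nabla\Phi(\theta^{(t)}) = 0$, which forces every coordinate of the gradient to vanish. Substituting the gradient formula \eqref{eq:gradient-formula} (and recalling that $\frac{\partial J_i}{\partial\theta_{s,a_i}} = \frac{\partial\Phi}{\partial\theta_{s,a_i}}$), this says that for each $i$, $s$, and $a_i$,
\[
\limtinf \frac{1}{1-\gamma}\, d^{(t)}(s)\,\pi_i^{(t)}(a_i|s)\,\overline{A_i^{(t)}}(s,a_i) = 0.
\]

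Second, I would use Assumption~\ref{assump:like-ergodicity}, which guarantees $d^{(t)}(s) \ge \inf_\theta\min_s d_\theta(s) > 0$ uniformly in $t$. Dividing out this strictly positive factor reduces the display to $\limtinf \pi_i^{(t)}(a_i|s)\,\overline{A_i^{(t)}}(s,a_i) = 0$. Third, I would fix any $a_i \in I_+^{i,s}\cup I_-^{i,s}$: by definition $\overline{A_i^{(\infty)}}(s,a_i)\neq 0$, and since $\theta^{(t)}\to\theta^{(\infty)}$ the convergence \eqref{eq:convergence-average-Q-A} gives $\overline{A_i^{(t)}}(s,a_i)\to\overline{A_i^{(\infty)}}(s,a_i)\neq 0$. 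Hence the advantage factor is bounded away from zero for all large $t$, and the vanishing of the product forces $\limtinf \pi_i^{(t)}(a_i|s) = 0$. Summing over the finitely many actions in $I_+^{i,s}\cup I_-^{i,s}$ yields the second identity, and the first then follows immediately from $\sum_{a_i}\pi_i^{(t)}(a_i|s) = 1$ together with the fact that $I_0^{i,s}$, $I_+^{i,s}$, $I_-^{i,s}$ partition $\cA_i$.

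The argument is essentially mechanical once the limit $\theta^{(\infty)}$ is known to exist (which is why the isolated-stationary-point hypothesis is needed upstream). The only genuine ingredient beyond the gradient formula is Assumption~\ref{assump:like-ergodicity}: it is precisely what allows us to strip off $d^{(t)}(s)$ and attribute the vanishing of the product to the \emph{policy} mass $\pi_i^{(t)}(a_i|s)$ rather than to the visitation frequency. I therefore do not expect any real obstacle here; the content of the corollary is a bookkeeping consequence of the stationarity of $\theta^{(\infty)}$, and the most delicate point is merely observing that a convergent, nonzero advantage factor can be divided out of the limit.
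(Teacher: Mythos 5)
Your proposal is correct and follows essentially the same route as the paper's proof: invoke the vanishing gradient from Lemma~\ref{lemma:monotonicity-value-function}, divide out the uniformly positive $d^{(t)}(s)$, use the convergence of $\overline{A_i^{(t)}}(s,a_i)$ to a nonzero limit for $a_i\notin I_0^{i,s}$ to conclude $\pi_i^{(t)}(a_i|s)\to 0$, and finish by normalization. If anything, your version is slightly more careful than the paper's in justifying why the nonzero advantage factor can be divided out of the limit of the product.
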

\begin{lemma}\label{lemma:go-to-infinity-I-minus}
$\forall a_i\in I_+^{i,s}, \theta_{s,a_i}^{(t)}$ is bounded from below. $\forall a_i\in I_-^{i,s}, \limtinf\theta_{s,a_i}^{(t)} = -\infty$.
\begin{proof}
The first statement, $\forall a_i\in I_+^{i,s}, \theta_{s,a_i}^{(t)}$ is bounded from below, is trivial from Corollary \ref{coro:average-A-bound-T_1}. We only need to prove the second statement. The key observation is that:
\begin{equation*}
    \sum_{a_i} \frac{\partial \Phi(\theta^{(t)})}{\partial \theta_{s,a_i}} = \frac{1}{1-\gamma}d^{(t)}(s)\sum_{a_i}\pi_i^{(t)}(a_i|s)\overline{A_i^{(t)}}(s,a_i) = 0
\end{equation*}
Thus
\begin{equation*}
    \sum_{a_i}\theta_{s,a_i}^{(t)} = \sum_{a_i}\theta_{s,a_i}^{(0)}.
\end{equation*}
From Corollary \ref{coro:measure-on-three-sets}, we have that
\begin{align*}
    \limtinf \sum_{a_i\in I_+^{i,s}\cup I_-^{i,s}}\pi_i^{(t)}(a_i|s) = 0\\
    \Longrightarrow \exists ~a_i\in I_0^{i,s},~s.t.~ \limsuptinf \theta_{s,a_i}^{(t)} = +\infty\\
\end{align*}
And since all $\theta_{s,a_i}^{(t)}$ sum up to a constant and that $\forall a_i\in I_+^{i,s}, \theta_{s,a_i}^{(t)}$ is bounded from below, we have that:
\begin{equation}\label{eq:overline-a-i}
    \exists~~\overline{a_i}\in I_0^{i,s}\cup I_-^{i,s},~s.t.~ \liminftinf \theta_{s,\overline{a_i}}^{(t)} = -\infty.
\end{equation}
From Corollary \ref{coro:average-A-bound-T_1}, for $a_i\in I_-^{i,s}$, $\theta_{s,a_i}^{(t)}$ is monotonically decreasing for $t>T_1$, thus
\begin{equation*}
    \limtinf \theta_{s,a_i}^{(t)} := \theta_{s,a_i}^{(\infty)},
\end{equation*}
where $\theta_{s,a_i}^{(\infty)}$ is either a constant or $-\infty$. We'll prove by contradiction. Suppose $\theta_{s,a_i}^{(\infty)}$ is a constant, then for any $\delta>0$ there exists $T_1'\ge T_1$ such that $\forall~t\ge T_1', ~|\theta_{s,a_i}^{(t)} - \theta_{s,a_i}^{(\infty)}| \le \delta$.

Let $\overline{a_i}\in\cA_i$ be defined as in \eqref{eq:overline-a-i}, define:
\begin{equation*}
    \tau(t) :=\left\{
    \begin{array}{lr}
        t+1, & \textup{if } \theta_{s,\overline{a_i}}^{(t)} > \theta_{s,a_i}^{(\infty)}-\delta \\
        \min_{t'}\{T_1'\le t'\le t| \theta_{s,\overline{a_i}}^{(\tau)} \le \theta_{s,a_i}^{(\infty)}-\delta, ~\forall t'\le\tau\le t \}, &\textup{otherwise} 
    \end{array}
    \right.
\end{equation*}
 We will focus on the set where $\{t|\tau(t)\le t\}$. Since $\liminftinf\theta_{s,\overline{a_i}}^{(t)} = -\infty$, there are infinitely many elements in this set. 
 
 For all $\tau(t)\le\tau\le t$, we have that:
 \begin{align*}
     \left|\frac{\frac{\partial \Phi(\theta^{(\tau)})}{\partial \theta_{s,a_i}}}{\frac{\partial \Phi(\theta^{(\tau)})}{\partial \theta_{s,\overline{a_i}}}}\right| = \left|\frac{\pi_i^{(\tau)}(a_i|s)\overline{A_i^{(\tau)}}(s,a_i)}{\pi_i^{(\tau)}(\overline{a_i}|s)\overline{A_i^{(\tau)}}(s,\overline{a_i})}\right| &= \exp{(\theta_{s,a_i}^{(\tau)} - \theta_{s,\overline{a_i}}^{(\tau)})}\left|\frac{\overline{A_i^{(\tau)}}(s,a_i)}{\overline{A_i^{(\tau)}}(s,\overline{a_i})}\right|\\
     &\ge \left|\frac{\overline{A_i^{(\tau)}}(s,a_i)}{\overline{A_i^{(\tau)}}(s,\overline{a_i})}\right| \ge \frac{\Delta(1-\gamma)}{4}
 \end{align*}
 Thus
 \begin{align}
     &\frac{\partial \Phi(\theta^{(\tau)})}{\partial \theta_{s,a_i}} \le \frac{\Delta(1-\gamma)}{4}\frac{\partial \Phi(\theta^{(\tau)})}{\partial \theta_{s,\overline{a_i}}}, ~~\tau(t)\le\tau\le t\notag\\
     \Longrightarrow &\frac{1}{\eta}(\theta_{s,a_i}^{(t+1)} - \theta_{s,a_i}^{(\tau(t))}) =\sum_{\tau(t)}^t\frac{\partial \Phi(\theta^{(\tau)})}{\partial \theta_{s,a_i}} \le \frac{\Delta(1-\gamma)}{4} \sum_{\tau(t)}^t \frac{\partial \Phi(\theta^{(\tau)})}{\partial \theta_{s,\overline{a_i}}}= \frac{\Delta(1-\gamma)}{4\eta} (\theta_{s,\overline{a_i}}^{(t+1)} - \theta_{s,\overline{a_i}}^{(\tau(t))})\label{eq:asymptotic-eq-1}
 \end{align}
 Since:
 \begin{equation*}
     \theta_{s,\overline{a_i}}^{(\tau(t))} \ge\theta_{s,\overline{a_i}}^{(\tau(t)-1)} - \eta \frac{1}{(1-\gamma)^2} \ge \theta_{s,a_i}^{(\infty)}-\delta- \eta \frac{1}{(1-\gamma)^2}
 \end{equation*}
 is bounded from below, and that $\theta_{s,a_i}^{\tau(t)}$ is also bounded from above by $\theta_{s,a_i}^{(T_1)}$, thus
 taking $\liminftinf$ on both sides of eq \eqref{eq:asymptotic-eq-1} will give
 \begin{align*}
     &\liminftinf \theta_{s,a_i}^{(t+1)} - \theta_{s,a_i}^{(\tau(t))} \le \frac{\Delta(1-\gamma)}{4} \left(\liminftinf \theta_{s,\overline{a_i}}^{(t+1)} -\theta_{s,a_i}^{(\infty)}+\delta+ \eta \frac{1}{(1-\gamma)^2}\right) = -\infty\\
     &\Longrightarrow \liminftinf \theta_{s,a_i}^{(t)} = -\infty
 \end{align*}
 which contradicts the assumption that $\theta_{s,a_i}^{(\infty)}$ is a constant, and thus we can conclude that 
\begin{equation*}
    \limtinf\theta_{s,a_i}^{(t)} = -\infty,~~\forall a_i\in I_-^{i,s}. \qedhere
\end{equation*}
\end{proof}

\end{lemma}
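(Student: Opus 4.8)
The plan is to handle the two assertions separately, as the first is essentially a one-line monotonicity observation while the second is the substantive claim. For $a_i\in I_+^{i,s}$, Corollary~\ref{coro:average-A-bound-T_1} gives $\overline{A_i^{(t)}}(s,a_i)>\frac{\Delta}{4}>0$ for all $t>T_1$; since $d^{(t)}(s)>0$ and $\pi_i^{(t)}(a_i|s)>0$, the gradient formula \eqref{eq:gradient-formula} makes $\frac{\partial\Phi(\theta^{(t)})}{\partial\theta_{s,a_i}}$ strictly positive, so the update \eqref{eq:unregularized-PG} forces $\theta_{s,a_i}^{(t)}$ to increase monotonically for $t>T_1$ and hence to stay bounded below by $\theta_{s,a_i}^{(T_1)}$.

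For $a_i\in I_-^{i,s}$ the same reasoning shows the coordinate is monotonically \emph{decreasing} after $T_1$, so it converges to some $\theta_{s,a_i}^{(\infty)}\in\bR\cup\{-\infty\}$, and I would argue by contradiction that the finite case is impossible. The two ingredients I would assemble first are: (i) the conservation law $\sum_{a_i}\theta_{s,a_i}^{(t)}=\sum_{a_i}\theta_{s,a_i}^{(0)}$, which holds because $\sum_{a_i}\frac{\partial\Phi}{\partial\theta_{s,a_i}}=\frac{d^{(t)}(s)}{1-\gamma}\sum_{a_i}\pi_i^{(t)}(a_i|s)\overline{A_i^{(t)}}(s,a_i)=0$; and (ii) the divergence of the softmax normalizer, since Corollary~\ref{coro:measure-on-three-sets} forces $\pi_i^{(t)}(a_i'|s)\to 0$ on $I_+^{i,s}$ while those coordinates are bounded below, so $\max_{a_i'}\theta_{s,a_i'}^{(t)}\to+\infty$. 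Combining (i), (ii), and the $I_+^{i,s}$ lower bound, I would deduce that some coordinate $\overline{a_i}\in I_0^{i,s}\cup I_-^{i,s}$ must satisfy $\liminftinf\theta_{s,\overline{a_i}}^{(t)}=-\infty$ (otherwise every coordinate is bounded below and a coordinate with $\limsup=+\infty$ would blow up the constant sum).

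The heart of the argument is then to drag the target coordinate $a_i$ down together with the pivot $\overline{a_i}$. Using the softmax structure I would write
\[
\left|\frac{\partial\Phi(\theta^{(\tau)})/\partial\theta_{s,a_i}}{\partial\Phi(\theta^{(\tau)})/\partial\theta_{s,\overline{a_i}}}\right|=\exp\!\left(\theta_{s,a_i}^{(\tau)}-\theta_{s,\overline{a_i}}^{(\tau)}\right)\left|\frac{\overline{A_i^{(\tau)}}(s,a_i)}{\overline{A_i^{(\tau)}}(s,\overline{a_i})}\right|,
\]
and note that on the excursions where $\theta_{s,\overline{a_i}}^{(\tau)}$ sits below the (assumed finite) limit $\theta_{s,a_i}^{(\infty)}$ the exponential factor is at least $1$, while $|\overline{A_i^{(\tau)}}(s,a_i)|>\frac{\Delta}{4}$ together with the uniform bound $|\overline{A_i^{(\tau)}}|\le\frac{1}{1-\gamma}$ pins the advantage ratio below by $\frac{\Delta(1-\gamma)}{4}$. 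Since both partial derivatives are negative, this yields $\frac{\partial\Phi}{\partial\theta_{s,a_i}}\le\frac{\Delta(1-\gamma)}{4}\frac{\partial\Phi}{\partial\theta_{s,\overline{a_i}}}$ on those excursions; telescoping \eqref{eq:unregularized-PG} over an excursion shows $\theta_{s,a_i}^{(t+1)}-\theta_{s,a_i}^{(\tau(t))}\le\frac{\Delta(1-\gamma)}{4}\bigl(\theta_{s,\overline{a_i}}^{(t+1)}-\theta_{s,\overline{a_i}}^{(\tau(t))}\bigr)$, and letting $\overline{a_i}$ run off to $-\infty$ would force $\theta_{s,a_i}^{(t)}\to-\infty$, contradicting finiteness.

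I expect the main obstacle to be the bookkeeping in this last step, because $\theta_{s,\overline{a_i}}^{(t)}$ only has $\liminf=-\infty$ rather than a clean monotone limit. To make the telescoping rigorous I would define stopping times $\tau(t)$ that isolate the maximal contiguous excursion ending at $t$ on which $\overline{a_i}$ stays below the threshold, verify that the comparison inequality holds throughout each excursion, and control the entry value $\theta_{s,\overline{a_i}}^{(\tau(t))}$ — which cannot drop more than one gradient step $\frac{\eta}{(1-\gamma)^2}$ below the threshold — so that taking $\liminftinf$ over excursion endpoints delivers the contradiction. Getting these excursions and the boundary terms to line up, rather than any single inequality, is where the real care is needed.
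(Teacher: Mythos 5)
Your plan reproduces the paper's argument essentially step for step: the monotonicity observation for $I_+^{i,s}$, the conservation law $\sum_{a_i}\theta_{s,a_i}^{(t)}=\sum_{a_i}\theta_{s,a_i}^{(0)}$, the extraction of a pivot coordinate $\overline{a_i}\in I_0^{i,s}\cup I_-^{i,s}$ with $\liminf_{t\to\infty}\theta_{s,\overline{a_i}}^{(t)}=-\infty$, the comparison of partial derivatives via the softmax ratio bounded below by $\frac{\Delta(1-\gamma)}{4}$, and the excursion/stopping-time telescoping with control of the entry value to one gradient step below the threshold. This is the same proof as in the paper, and your identification of the excursion bookkeeping as the delicate point matches where the paper's proof does its real work.
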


\begin{lemma}\label{lemma:bound-I-0-by-I-plus}
$\forall a_i^+\in I_+^{i,s}$, for any $a\in I_0^{i,s}$, if there exists $t \ge T_1$ such that $\pi_i^{(t)}(a_i|s)\le\pi_i^{(t)}(a_i^+|s)$, then for all $\tau\ge t$, $\pi_i^{(\tau)}(a_i|s)\le\pi_i^{(\tau)}(a_i^+|s)$
\end{lemma}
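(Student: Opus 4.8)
The plan is to reduce the statement about probabilities to an equivalent statement about the softmax logits and then argue by a one-step induction on $\tau$. Because at the fixed state $s$ the actions $a_i$ and $a_i^+$ share the same normalizing denominator $\sum_{a_i'}\exp(\theta_{s,a_i'})$, the inequality $\pi_i^{(\tau)}(a_i|s)\le\pi_i^{(\tau)}(a_i^+|s)$ holds if and only if $\theta_{s,a_i}^{(\tau)}\le\theta_{s,a_i^+}^{(\tau)}$. Hence it suffices to show that the logit gap $g^{(\tau)}:=\theta_{s,a_i^+}^{(\tau)}-\theta_{s,a_i}^{(\tau)}$ remains nonnegative for all $\tau\ge t$ once $g^{(t)}\ge 0$.

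First I would write down the evolution of the gap under one gradient-play step. Using \eqref{eq:unregularized-PG} together with the gradient formula \eqref{eq:gradient-formula} (and $\frac{\partial\Phi}{\partial\theta_{s,a_i}}=\frac{\partial J_i}{\partial\theta_{s,a_i}}$),
\[
g^{(\tau+1)} = g^{(\tau)} + \eta\frac{d^{(\tau)}(s)}{1-\gamma}\left(\pi_i^{(\tau)}(a_i^+|s)\overline{A_i^{(\tau)}}(s,a_i^+)-\pi_i^{(\tau)}(a_i|s)\overline{A_i^{(\tau)}}(s,a_i)\right).
\]
Since $\eta>0$ and $d^{(\tau)}(s)>0$ by Assumption \ref{assump:like-ergodicity}, it is enough to show that the bracketed difference is nonnegative whenever $g^{(\tau)}\ge 0$ and $\tau\ge T_1$; the inductive hypothesis $g^{(\tau)}\ge 0$ then propagates to $g^{(\tau+1)}\ge 0$, and since $t\ge T_1$ the induction is valid for every $\tau\ge t$.

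The core estimate uses Corollary \ref{coro:average-A-bound-T_1}: for $\tau\ge T_1$ we have $\overline{A_i^{(\tau)}}(s,a_i^+)>\tfrac{\Delta}{4}$ because $a_i^+\in I_+^{i,s}$, while $\overline{A_i^{(\tau)}}(s,a_i)<\tfrac{\Delta}{4}$ because $a_i\in I_0^{i,s}$. I would split on the sign of $\overline{A_i^{(\tau)}}(s,a_i)$. If it is nonpositive, the subtracted term is $\le 0$ while the first product is strictly positive, so the bracket is positive. If it is positive (hence lies in $(0,\tfrac{\Delta}{4})$), I bound the first product below by $\pi_i^{(\tau)}(a_i^+|s)\cdot\tfrac{\Delta}{4}$ and the second product above by $\pi_i^{(\tau)}(a_i|s)\cdot\tfrac{\Delta}{4}$, and then invoke the inductive hypothesis $\pi_i^{(\tau)}(a_i|s)\le\pi_i^{(\tau)}(a_i^+|s)$ to conclude that the first dominates the second. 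Either way the bracket is nonnegative, which closes the induction.

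The argument is elementary once the softmax-logit reduction is made; the only point requiring care is that an action $a_i\in I_0^{i,s}$ may have a \emph{positive} (albeit small) averaged advantage, so one cannot simply claim the update decreases $\pi_i^{(\tau)}(a_i|s)$ in absolute terms. The mechanism that rescues the proof is the strict gap $\tfrac{\Delta}{4}$ separating the advantages of $I_+^{i,s}$ and $I_0^{i,s}$ actions furnished by Corollary \ref{coro:average-A-bound-T_1}, combined with the probability ordering carried by the induction hypothesis; this comparison is the step I would treat most carefully.
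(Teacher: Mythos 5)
Your proposal is correct and follows essentially the same route as the paper: a one-step induction in which the ordering of the logits (equivalently, of the probabilities, since they share a normalizer) is shown to be preserved because the gradient coordinate for $a_i^+$ dominates that for $a_i$, using the advantage separation from Corollary \ref{coro:average-A-bound-T_1} together with the inductive probability ordering. The only cosmetic difference is that you split on the sign of $\overline{A_i^{(\tau)}}(s,a_i)$, whereas the paper handles both cases at once via the chain $\pi_i^{(\tau)}(a_i^+|s)\overline{A_i^{(\tau)}}(s,a_i^+)\ge\pi_i^{(\tau)}(a_i|s)\overline{A_i^{(\tau)}}(s,a_i^+)\ge\pi_i^{(\tau)}(a_i|s)\overline{A_i^{(\tau)}}(s,a_i)$.
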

\begin{proof}
We will prove by induction. Suppose for a certain $\tau \ge t$, it holds that $\pi_i^{(\tau)}(a_i|s)\le\pi_i^{(\tau)}(a_i^+|s)$, then:
\begin{align*}
    \frac{\partial \Phi(\theta^{(\tau)})}{\partial \theta_{s,a_i^+}} &= \frac{1}{1-\gamma}d^{(\tau)}(s)\pi_i^{(\tau)}(a_i^+|s)\overline{A_i^{(\tau)}}(s,a_i^+)\\
    &\ge \frac{1}{1-\gamma}d^{(\tau)}(s)\pi_i^{(\tau)}(a_i|s)\overline{A_i^{(\tau)}}(s,a_i^+)\\
    &\ge \frac{1}{1-\gamma}d^{(\tau)}(s)\pi_i^{(\tau)}(a_i|s)\overline{A_i^{(\tau)}}(s,a_i)\\
    &= \frac{\partial \Phi(\theta^{(\tau)})}{\partial \theta_{s,a_i}}
\end{align*}
Since $\pi_i^{(\tau)}(a_i|s)\le\pi_i^{(\tau)}(a_i^+|s) ~\Longrightarrow~ \theta_{s,a_i}^{(\tau)}\le\theta_{s,a_i^+}^{(\tau)}$, we have:
\begin{align*}
    \theta_{s,a_i^+}^{(\tau+1)} = \theta_{s,a_i^+}^{(\tau)} + \eta \frac{\partial \Phi(\theta^{(\tau)})}{\partial \theta_{s,a_i^+}} \ge \theta_{s,a_i}^{(\tau)} + \eta \frac{\partial \Phi(\theta^{(\tau)})}{\partial \theta_{s,a_i}} = \theta_{s,a_i}^{(\tau+1)}
\end{align*}
Thus $\pi_i^{(\tau+1)}(a_i|s)\le\pi_i^{(\tau+1)}(a_i^+|s)$ also holds, which completes the proof.
\end{proof}

\begin{lemma}\label{lemma:I-plus-empty}
$I_+^{i,s} = \emptyset$.
\end{lemma}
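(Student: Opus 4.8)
The plan is to argue by contradiction: fix $i,s$ and suppose $I_+^{i,s}\neq\emptyset$, picking some $a_i^+\in I_+^{i,s}$. By Corollary~\ref{coro:average-A-bound-T_1}, for all $t>T_1$ we have $\overline{A_i^{(t)}}(s,a_i^+)>\Delta/4>0$, so the gradient component $\frac{\partial \Phi(\theta^{(t)})}{\partial\theta_{s,a_i^+}}=\frac{1}{1-\gamma}d^{(t)}(s)\pi_i^{(t)}(a_i^+|s)\overline{A_i^{(t)}}(s,a_i^+)$ is strictly positive; hence $\theta_{s,a_i^+}^{(t)}$ is monotonically increasing for $t>T_1$. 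On the other hand, Corollary~\ref{coro:measure-on-three-sets} forces $\pi_i^{(t)}(a_i^+|s)\to 0$. The goal is to show these two facts are incompatible, i.e.\ that a persistently increasing $\theta_{s,a_i^+}^{(t)}$ cannot have vanishing softmax probability. Concretely, I would show that $\theta_{s,a_i^+}^{(t)}-\max_{a_i}\theta_{s,a_i}^{(t)}$ stays bounded below, which (since the maximizing action always carries probability at least $1/|\cA_i|$) gives $\liminftinf \pi_i^{(t)}(a_i^+|s)>0$, the desired contradiction.

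To make the comparison concrete, I would first isolate a single competing action in $I_0^{i,s}$ that dominates $a_i^+$. Since $\sum_{a_i\in I_0^{i,s}}\pi_i^{(t)}(a_i|s)\to 1$ while every $I_+^{i,s}$-probability vanishes, the $\max$-probability action eventually lies in $I_0^{i,s}$ and carries probability at least $1/|\cA_i|$; because $I_0^{i,s}$ is finite, some fixed $a_i^0\in I_0^{i,s}$ is this argmax infinitely often, so $\pi_i^{(t)}(a_i^0|s)>\pi_i^{(t)}(a_i^+|s)$ at arbitrarily large times. Feeding this into the monotone ordering of Lemma~\ref{lemma:bound-I-0-by-I-plus} (once $\pi_i(a_i^0|s)\le\pi_i(a_i^+|s)$ it stays so, hence the reverse strict inequality propagates \emph{backward} in time) pins down $a_i^0$ with $\theta_{s,a_i^0}^{(t)}>\theta_{s,a_i^+}^{(t)}$ for all $t\ge T_1$ and $\limsuptinf\pi_i^{(t)}(a_i^0|s)\ge 1/|\cA_i|$. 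It then suffices to bound $u_t:=\theta_{s,a_i^0}^{(t)}-\theta_{s,a_i^+}^{(t)}>0$ from above, since $\pi_i^{(t)}(a_i^+|s)=e^{-u_t}\pi_i^{(t)}(a_i^0|s)$ and the right factor does not vanish along a subsequence.

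Finally, to control $u_t$ I would mimic the gradient-ratio device used for $I_-^{i,s}$ in Lemma~\ref{lemma:go-to-infinity-I-minus}. Writing the increment $u_{t+1}-u_t=\eta\frac{d^{(t)}(s)}{1-\gamma}\big(\pi_i^{(t)}(a_i^0|s)\overline{A_i^{(t)}}(s,a_i^0)-\pi_i^{(t)}(a_i^+|s)\overline{A_i^{(t)}}(s,a_i^+)\big)$, the second term is a guaranteed positive drift of size at least $\tfrac{\Delta}{4}\pi_i^{(t)}(a_i^+|s)$ pulling $u_t$ down, whereas the first is controlled by the vanishing $\overline{A_i^{(t)}}(s,a_i^0)\to 0$. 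The heart of the argument is the ratio estimate $\big|\tfrac{\partial\Phi/\partial\theta_{s,a_i^+}}{\partial\Phi/\partial\theta_{s,a_i^0}}\big|\ge e^{-u_t}\tfrac{\Delta(1-\gamma)}{4}$, which exploits the persistent advantage gap $\overline{A_i^{(t)}}(s,a_i^+)\ge \Delta/4\gg \overline{A_i^{(t)}}(s,a_i^0)$ to force the growth of $\theta_{s,a_i^+}$ to keep pace with $\theta_{s,a_i^0}$ and thereby prevent $u_t\to\infty$. The main obstacle is precisely this step: because $a_i^0$ can have probability far larger than $\pi_i^{(t)}(a_i^+|s)$, its gradient magnitude may temporarily exceed that of $a_i^+$ despite the shrinking advantage, so the bound on $u_t$ cannot come from a single-step sign argument but must aggregate the guaranteed positive drift on $a_i^+$ against the summable positive excursions of $\overline{A_i^{(t)}}(s,a_i^0)$, while handling the interleaving of steps where $\overline{A_i^{(t)}}(s,a_i^0)$ changes sign and quantifying ``$\overline{A_i^{(t)}}(s,a_i^0)\to 0$'' strongly enough. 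I would additionally lean on the identity $\sum_{a_i}\pi_i^{(t)}(a_i|s)\overline{A_i^{(t)}}(s,a_i)=0$ to tie the positive drift on $a_i^+$ to the aggregate drift elsewhere, which should supply the quantitative handle needed to close the bound and conclude $I_+^{i,s}=\emptyset$ for every $i,s$.
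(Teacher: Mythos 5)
Your setup is sound, and several of your intermediate observations match the paper's (the contradiction hypothesis, the monotone increase of $\theta_{s,a_i^+}^{(t)}$ after $T_1$, the backward propagation of strict dominance via Lemma~\ref{lemma:bound-I-0-by-I-plus}, and the eventual appeal to $\sum_{a_i}\pi_i^{(t)}(a_i|s)\overline{A_i^{(t)}}(s,a_i)=0$). But the step you yourself flag as ``the heart of the argument''---bounding $u_t=\theta_{s,a_i^0}^{(t)}-\theta_{s,a_i^+}^{(t)}$ from above---is left open, and the strategy you sketch for it does not close. Two concrete problems. First, you propose to control the upward drift of $\theta_{s,a_i^0}^{(t)}$ by the ``summable positive excursions of $\overline{A_i^{(t)}}(s,a_i^0)$,'' but nothing available gives summability: you only know $\overline{A_i^{(t)}}(s,a_i^0)\to 0$, and in fact Lemma~\ref{lemma:go-to-infinity-I-minus} exhibits some $a_i\in I_0^{i,s}$ with $\limsuptinf\theta_{s,a_i}^{(t)}=+\infty$, so the positive increments to a single $I_0$-coordinate need not be summable at all. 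Second, the identity $\sum_{a_i}\pi_i^{(t)}(a_i|s)\overline{A_i^{(t)}}(s,a_i)=0$ only controls the \emph{aggregate} drift of the $I_0$-actions dominating $a_i^+$; it cannot pin down the sign or size of $\pi_i^{(t)}(a_i^0|s)\overline{A_i^{(t)}}(s,a_i^0)$ for one particular $a_i^0$, since other dominating $I_0$-actions can absorb the required negativity. There is also a circularity in your pacing argument: the guaranteed drift on $\theta_{s,a_i^+}$ is proportional to $\pi_i^{(t)}(a_i^+|s)=e^{-u_t}\pi_i^{(t)}(a_i^0|s)$, so once $u_t$ is large the catch-up force is exponentially damped, and a ratio estimate of the form $e^{-u_t}\Delta(1-\gamma)/4$ cannot by itself prevent $u_t\to\infty$.

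The paper closes exactly this gap by aggregating rather than isolating: it defines $B_0^{i,s}(a_i^+)$, the set of $I_0$-actions whose probability is dominated by $\pi_i^{(t)}(a_i^+|s)$ for all $t\ge T_1$, and uses the zero-sum identity together with the smallness of the $I_-$-probabilities relative to $\pi_i^{(t)}(a_i^+|s)$ (from Lemma~\ref{lemma:go-to-infinity-I-minus}) and the near-vanishing of $\overline{A_i^{(t)}}$ on $I_0$ to show $\sum_{a_i\in I_0^{i,s}\setminus B_0^{i,s}(a_i^+)}\pi_i^{(t)}(a_i|s)\overline{A_i^{(t)}}(s,a_i)<0$ eventually. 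Hence $\sum_{a_i\in I_0^{i,s}\setminus B_0^{i,s}(a_i^+)}\theta_{s,a_i}^{(t)}$ is eventually decreasing and thus bounded above, which---combined with $\theta_{s,a_i^+}^{(t)}$ bounded below---keeps $\sum_{a_i\in I_0^{i,s}\setminus B_0^{i,s}(a_i^+)}\pi_i^{(t)}(a_i|s)\big/\pi_i^{(t)}(a_i^+|s)$ bounded, while the $B_0$ part of the ratio is at most $|\cA_i|$ by definition; this contradicts $\sum_{a_i\in I_0^{i,s}}\pi_i^{(t)}(a_i|s)\big/\pi_i^{(t)}(a_i^+|s)\to+\infty$ from Corollary~\ref{coro:measure-on-three-sets}. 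If you replace your single competitor $a_i^0$ with this aggregate over $I_0^{i,s}\setminus B_0^{i,s}(a_i^+)$, your plan essentially becomes the paper's proof.
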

\begin{proof}
We will prove by contradiction. If $I_+^{i,s} \neq \emptyset$, select an arbitrary $a_i^+\in I_+^{i,s}$ and define
$$B_0^{i,s}(a_i^+):= \{a_i\in I_0^{i,s}~|~ \pi_i^{(t)}(a_i|s)\le\pi_i^{(t)}(a_i^+|s), \forall t \ge T_1\}.$$
From Lemma \ref{lemma:go-to-infinity-I-minus}, we have that for any $a_i\in I_-^{i,s}$ $\limtinf\frac{\pi_i^{(t)}(a_i|t)}{\pi_i^{(t)}(a_i^+|t)} = 0$, thus there exists $T_2>T_1$ such that for any $t\ge T_2$,
\begin{equation*}
    \frac{\pi_i^{(t)}(a_i|t)}{\pi_i^{(t)}(a_i^+|t)} \le \frac{(1-\gamma)\Delta}{16|\cA_i|}, \quad \forall a_i\in I_-^{i,s}.
\end{equation*}
Additionally, since for any $a_i\in I_0^{i,s}, \limtinf \overline{A_i^{(t)}}(s, a_i) =0$, there exists $T_3>T_1$ such that for any $t\ge T_3$,
\begin{equation*}
    \overline{A_i^{(t)}}(s, a_i) \ge \frac{-\Delta}{16|\cA_i|},\quad \forall a_i\in I_0^{i,s}.
\end{equation*}
Thus, for $t\ge\max\{T_2, T_3\}$, from the fact that $\sum_{a_i}\pi_i^{(t)}(a_i|s)\overline{A_i^{(t)}}(s,a_i) = 0$, we have:
\begin{align*}
    0 &= \sum_{a_i\in I_0^{i,s}}\pi_i^{(t)}(a_i|s)\overline{A_i^{(t)}}(s,a_i) + \sum_{a_i\in I_+^{i,s}}\pi_i^{(t)}(a_i|s)\overline{A_i^{(t)}}(s,a_i) + \sum_{a_i\in I_-^{i,s}}\pi_i^{(t)}(a_i|s)\overline{A_i^{(t)}}(s,a_i)\\
    &\ge  \sum_{a_i\in I_0^{i,s}\backslash B_0^{i,s}(a_i^+)}\pi_i^{(t)}(a_i|s)\overline{A_i^{(t)}}(s,a_i) + \sum_{a_i\in B_0^{i,s}(a_i^+)}\pi_i^{(t)}(a_i|s)\overline{A_i^{(t)}}(s,a_i)  \\&\qquad+\pi_i^{(t)}(a_i^+|s)\overline{A_i^{(t)}}(s,a_i^+)+ \sum_{a_i\in I_-^{i,s}}\pi_i^{(t)}(a_i|s)\overline{A_i^{(t)}}(s,a_i)\\
    &\ge  \sum_{a_i\in I_0^{i,s}\backslash B_0^{i,s}(a_i^+)}\pi_i^{(t)}(a_i|s)\overline{A_i^{(t)}}(s,a_i)+\sum_{a_i\in B_0^{i,s}(a_i^+)} \pi_i^{(t)}(a_i|s)\frac{-\Delta}{16|\cA_i|}\\&\qquad+\pi_i^{(t)}(a_i^+|s)\overline{A_i^{(t)}}(s,a_i^+)+ \sum_{a_i\in I_-^{i,s}}\frac{(1-\gamma)\Delta}{16|\cA_i|}\pi_i^{(t)}(a_i^+|s)\overline{A_i^{(t)}}(s,a_i)\\
    &\ge \sum_{a_i\in I_0^{i,s}\backslash B_0^{i,s}(a_i^+)}\pi_i^{(t)}(a_i|s)\overline{A_i^{(t)}}(s,a_i)+ |\cA_i|\pi_i^{(t)}(a_i|s)\frac{-\Delta}{16|\cA_i|}\\&\qquad+\pi_i^{(t)}(a_i^+|s)\frac{\Delta}{4}+ |\cA_i|\frac{(1-\gamma)\Delta}{16|\cA_i|}\pi_i^{(t)}(a_i^+|s)\frac{-1}{1-\gamma}\\
    &\ge \sum_{a_i\in I_0^{i,s}\backslash B_0^{i,s}(a_i^+)}\pi_i^{(t)}(a_i|s)\overline{A_i^{(t)}}(s,a_i)+ \pi_i^{(t)}(a_i^+|s)\frac{\Delta}{8}\\
    \Longrightarrow~& \sum_{a_i\in I_0^{i,s}\backslash B_0^{i,s}(a_i^+)}\pi_i^{(t)}(a_i|s)\overline{A_i^{(t)}}(s,a_i) < 0.
\end{align*}
Thus for $t\ge\max\{T_2, T_3\}$,
\begin{equation*}
\begin{split}
    \sum_{a_i\in I_0^{i,s}\backslash B_0^{i,s}(a_i^+)}\theta^{(t+1)}_{s,a_i} &= \sum_{a_i\in I_0^{i,s}\backslash B_0^{i,s}(a_i^+)}\theta^{(t)}_{s,a_i} + \eta \frac{1}{1-\gamma}d^{(t)}(s) \sum_{a_i\in I_0^{i,s}\backslash B_0^{i,s}(a_i^+)}\pi_i^{(t)}(a_i|s)\overline{A_i^{(t)}}(s,a_i)\\ &< \sum_{a_i\in I_0^{i,s}\backslash B_0^{i,s}(a_i^+)}\theta^{(t)}_{s,a_i},
\end{split}
\end{equation*}
which leads to the fact that $\sum_{a_i\in I_0^{i,s}\backslash B_0^{i,s}(a_i^+)}\theta^{(t)}_{s,a_i}$ is bounded from above. Further, from Lemma \ref{lemma:go-to-infinity-I-minus}, $\theta_{s,a_i^+}^{(t)}$ is bounded from below, thus the value
$$\frac{\sum_{a_i\in I_0^{i,s}\backslash B_0^{i,s}(a_i^+)}\pi_i^{(t)}(a_i|s)}{\pi_i^{(t)}(a_i^+|s)}$$
is bounded from above. However from Corollary \ref{coro:measure-on-three-sets},
$$\limtinf\frac{\sum_{a_i\in I_0^{i,s}}\pi_i^{(t)}(a_i|s)}{\pi_i^{(t)}(a_i^+|s)} = +\infty.$$
Thus
$$\limtinf\frac{\sum_{a_i\in B_0^{i,s}(a_i^+)}\pi_i^{(t)}(a_i|s)}{\pi_i^{(t)}(a_i^+|s)} = +\infty,$$
which contradicts the fact that
$$\pi_i^{(t)}(a_i|s)\le \pi_i^{(t)}(a_i^+|s), \quad \forall a_i\in B_0^{i,s}(a_i^+)$$
and finishes the proof by contradiction.
\end{proof}
Lemma \ref{lemma:I-plus-empty} directly implies asymptotic convergence for gradient play as state in Theorem \ref{thm:asymptotic-convergence}.

\begin{remark}\textbf{(Discussion on the isolated stationary points assumption)}
The proof of Theorem \ref{thm:asymptotic-convergence} resembles the technique used in \citep{agarwal2020} for the single agent case, which relies heavily on the fact that the sequence of $Q$-functions $Q^{(t)}(s,a)$ obtains a limit $Q^{(\infty)}(s,a)$. The existence of such a limit in the single agent case  follows from the monotonicity of the $Q$-functions. However, generalizing this proof to the multi-agent case requires the assumption that the sequence of averaged $Q$-functions $\overline{Q_i^{(t)}}(s,a_i)$ (which can be non-monotonic, see, e.g., Figure \ref{figure:NPG-detail-dynamic} in Appendix) has a limit $\overline{Q_i^{(\infty)}}(s,a_i)$, which is not necessarily true in general.  For instance, if the set of stationary policies $\mathcal{SP}:=\!\! \left\{\pi\!:\!\pi_{i}(a_i|s)\overline{A_i^\pi}(s,a_i) \!= \!0,\forall s\!\in\!\cS,a_i\!\in\!\cA,i \!= \!1\!,\!2\!,\!\dots\!,\!n\right\}$ is not isolated, one cannot rule out the possibility that (natural) gradient play will not converge to a fixed point $\pi^{(\infty)}$ 
(see e.g. \citep{absil2005} for counterexamples). Consequently, $\overline{Q_i^{(t)}}(s,a_i)$ might not converge to a single value. 
For the above reasons, we assume the stationary policies are isolated to ensure that $\pi^{(t)}$ converges to a fixed stationary policy $\pi^{(\infty)}$ and thus $\overline{Q_i^{(t)}}(s,a_i)$ obtains a limit. We believe that this assumption is  a conservative  condition that is sufficient to imply asymptotic convergence. It remains an interesting open question to establish convergence without this assumption.
\end{remark}
\subsubsection{Asymptotic convergence for natural gradient play}
The asymptotic convergence for natural gradient play is easier to establish compared with gradient play. 

From Lemma \ref{lemma:unregularized-NPG-1} and the assumption that $\phi(s,a)$ is upper-bounded, we know 
\begin{align*}
    \limtinf \sum_{a_i}\pi_i^{(t)}(a_i|s)\exp{\left(\frac{\eta\overline{A_i^{(t)}}(s,a_i)}{1-\gamma}\right)} = 1, ~\forall s, i = 1,2,\dots, n.
\end{align*}
Since
\begin{align*}
    \sum_{a_i}\pi_i^{(t)}(a_i|s)\exp{\left(\frac{\eta\overline{A_i^{(t)}}(s,a_i)}{1-\gamma}\right)} &\ge \sum_{a_i}\pi_i^{(t)}(a_i|s)\left(1 +\left(\frac{\eta\overline{A_i^{(t)}}(s,a_i)}{1-\gamma}\right) + \frac{1}{4}\left(\frac{\eta\overline{A_i^{(t)}}(s,a_i)}{1-\gamma}\right)^2 \right)\\
    &\qquad\qquad\qquad\qquad\qquad\qquad\qquad \textup{($e^x \ge 1 + x + \frac{x^2}{4}$ for $|x|\le 1$)}\\
    & = 1 + \frac{\eta^2}{4(1-\gamma)^2}\sum_{a_i}\pi_i^{(t)}(a_i|s)\overline{A_i^{(t)}}(s,a_i)^2\\
\Longrightarrow \quad &\limtinf \sum_{a_i}\pi_i^{(t)}(a_i|s)\overline{A_i^{(t)}}(s,a_i)^2 = 0\\
\Longrightarrow \quad &\limtinf \pi_i^{(t)}(a_i|s)\overline{A_i^{(t)}}(s,a_i) = 0, ~\forall s, a_i, i = 1,2, \dots, n\\
\Longrightarrow \quad &\limtinf \|\nabla_\theta\Phi(\theta^{(t)})\|_2 = 0
\end{align*}
Similar to the proof for gradient play, from the assumption that stationary points are isolated, we can conclude that $\pi^{(t)}$ converges to some stationary policy $\pi^{(\infty)}$,  and we can define $\overline{Q_i^{(\infty)}}(s,a_i), \overline{A_i^{(\infty)}}(s,a_i)$ accordingly. Asymptotic convergence is equivalent to
$$I_+^{i,s}:=\left\{a_i: \overline{A_i^{(\infty)}}(s,a_i) >0\right\} = \emptyset, \quad \forall s, ~i=1,2,\dots,n$$
We prove by contradiction. Suppose there exists $a_i^+$ such that $\overline{A_i^{(\infty)}}(s,a_i^+) >0$. From $\limtinf \pi_i^{(t)}(a_i|s)\overline{A_i^{(t)}}(s,a_i) = 0 $, we have that $\limtinf \pi_i^{(t)}(a_i^+|s) = 0.$

Select $a_i^0$ such that $\limtinf \pi_i^{(t)}(a_i^0|s) > 0.$ From $\limtinf \pi_i^{(t)}(a_i|s)\overline{A_i^{(t)}}(s,a_i) = 0 $, we have that $ \limtinf \overline{A_i^{(t)}}(s,a_i^0) = 0.$ Thus there exists $\Delta > 0$ and $T$ such that for $t > T$, 
\begin{align*}
    \overline{A_i^{(t)}}(s,a_i^+) > \Delta, \overline{A_i^{(t)}}(s,a_i^0) < \frac{\Delta}{2}
\end{align*}
Thus from natural gradient play scheme \eqref{eq:unregularized-NPG-2}
\begin{align*}
    \frac{\pi_i^{(t)}(a_i^+|s)}{\pi_i^{(t)}(a_i^0|s)} =\frac{\pi_i^{(T)}(a_i^+|s)}{\pi_i^{(T)}(a_i^0|s) }  \exp{\left(\frac{\eta}{1-\gamma}\sum_{\tau=T}^{t-1}\overline{A_i^{(\tau)}}(s,a_i^+)-\overline{A_i^{(\tau)}}(s,a_i^0)\right)} \ge \frac{\pi_i^{(T)}(a_i^+|s)}{\pi_i^{(T)}(a_i^0|s) },
\end{align*}
which contradict the fact that $\limtinf\frac{\pi_i^{(t)}(a_i^+|s)}{\pi_i^{(t)}(a_i^0|s)} =0$, and thus completes the proof.
\section{Proof of Theorem \ref{thm:unregularized-PG}}\label{apdx:unregularized-PG-NPG}
\subsection{Proof of Theorem \ref{thm:unregularized-PG} (Gradient play part)}\label{apdx:unregularized-PG}
\begin{proof}[Proof of Theorem \ref{thm:unregularized-PG}, gradient play]
From Lemma \ref{lemma:smoothness-unregularized}, $\Phi$ is $\beta$-smooth with $\beta = \frac{6n}{(1-\gamma)^3}$, we have that:
\begin{align*}
    \Phi(\theta^{(t+1)}) -  \Phi(\theta^{(t)}) &\ge \left<\nabla\Phi(\theta^{(t)}), \theta^{(t+1)} - \theta^{(t)}\right> - \frac{\beta}{2}\|\theta^{(t+1)} - \theta^{(t)}\|^2\\
    & = (\eta - \frac{\beta\eta^2}{2})\|\nabla\Phi(\theta^{(t)})\|^2\\
    & \ge \frac{\eta}{2}\|\nabla\Phi(\theta^{(t)})\|^2
\end{align*}
Summing over $t$ we get:
\begin{align*}
    \frac{\phi_{\max} - \phi_{\min}}{1-\gamma} \ge \Phi(\theta^{(T)}) - \Phi(\theta^{(0)}) \ge \frac{\eta}{2}\sum_{t=0}^{T-1}\|\nabla\Phi(\theta^{(t)})\|^2
\end{align*}
From Theorem \ref{lemma:non-uniform-gradient-domination} we have that
\begin{align*}
    \|\nabla\Phi(\theta^{(t)})\| \ge \frac{c}{M\sqrt{\max_i|\cA_i|}}\NEgap(\theta^{(t)})
\end{align*}
Thus
\begin{align*}
    \frac{1}{T}\sum_{t=0}^{T-1}\NEgap(\theta^{(t)})^2 \le \frac{2\max_i|\cA_i|M^2(\phi_{\max}-\phi_{\min})}{(1-\gamma)c^2\eta T}
\end{align*}
which completes the proof.
\end{proof}

\subsection{Proof of Theorem \ref{thm:unregularized-PG} (Natural gradient play part)}\label{apdx:unregularized-NPG}
\begin{lemma}\label{lemma:unregularized-NPG-1}
For $\eta \le \frac{(1-\gamma)^2}{2n(\phi_{\max}-\phi_{\min})}$, running scheme \eqref{eq:unregularized-NPG-2} will guarantee that
\begin{equation*}
    \begin{split}
        \Phi(\theta^{(t+1)}) - \Phi(\theta^{(t)}) \ge \frac{1}{\eta}\sum_{i=1}^n \sum_s d^{(t+1)}(s)\log Z_t^{i,s},
    \end{split}
\end{equation*}
where $Z_t^{i,s}$ is defined by
\begin{equation*}
    Z_t^{i,s}:= \sum_{a_i}\pi_i^{(t)}(a_i|s)\exp{\left(\frac{\eta\overline{A_{i,\phi}^{(t)}}(s,a_i)}{1-\gamma}\right)}.
\end{equation*}
\end{lemma}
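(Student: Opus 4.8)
The plan is to combine a performance-difference identity for the total potential $\Phi$ with the multiplicative-weights (mirror-descent) structure of the update \eqref{eq:unregularized-NPG-2}, and then to control the error incurred because all $n$ agents update their policies \emph{simultaneously}.

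First I would record the consistency fact that makes the statement well posed: although the update is driven by the reward-advantage $\overline{A_i^{(t)}}$ while $Z_t^{i,s}$ is written with the potential-advantage $\overline{A_{i,\phi}^{(t)}}$, these two coincide. Indeed, from $\frac{\partial J_i}{\partial\theta_{s,a_i}}=\frac{\partial\Phi}{\partial\theta_{s,a_i}}$ together with the gradient formula \eqref{eq:gradient-formula} we obtain $d_\theta(s)\pi_{\theta_i}(a_i|s)\overline{A_i^\theta}(s,a_i)=d_\theta(s)\pi_{\theta_i}(a_i|s)\overline{A_{i,\phi}^\theta}(s,a_i)$; since $d_\theta(s)>0$ by Assumption \ref{assump:like-ergodicity} and $\pi_{\theta_i}(a_i|s)>0$ under softmax, I may cancel and conclude $\overline{A_i^{(t)}}=\overline{A_{i,\phi}^{(t)}}$ pointwise, so the two are used interchangeably below.

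Next, treating the joint policy $\pi^{(t)}=\prod_i\pi_i^{(t)}$ as a single policy in the MDP whose reward is the potential $\phi$, the performance difference lemma \citep{Kakade02} gives
\[
\Phi(\theta^{(t+1)})-\Phi(\theta^{(t)})=\frac{1}{1-\gamma}\sum_s d^{(t+1)}(s)\sum_a \pi^{(t+1)}(a|s)A_\phi^{(t)}(s,a),
\]
and, using $\sum_a\pi^{(t)}(a|s)A_\phi^{(t)}(s,a)=0$, I may replace $\pi^{(t+1)}$ by $\pi^{(t+1)}-\pi^{(t)}$. I then telescope the product difference agent by agent,
\[
\prod_i\pi_i^{(t+1)}-\prod_i\pi_i^{(t)}=\sum_i\Big(\prod_{j<i}\pi_j^{(t+1)}\Big)\big(\pi_i^{(t+1)}-\pi_i^{(t)}\big)\Big(\prod_{j>i}\pi_j^{(t)}\Big),
\]
and split each factor $\prod_{j<i}\pi_j^{(t+1)}$ into $\prod_{j<i}\pi_j^{(t)}$ plus a remainder. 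The ``all-others-old'' piece collapses, via $\overline{A_{i,\phi}^{(t)}}(s,a_i)=\sum_{a_{-i}}\pi_{-i}^{(t)}(a_{-i}|s)A_\phi^{(t)}(s,a)$ and $\sum_{a_i}\pi_i^{(t)}\overline{A_{i,\phi}^{(t)}}=0$, into the clean per-agent term $\sum_{a_i}\pi_i^{(t+1)}(a_i|s)\overline{A_i^{(t)}}(s,a_i)$. The defining relation of the update then yields the exact mirror-descent identity
\[
\sum_{a_i}\pi_i^{(t+1)}(a_i|s)\frac{\eta\,\overline{A_i^{(t)}}(s,a_i)}{1-\gamma}=\mathrm{KL}\big(\pi_i^{(t+1)}(\cdot|s)\,\|\,\pi_i^{(t)}(\cdot|s)\big)+\log Z_t^{i,s}\ \ge\ \log Z_t^{i,s},
\]
so the collapsed terms contribute precisely the target $\frac{1}{\eta}\sum_i\sum_s d^{(t+1)}(s)\log Z_t^{i,s}$, together with a nonnegative surplus $\frac{1}{\eta}\sum_i\sum_s d^{(t+1)}(s)\,\mathrm{KL}(\pi_i^{(t+1)}(\cdot|s)\|\pi_i^{(t)}(\cdot|s))$.

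The main obstacle — and the reason for the step-size restriction $\eta\le\frac{(1-\gamma)^2}{2n(\phi_{\max}-\phi_{\min})}$ — is the remainder (``cross'') terms produced by $\prod_{j<i}\pi_j^{(t+1)}-\prod_{j<i}\pi_j^{(t)}$, which are second order in the simultaneous policy changes and have no single-agent analogue. I would bound these using $|A_\phi^{(t)}|\le\frac{\phi_{\max}-\phi_{\min}}{1-\gamma}$ together with the observation that the chosen step size keeps every exponent $\frac{\eta\,\overline{A_i^{(t)}}(s,a_i)}{1-\gamma}$ bounded by $\tfrac{1}{2n}$ in magnitude, so that each ratio $\pi_i^{(t+1)}/\pi_i^{(t)}$ stays within a constant factor of $1$ and each policy increment is controlled by the corresponding KL divergence (via Pinsker or an elementary $\ell_1$/entropy estimate). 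Showing that the aggregate magnitude of the cross terms is dominated by the KL surplus then closes the inequality with \emph{no} leftover error term, which is exactly where the factor $n$ (number of interacting agents) and $(\phi_{\max}-\phi_{\min})$ (range of the potential advantage) enter the step-size bound. Summing the resulting per-state inequality against $d^{(t+1)}$ yields the claim.
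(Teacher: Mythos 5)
Your proposal is correct and follows essentially the same route as the paper's proof: performance difference lemma for $\Phi$, the agent-by-agent telescoping of the product policy with the hybrid averaged advantage, the mirror-descent identity turning the "all-others-old" piece into $\mathrm{KL}+\log Z_t^{i,s}$, and Pinsker's inequality to show the KL surplus absorbs the simultaneous-update cross terms under the stated step size. The only cosmetic difference is that you invoke boundedness of the ratios $\pi_i^{(t+1)}/\pi_i^{(t)}$, which the paper does not need since Pinsker controls the $\ell_1$ increments unconditionally.
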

\begin{proof}
From performance difference lemma we have that
\begin{align*}
    \Phi(\theta^{(t+1)})-\Phi(\theta^{(t)}) = \frac{1}{1-\gamma}\sum_sd^{(t+1)}(s)\sum_a\left(\pi^{(t+1)}(a|s)-\pi^{(t)}(a|s)\right)A_\phi^{(t)}(s,a).
\end{align*}
We  define
\begin{equation}
    \widetilde{A_{i,\phi}^{(t)}}(s,a_i):= \sum_{a_{-i}}\prod_{j=1}^{i-1}\pi_j^{(t+1)}(a_j|s)\prod_{j=i+1}^n\pi_j^{(t)}(a_j|s)A_\phi^{(t)}(s,a_i,a_{-i}).
\end{equation}
Then
\begin{align*}
    &\quad \Phi(\theta^{(t+1)})-\Phi(\theta^{(t)}) = \frac{1}{1-\gamma}\sum_sd^{(t+1)}(s)\sum_a\left(\pi^{(t+1)}(a|s)-\pi^{(t)}(a|s)\right)A_\phi^{(t)}(s,a)\\
    &= \!\frac{1}{1\!-\!\gamma}\!\sum_s\!d^{(t\!+\!1)}(s)\!\sum_a\!\sum_{i=1}^n\!\!\left(\prod_{j=1}^{i}\!\pi_j^{(t\!+\!1)}\!(a_j|s)\!\!\prod_{j=i+1}^n\!\!\!\pi_j^{(t)}\!(a_j|s)\!-\!\!\prod_{j=1}^{i-1}\!\pi_j^{(t\!+\!1)}\!(a_j|s)\prod_{j=i}^n\!\pi_j^{(t)}\!(a_j|s))\!\!\right)\!\!A_{\!\phi}^{(t)}\!(s,a)\\
    &=\frac{1}{1-\gamma}\sum_sd^{(t+1)}(s)\sum_{i=1}^n\sum_{a_i}\left(\pi_i^{(t+1)}(a_i|s)-\pi_i^{(t)}(a_i|s)\right) \widetilde{A_{i,\phi}^{(t)}}(s,a_i)\\
    &=\frac{1}{1-\gamma}\sum_sd^{(t+1)}(s)\sum_{i=1}^n\sum_{a_i}\left(\pi_i^{(t+1)}(a_i|s)-\pi_i^{(t)}(a_i|s)\right) \overline{A_{i,\phi}^{(t)}}(s,a_i) \\&\quad+ \frac{1}{1-\gamma}\sum_sd^{(t+1)}(s)\sum_{i=1}^n\sum_{a_i}\left(\pi_i^{(t+1)}(a_i|s)-\pi_i^{(t)}(a_i|s)\right) \left(\widetilde{A_{i,\phi}^{(t)}}(s,a_i)-\overline{A_{i,\phi}^{(t)}}(s,a_i)\right)\\
    &=\underbrace{\frac{1}{1-\gamma}\sum_sd^{(t+1)}(s)\sum_{i=1}^n\sum_{a_i}\pi_i^{(t+1)}(a_i|s) \overline{A_{i,\phi}^{(t)}}(s,a_i)}_{\textup{Part A}} \\&\quad+ \underbrace{\frac{1}{1-\gamma}\sum_sd^{(t+1)}(s)\sum_{i=1}^n\sum_{a_i}\left(\pi_i^{(t+1)}(a_i|s)-\pi_i^{(t)}(a_i|s)\right) \left(\widetilde{A_{i,\phi}^{(t)}}(s,a_i)-\overline{A_{i,\phi}^{(t)}}(s,a_i)\right)}_{\textup{Part B}}.
\end{align*}
From scheme \eqref{eq:unregularized-NPG-2},
\begin{equation*}
    \overline{A_{i,\phi}^{(t)}}(s,a_i) = \frac{1-\gamma}{\eta}\left(\log\left(\frac{\pi_i^{(t+1)}(a_i|s)}{\pi_i^{(t)}(a_i|s)}\right)+\log\left(Z_t^{i,s}\right)\right)
\end{equation*}
Substitute this into Part A, we have
\begin{align*}
    \textup{Part A} &= \frac{1}{\eta}\sum_sd^{(t+1)}(s)\sum_{i=1}^n\sum_{a_i}\pi_i^{(t+1)}(a_i|s) \overline{A_{i,\phi}^{(t)}}(s,a_i)\\
    & = \frac{1}{\eta}\sum_sd^{(t+1)}(s)\sum_{i=1}^n\sum_{a_i}\pi_i^{(t+1)}(a_i|s) \left(\log\left(\frac{\pi_i^{(t+1)}(a_i|s)}{\pi_i^{(t)}(a_i|s)}\right)+\log\left(Z_t^{i,s}\right)\right)\\
    &= \frac{1}{\eta}\sum_s\sum_{i=1}^nd^{(t+1)}(s)\textup{KL}(\pi_{i,s}^{(t+1)}||\pi_{i,s}^{(t)})  + \frac{1}{\eta}\sum_s\sum_{i=1}^nd^{(t+1)}(s)\log\left(Z_t^{i,s}\right).
\end{align*}
Further, we have that
\begin{align*}
    &\quad \left|\widetilde{A_{i,\phi}^{(t)}}(s,a_i)-\overline{A_{i,\phi}^{(t)}}(s,a_i)\right| \\
    &= \left|\sum_{a_{-i}}\left(\prod_{j=1}^{i-1}\pi_j^{(t+1)}(a_j|s)-\prod_{j=1}^{i-1}\pi_j^{(t)}(a_j|s)\right)\prod_{j=i+1}^n\pi_j^{(t)}(a_j|s)A_\phi^{(t)}(s,a_i,a_{-i})\right|\\
    &\le \frac{\phi_{\max}-\phi_{\min}}{1-\gamma}\sum_{j=1}^{i-1}\|\pi_{j,s}^{(t+1)}-\pi_{j,s}^{(t)}\|_1\\
    &\le \frac{\phi_{\max}-\phi_{\min}}{1-\gamma}\sum_{j=1}^{n}\|\pi_{j,s}^{(t+1)}-\pi_{j,s}^{(t)}\|_1.
\end{align*}
Thus
\begin{align*}
    |\textup{Part B}|&\le \frac{1}{1-\gamma}\sum_s d^{(t+1)}(s)\sum_{i=1}^n\sum_{a_i}\left|\pi_i^{(t+1)}(a_i|s)-\pi_i^{(t)}(a_i|s)\right| \left|\widetilde{A_{i,\phi}^{(t)}}(s,a_i)-\overline{A_{i,\phi}^{(t)}}(s,a_i)\right|\\
    & \le \frac{\phi_{\max}-\phi_{\min}}{(1-\gamma)^2}\sum_{i=1}^n\sum_s d^{(t+1)}(s) \sum_{a_i}\left|\pi_i^{(t+1)}(a_i|s)-\pi_i^{(t)}(a_i|s)\right|\sum_{j=1}^{n}\|\pi_{j,s}^{(t+1)}-\pi_{j,s}^{(t)}\|_1\\
    &\le  \frac{\phi_{\max}-\phi_{\min}}{(1-\gamma)^2}\sum_sd^{(t+1)}(s)\left(\sum_{i=1}^n\|\pi_{i,s}^{(t+1)}-\pi_{i,s}^{(t)}\|_1\right)^2\\
     &\le \frac{n(\phi_{\max}-\phi_{\min})}{(1-\gamma)^2}\sum_sd^{(t+1)}(s)\sum_{i=1}^n\|\pi_{i,s}^{(t+1)}-\pi_{i,s}^{(t)}\|_1^2\\
     &\le \frac{2n(\phi_{\max}-\phi_{\min})}{(1-\gamma)^2}\sum_sd^{(t+1)}(s)\sum_{i=1}^n\textup{KL}(\pi_{i,s}^{(t+1)}||\pi_{i,s}^{(t)}) \quad \textup{(Pinsker's inequality)}
\end{align*}
Thus, when $\eta \le \frac{(1-\gamma)^2}{2n(\phi_{\max}-\phi_{\min})}$, we have that
\begin{align*}
    &\Phi(\theta^{(t+1)})-\Phi(\theta^{(t)}) = \textup{Part A} + \textup{Part B}\\
    &\ge \left(\frac{1}{\eta}-\frac{2n(\phi_{\max}-\phi_{\min})}{(1-\gamma)^2}\right)\sum_s\sum_{i=1}^nd^{(t+1)}(s)\textup{KL}(\pi_{i,s}^{(t+1)}||\pi_{i,s}^{(t)})  + \frac{1}{\eta}\sum_s\sum_{i=1}^nd^{(t+1)}(s)\log\left(Z_t^{i,s}\right)\\
    &\ge \frac{1}{\eta}\sum_s\sum_{i=1}^nd^{(t+1)}(s)\log\left(Z_t^{i,s}\right),
\end{align*}
which completes the proof.
\end{proof}

\begin{lemma}\label{lemma:unregularized-NPG-2} For $\eta \le (1-\gamma)^2$
\begin{equation*}
    \sum_{i=1}^n \sum_s d^{(t+1)}(s)\log Z_t^{i,s} \ge \frac{c\eta^2}{3M} \NEgap(\theta^{(t)})^2
\end{equation*}
\begin{proof}
From Lemma \ref{lemma:NE-gap-bound} we have that
$    \NEgap(\theta) \le \frac{1}{1-\gamma} \max_i\max_{s, a_i}\overline{A_i^{\theta}}(s, a_i).
$ On the other hand,
\begin{align*}
    &\quad Z_t^{i,s}= \sum_{a_i}\pi_i^{(t)}(a_i|s)\exp{\left(\frac{\eta\overline{A_i^{(t)}}(s,a_i)}{1-\gamma}\right)}\\
    &= \!\!\!\!\!\!\!\!\!\!\!\!\sum_{a_i \notin \argmax_{a_i}\! \!\overline{Q_i^{(t)}}(s,a_i)}\!\!\!\!\!\!\!\!\!\!\!\!\pi_i^{(t)}(a_i|s)\exp{\left(\frac{\eta\overline{A_i^{(t)}}(s,a_i)}{1-\gamma}\right)} +\!\!\!\!\!\!\!\! \sum_{a_i \in \argmax_{a_i} \!\overline{Q_i^{(t)}}(s,a_i)}\!\!\!\!\!\!\!\!\!\!\!\!\pi_i^{(t)}(a_i|s)\exp{\left(\frac{\eta\max_{a_i}\overline{A_i^{(t)}}(s,a_i)}{1-\gamma}\right)}\\
    &\ge \sum_{a_i \notin \argmax_{a_i} \!\overline{Q_i^{(t)}}(s,a_i)}\pi_i^{(t)}(a_i|s)\left(1 + \frac{\eta\overline{A_i^{(t)}}(s,a_i)}{1-\gamma}\right) \\
    &+ \sum_{a_i \in \argmax_{a_i} \!\overline{Q_i^{(t)}}(s,a_i)}\pi_i^{(t)}(a_i|s)\left(1 + \frac{\eta\max_{a_i}\overline{A_i^{(t)}}(s,a_i)}{1-\gamma} + \frac{1}{2}\left(\frac{\eta\max_{a_i}\overline{A_i^{(t)}}(s,a_i)}{1-\gamma}\right)^2\right)\\
    &= \sum_{a_i}\pi_i^{(t)}(a_i|s) + \frac{\eta}{1-\gamma}\sum_{a_i}\pi_i^{(t)}(a_i|s)\overline{A_i^{(t)}}(s,a_i) + \frac{1}{2} \!\!\!\!\!\!\!\!\!\!\!\!\sum_{a_i \in \argmax_{a_i} \!\overline{Q_i^{(t)}}(s,a_i)}\!\!\!\!\!\!\!\!\!\!\!\!\pi_i^{(t)}(a_i|s)\left(\frac{\eta\max_{a_i}\overline{A_i^{(t)}}(s,a_i)}{1-\gamma}\right)^2\\
    &= 1+ \frac{1}{2} \sum_{a_i \in \argmax_{a_i} \!\overline{Q_i^{(t)}}(s,a_i)}\pi_i^{(t)}(a_i|s)\left(\frac{\eta\max_{a_i}\overline{A_i^{(t)}}(s,a_i)}{1-\gamma}\right)^2\\
    &\ge 1 + \frac{c}{2}\left(\frac{\eta\max_{a_i}\overline{A_i^{(t)}}(s,a_i)}{1-\gamma}\right)^2.
\end{align*}
Thus
\begin{align*}
    \log(Z_t^{i,s}) &\ge \log\left(1 + \frac{c}{2}\left(\frac{\eta\max_{a_i}\overline{A_i^{(t)}}(s,a_i)}{1-\gamma}\right)^2\right).
\end{align*}
Because when $\eta \le (1-\gamma)^2$, we have $\frac{c}{2}\left(\frac{\eta\max_{a_i}\overline{A_i^{(t)}}(s,a_i)}{1-\gamma}\right)^2 \le \frac{1}{2}$, and that
$$\log(1+x)\ge \frac{2}{3}x,~~\textup{for } 0\le x\le \frac{1}{2}, $$
thus
\begin{align*}
    \log(Z_t^{i,s}) &\ge \log\left(1 + \frac{c}{2}\left(\frac{\eta\max_{a_i}\overline{A_i^{(t)}}(s,a_i)}{1-\gamma}\right)^2\right)\\
    &\ge \frac{c}{3}\left(\frac{\eta\max_{a_i}\overline{A_i^{(t)}}(s,a_i)}{1-\gamma}\right)^2.
\end{align*}
Thus
\begin{align*}
    \sum_{i=1}^n \sum_s d^{(t+1)}(s)\log Z_t^{i,s} &\ge  \frac{c}{3}\sum_{i=1}^n \sum_s d^{(t+1)}(s) \left(\frac{\eta\max_{a_i}\overline{A_i^{(t)}}(s,a_i)}{1-\gamma}\right)^2\\
    &\ge\frac{c\eta^2}{3M(1-\gamma)^2}\max_i\max_s\max_{a_i}\overline{A_i^{(t)}}(s,a_i)^2\\
    &\ge \frac{c\eta^2}{3M}\NEgap(\theta^{(t)})^2. \qedhere
\end{align*}
\end{proof}
\end{lemma}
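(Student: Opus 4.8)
The plan is to treat each $Z_t^{i,s}$ as the normalizing constant of the natural gradient update and show that it exceeds $1$ by an amount quadratic in the largest averaged advantage, then pass to $\log Z_t^{i,s}$ and finally to $\NEgap(\theta^{(t)})^2$ via Lemma~\ref{lemma:NE-gap-bound}. The starting observation is that the averaged advantage is centered under the current policy, $\sum_{a_i}\pi_i^{(t)}(a_i|s)\overline{A_i^{(t)}}(s,a_i)=0$, so the crude termwise bound $e^x\ge 1+x$ only yields $Z_t^{i,s}\ge 1$ and $\log Z_t^{i,s}\ge 0$, which is far too weak. The entire difficulty is to recover a quantitative, second-order improvement.

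To do this I would split the action set into the greedy actions $\argmax_{a_i}\overline{Q_i^{(t)}}(s,a_i)$ (equivalently $\argmax_{a_i}\overline{A_i^{(t)}}(s,a_i)$, since the two differ only by the action-independent $V_i^{(t)}(s)$) and the rest. On the non-greedy actions I would keep only $e^x\ge 1+x$; on the greedy actions, where $\overline{A_i^{(t)}}(s,a_i)=\max_{a_i}\overline{A_i^{(t)}}(s,a_i)\ge 0$, I would use the sharper $e^x\ge 1+x+\tfrac12 x^2$, valid for $x\ge 0$. Summing, the first-order contributions recombine into $\tfrac{\eta}{1-\gamma}\sum_{a_i}\pi_i^{(t)}(a_i|s)\overline{A_i^{(t)}}(s,a_i)=0$ and cancel, leaving only the quadratic term supported on the greedy actions:
\begin{equation*}
Z_t^{i,s}\ge 1+\frac{1}{2}\sum_{a_i\in\argmax_{a_i}\overline{Q_i^{(t)}}(s,a_i)}\pi_i^{(t)}(a_i|s)\left(\frac{\eta\max_{a_i}\overline{A_i^{(t)}}(s,a_i)}{1-\gamma}\right)^2.
\end{equation*}
By the definition of $c(\theta^{(t)})$ in \eqref{eq:c-theta}, the probability mass on the greedy actions is at least $c(\theta^{(t)})\ge c$, so $Z_t^{i,s}\ge 1+\tfrac{c}{2}\big(\tfrac{\eta\max_{a_i}\overline{A_i^{(t)}}(s,a_i)}{1-\gamma}\big)^2$.

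It remains to linearize the logarithm and sum. The step-size restriction $\eta\le(1-\gamma)^2$ is exactly what keeps $\tfrac{\eta\max_{a_i}\overline{A_i^{(t)}}(s,a_i)}{1-\gamma}\le 1$ (using $\overline{A_i^{(t)}}\le\tfrac{1}{1-\gamma}$), hence the quadratic term is at most $\tfrac12$ and the elementary inequality $\log(1+x)\ge\tfrac23 x$ on $[0,\tfrac12]$ gives $\log Z_t^{i,s}\ge \tfrac{c}{3}\big(\tfrac{\eta\max_{a_i}\overline{A_i^{(t)}}(s,a_i)}{1-\gamma}\big)^2$. Summing over $i$ and $s$, lower bounding each $d^{(t+1)}(s)\ge 1/M$ from Assumption~\ref{assump:like-ergodicity}, retaining only the pair $(i,s)$ attaining $\max_i\max_{s,a_i}\overline{A_i^{(t)}}(s,a_i)$, and invoking $\NEgap(\theta^{(t)})\le\tfrac{1}{1-\gamma}\max_i\max_{s,a_i}\overline{A_i^{(t)}}(s,a_i)$ from Lemma~\ref{lemma:NE-gap-bound}, delivers the claimed $\tfrac{c\eta^2}{3M}\NEgap(\theta^{(t)})^2$. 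The main obstacle is the cancellation step: because the advantage is centered, only an \emph{asymmetric} Taylor expansion that retains curvature solely on the greedy actions produces a nonvanishing quadratic term, and this is precisely where the constant $c$ and the step-size condition must enter.
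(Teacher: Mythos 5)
Your proposal is correct and follows essentially the same route as the paper's proof: the same greedy/non-greedy split with the asymmetric Taylor bounds $e^x\ge 1+x$ and $e^x\ge 1+x+\tfrac12x^2$, the same cancellation of the centered first-order term, the same use of $c$ to lower bound the greedy mass, the same $\log(1+x)\ge\tfrac23x$ linearization under $\eta\le(1-\gamma)^2$, and the same final step via $d^{(t+1)}(s)\ge 1/M$ and Lemma~\ref{lemma:NE-gap-bound}. No gaps.
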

We are now ready to prove the bound for natural gradient play in Theorem \ref{thm:unregularized-PG}.
\begin{proof}[Proof of Theorem \ref{thm:unregularized-PG}, natural gradient play]
Combining Lemma \ref{lemma:unregularized-NPG-1} and \ref{lemma:unregularized-NPG-2} we have
\begin{align*}
    \Phi(\theta^{(t+1)}) - \Phi(\theta^{(t)}) \ge \frac{1}{\eta}\sum_{i=1}^n \sum_s d^{(t+1)}(s)\log Z_t^{i,s}\\
    \ge \frac{c\eta}{3M}\NEgap(\theta^{(t)})^2
\end{align*}
Summing over $t$ we have
\begin{align*}
    \frac{\phi_{\max}-\phi_{\min}}{1-\gamma}\ge \Phi(\theta^{(T)}) - \Phi(\theta^{(0)}) \ge \frac{c\eta}{3M}\sum_{t=0}^{T-1}\NEgap(\theta^{(t)})^2,
\end{align*}
thus
\begin{align*}
    \frac{\sum_{t=0}^{T-1}\NEgap(\theta^{(t)})^2}{T}\le \frac{3M(\phi_{\max}-\phi_{\min})}{(1-\gamma)c\eta T},
\end{align*}
which completes the proof.
\end{proof}
\section{Proof for $\log$-barrier regularization}
\subsection{Proof of Theorem \ref{thm:log-barrier-PG}}\label{apdx:log-barrier-pg}
We start with the following lemma:
\begin{lemma}\label{lemma:log-barrier-pg}
Suppose $\theta$ is such that $ \|\nabla_{\theta_i}\widetilde{J_i}(\theta)\|_2 \le \lambda$,
then $\NEgap_i(\theta)\le \lambda M|\cA_i|$,
where $M$ is defined as in Assumption \ref{assump:like-ergodicity}.
\begin{proof}
 From $\|\nabla_{\theta_i}\widetilde{J}_i(\theta)\|_2\le \frac{\lambda}{2}$ we have that
\begin{align*}
    \frac{\partial \widetilde{J}_i(\theta)}{\partial\theta_{s,a_i}} &= \frac{1}{1-\gamma}d_\theta(s)\pi_{\theta_i}(a_i|s)\overline{A_i^\theta}(s,a_i) + \lambda - \lambda|\cA_i|\pi_{\theta_i}(a_i|s)\\
    &= \pi_{\theta_i}(a_i|s)\left(\frac{1}{1-\gamma}d_\theta(s)\overline{A_i^{\theta}}(s,a_i) - \lambda|\cA_i|\right) + \lambda \le \lambda\\
\Longrightarrow &\quad \pi_{\theta_i}(a_i|s)\left(\frac{1}{1-\gamma}d_\theta(s)\overline{A_i^{\theta}}(s,a_i) - \lambda|\cA_i|\right) \le 0\\
\Longrightarrow &\quad \frac{1}{1-\gamma}d_\theta(s)\overline{A_i^{\theta}}(s,a_i) - \lambda|\cA_i| \le 0\\
\Longrightarrow &\quad \overline{A_i^{\theta}}(s,a_i)\le \frac{\lambda|\cA_i|(1-\gamma)}{d_\theta(s)} \le \lambda|\cA_i|(1-\gamma)M
\end{align*}
Thus,
\begin{align*}
    \NEgap_i(\theta) = \sup_{\theta_i^*}J_i(\theta_i^*,\theta_{-i}) - J_i(\theta_i, \theta_{-i}) &= \frac{1}{1-\gamma} \sum_{s,a_i}d_{\theta^*}(s)\pi_{\theta_i^*}(a_i|s)\overline{A_i^\theta}(s,a_i)\\
    &\le \frac{1}{1-\gamma} \sum_{s,a_i}d_{\theta^*}(s)\max_{s,a_i}\overline{A_i^\theta}(s,a_i)\\
    &\le \frac{1}{1-\gamma} \sum_{s,a_i}d_{\theta^*}(s)\lambda|\cA_i|(1-\gamma)M\\
    &\le \lambda|\cA_i|M. \qedhere
\end{align*}
\end{proof}

\end{lemma}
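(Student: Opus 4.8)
The plan is to exploit the additive $\lambda$ offset in the $\log$-barrier gradient, which is precisely the feature that decouples the resulting bound from the (possibly tiny) probability $\pi_{\theta_i}(a_i|s)$. First I would regroup the three terms of the gradient formula for $\widetilde{J}_i$ into
\[
\frac{\partial \widetilde{J}_i(\theta)}{\partial\theta_{s,a_i}} = \pi_{\theta_i}(a_i|s)\left(\frac{1}{1-\gamma}d_\theta(s)\overline{A_i^\theta}(s,a_i) - \lambda|\cA_i|\right) + \lambda.
\]
Since $\|\nabla_{\theta_i}\widetilde{J}_i(\theta)\|_2 \le \lambda$ bounds every coordinate in absolute value by $\lambda$, in particular this displayed quantity is at most $\lambda$ for each fixed $(s,a_i)$.

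Subtracting $\lambda$ from both sides gives $\pi_{\theta_i}(a_i|s)\big(\tfrac{1}{1-\gamma}d_\theta(s)\overline{A_i^\theta}(s,a_i) - \lambda|\cA_i|\big) \le 0$. Here is the crucial step: because the softmax parameterization guarantees $\pi_{\theta_i}(a_i|s) > 0$ strictly, I can divide through by the (positive) probability and conclude $\tfrac{1}{1-\gamma}d_\theta(s)\overline{A_i^\theta}(s,a_i) \le \lambda|\cA_i|$, hence $\overline{A_i^\theta}(s,a_i) \le \lambda|\cA_i|(1-\gamma)/d_\theta(s) \le \lambda|\cA_i|(1-\gamma)M$, using $1/d_\theta(s)\le M$ from Assumption~\ref{assump:like-ergodicity}. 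Note that this advantage bound is uniform over all $(s,a_i)$ and, importantly, carries no factor like $1/\pi_{\theta_i}(a_i|s)$.

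Finally I would pass from this advantage bound to the NE-gap via the performance difference lemma (Lemma~\ref{lemma:performance-difference-lemma}): writing $\theta^* = (\theta_i^*, \theta_{-i})$ for an arbitrary best response and invoking the lemma, $\NEgap_i(\theta) = \tfrac{1}{1-\gamma}\sum_{s,a_i} d_{\theta^*}(s)\pi_{\theta_i^*}(a_i|s)\overline{A_i^\theta}(s,a_i)$. Upper bounding each advantage by the uniform bound above and using the normalization $\sum_{s,a_i} d_{\theta^*}(s)\pi_{\theta_i^*}(a_i|s) = 1$ collapses the sum to $\tfrac{1}{1-\gamma}\cdot\lambda|\cA_i|(1-\gamma)M = \lambda M|\cA_i|$, which is the claim.

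The main obstacle is conceptual rather than computational: one must recognize that the additive regularization term is exactly what allows the factor $\pi_{\theta_i}(a_i|s)$ to cancel. In the unregularized analysis (Lemma~\ref{lemma:non-uniform-gradient-domination}) the averaged advantage is only controlled when multiplied by $\pi_{\theta_i}(a_i|s)$, so an exponentially small probability can mask a large advantage, and this is precisely the origin of the $1/c$ factor. The $\log$-barrier forces the \emph{unweighted} averaged advantage to be small, which is what eliminates the $1/c$ dependence in Theorem~\ref{thm:log-barrier-PG}, at the cost of the extra $|\cA_i|$ factor that appears here. Once this structural observation is made, the remaining steps are routine.
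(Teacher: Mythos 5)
Your proposal is correct and follows essentially the same argument as the paper: regroup the regularized gradient coordinate as $\pi_{\theta_i}(a_i|s)(\cdot)+\lambda\le\lambda$, cancel the strictly positive softmax probability to get the uniform bound $\overline{A_i^\theta}(s,a_i)\le\lambda|\cA_i|(1-\gamma)M$, and conclude via the performance difference lemma. Your handling of the final sum (keeping the weights $d_{\theta^*}(s)\pi_{\theta_i^*}(a_i|s)$ so that they sum to one) is in fact slightly cleaner than the paper's displayed computation, but the route is identical.
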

Lemma \ref{lemma:log-barrier-pg} implies that any policy with gradient norm smaller than $\lambda$ is also a $\lambda M\max_i|\cA_i|$-NE. Thus by properly choosing $\lambda$, agents can find a $\epsilon$-NE by running gradient play.

We now prove Theorem \ref{thm:log-barrier-PG}.

\begin{proof}[Proof of Theorem \ref{thm:log-barrier-PG}]
From Lemma \ref{lemma:smoothness-log-barrier}, $\widetilde\Phi$ is $\beta$-smooth with $\beta = \frac{6n}{(1-\gamma)^3} + 2\lambda\max_i|\cA_i|$, we have that:
\begin{align*}
    \widetilde\Phi(\theta^{(t+1)}) -  \widetilde\Phi(\theta^{(t)}) &\ge \left<\nabla\widetilde\Phi(\theta^{(t)}), \theta^{(t+1)} - \theta^{(t)}\right> - \frac{\beta}{2}\|\theta^{(t+1)} - \theta^{(t)}\|^2\\
    & = (\eta - \frac{\beta\eta^2}{2})\|\nabla\Phi(\theta^{(t)})\|^2\\
    & \ge \frac{\eta}{2}\|\nabla\Phi(\theta^{(t)})\|^2
\end{align*}
For $\theta^{(0)} = \textbf{0}$, summing over $t$ we get:
\begin{align*}
    \frac{\phi_{\max} - \phi_{\min}}{1-\gamma} \ge \widetilde\Phi(\theta^{(T)}) - \widetilde\Phi(\theta^{(0)}) \ge \frac{\eta}{2}\sum_{t=0}^{T-1}\|\nabla\widetilde\Phi(\theta^{(t)})\|^2
\end{align*}
Thus,
\begin{equation*}
    \min_{0\le t\le T-1}\|\nabla\widetilde\Phi(\theta^{(t)})\| \le \frac{2(\phi_{\max} - \phi_{\min})}{(1-\gamma)\eta T}.
\end{equation*}
Thus for
\begin{align*}
    T &\ge \frac{2(\phi_{\max} - \phi_{\min})}{(1-\gamma)\eta \lambda^2}\\
    &= \frac{2\max_i|\cA_i|^2(\phi_{\max} - \phi_{\min})M^2}{(1-\gamma)\eta \epsilon^2},
\end{align*}
it can be guaranteed that
\begin{align*}
    \min_{0\le t\le T-1}\|\nabla\widetilde\Phi(\theta^{(t)})\| \le \lambda = \frac{\epsilon}{\max_i|\cA_i|M}.
\end{align*}
Then applying Lemma \ref{lemma:log-barrier-pg} completes the proof.
\end{proof}

\subsection{Proof of Theorem \ref{thm:log-barrier-NPG}}\label{apdx:log-barrier-NPG}
For notational simplicity, we define the following variables:
\begin{align*}
    f_i^{(t)}(s,a_i)&:= \frac{1}{1-\gamma}\overline{A_i^{(t)}}(s,a_i)+\frac{\lambda}{d^{(t)}(s)\pi_i^{(t)}(a_i|s)} - \frac{\lambda|\cA_i|}{d^{(t)}(s)}\\
    Z_t^{i,s}&:= \sum_{a_i}\pi_i^{(t)}(a_i|s)\exp{\left(\eta f^{(t)}(s,a_i)\right)}\\
    \Delta_i^{(t)}(s,a_i) &:= \frac{\pi_i^{(t+1)}(a_i|s)}{\pi_i^{(t)}(a_i|s)} - 1
\end{align*}
\begin{lemma}
\begin{align*}
    \sum_{a_i}\pi_i^{(t)}(a_i|s) f_i^{(t)}(s,a_i) &= 0\\
    \sum_{a_i}\pi_i^{(t)}(a_i|s) \Delta_i^{(t)}(s,a_i) &= 0\\
    Z_t^{i,s}&\ge1
\end{align*}
\begin{proof}
From the definition of $f_i^{(t)}(s,a_i), \Delta_i^{(t)}(s,a_i)$,
\begin{align*}
    &\quad \sum_{a_i}\pi_i^{(t)}(a_i|s) f_i^{(t)}(s,a_i)\\ &= \frac{1}{1\!-\!\gamma}\sum_{a_i}\pi_i^{(t)}(a_i|s)\overline{A_i^{(t)}}(s,a_i) \!+ \!\lambda \sum_{a_i}\pi_i^{(t)}(a_i|s)\frac{1}{d^{(t)}(s)\pi^{(t)}(a_i|s)} \!-\!\frac{\lambda|\cA_i|}{d^{(t)}(s)}\sum_{a_i}\pi_i^{(t)}(a_i|s)\\
    &=\frac{1}{d^{(t)}(s)}(\lambda|\cA_i|-\lambda|\cA_i|)=0\\
     &\quad \sum_{a_i}\pi_i^{(t)}(a_i|s) \Delta_i^{(t)}(s,a_i) =\sum_{a_i}\pi_i^{(t)}(a_i|s) \left(\frac{\pi_i^{(t+1)}(a_i|s)}{\pi_i^{(t)}(a_i|s)} - 1\right)\\
    &= \sum_{a_i}\pi_i^{(t+1)}(a_i|s) - \sum_{a_i}\pi_i^{(t)}(a_i|s) = 1-1=0
\end{align*}
Using the fact that $e^x \ge 1+x$,
\begin{align*}
   Z_t^{i,s}&=\sum_{a_i}\pi_i^{(t)}(a_i|s)\exp{\left(\eta f^{(t)}(s,a_i)\right)}\\
   &\ge \sum_{a_i}\pi_i^{(t)}(a_i|s)\left(1 + \eta f^{(t)}(s,a_i)\right)\\
   &\ge \sum_{a_i}\pi_i^{(t)}(a_i|s) + \eta \sum_{a_i}\pi_i^{(t)}(a_i|s) f_i^{(t)}(s,a_i)\ge 1. \qedhere
\end{align*}
\end{proof}
\end{lemma}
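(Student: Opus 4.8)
The plan is to verify the three identities by direct substitution, exploiting two elementary facts: that $\pi_i^{(t)}(\cdot|s)$ and $\pi_i^{(t+1)}(\cdot|s)$ are both probability distributions on $\cA_i$, and that the $\pi_i^{(t)}$-weighted average of the averaged advantage $\overline{A_i^{(t)}}$ vanishes.

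First I would establish $\sum_{a_i}\pi_i^{(t)}(a_i|s)f_i^{(t)}(s,a_i)=0$ by splitting $f_i^{(t)}$ into its three constituent terms. The advantage term contributes $\frac{1}{1-\gamma}\sum_{a_i}\pi_i^{(t)}(a_i|s)\overline{A_i^{(t)}}(s,a_i)$, which equals $\frac{1}{1-\gamma}\sum_a \pi^{(t)}(a|s)A_i^{(t)}(s,a)=0$ after unfolding $\overline{A_i^{(t)}}$ as a $\pi_{-i}^{(t)}$-average and using $\sum_a\pi(a|s)A_i(s,a)=\sum_a\pi(a|s)Q_i(s,a)-V_i(s)=0$. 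The remaining two terms combine as $\frac{\lambda}{d^{(t)}(s)}\sum_{a_i}\pi_i^{(t)}(a_i|s)\frac{1}{\pi_i^{(t)}(a_i|s)}-\frac{\lambda|\cA_i|}{d^{(t)}(s)}\sum_{a_i}\pi_i^{(t)}(a_i|s)$; the first sum collapses to $|\cA_i|$ and the second to $1$, so this equals $\frac{\lambda|\cA_i|}{d^{(t)}(s)}-\frac{\lambda|\cA_i|}{d^{(t)}(s)}=0$. Hence the whole sum vanishes.

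Next, the identity $\sum_{a_i}\pi_i^{(t)}(a_i|s)\Delta_i^{(t)}(s,a_i)=0$ is immediate from the definition $\Delta_i^{(t)}=\pi_i^{(t+1)}/\pi_i^{(t)}-1$: the ratio cancels the weight, leaving $\sum_{a_i}\pi_i^{(t+1)}(a_i|s)-\sum_{a_i}\pi_i^{(t)}(a_i|s)=1-1=0$ by normalization of both policies.

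Finally I would deduce $Z_t^{i,s}\ge1$ from the first identity together with the convexity bound $e^x\ge 1+x$: applying it termwise gives $Z_t^{i,s}=\sum_{a_i}\pi_i^{(t)}(a_i|s)e^{\eta f_i^{(t)}(s,a_i)}\ge 1+\eta\sum_{a_i}\pi_i^{(t)}(a_i|s)f_i^{(t)}(s,a_i)=1$. There is no genuine obstacle here; the only point requiring any care is the first identity, specifically recognizing that the $\log$-barrier contributions to $f_i^{(t)}$ cancel in $\pi_i^{(t)}$-expectation and that the averaged-advantage term is a legitimate weighted advantage that averages to zero. The rest is routine bookkeeping.
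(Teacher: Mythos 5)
Your proposal is correct and follows essentially the same route as the paper's proof: termwise cancellation of the $\log$-barrier contributions and the vanishing $\pi_i^{(t)}$-average of $\overline{A_i^{(t)}}$ for the first identity, normalization of both policies for the second, and $e^x\ge 1+x$ combined with the first identity for $Z_t^{i,s}\ge 1$. The only difference is that you spell out why $\sum_{a_i}\pi_i^{(t)}(a_i|s)\overline{A_i^{(t)}}(s,a_i)=0$, which the paper leaves implicit.
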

\begin{lemma}\label{lemma:NPG-bounded-probability}
For $\eta \le \frac{1}{15\left(\frac{1}{(1-\gamma)^2} + \lambda|\cA_i|M \right)}, \theta^{(0)} = \textbf{0}$, running scheme \eqref{eq:log-barrier-NPG-2} will guarantee that
\begin{equation*}
    \pi_i^{(t)}(a_i|s)\ge\frac{\lambda}{4\left(\lambda |\cA_i|M + \frac{1}{(1-\gamma)^2}\right)}.
\end{equation*}
\begin{proof}
We will prove by induction. For  $\theta^{0} = \textbf{0}$ apparently $\pi^{(0)}$ satisfies the lower bound. Suppose that
\begin{equation*}
    \pi_i^{(t)}(a_i|s)\ge\frac{\lambda}{4\left(\lambda |\cA_i|M + \frac{1}{(1-\gamma)^2}\right)},
\end{equation*}
then from the definition of $f_i^{(t)}(s,a)$ we have that
\begin{equation*}
    -\frac{1}{(1-\gamma)^2} - \lambda|\cA_i|M\le f_i^{(t)}(s,a_i)\le 5\left(\frac{1}{(1-\gamma)^2} + \lambda|\cA_i|M\right)
\end{equation*}
Thus
\begin{equation*}
    -\frac{1}{15}\le\eta f_i^{(t)}(s,a_i) \le \frac{1}{3},
\end{equation*}
which leads to the fact that
\begin{align}
    Z_t^{i,s} &= \sum_{a_i}\pi_i^{(t)}(a_i|s)\exp{\left(\eta f_i^{(t)}(s,a_i)\right)}\notag\\
    &\le \sum_{a_i}\pi_i^{(t)}(a_i|s) \left(1+\left(\eta f_i^{(t)}(s,a_i)\right)+\left(\eta f_i^{(t)}(s,a_i)\right)^2\right) \\
    &\qquad \qquad\qquad\qquad\qquad(e^x \le 1+x+x^2, \textup{for} -\frac{1}{15}\le x \le \frac{1}{3})\notag\\
    &= 1 + \sum_{a_i}\pi_i^{(t)}(a_i|s) \left(\eta f_i^{(t)}(s,a_i)\right)^2\label{eq:ineq-log-barrier-1}\\
    &\le 1 + \frac{1}{3^2} = \frac{10}{9}\notag.
\end{align}
Thus we have that
\begin{equation*}
     \frac{\pi_i^{(t+1)}(a_i|s)}{\pi_i^{(t)}(a_i|s)} = \frac{\exp{\left(\eta f_i^{(t)}(s,a_i)\right)}}{Z_t^{i,s}}\ge \frac{1+\eta f^{(t)}(s,a_i)}{Z_t^{i,s}}\ge \frac{1-\frac{1}{15}}{\frac{10}{9}} = \frac{21}{25}. 
\end{equation*}
Thus, for $a_i$ such that $\pi_i^{(t)}(a_i|s)\ge\frac{\lambda}{3\left(\lambda |\cA_i|M + \frac{1}{(1-\gamma)^2}\right)}$, we have
\begin{equation*}
    \pi_i^{(t+1)}(a_i|s) \ge \frac{21}{25}\frac{\lambda}{3\left(\lambda |\cA_i|M + \frac{1}{(1-\gamma)^2}\right)} \ge \frac{\lambda}{4\left(\lambda |\cA_i|M + \frac{1}{(1-\gamma)^2}\right)}.
\end{equation*}
On the other hand, for $a_i$ such that $\pi_i^{(t)}(a_i|s)<\frac{\lambda}{3\left(\lambda |\cA_i|M + \frac{1}{(1-\gamma)^2}\right)}$, we have
\begin{align*}
    f_i^{(t)}(s,a_i) \ge -\frac{1}{(1-\gamma)^2} - \lambda|\cA_i|M + 3\left(\frac{1}{(1-\gamma)^2} + \lambda|\cA_i|M\right) = 2\left(\frac{1}{(1-\gamma)^2} + \lambda|\cA_i|M\right),
\end{align*}
From inequality \eqref{eq:ineq-log-barrier-1} we have that
\begin{align*}
    Z_{i,t} &\le 1 + \eta^2 \sum_{a_i}\pi_i^{(t)}(a_i|s) f_i^{(t)}(s,a_i)^2\\
    & \le 1 + 25\eta^2 \left(\frac{1}{(1-\gamma)^2} + \lambda|\cA_i|M\right)^2\\
    &\le 1 + \frac{5}{3}\eta \left(\frac{1}{(1-\gamma)^2} + \lambda|\cA_i|M\right)
\end{align*}
Thus
\begin{align*}
    \frac{\pi_i^{(t+1)}(a_i|s)}{\pi_i^{(t)}(a_i|s)} = \frac{\exp{\left(\eta f_i^{(t)}(s,a_i)\right)}}{Z_t^{i,s}} \ge\frac{1+\left(\eta f_i^{(t)}(s,a_i)\right)}{Z_t^{i,s}} \ge \frac{1+2\eta \left(\frac{1}{(1-\gamma)^2} + \lambda|\cA_i|M\right)}{1 + \frac{5}{3}\eta\left(\frac{1}{(1-\gamma)^2} + \lambda|\cA_i|M\right)}\ge 1,
\end{align*}
then according to the induction assumption, we have
\begin{equation*}
    \pi_i^{(t+1)}(a_i|s)\ge\frac{\lambda}{4\left(\lambda |\cA_i|M + \frac{1}{(1-\gamma)^2}\right)},
\end{equation*}
which completes the proof of the lemma.
\end{proof}
\end{lemma}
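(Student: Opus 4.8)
The plan is to prove this uniform lower bound by induction on $t$, exploiting the fact that the regularization term $\frac{\lambda}{d^{(t)}(s)\pi_i^{(t)}(a_i|s)}$ appearing in $f_i^{(t)}(s,a_i)$ acts as a repulsive force that blows up as $\pi_i^{(t)}(a_i|s)\to 0$. The base case is immediate: for $\theta^{(0)}=\mathbf{0}$ the policy is uniform, $\pi_i^{(0)}(a_i|s)=1/|\cA_i|$, and since $M\ge 1$ one verifies $1/|\cA_i|\ge \frac{\lambda}{4(\lambda|\cA_i|M+1/(1-\gamma)^2)}$ by dropping the positive $1/(1-\gamma)^2$ term in the denominator. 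For the inductive step I would assume the bound at time $t$ and argue entrywise about $\pi_i^{(t+1)}(a_i|s)$ produced by scheme \eqref{eq:log-barrier-NPG-2}.

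First I would convert the inductive hypothesis into two-sided control of $f_i^{(t)}(s,a_i)$. Using $|\overline{A_i^{(t)}}(s,a_i)|\le 1/(1-\gamma)$, Assumption \ref{assump:like-ergodicity} (so that $1/d^{(t)}(s)\le M$ and $1/d^{(t)}(s)\ge 1$), and the hypothesised lower bound on $\pi_i^{(t)}(a_i|s)$, I expect a range roughly of the form $-(\tfrac{1}{(1-\gamma)^2}+\lambda|\cA_i|M)\le f_i^{(t)}(s,a_i)\le 5(\tfrac{1}{(1-\gamma)^2}+\lambda|\cA_i|M)$. Calibrated against the stepsize restriction $\eta\le \tfrac{1}{15(1/(1-\gamma)^2+\lambda|\cA_i|M)}$, this forces $-\tfrac{1}{15}\le \eta f_i^{(t)}(s,a_i)\le \tfrac13$, and on that interval the elementary inequality $e^x\le 1+x+x^2$ lets me bound the normalizer as $Z_t^{i,s}\le 1+\sum_{a_i}\pi_i^{(t)}(a_i|s)(\eta f_i^{(t)})^2$, while $e^x\ge 1+x$ controls the numerator of the multiplicative update $\frac{\pi_i^{(t+1)}(a_i|s)}{\pi_i^{(t)}(a_i|s)}=\frac{\exp(\eta f_i^{(t)})}{Z_t^{i,s}}$.

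The crux is a two-case split on the size of $\pi_i^{(t)}(a_i|s)$ at the auxiliary threshold $\frac{\lambda}{3(\lambda|\cA_i|M+1/(1-\gamma)^2)}$. If $\pi_i^{(t)}(a_i|s)$ already exceeds this threshold, the probability can shrink only by a bounded factor: combining $\eta f_i^{(t)}\ge -\tfrac{1}{15}$ with $Z_t^{i,s}\le \tfrac{10}{9}$ gives $\frac{\pi_i^{(t+1)}}{\pi_i^{(t)}}\ge \tfrac{21}{25}$, and $\tfrac{21}{25}\cdot\frac{\lambda}{3(\cdots)}\ge \frac{\lambda}{4(\cdots)}$ closes the case. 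If instead $\pi_i^{(t)}(a_i|s)$ lies below the threshold, the barrier term dominates: $\frac{\lambda}{d^{(t)}(s)\pi_i^{(t)}(a_i|s)}$ is large enough to push $f_i^{(t)}(s,a_i)\ge 2(\tfrac{1}{(1-\gamma)^2}+\lambda|\cA_i|M)>0$, so after re-bounding $Z_t^{i,s}$ on this enlarged range I would obtain $\frac{\pi_i^{(t+1)}}{\pi_i^{(t)}}\ge 1$, meaning the entry does not decrease and stays above the target by the hypothesis.

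The main obstacle I anticipate is the bookkeeping that makes the two cases patch together. The threshold for the case split (the $\tfrac13$-type constant) must be chosen strictly larger than the target bound (the $\tfrac14$-type constant) by exactly enough that the worst-case multiplicative decay $\tfrac{21}{25}$ in the ``large'' case still lands above the target, while simultaneously the stepsize must be small enough that $\eta f_i^{(t)}$ stays in the regime where $e^x\le 1+x+x^2$ applies and $Z_t^{i,s}$ is close to $1$. The genuinely quantitative step is showing that $f_i^{(t)}(s,a_i)$ is not merely positive but bounded below by $2(\tfrac{1}{(1-\gamma)^2}+\lambda|\cA_i|M)$ in the small-probability regime, since this is what makes the barrier ``self-correcting''; it is precisely here that the specific form of the $\log$-barrier regularizer (rather than entropy) is essential.
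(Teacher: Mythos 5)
Your proposal follows essentially the same route as the paper's proof: induction with the same two-sided bound on $f_i^{(t)}$, the same $e^x\le 1+x+x^2$ control of $Z_t^{i,s}\le \tfrac{10}{9}$, the same case split at the threshold $\frac{\lambda}{3(\lambda|\cA_i|M+1/(1-\gamma)^2)}$, and the same key observation that the barrier forces $f_i^{(t)}\ge 2(\tfrac{1}{(1-\gamma)^2}+\lambda|\cA_i|M)$ in the small-probability regime so that the update is non-decreasing there. The argument is correct as outlined.
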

\begin{corollary}\label{coro:NPG-log-barrier-bounded-quantities}
Under the condition of Lemma \ref{lemma:NPG-bounded-probability}, running \eqref{eq:log-barrier-NPG-2} will guarantee that
\begin{align*}
    -\frac{1}{15}\le\eta f_i^{(t)}(s,a_i) \le \frac{1}{3},\quad Z_t^{i,s} \le \frac{10}{9},\quad
    -\frac{1}{5}\le \Delta_i^{(t)}(s,a_i) \le \frac{1}{2},
\end{align*}
\begin{proof}
The first two inequalities are proved in the proof of Lemma \ref{lemma:NPG-bounded-probability}, we only need to show $-\frac{1}{5}\le \Delta_i^{(t)}(s,a_i) \le \frac{1}{2}$. In the proof of Lemma \ref{lemma:NPG-bounded-probability}, we have already shown that
\begin{equation*}
     \frac{\pi_i^{(t+1)}(a_i|s)}{\pi_i^{(t)}(a_i|s)} = \frac{\exp{\left(\eta f_i^{(t)}(s,a_i)\right)}}{Z_t^{i,s}}\ge \frac{1+\eta f^{(t)}(s,a_i)}{Z_t^{i,s}}\ge \frac{1-\frac{1}{15}}{\frac{10}{9}} = \frac{21}{25} \ge \frac{4}{5}, 
\end{equation*}
thus
\begin{equation*}
    \Delta_i^{(t)}(s,a_i)\ge \frac{4}{5}-1 = -\frac{1}{5}.
\end{equation*}
On the other hand,
\begin{equation*}
    \frac{\pi_i^{(t+1)}(a_i|s)}{\pi_i^{(t)}(a_i|s)} = \frac{\exp{\left(\eta f_i^{(t)}(s,a_i)\right)}}{Z_t^{i,s}} \le \exp{\left(\eta f_i^{(t)}(s,a_i)\right)} \le \exp{(\frac{1}{3})} \le \frac{3}{2},
\end{equation*}
thus
\begin{equation*}
    \Delta_i^{(t)}(s,a_i)\le \frac{3}{2}-1 = \frac{1}{2},
\end{equation*}
which completes the proof of the corollary.
\end{proof}
\end{corollary}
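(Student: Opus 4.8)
The plan is to observe that this corollary is essentially a bookkeeping consequence of Lemma~\ref{lemma:NPG-bounded-probability}: the first two bounds are byproducts of that lemma's proof, so the only genuinely new content is the two-sided control of $\Delta_i^{(t)}(s,a_i)$. I would therefore begin by re-invoking the conclusion of Lemma~\ref{lemma:NPG-bounded-probability}, namely the uniform lower bound $\pi_i^{(t)}(a_i|s)\ge \frac{\lambda}{4(\lambda|\cA_i|M + (1-\gamma)^{-2})}$, and using it to bound the auxiliary quantity $f_i^{(t)}(s,a_i)$. Since $f_i^{(t)}$ is the sum of $\frac{1}{1-\gamma}\overline{A_i^{(t)}}$ (bounded in magnitude by $\frac{1}{(1-\gamma)^2}$), the barrier term $\frac{\lambda}{d^{(t)}(s)\pi_i^{(t)}(a_i|s)}$ (bounded above by $\lambda |\cA_i| M$ once the lower bound on $\pi$ and the definition of $M$ are used), and the offset $-\frac{\lambda|\cA_i|}{d^{(t)}(s)}$, the lower bound on $\pi$ converts into a two-sided estimate $-(\frac{1}{(1-\gamma)^2}+\lambda|\cA_i|M)\le f_i^{(t)}(s,a_i)\le 5(\frac{1}{(1-\gamma)^2}+\lambda|\cA_i|M)$. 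Multiplying by the stepsize $\eta\le \frac{1}{15(\frac{1}{(1-\gamma)^2}+\lambda|\cA_i|M)}$ then yields $-\frac{1}{15}\le \eta f_i^{(t)}(s,a_i)\le \frac{1}{3}$. The bound $Z_t^{i,s}\le \frac{10}{9}$ follows by substituting this range into the elementary inequality $e^x\le 1+x+x^2$ (valid on $[-\frac{1}{15},\frac{1}{3}]$) and cancelling the linear term via $\sum_{a_i}\pi_i^{(t)}(a_i|s)f_i^{(t)}(s,a_i)=0$.

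For the remaining bound on $\Delta_i^{(t)}(s,a_i)=\frac{\pi_i^{(t+1)}(a_i|s)}{\pi_i^{(t)}(a_i|s)}-1$, I would rewrite the ratio using the update rule \eqref{eq:log-barrier-NPG-2} as $\frac{\exp(\eta f_i^{(t)}(s,a_i))}{Z_t^{i,s}}$ and estimate it on each side. For the lower bound, I combine $e^x\ge 1+x$ with $\eta f_i^{(t)}\ge -\frac{1}{15}$ and $Z_t^{i,s}\le \frac{10}{9}$, giving a ratio at least $\frac{1-1/15}{10/9}=\frac{21}{25}\ge \frac{4}{5}$, hence $\Delta_i^{(t)}\ge -\frac{1}{5}$. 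For the upper bound, I use $Z_t^{i,s}\ge 1$ (established just before the corollary) to drop the denominator, leaving a ratio at most $\exp(\eta f_i^{(t)})\le e^{1/3}\le \frac{3}{2}$, hence $\Delta_i^{(t)}\le \frac{1}{2}$.

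I do not expect any real obstacle here, since everything reduces to elementary exponential and Taylor estimates once Lemma~\ref{lemma:NPG-bounded-probability} is in hand. The only point requiring mild care is that the two-sided estimate on $f_i^{(t)}$ is \emph{asymmetric} (the barrier term only pushes $f_i^{(t)}$ upward), so one must track the worst case in each direction separately and verify that the single stepsize choice simultaneously enforces $\eta f_i^{(t)}\le \frac{1}{3}$ and $\eta f_i^{(t)}\ge -\frac{1}{15}$; the constants $\frac{10}{9}$, $\frac{21}{25}$, $\frac{4}{5}$, $\frac{3}{2}$ then drop out mechanically.
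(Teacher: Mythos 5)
Your proposal is correct and follows essentially the same route as the paper: the first two bounds are recovered from the proof of Lemma~\ref{lemma:NPG-bounded-probability}, the lower bound on $\Delta_i^{(t)}$ uses $e^x\ge 1+x$ together with $\eta f_i^{(t)}\ge -\frac{1}{15}$ and $Z_t^{i,s}\le\frac{10}{9}$ to get the ratio $\frac{21}{25}\ge\frac{4}{5}$, and the upper bound drops the denominator via $Z_t^{i,s}\ge 1$ and uses $e^{1/3}\le\frac{3}{2}$. The only difference is that you re-derive the two-sided estimate on $f_i^{(t)}$ rather than simply citing it, which is harmless.
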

\begin{lemma}\label{lemma:NPG-log-barrier-sufficient-ascent}
\begin{equation*}
    \widetilde{\Phi}(\theta^{(t\!+\!1)}\!) \!-\! \widetilde{\Phi}(\theta^{(t)}\!) \!\ge\!\! \left(\!\frac{1}{2\eta}\!-\!4\lambda\max_{i}\!|\cA_i|M^2\!-\!\frac{4M}{(1\!-\!\gamma)^2}\!-\!\frac{3nM}{(1\!-\!\gamma)^3}\!\right)\!\sum_{i=1}^n\!\sum_{s,a_i}\!d^{(t)}\!(s)\pi_i^{(t)}\!(a_i|s)\Delta_i^{(t)}\!(s,a_i)^2
\end{equation*}
\begin{proof}
Let $\widetilde{\theta}^{i,(t)}$ be defined as:
$$\widetilde{\theta}^{i,(t)}:= \left(\theta_1^{(t)},\dots,\theta_{i-1}^{(t)},\theta_i^{(t+1)},\dots,\theta_n^{(t+1)}\right).$$
Then we have that
\begin{align*}
    \Phi(\theta^{(t+1)}) - \Phi(\theta^{(t)}) &= \sum_{i=1}^n\Phi(\widetilde{\theta}^{i,(t)}) - \Phi(\widetilde{\theta}^{i+1,(t)})\\
    &=\sum_{i=1}^n J_i(\widetilde{\theta}^{i,(t)}) - J_i(\widetilde{\theta}^{i+1,(t)})\\
    & =\frac{1}{1-\gamma}\sum_{i=1}^n\sum_s d_{\widetilde{\theta}^{i,(t)}}(s)\sum_{a_i}\pi_i^{(t+1)}(a_i|s)\overline{A_i^{\widetilde{\theta}^{i+1,(t)}}}(s,a_i)\quad  (\textup{Lemma \ref{lemma:performance-difference-lemma}})\\
    &=\frac{1}{1-\gamma}\sum_{i=1}^n\sum_s d_{\widetilde{\theta}^{i,(t)}}(s)\sum_{a_i}\left(\pi_i^{(t+1)}(a_i|s)-\pi_i^{(t)}(a_i|s)\right)\overline{Q_i^{\widetilde{\theta}^{i+1,(t)}}}(s,a_i).
\end{align*}
Thus
\begin{align*}
    &\quad\widetilde{\Phi}(\theta^{(t+1)})-\widetilde{\Phi}(\theta^{(t)})=\Phi(\theta^{(t+1)}) - \Phi(\theta^{(t)}) + \lambda\sum_i\sum_{s,a_i}\log\left(\frac{\pi_i^{(t+1)}(a_i|s)}{\pi_i^{(t)}(a_i|s)}\right)\\
    &= \frac{1}{1\!-\!\gamma}\!\sum_{i=1}^n\!\sum_s d_{\widetilde{\theta}^{i\!,(t)}}\!(s)\!\sum_{a_i}\!\left(\pi_i^{(t\!+\!1)}\!(a_i|s)-\pi_i^{(t)}\!(a_i|s)\right)\overline{Q_i^{\widetilde{\theta}^{i\!+\!1\!,(t)}}}\!(s,a_i) \!+\! \lambda\!\sum_i\!\sum_{s,a_i}\!\log\left(1\!+\!\Delta^{(t)}\!(s,a_i)\right)\\
    &= \frac{1}{1-\gamma}\sum_{i=1}^n\sum_s d^{(t)}(s)\sum_{a_i}\left(\pi_i^{(t+1)}(a_i|s)-\pi_i^{(t)}(a_i|s)\right)\overline{Q_i^{(t)}}(s,a_i)\\
    &\quad + \frac{1}{1-\gamma}\sum_{i=1}^n\sum_s d^{(t)}(s)\sum_{a_i}\left(\pi_i^{(t+1)}(a_i|s)-\pi_i^{(t)}(a_i|s)\right)\left(\overline{Q_i^{\widetilde{\theta}^{i+1,(t)}}}(s,a_i)-\overline{Q_i^{(t)}}(s,a_i)\right)\\
    &\quad + \frac{1}{1-\gamma}\sum_{i=1}^n\sum_s \left(d_{\widetilde{\theta}^{i,(t)}}(s)-d^{(t)}(s)\right)\sum_{a_i}\left(\pi_i^{(t+1)}(a_i|s)-\pi_i^{(t)}(a_i|s)\right)\overline{Q_i^{\widetilde{\theta}^{i+1,(t)}}}(s,a_i)\\
    &\quad + \lambda\sum_i\sum_{s,a_i}\log\left(1+\Delta^{(t)}(s,a_i)\right)\\
    &=\underbrace{\frac{1}{1-\gamma}\sum_{i=1}^n\sum_s d^{(t)}(s)\sum_{a_i}\left(\pi_i^{(t+1)}(a_i|s)-\pi_i^{(t)}(a_i|s)\right)\overline{Q_i^{(t)}}(s,a_i)+\lambda\Delta_i^{(t)}(s,a_i)}_{\textup{Part A}}\\
    &\quad + \underbrace{\lambda\sum_i\sum_{s,a_i}\log\left(1+\Delta^{(t)}(s,a_i)\right)-\Delta_i^{(t)}(s,a_i)}_{\textup{Part B}}\\
    &\quad + \underbrace{\frac{1}{1-\gamma}\sum_{i=1}^n\sum_s d^{(t)}(s)\sum_{a_i}\left(\pi_i^{(t+1)}(a_i|s)-\pi_i^{(t)}(a_i|s)\right)\left(\overline{Q_i^{\widetilde{\theta}^{i+1,(t)}}}(s,a_i)-\overline{Q_i^{(t)}}(s,a_i)\right)}_{\textup{Part C}}\\
    &\quad + \underbrace{\frac{1}{1-\gamma}\sum_{i=1}^n\sum_s \left(d_{\widetilde{\theta}^{i,(t)}}(s)-d^{(t)}(s)\right)\sum_{a_i}\left(\pi_i^{(t+1)}(a_i|s)-\pi_i^{(t)}(a_i|s)\right)\overline{Q_i^{\widetilde{\theta}^{i+1,(t)}}}(s,a_i)}_{\textup{Part D}}.
\end{align*}

We will now bound each part separately. We first get a lower bound for part A:
\begin{align*}
    \textup{Part A} &= \sum_{i=1}^n\sum_{s,a_i}d^{(t)}(s)\left[\left(\pi_i^{(t+1)}(a_i|s)-\pi_i^{(t)}(a_i|s)\right)\left(\frac{1}{1-\gamma}\overline{A_i^{(t)}}(s,a_i)\right) + \frac{\lambda}{d^{(t)}(s)}\Delta_i^{(t)}(s,a_i)\right]\\
    &=\sum_{i=1}^n\sum_{s,a_i}d^{(t)}(s)\left[\pi_i^{(t)}(a_i|s)\Delta_i^{(t)}(s,a_i)\left(\frac{1}{1-\gamma}\overline{A_i^{(t)}}(s,a_i)\right) + \frac{\lambda}{d^{(t)}(s)}\Delta_i^{(t)}(s,a_i)\right]\\
    &=\sum_{i=1}^n\sum_{s,a_i}d^{(t)}(s)\left[\pi_i^{(t)}(a_i|s)\Delta_i^{(t)}(s,a_i)\left(\frac{1}{1-\gamma}\overline{A_i^{(t)}}(s,a_i) - \frac{\lambda|\cA_i|}{d^{(t)}(s)}\right) + \frac{\lambda}{d^{(t)}(s)}\Delta_i^{(t)}(s,a_i)\right]\\
    &=\sum_{i=1}^n\sum_{s,a_i}d^{(t)}(s)\pi_i^{(t)}(a_i|s)\Delta_i^{(t)}(s,a_i)\left(\frac{1}{1-\gamma}\overline{A_i^{(t)}}(s,a_i)  + \frac{\lambda}{d^{(t)}(s)\pi_i^{(t)}(a_i|s)} - \frac{\lambda|\cA_i|}{d^{(t)}(s)}\right)\\
    &=\sum_{i=1}^n\sum_{s,a_i}d^{(t)}(s)\pi_i^{(t)}(a_i|s)\Delta_i^{(t)}(s,a_i)f_i^{(t)}(s,a_i)\\
    &=\sum_{i=1}^n\sum_{s,a_i}d^{(t)}(s)\pi_i^{(t)}(a_i|s)\Delta_i^{(t)}(s,a_i)\frac{1}{\eta}\left(\log\left(\frac{\pi_i^{(t+1)}(a_i|s)}{\pi_i^{(t)}(a_i|s)}\right)+\log(Z_t^{i,s})\right)\\
    &=\frac{1}{\eta}\sum_{i=1}^n\sum_{s,a_i}d^{(t)}(s)\pi_i^{(t)}(a_i|s)\Delta_i^{(t)}(s,a_i)\log\left(1+\Delta_i^{(t)}(s,a_i)\right) \\
    &\qquad+ \frac{1}{\eta}\sum_{i=1}^n\sum_{s}d^{(t)}(s)\log(Z_t^{i,s})\sum_{a_i}\pi_i^{(t)}(a_i|s)\Delta_i^{(t)}(s,a_i)\\
    &=\frac{1}{\eta}\sum_{i=1}^n\sum_{s,a_i}d^{(t)}(s)\pi_i^{(t)}(a_i|s)\Delta_i^{(t)}(s,a_i)\log\left(1+\Delta_i^{(t)}(s,a_i)\right)\\
    &=\frac{1}{\eta}\sum_{i=1}^n\sum_{s,a_i}d^{(t)}(s)\pi_i^{(t)}(a_i|s)\left|\Delta_i^{(t)}(s,a_i)\right|\left|\log\left(1+\Delta_i^{(t)}(s,a_i)\right)\right|
\end{align*}
From the boundedness of $\Delta_i^{(t)}(s,a_i)$ in Corollary \ref{coro:NPG-log-barrier-bounded-quantities}, we have that
\begin{equation*}
    \left|\log\left(1+\Delta_i^{(t)}(s,a_i)\right)\right| \ge \frac{1}{2}\left|\Delta_i^{(t)}(s,a_i)\right|
\end{equation*}
Substitute this into the above inequalities, we get
\begin{align*}
     \textup{Part A} \ge \frac{1}{2\eta}\sum_{i=1}^n\sum_{s,a_i}d^{(t)}(s)\pi_i^{(t)}(a_i|s)\Delta_i^{(t)}(s,a_i)^2
\end{align*}
Now we will give a lower bound for part B. Similarly, from the boundedness of $\Delta_i^{(t)}(s,a_i)$ in Corollary \ref{coro:NPG-log-barrier-bounded-quantities}, we have that
\begin{equation*}
    \log\left(1+\Delta_i^{(t)}(s,a_i)\right) - \Delta_i^{(t)}(s,a_i) \ge -\Delta_i^{(t)}(s,a_i)^2.
\end{equation*}
Thus
\begin{align*}
    \textup{Part B} = \lambda\sum_i\sum_{s,a_i}\log\left(1+\Delta^{(t)}(s,a_i)\right)-\Delta_i^{(t)}(s,a_i) \ge -\lambda\sum_i\sum_{s,a_i}\Delta_i^{(t)}(s,a_i)^2.
\end{align*}
Additionally, using Lemma \ref{lemma:NPG-bounded-probability},
\begin{align*}
    \textup{Part B} &\ge -\lambda\sum_i\sum_{s,a_i}\Delta_i^{(t)}(s,a_i)^2\\
    &\ge -4\left(\lambda\max_i|\cA_i|M+\frac{1}{(1-\gamma)^2}\right)\sum_i\sum_{s,a_i}\pi_i^{(t)}(a_i|s)\Delta_i^{(t)}(s,a_i)^2\\
    &\ge -4M\left(\lambda\max_i|\cA_i|M+\frac{1}{(1-\gamma)^2}\right)\sum_i\sum_{s,a_i}d^{(t)}(s)\pi_i^{(t)}(a_i|s)\Delta_i^{(t)}(s,a_i)^2.
\end{align*}

We will now move on to bound the absolute value of Part C.
\begin{align*}
    \left|\textup{Part C}\right| \le \frac{1}{1-\gamma}\sum_{i=1}^n\sum_sd^{(t)}(s)\sum_{a_i}\left|\pi_i^{(t+1)}(a_i|s)-\pi_i^{(t)}(a_i|s)\right|\left|\overline{Q_i^{\widetilde{\theta}^{i+1,(t)}}}(s,a_i) - \overline{Q_i^{(t)}}(s,a_i)\right|.
\end{align*}
Since
\begin{align*}
    & \left|\overline{Q_i^{\widetilde{\theta}^{i+1,(t)}}}(s,a_i) - \overline{Q_i^{(t)}}(s,a_i)\right|\le \sum_{a_{-i}}\pi_{\widetilde{\theta}_{-i}^{i+1,(t)}}(a_{-i}|s)\left|Q^{\widetilde{\theta}^{i+1,(t)}}(s,a_i,a_{-i}) - Q^{(t)}(s,a_i,a_{-i})\right|\\&\quad + \sum_{a_{-i}}\left|\pi_{\widetilde{\theta}_{-i}^{i+1,(t)}}(a_{-i}|s) - \pi^{(t)}_{-i}(a_{-i}|s)\right|\left|Q^{(t)}(s,a_i)\right|\\
    &\le \max_a \left|Q^{\widetilde{\theta}^{i+1,(t)}}(s,a) - Q^{(t)}(s,a)\right| + \frac{1}{1-\gamma}\sum_{j=1}^n\|\pi_{j,s}^{(t+1)}-\pi_{j,s}^{(t)}\|_1.
\end{align*}
From Lemma \ref{lemma:NPG-log-barrier-auxiliary-1},
\begin{align*}
    \max_a \left|Q^{\widetilde{\theta}^{i+1,(t)}}(s,a) - Q^{(t)}(s,a)\right| &\le \frac{1}{(1-\gamma)^2}\max_s \|\pi_{\widetilde{\theta}^{i+1,(t)}_s}-\pi_{\theta^{(t)}_s}\|_1\\
    &\le \frac{1}{(1-\gamma)^2}\max_s \sum_{j=1}^n\|\pi_{j,s}^{(t+1)}-\pi_{j,s}^{(t)}\|_1
\end{align*}
Thus we have that
\begin{align*}
    \left|\overline{Q_i^{\widetilde{\theta}^{i+1,(t)}}}(s,a_i) - \overline{Q_i^{(t)}}(s,a_i)\right|&\le\max_a \left|Q^{\widetilde{\theta}^{i+1,(t)}}(s,a) - Q^{(t)}(s,a)\right| + \frac{1}{1-\gamma}\sum_{j=1}^n\|\pi_{j,s}^{(t+1)}-\pi_{j,s}^{(t)}\|_1\\&\le \frac{2}{(1-\gamma)^2}\max_s \sum_{j=1}^n\|\pi_{j,s}^{(t+1)}-\pi_{j,s}^{(t)}\|_1.
\end{align*}
Thus
\begin{align*}
    |\textup{Part C}|&\le \frac{1}{1-\gamma}\sum_{i=1}^n\sum_sd^{(t)}(s)\sum_{a_i}\left|\pi_i^{(t+1)}(a_i|s)-\pi_i^{(t)}(a_i|s)\right|\frac{2}{(1-\gamma)^2}\max_s \sum_{j=1}^n\|\pi_{j,s}^{(t+1)}-\pi_{j,s}^{(t)}\|_1\\
    &\le \frac{2}{(1-\gamma)^3}\left(\max_s \sum_{j=1}^n\|\pi_{j,s}^{(t+1)}-\pi_{j,s}^{(t)}\|_1\right) \cdot\sum_{i=1}^n\sum_sd^{(t)}(s)\sum_{a_i}\left|\pi_i^{(t+1)}(a_i|s)-\pi_i^{(t)}(a_i|s)\right|\\
    &\le \frac{2}{(1-\gamma)^3}\left(\max_s \sum_{j=1}^n\|\pi_{j,s}^{(t+1)}-\pi_{j,s}^{(t)}\|_1\right) \cdot\sum_sd^{(t)}(s)\sum_{j=1}^n\|\pi_{j,s}^{(t+1)}-\pi_{j,s}^{(t)}\|_1\\
    &\le \frac{2}{(1-\gamma)^3}\left(\max_s \sum_{j=1}^n\|\pi_{j,s}^{(t+1)}-\pi_{j,s}^{(t)}\|_1\right)^2
\end{align*}
From Cauchy-Schwarz inequality,
\begin{align*}
    \left(\sum_{j=1}^n\|\pi_{j,s}^{(t+1)}-\pi_{j,s}^{(t)}\|_1\right)^2 &= \left(\sum_{j=1}^n\sum_{a_j}\pi_i^{(t)}(a_j|s)\left|\Delta^{(t)}_j(s,a_j)\right|\right)^2\\&\le \left(\sum_{j=1}^n\sum_{a_j}\pi_i^{(t)}(a_j|s)\right)\left(\sum_{j=1}^n\sum_{a_j}\pi_j^{(t)}(a_j|s)\Delta^{(t)}_j(s,a_j)^2\right)\\
    &= n \sum_{j=1}^n\sum_{a_j}\pi_j^{(t)}(a_j|s)\Delta^{(t)}_j(s,a_j)^2
\end{align*}
Thus
\begin{align*}
    |\textup{Part C}| &\le \frac{2n}{(1-\gamma)^3}\sum_{i=1}^n\sum_{a_i}\pi_i^{(t)}(a_i|s)\Delta^{(t)}_i(s,a_i)^2 \\
    &\le \frac{2nM}{(1-\gamma)^3}\sum_{i=1}^n\sum_{a_i}d^{(t)}(s)\pi_i^{(t)}(a_i|s)\Delta^{(t)}_i(s,a_i)^2 
\end{align*}
Lastly, we will bound the absolute value of Part D.

\begin{align*}
    |\textup{Part D}|&=\left|\frac{1}{1-\gamma}\sum_{i=1}^n\sum_s \left(d_{\widetilde{\theta}^{i,(t)}}(s)-d^{(t)}(s)\right)\sum_{a_i}\left(\pi^{(t+1)}(a_i|s)-\pi_i^{(t)}(a_i|s)\right)\overline{Q_i^{\widetilde{\theta}^{i+1,(t)}}}(s,a_i)\right|\\
    &\le \frac{1}{(1-\gamma)^2}\sum_{i=1}^n\sum_s \left|d_{\widetilde{\theta}^{i,(t)}}(s)-d^{(t)}(s)\right|\sum_{a_i}\left|\pi^{(t+1)}(a_i|s)-\pi_i^{(t)}(a_i|s)\right|\\
    &\le \frac{1}{(1-\gamma)^2}\sum_{i=1}^n \max_s\|\pi_{i,s}^{(t+1)}-\pi_{i,s}^{(t)}\|_1\sum_s\left|d_{\widetilde{\theta}^{i,(t)}}(s)-d^{(t)}(s)\right|.
\end{align*}
From Corollary \ref{coro:NPG-log-barrier-auxiliary-2} 
\begin{align*}
    \sum_s\left|d_{\widetilde{\theta}^{i,(t)}}(s)-d^{(t)}(s)\right| &\le \frac{1}{1-\gamma}\max_s \left\|\pi_{\widetilde{\theta}^{i,(t)}}(a|s)-\pi^{(t)}(a|s)\right\|_1\\
    &\le \frac{1}{1-\gamma}\max_s\sum_{i=1}^n \|\pi_{i,s}^{(t+1)}-\pi_{i,s}^{(t)}\|_1.\\
\end{align*}
Thus
\begin{align*}
    |\textup{Part D}|&\le \frac{1}{(1-\gamma)^2}\sum_{i=1}^n \max_s\|\pi_{i,s}^{(t+1)}-\pi_{i,s}^{(t)}\|_1\sum_s\left|d_{\widetilde{\theta}^{i,(t)}}(s)-d^{(t)}(s)\right|\\
    &\le \frac{1}{(1-\gamma)^3}\left(\sum_{i=1}^n \max_s\|\pi_{i,s}^{(t+1)}-\pi_{i,s}^{(t)}\|_1\right)\left(\max_s\sum_{i=1}^n \|\pi_{i,s}^{(t+1)}-\pi_{i,s}^{(t)}\|_1\right)\\
    &\le  \frac{1}{(1-\gamma)^3}\left(\sum_{i=1}^n \max_s\|\pi_{i,s}^{(t+1)}-\pi_{i,s}^{(t)}\|_1\right)^2
\end{align*}
From Cauchy-Schwarz inequality
\begin{align*}
    \left(\sum_{i=1}^n \max_s\|\pi_{i,s}^{(t+1)}-\pi_{i,s}^{(t)}\|_1\right)^2 & \le n\sum_{i=1}^n \max_s\left( \sum_{a_i}\left|\pi_{i}^{(t+1)}(a_i|s)-\pi_{i}^{(t)}(a_i|s)\right|\right)^2\\
    &= n\sum_{i=1}^n \max_s \left( \sum_{a_i}\pi_{i}^{(t)}(a_i|s)\left|\Delta_i^{(t)}(a_i|s)\right|\right)^2\\
    &\le n\sum_{i=1}^n\max_s\left(\sum_{a_i}\pi_{i}^{(t)}(a_i|s)\right)\left(\sum_{a_i}\pi_{i}^{(t)}(a_i|s)\Delta_i^{(t)}(a_i|s)^2\right)\\
    &\le n\sum_{i=1}^n\max_s\sum_{a_i}\pi_{i}^{(t)}(a_i|s)\Delta_i^{(t)}(a_i|s)^2\\
    &\le n\sum_{i=1}^n\sum_{s,a_i}\pi_{i}^{(t)}(a_i|s)\Delta_i^{(t)}(a_i|s)^2.
\end{align*}
Thus
\begin{align*}
    |\textup{Part D}|& \le \frac{n}{(1-\gamma)^3}\sum_{i=1}^n\sum_{s,a_i}\pi_{i}^{(t)}(a_i|s)\Delta_i^{(t)}(a_i|s)^2\\
    &\le\frac{nM}{(1-\gamma)^3}\sum_{i=1}^n\sum_{s,a_i} d^{(t)}(s) \pi_{i}^{(t)}(a_i|s)\Delta_i^{(t)}(a_i|s)^2,\\
\end{align*}

Combining the bounds on Part A,B,C,D we get
\begin{align*}
    &\quad \widetilde{\Phi}(\theta^{(t+1)}) - \widetilde{\Phi}(\theta^{(t)}) = \textup{Part A} + \textup{Part B} + \textup{Part C} + \textup{Part D}\\
    &\ge \left(\frac{1}{2\eta}-4\lambda\max_{i}|\cA_i|M^2-\frac{4M}{(1-\gamma)^2}-\frac{3nM}{(1-\gamma)^3}\right)\sum_{i=1}^n\sum_{s,a_i}d^{(t)}(s)\pi_i^{(t)}(a_i|s)\Delta_i^{(t)}(s,a_i)^2,
\end{align*}
which completes the proof.
\end{proof}
\end{lemma}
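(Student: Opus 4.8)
The plan is to control the one-step change $\widetilde{\Phi}(\theta^{(t+1)}) - \widetilde{\Phi}(\theta^{(t)})$ by first splitting it into the potential part $\Phi(\theta^{(t+1)}) - \Phi(\theta^{(t)})$ and the regularizer part $\lambda\sum_i\sum_{s,a_i}\log(\pi_i^{(t+1)}(a_i|s)/\pi_i^{(t)}(a_i|s))$. The central difficulty of the multi-agent setting is that all $n$ agents update simultaneously, so there is no single performance difference identity I can apply directly. To get around this I would introduce the hybrid policies $\widetilde{\theta}^{i,(t)} = (\theta_1^{(t)},\dots,\theta_{i-1}^{(t)},\theta_i^{(t+1)},\dots,\theta_n^{(t+1)})$ that switch agents from their old to their new policy one at a time, and telescope $\Phi(\theta^{(t+1)}) - \Phi(\theta^{(t)}) = \sum_{i=1}^n (\Phi(\widetilde{\theta}^{i,(t)}) - \Phi(\widetilde{\theta}^{i+1,(t)}))$. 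Because only agent $i$'s policy differs between consecutive hybrids and the game is an MPG, each summand equals $J_i(\widetilde{\theta}^{i,(t)}) - J_i(\widetilde{\theta}^{i+1,(t)})$, to which the performance difference lemma (Lemma \ref{lemma:performance-difference-lemma}) applies, yielding a sum over $s,a_i$ of $d_{\widetilde{\theta}^{i,(t)}}(s)(\pi_i^{(t+1)}-\pi_i^{(t)})\overline{Q_i^{\widetilde{\theta}^{i+1,(t)}}}(s,a_i)$.

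The next step is to replace every occurrence of the hybrid-policy quantities $d_{\widetilde{\theta}^{i,(t)}}$ and $\overline{Q_i^{\widetilde{\theta}^{i+1,(t)}}}$ by their time-$t$ versions $d^{(t)}$ and $\overline{Q_i^{(t)}}$, collecting the resulting mismatches as error terms. This produces a clean principal term (Part A) in which I can substitute the NPG update rule. Writing the update as $f_i^{(t)}(s,a_i) = \tfrac{1}{\eta}(\log(1+\Delta_i^{(t)}(s,a_i)) + \log Z_t^{i,s})$ and recalling $\sum_{a_i}\pi_i^{(t)}(a_i|s)\Delta_i^{(t)}(s,a_i) = 0$ eliminates the normalizing $\log Z_t^{i,s}$, so Part A collapses to $\tfrac{1}{\eta}\sum_{i,s,a_i} d^{(t)}(s)\pi_i^{(t)}(a_i|s)\Delta_i^{(t)}(s,a_i)\log(1+\Delta_i^{(t)}(s,a_i))$. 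Using the two-sided bound $-\tfrac{1}{5}\le \Delta_i^{(t)}\le\tfrac{1}{2}$ from Corollary \ref{coro:NPG-log-barrier-bounded-quantities}, the elementary inequality $|\log(1+x)|\ge\tfrac12|x|$ on that range gives $\textup{Part A}\ge \tfrac{1}{2\eta}\sum_{i,s,a_i} d^{(t)}(s)\pi_i^{(t)}(a_i|s)\Delta_i^{(t)}(s,a_i)^2$, which is exactly the positive term the lemma asserts.

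The remaining work is to show that each of the three error terms is dominated by the same quadratic form $\sum_{i,s,a_i} d^{(t)}(s)\pi_i^{(t)}(a_i|s)\Delta_i^{(t)}(s,a_i)^2$ with only a constant-factor loss, and this is where I expect the real effort to lie. For the log-versus-linear gap (Part B) I would use $\log(1+x)-x\ge -x^2$ on the admissible range, together with the uniform lower bound $\pi_i^{(t)}(a_i|s)\ge \tfrac{\lambda}{4(\lambda|\cA_i|M+(1-\gamma)^{-2})}$ from Lemma \ref{lemma:NPG-bounded-probability} (to turn a bare $\sum\Delta^2$ into a $\pi$-weighted sum) and the constant $M$ from Assumption \ref{assump:like-ergodicity} (to insert $d^{(t)}(s)$), giving the $-4M(\lambda\max_i|\cA_i|M+(1-\gamma)^{-2})$ factor. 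For the $Q$-mismatch (Part C) and the visitation-distribution mismatch (Part D) I would invoke the Lipschitz-type auxiliary estimates (Lemma \ref{lemma:NPG-log-barrier-auxiliary-1} and Corollary \ref{coro:NPG-log-barrier-auxiliary-2}) to bound $|\overline{Q_i^{\widetilde{\theta}^{i+1,(t)}}}-\overline{Q_i^{(t)}}|$ and $\sum_s|d_{\widetilde{\theta}^{i,(t)}}-d^{(t)}|$ by $\sum_j\|\pi_{j,s}^{(t+1)}-\pi_{j,s}^{(t)}\|_1$, and then apply Cauchy--Schwarz in the form $(\sum_j\|\pi_{j,s}^{(t+1)}-\pi_{j,s}^{(t)}\|_1)^2\le n\sum_j\sum_{a_j}\pi_j^{(t)}(a_j|s)\Delta_j^{(t)}(s,a_j)^2$ to return to the target quadratic form, producing the $\tfrac{4M}{(1-\gamma)^2}$ and $\tfrac{3nM}{(1-\gamma)^3}$ contributions. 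The main obstacle is precisely this bookkeeping: the simultaneous update forces the cross-agent error terms C and D, and the crux is that each is genuinely \emph{second order} in the policy increments, so that they can be absorbed into the first-order positive term from Part A once $\eta$ is small enough; pinning down this second-order character without losing an extra factor is the delicate part.
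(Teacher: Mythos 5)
Your proposal follows the paper's proof essentially step for step: the same hybrid-policy telescoping via $\widetilde{\theta}^{i,(t)}$ combined with the MPG property and the performance difference lemma, the same Part A--D decomposition with the identical treatment of each part (NPG update substitution and $|\log(1+x)|\ge\frac12|x|$ for Part A, $\log(1+x)-x\ge -x^2$ plus the uniform policy lower bound for Part B, and the Lipschitz estimates with Cauchy--Schwarz for Parts C and D). The argument is correct and matches the paper's.
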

\begin{lemma}\label{lemma:NPG-log-barrier-Delta-to-f}
Under the condition as in Lemma \ref{lemma:NPG-bounded-probability},
\begin{equation*}
    \sum_{i=1}^n\sum_{s,a_i}d^{(t)}(s)\pi_i^{(t)}(a_i|s)\Delta_i^{(t)}(s,a_i)^2\ge \frac{\eta^2}{9}\sum_{i=1}^n\sum_{s,a_i}d^{(t)}(s)\pi_i^{(t)}(a_i|s)f_i^{(t)}(s,a_i)^2
\end{equation*}
\begin{proof}
Recall from the definition of $\Delta_i^{(t)}(s,a_i)$:
\begin{equation*}
    \Delta_i^{(t)}(s,a_i)=\frac{\exp{\left(\eta f_i^{(t)}(s,a_i)\right)}}{Z_t^{i,s}} - 1.
\end{equation*}
Thus
\begin{align*}
    &\sum_{a_i}\pi_i^{(t)}(a_i|s)\Delta_i^{(t)}(s,a_i)^2 = \frac{1}{\left(Z_t^{i,s}\right)^2}\sum_{a_i}\pi_i^{(t)}(a_i|s)\left(\exp{\left(\eta f_i^{(t)}(s,a_i)\right)} - Z_t^{i,s}\right)^2\\
    &=\frac{1}{\left(Z_t^{i,s}\right)^2}\left[\sum_{a_i}\pi_i^{(t)}(a_i|s)\left(\exp{\left(\eta f_i^{(t)}(s,a_i)\right)}\!-\!1\right)^2\right.\\&\qquad-\left.2\sum_{a_i}\pi_i^{(t)}(a_i|s)\left(\exp{\left(\eta f_i^{(t)}(s,a_i)\right)}\!-\!1\right)\left( Z_t^{i,s}\!-\!1\right) \!+\! \sum_{a_i}\pi_i^{(t)}(a_i|s)\left( Z_t^{i,s}-1\right)^2\right]\\
    &=\frac{1}{\left(Z_t^{i,s}\right)^2}\left[\sum_{a_i}\pi_i^{(t)}(a_i|s)\left(\exp{\left(\eta f_i^{(t)}(s,a_i)\right)}\!-\!1\right)^2-\left( Z_t^{i,s}-1\right)^2\right]
\end{align*}
Since $|e^x - 1|\ge \frac{|x|}{2}$ for $x\ge -1$, we have that
\begin{align*}
    \sum_{a_i}\pi_i^{(t)}(a_i|s)\left(\exp{\left(\eta f_i^{(t)}(s,a_i)-1\right)}\right)^2 &\ge \sum_{a_i}\pi_i^{(t)}(a_i|s)\left(\frac{\eta}{2} f_i^{(t)}(s,a_i)\right)^2\\
    &\ge\frac{\eta^2}{4}\sum_{a_i}\pi_i^{(t)}(a_i|s)f_i^{(t)}(s,a_i)^2
\end{align*}
Additionally, as is proved in Lemma \ref{lemma:NPG-bounded-probability},
\begin{align*}
    Z_t^{i,s} &= \sum_{a_i}\pi_i^{(t)}(a_i|s)\exp{\left(\eta f^{(t)}(s,a_i)\right)}\\
    &\le \sum_{a_i}\pi_i^{(t)}(a_i|s) \left(1+\left(\eta f^{(t)}(s,a_i)\right)+\left(\eta f^{(t)}(s,a_i)\right)^2\right)\\
    &\le 1 + \eta^2\sum_{a_i}\pi_i^{(t)}(a_i|s)f_i^{(t)}(s,a_i)^2.
\end{align*}
Thus
\begin{align*}
    &\sum_{a_i}\pi_i^{(t)}(a_i|s)\Delta_i^{(t)}(s,a_i)^2=\frac{1}{\left(Z_t^{i,s}\right)^2}\left[\sum_{a_i}\pi_i^{(t)}(a_i|s)\left(\exp{\left(\eta f_i^{(t)}(s,a_i)\right)}\!-\!1\right)^2-\left( Z_t^{i,s}-1\right)^2\right]\\
    &\ge \frac{1}{\left(Z_t^{i,s}\right)^2}\left[\frac{\eta^2}{4}\sum_{a_i}\pi_i^{(t)}(a_i|s)f_i^{(t)}(s,a_i)^2-\left( \eta^2\sum_{a_i}\pi_i^{(t)}(a_i|s)f_i^{(t)}(s,a_i)^2 \right)^2\right]\\
    & = \frac{1}{\left(Z_t^{i,s}\right)^2} \left(\eta^2\sum_{a_i}\pi_i^{(t)}(a_i|s)f_i^{(t)}(s,a_i)^2\right)\left(\frac{1}{4}-\eta^2\sum_{a_i}\pi_i^{(t)}(a_i|s)f_i^{(t)}(s,a_i)^2\right).
\end{align*}
From Corollary \ref{coro:NPG-log-barrier-bounded-quantities}
\begin{align*}
    &-\frac{1}{15}\le \eta f_i^{(t)}(s,a_i)\le \frac{1}{3}\\
    \quad \Longrightarrow \quad &\eta^2 \sum_{a_i}\pi_i^{(t)}(s,a_i)f_i^{(t)}(s,a_i)^2 \le \frac{1}{9}.
\end{align*}
Thus
\begin{align*}
    \sum_{a_i}\!\pi_i^{(t)}\!(a_i|s)\Delta_i^{(t)}\!(s,a_i)^2  & \frac{1}{\left(\!Z_t^{i,s}\!\right)^{\!2}} \left(\eta^2\sum_{a_i}\pi_i^{(t)}\!(a_i|s)f_i^{(t)}\!(s,a_i)^2\right)\left(\frac{1}{4}\!-\!\eta^2\!\sum_{a_i}\!\pi_i^{(t)}\!(a_i|s)f_i^{(t)}\!(s,a_i)^2\right)\\
    &\ge  \frac{1}{\left(Z_t^{i,s}\right)^2} \left(\eta^2\sum_{a_i}\pi_i^{(t)}(a_i|s)f_i^{(t)}(s,a_i)^2\right)\left(\frac{1}{4}-\frac{1}{9}\right)\\
    &\ge \left(\frac{9}{10}\right)^2\left(\frac{1}{4}-\frac{1}{9}\right)\left(\eta^2\sum_{a_i}\pi_i^{(t)}(a_i|s)f_i^{(t)}(s,a_i)^2\right)\\
    &\ge \frac{\eta^2}{9}\sum_{a_i}\pi_i^{(t)}(a_i|s)f_i^{(t)}(s,a_i)^2,
\end{align*}
Thus
\begin{equation*}
    \sum_{i=1}^n\sum_{s,a_i}d^{(t)}(s)\pi_i^{(t)}(a_i|s)\Delta_i^{(t)}(s,a_i)^2\ge \frac{\eta^2}{9}\sum_{i=1}^n\sum_{s,a_i}d^{(t)}(s)\pi_i^{(t)}(a_i|s)f_i^{(t)}(s,a_i)^2
\end{equation*}
which completes the proof.
\end{proof}
\end{lemma}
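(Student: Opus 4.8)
The plan is to reduce the claimed inequality to a per-pair bound. Since both sides are sums of $d^{(t)}(s)$ times nonnegative quantities indexed by $(i,s)$, it suffices to prove, for each fixed agent $i$ and state $s$,
\[
\sum_{a_i}\pi_i^{(t)}(a_i|s)\Delta_i^{(t)}(s,a_i)^2 \ge \frac{\eta^2}{9}\sum_{a_i}\pi_i^{(t)}(a_i|s)f_i^{(t)}(s,a_i)^2,
\]
and then multiply by $d^{(t)}(s)$ and sum over $i$ and $s$. Writing $S := \sum_{a_i}\pi_i^{(t)}(a_i|s)f_i^{(t)}(s,a_i)^2$ for brevity, I would first use the update rule \eqref{eq:log-barrier-NPG-2} to write $\Delta_i^{(t)}(s,a_i) = e^{\eta f_i^{(t)}(s,a_i)}/Z_t^{i,s} - 1$ and expand the weighted square. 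Exploiting the identity $\sum_{a_i}\pi_i^{(t)}(a_i|s)\bigl(e^{\eta f_i^{(t)}}-1\bigr) = Z_t^{i,s}-1$, the cross term collapses and the weighted square separates cleanly into a leading and a correction piece:
\[
\sum_{a_i}\pi_i^{(t)}(a_i|s)\Delta_i^{(t)}(s,a_i)^2 = \frac{1}{(Z_t^{i,s})^2}\Bigl[\sum_{a_i}\pi_i^{(t)}(a_i|s)\bigl(e^{\eta f_i^{(t)}}-1\bigr)^2 - (Z_t^{i,s}-1)^2\Bigr].
\]

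Next I would bound the two bracketed terms separately. For the leading term, the elementary inequality $|e^x-1|\ge |x|/2$, valid for all $x\ge -1$, applies because Corollary \ref{coro:NPG-log-barrier-bounded-quantities} guarantees $\eta f_i^{(t)}(s,a_i)\ge -\tfrac{1}{15}\ge -1$; this yields $\sum_{a_i}\pi_i^{(t)}(a_i|s)\bigl(e^{\eta f_i^{(t)}}-1\bigr)^2 \ge \tfrac{\eta^2}{4}S$. For the correction term, I would invoke the quadratic upper bound on the normalizer already established inside the proof of Lemma \ref{lemma:NPG-bounded-probability}, namely $Z_t^{i,s}\le 1+\eta^2 S$, which gives $(Z_t^{i,s}-1)^2 \le \eta^4 S^2$. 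Substituting both estimates leaves $\sum_{a_i}\pi_i^{(t)}(a_i|s)\Delta_i^{(t)}(s,a_i)^2 \ge \tfrac{\eta^2 S}{(Z_t^{i,s})^2}\bigl(\tfrac14 - \eta^2 S\bigr)$.

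Finally I would absorb the remaining factors into the constant $\tfrac19$ using Corollary \ref{coro:NPG-log-barrier-bounded-quantities}. The bound $|\eta f_i^{(t)}|\le \tfrac13$ implies $\eta^2 S \le \tfrac19$, so $\tfrac14 - \eta^2 S \ge \tfrac{5}{36}$, and $Z_t^{i,s}\le \tfrac{10}{9}$ gives $(Z_t^{i,s})^{-2}\ge (\tfrac{9}{10})^2$. Since $(\tfrac{9}{10})^2\cdot\tfrac{5}{36} = \tfrac{9}{80}\ge \tfrac19$, the per-pair inequality follows, and summing over $i,s$ (after weighting by $d^{(t)}(s)$) finishes the proof. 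The main obstacle is the subtracted term $-(Z_t^{i,s}-1)^2$: a crude treatment would fear that it cancels the leading $\tfrac{\eta^2}{4}S$, so the crux is recognizing that $Z_t^{i,s}-1$ is genuinely second order in $\eta$ (controlled by $\eta^2 S$), hence the correction is strictly lower order and only erodes the constant from $\tfrac14$ to a value still safely above $\tfrac19$. All the required inputs — the range of $\eta f_i^{(t)}$, the bound $Z_t^{i,s}\le \tfrac{10}{9}$, and the implied bound $\eta^2 S\le \tfrac19$ — are supplied by Lemma \ref{lemma:NPG-bounded-probability} and Corollary \ref{coro:NPG-log-barrier-bounded-quantities}, so no new estimate is needed beyond $|e^x-1|\ge |x|/2$.
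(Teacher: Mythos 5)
Your proposal is correct and follows essentially the same route as the paper's proof: the same expansion of the weighted square of $\Delta_i^{(t)}$ with the collapsing cross term, the same use of $|e^x-1|\ge|x|/2$ for the leading term, the same bound $Z_t^{i,s}\le 1+\eta^2 S$ for the correction, and the same final constants $(9/10)^2\cdot(1/4-1/9)=9/80\ge 1/9$. No substantive differences.
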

\begin{lemma}\label{lemma:NPG-log-barrier-f-to-NEgap}
\begin{equation*}
    \NEgap(\theta^{(t)})\le\frac{\sum_{i=1}^n\sum_{s,a_i}d^{(t)}(s)\pi_i^{(t)}(a_i|s)f_i^{(t)}(s,a_i)^2}{4\lambda} + \lambda\max_i|\cA_i|M,
\end{equation*}
where $M = \sup_{\theta}\max_s\frac{1}{d_{\theta}(s)}$.
\begin{proof}

We will now prove the lemma. 
\begin{align*}
    &\quad d^{(t)}(s)\pi_i^{(t)}f_i^{(t)}(s,a_i)^2 = d^{(t)}(s) \pi_i^{(t)}(a_i|s)\left(\frac{1}{1-\gamma}\overline{A_i^{(t)}}(s,a_i) + \lambda \frac{1}{d^{(t)}(s)\pi_i^{(t)}(a_i|s)}-\frac{\lambda|\cA_i|}{d^{(t)}(s)}\right)^2\\
    &=\! d^{(t)}\!(s)\pi_i^{(t)}\!(a_i|s)\left(\frac{1}{1\!-\!\gamma}\overline{A_i^{(t)}}\!(s,a_i)\!-\!\frac{\lambda|\cA_i|}{d^{(t)}\!(s)}\right)^{\!2} \!\!+\! \frac{\lambda^2}{d^{(t)}\!(s)\pi_i^{(t)}\!(a_i|s)} \!+\! 2\lambda\left(\frac{1}{1\!-\!\gamma}\overline{A_i^{(t)}}\!(s,a_i)\!-\!\frac{\lambda|\cA_i|}{d^{(t)}\!(s)}\right)\\
    &\ge 4\lambda\left(\frac{1}{1-\gamma}\overline{A_i^{(t)}}(s,a_i)\!-\!\frac{\lambda|\cA_i|}{d^{(t)}(s)}\right).\\
& \Longrightarrow \quad \frac{1}{1-\gamma}\overline{A_i^{(t)}}(s,a_i) \le \frac{d^{(t)}(s)\pi_i^{(t)}(a_i|s)f_i^{(t)}(s,a_i)^2}{4\lambda} + \frac{\lambda|\cA_i|}{d^{(t)}(s)}\\
&\qquad\qquad\qquad\qquad\qquad\le \frac{\sum_i\sum_{s,a_i}d^{(t)}(s)\pi_i^{(t)}(a_i|s)f_i^{(t)}(s,a_i)^2}{4\lambda} + \lambda\max_i|\cA_i|M.
\end{align*}
Thus from Lemma \ref{lemma:NE-gap-bound},
\begin{align*}
    \NEgap_i(\theta^{(t)})\le \frac{1}{1-\gamma}\max_{s,a_i}\overline{A_i^{(t)}}(s,a_i)\le \frac{\sum_i\sum_{s,a_i}d^{(t)}(s)\pi_i^{(t)}(a_i|s)f_i^{(t)}(s,a_i)^2}{4\lambda} + \lambda\max_i|\cA_i|M,
\end{align*}
which completes the proof.
\end{proof}
\end{lemma}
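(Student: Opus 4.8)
The plan is to reduce everything to Lemma \ref{lemma:NE-gap-bound}, which gives $\NEgap_i(\theta^{(t)}) \le \frac{1}{1-\gamma}\max_{s,a_i}\overline{A_i^{(t)}}(s,a_i)$, so it suffices to bound $\frac{1}{1-\gamma}\overline{A_i^{(t)}}(s,a_i)$ pointwise by the weighted squared $f$-values plus the bias term. First I would expand the defining expression for $f_i^{(t)}$. Writing $x := \frac{1}{1-\gamma}\overline{A_i^{(t)}}(s,a_i) - \frac{\lambda|\cA_i|}{d^{(t)}(s)}$ and $y := \frac{\lambda}{d^{(t)}(s)\pi_i^{(t)}(a_i|s)}$, we have $f_i^{(t)}(s,a_i) = x + y$, so multiplying the square by $d^{(t)}(s)\pi_i^{(t)}(a_i|s)$ produces exactly three terms: $d^{(t)}(s)\pi_i^{(t)}(a_i|s)\,x^2$, the cross term $2\lambda x$ (the weights telescope since $d^{(t)}(s)\pi_i^{(t)}(a_i|s)\cdot y = \lambda$), and $\frac{\lambda^2}{d^{(t)}(s)\pi_i^{(t)}(a_i|s)}$.

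The key step is to apply the AM-GM inequality $a^2+b^2 \ge 2|ab| \ge 2ab$ to the two squared terms, taking $a = \sqrt{d^{(t)}(s)\pi_i^{(t)}(a_i|s)}\,x$ and $b = \lambda/\sqrt{d^{(t)}(s)\pi_i^{(t)}(a_i|s)}$. This yields $d^{(t)}(s)\pi_i^{(t)}(a_i|s)\,x^2 + \frac{\lambda^2}{d^{(t)}(s)\pi_i^{(t)}(a_i|s)} \ge 2\lambda x$, and adding back the cross term $2\lambda x$ gives $d^{(t)}(s)\pi_i^{(t)}(a_i|s)\,f_i^{(t)}(s,a_i)^2 \ge 4\lambda x = 4\lambda\big(\frac{1}{1-\gamma}\overline{A_i^{(t)}}(s,a_i) - \frac{\lambda|\cA_i|}{d^{(t)}(s)}\big)$. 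Rearranging isolates the advantage: $\frac{1}{1-\gamma}\overline{A_i^{(t)}}(s,a_i) \le \frac{d^{(t)}(s)\pi_i^{(t)}(a_i|s)\,f_i^{(t)}(s,a_i)^2}{4\lambda} + \frac{\lambda|\cA_i|}{d^{(t)}(s)}$.

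To make this uniform over $(s,a_i,i)$ I would bound the single nonnegative summand in the numerator by the full double sum $\sum_{i}\sum_{s,a_i}$, and use $1/d^{(t)}(s)\le M$ in the bias term, obtaining $\frac{1}{1-\gamma}\overline{A_i^{(t)}}(s,a_i) \le \frac{\sum_i\sum_{s,a_i} d^{(t)}(s)\pi_i^{(t)}(a_i|s)f_i^{(t)}(s,a_i)^2}{4\lambda} + \lambda\max_i|\cA_i|M$. Feeding this into Lemma \ref{lemma:NE-gap-bound} and taking the maximum over $i$ finishes the proof. There is no serious obstacle here; the only points requiring care are the sign in the AM-GM step (we genuinely use $2|ab|\ge 2ab$, valid whatever the sign of $x$, since $x$ may be negative when the regularization bias dominates the advantage), and the fact that replacing one summand by the whole sum is legitimate precisely because every term $d^{(t)}(s)\pi_i^{(t)}(a_i|s)f_i^{(t)}(s,a_i)^2$ is nonnegative.
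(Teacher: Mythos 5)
Your proposal is correct and follows essentially the same route as the paper's proof: expand $d^{(t)}(s)\pi_i^{(t)}(a_i|s)f_i^{(t)}(s,a_i)^2$ into the squared, cross, and $\lambda^2$ terms, apply AM--GM to get the lower bound $4\lambda\bigl(\frac{1}{1-\gamma}\overline{A_i^{(t)}}(s,a_i)-\frac{\lambda|\cA_i|}{d^{(t)}(s)}\bigr)$, rearrange, bound the single nonnegative summand by the full sum and $1/d^{(t)}(s)$ by $M$, and conclude via Lemma~\ref{lemma:NE-gap-bound}. Your write-up merely makes the AM--GM step (which the paper leaves implicit) and the sign caveat on $x$ explicit.
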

We are now ready to prove Theorem \ref{thm:log-barrier-NPG}.

\begin{proof}[Proof of Theorem \ref{thm:log-barrier-NPG}]
From Lemma \ref{lemma:NPG-log-barrier-sufficient-ascent} we have that for\\
$\eta \le \min\left\{\frac{1}{15\left(\frac{1}{(1-\gamma)^2} + \lambda|\cA_i|M \right)},\frac{1}{4\left(4\lambda\max_i|\cA_i|M^2 + \frac{4M}{(1-\gamma)^2} + \frac{3nM}{(1-\gamma)^3}\right)}\right\}$,
\begin{equation*}
    \widetilde{\Phi}(\theta^{(t+1)})-\widetilde{\Phi}(\theta^{(t)})\ge\frac{1}{4\eta}\sum_{i=1}^n\sum_{s,a_i}d^{(t)}(s)\pi_i^{(t)}(a_i|s)\Delta_i^{(t)}(s,a_i)^2.
\end{equation*}
From Lemma \ref{lemma:NPG-log-barrier-Delta-to-f},
\begin{align*}
     \widetilde{\Phi}(\theta^{(t+1)})-\widetilde{\Phi}(\theta^{(t)})&\ge\frac{1}{4\eta}\sum_{i=1}^n\sum_{s,a_i}d^{(t)}(s)\pi_i^{(t)}(a_i|s)\Delta_i^{(t)}(s,a_i)^2\\
     &\ge \frac{\eta}{36}\sum_{i=1}^n\sum_{s,a_i}d^{(t)}(s)\pi_i^{(t)}(a_i|s)f_i^{(t)}(s,a_i)^2
\end{align*}
Thus by telescoping we have
\begin{align*}
    \frac{\sum_{t=0}^{T-1}\sum_{i=1}^n\sum_{s,a_i}d^{(t)}(s)\pi_i^{(t)}(a_i|s)f_i^{(t)}(s,a_i)^2}{T}\le \frac{36\left(\widetilde{\Phi}(\theta^{(T)})-\widetilde{\Phi}(\theta^{(0)})\right)}{\eta T}.
\end{align*}
From Lemma \ref{lemma:NPG-log-barrier-f-to-NEgap},
\begin{align*}
    \frac{\sum_{t=0}^{T-1}\NEgap(\theta^{(t)})}{T}&\le \frac{1}{4\lambda} \frac{\sum_{t=0}^{T-1}\sum_{i=1}^n\sum_{s,a_i}d^{(t)}(s)\pi_i^{(t)}(a_i|s)f_i^{(t)}(s,a_i)^2}{T} + \lambda\max_i|\cA_i|M\\
    &\le \frac{9\left(\widetilde{\Phi}(\theta^{(T)})-\widetilde{\Phi}(\theta^{(0)})\right)}{\eta\lambda T} + \lambda\max_i|\cA_i|M.
\end{align*}
Specifically, set $\lambda = \frac{\epsilon}{2\max_i|\cA_i|M}$ and $\theta^{(0)} = \mathbf{0}$, then for any
\begin{align*}
    T &\ge \frac{18\left(\widetilde{\Phi}(\theta^{(T)})-\widetilde{\Phi}(\theta^{(0)})\right)}{(1-\gamma)\eta\lambda\epsilon}=\frac{36\max_i|\cA_i|(\phi_{\max}-\phi_{\min})M}{(1-\gamma)\eta\epsilon^2}\\
    &\ge O\left(\frac{n\max_i|\cA_i|(\phi_{\max}-\phi_{\min})M^2}{(1-\gamma)^4\epsilon^2}\right),
\end{align*}
we have
\begin{align*}
    \frac{\sum_{t=0}^{T-1}\NEgap(\theta^{(t)})}{T}&\le \frac{\epsilon}{2}+\frac{\epsilon}{2} = \epsilon,
\end{align*}
which completes the proof.
\end{proof}

\section{Smoothness Proofs}
{This section mainly focuses on the smoothness of $\Phi$ and $\widetilde\Phi$. We first state the smoothness results in Lemma \ref{lemma:smoothness-unregularized} and Lemma \ref{lemma:smoothness-log-barrier}. The auxiliary lemmas used during proof of the above two lemmas are stated in Lemma \ref{lemma:smoothness-unregularized-auxillary} and Appendix \ref{apdx:useful-lemmas}.} 
\begin{lemma}[Smoothness of $\Phi(\theta)$]\label{lemma:smoothness-unregularized}
\begin{equation*}
    \left\|\nabla_\theta\Phi(\theta') -\nabla_\theta\Phi(\theta)\right\|_2 \le \frac{6n}{(1-\gamma)^3}\|\theta'-\theta\|_2
\end{equation*}
\begin{proof}
From Lemma \ref{lemma:smoothness-unregularized-auxillary} we have that
\begin{align*}
    \left\|\nabla_\theta\Phi(\theta') -\nabla_\theta\Phi(\theta)\right\|_2^2 &= \sum_{i=1}^n\left\|\nabla_{\theta_i}\Phi(\theta') -\nabla_{\theta_i}\Phi(\theta)\right\|_2^2\\
    &\le \sum_{i=1}^n\left\|\nabla_{\theta_i}\Phi(\theta') -\nabla_{\theta_i}\Phi(\theta)\right\|_1^2\\
    &\le \sum_{i=1}^n \left(\frac{6}{(1-\gamma)^3}\sum_{i=1}^n\|\theta_i'-\theta_i\|_2\right)^2\\
    &=\frac{36n}{(1-\gamma)^6}\left(\sum_{i=1}^n\|\theta_i'-\theta_i\|_2\right)^2\\
    &\le \frac{36n^2}{(1-\gamma)^6}\sum_{i=1}^n\|\theta_i'-\theta_i\|_2^2\\
    &= \frac{36n^2}{(1-\gamma)^6}\|\theta'-\theta\|_2^2,
\end{align*}
thus
\begin{equation*}
     \left\|\nabla_\theta\Phi(\theta') -\nabla_\theta\Phi(\theta)\right\|_2 \le \frac{6n}{(1-\gamma)^3}\|\theta'-\theta\|_2
\end{equation*}
\end{proof}
\end{lemma}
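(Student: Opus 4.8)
The plan is to treat $\Phi$ as the (unnormalized) value function for the surrogate stage reward $\phi$ under the product softmax policy $\pi_\theta = \prod_{i}\pi_{\theta_i}$, and to bound its gradient-Lipschitz constant through a second-order directional-derivative argument, exactly as one does for single-agent softmax value functions. Concretely, fix a parameter $\theta$ and a unit vector $u$ in the full parameter space, set $\theta_\alpha := \theta + \alpha u$, and observe that it suffices to produce a uniform bound $\sup_\alpha \bigl|\tfrac{d^2}{d\alpha^2}\Phi(\theta_\alpha)\bigr| \le \beta$ with $\beta = \tfrac{6n}{(1-\gamma)^3}$; the stated $\ell_2$ gradient-Lipschitz bound then follows because a uniform bound on every second directional derivative bounds the operator norm of the Hessian of $\Phi$.

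First I would reduce the problem to the sensitivity of the joint policy. Writing $\Phi(\theta_\alpha)$ as a discounted expectation over trajectories whose law depends on $\theta_\alpha$ only through the per-state action distributions $\pi_{\theta_\alpha}(\cdot|s)$, the standard value-smoothness computation bounds $\tfrac{d^2}{d\alpha^2}\Phi(\theta_\alpha)$ in terms of $\bigl\|\partial_\alpha \pi_{\theta_\alpha}(\cdot|s)\bigr\|_1$ and $\bigl\|\partial_\alpha^2 \pi_{\theta_\alpha}(\cdot|s)\bigr\|_1$, with the discount factor contributing the $\tfrac{1}{(1-\gamma)^3}$ scaling (each extra horizon summation costs a factor $\tfrac{1}{1-\gamma}$, and two policy derivatives together with the boundedness of $\phi$ produce three such factors). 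This reduces the whole estimate to controlling the first and second $\alpha$-derivatives of the product softmax policy.

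The core step is then to bound these policy derivatives. Decompose $u = (u_1,\dots,u_n)$ into its per-agent blocks and apply the product rule $\partial_\alpha \pi_{\theta_\alpha}(a|s) = \sum_i \bigl(\prod_{j\ne i}\pi_{\theta_{j,\alpha}}(a_j|s)\bigr)\,\partial_\alpha \pi_{\theta_{i,\alpha}}(a_i|s)$. Since each single-agent softmax factor has first and second $\alpha$-derivatives bounded by constant multiples of $\|u_{i,s}\|_2$ and $\|u_{i,s}\|_2^2$ respectively, summing over the $n$ agents yields a first-derivative bound additive in the blocks and a second-derivative bound whose cross-agent terms are likewise controlled by $\bigl(\sum_i \|u_{i,s}\|_2\bigr)^2$; Cauchy--Schwarz against $\|u\|_2 = 1$ then produces the linear-in-$n$ factor, and collecting constants gives $\beta = \tfrac{6n}{(1-\gamma)^3}$.

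The main obstacle is the careful bookkeeping of the second derivative of the \emph{product} policy: unlike the single-agent case, $\partial_\alpha^2 \pi_{\theta_\alpha}$ contains cross terms $\bigl(\prod_{k\notin\{i,j\}}\pi_{\theta_{k,\alpha}}\bigr)\,\partial_\alpha\pi_{\theta_{i,\alpha}}\,\partial_\alpha\pi_{\theta_{j,\alpha}}$ coupling distinct agents through $\overline{A_i^\theta}$ and the visitation distribution $d_\theta$, and one must verify that these aggregate to a factor only linear (not quadratic) in $n$. An equivalent and arguably cleaner route is to first establish the per-agent bound $\|\nabla_{\theta_i}\Phi(\theta')-\nabla_{\theta_i}\Phi(\theta)\|_1 \le \tfrac{6}{(1-\gamma)^3}\|\theta'-\theta\|$ and then assemble the blocks using $\|\cdot\|_2\le\|\cdot\|_1$, the Pythagorean identity $\|\nabla_\theta\Phi(\theta')-\nabla_\theta\Phi(\theta)\|_2^2 = \sum_i\|\nabla_{\theta_i}\Phi(\theta')-\nabla_{\theta_i}\Phi(\theta)\|_2^2$, and the inequality $\bigl(\sum_i\|\theta_i'-\theta_i\|_2\bigr)^2 \le n\,\|\theta'-\theta\|_2^2$, which is precisely where the extra $\sqrt{n}$ upgrades the per-block constant to $\tfrac{6n}{(1-\gamma)^3}$.
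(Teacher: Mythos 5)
Your primary route (a uniform bound on the second directional derivative $\tfrac{d^2}{d\alpha^2}\Phi(\theta+\alpha u)$, i.e.\ a Hessian operator-norm bound) is genuinely different from what the paper does: the paper never touches second derivatives. Instead it works directly with the finite difference of gradients, using the explicit formula $\partial\Phi/\partial\theta_{s,a_i}=\tfrac{1}{1-\gamma}d_\theta(s)\pi_{\theta_i}(a_i|s)\overline{A_i^\theta}(s,a_i)$ and splitting $\|\nabla_{\theta_i}\Phi(\theta')-\nabla_{\theta_i}\Phi(\theta)\|_1$ into a term controlled by the total variation of the joint state-action occupancy $d_\theta(s)\pi_\theta(a|s)$ (Corollary~\ref{coro:smoothness-auxiliary}) and a term controlled by $|A_i^{\theta'}-A_i^\theta|$ (Lemma~\ref{lemma:NPG-log-barrier-auxiliary-1}); both are Lipschitz in $\max_s\|\pi_{\theta'_s}-\pi_{\theta_s}\|_1\le\sum_i\|\pi_{\theta'_{i,s}}-\pi_{\theta_{i,s}}\|_1\le 2\sum_i\|\theta_i'-\theta_i\|_2$, which is where the per-agent constant $\tfrac{6}{(1-\gamma)^3}$ comes from with no cross-agent bookkeeping at all. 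Your Hessian route would work in principle, but its crux --- verifying that the cross terms $\partial_\alpha\pi_{\theta_i}\,\partial_\alpha\pi_{\theta_j}$ in $\partial_\alpha^2$ of the product policy aggregate to a factor only linear in $n$ with the specific constant $6$ --- is flagged but not carried out, so as written the constant is asserted rather than derived. Your ``equivalent and arguably cleaner route'' in the last paragraph is, however, exactly the paper's proof: establish $\|\nabla_{\theta_i}\Phi(\theta')-\nabla_{\theta_i}\Phi(\theta)\|_1\le\tfrac{6}{(1-\gamma)^3}\sum_j\|\theta_j'-\theta_j\|_2$ per agent, then assemble with $\|\cdot\|_2\le\|\cdot\|_1$, the block Pythagorean identity, and $\bigl(\sum_i\|\theta_i'-\theta_i\|_2\bigr)^2\le n\|\theta'-\theta\|_2^2$ to pick up the factor $n$. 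The trade-off is that the paper's first-order argument avoids differentiating the product policy twice and makes the constants fall out mechanically from the three $\tfrac{1}{1-\gamma}$ factors plus the policy's Lipschitz constant $2$ in $\theta$, whereas your Hessian argument, if completed, would give a local statement that more closely mirrors the single-agent analysis of \citep{agarwal2020} but at the cost of delicate cross-term accounting.
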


\begin{lemma}[Smoothness of $\widetilde\Phi(\theta)$]\label{lemma:smoothness-log-barrier}
\begin{equation*}
    \left\|\nabla_\theta\widetilde\Phi(\theta') -\nabla_\theta\widetilde\Phi(\theta)\right\|_2 \le \left(\frac{6n}{(1-\gamma)^3} + 2\lambda\max_i|\cA_i|\right)\|\theta'-\theta\|_2
\end{equation*}
\begin{proof}
Since
\begin{align*}
    \frac{\partial\sum_{i=1}^n\sum_{s,a_i}\log\pi_{\theta_i}(a_i|s)}{\partial \theta_{s,a_i}} = 1-|\cA_i|\pi_{\theta_i}(a_i|s)
\end{align*}
we have that
\begin{align*}
    &\left\|\nabla_\theta\! \left(\!\sum_{i=1}^n\sum_{s,a_i}\!\log\pi_{\theta_i'}(a_i|s)\! -\! \sum_{i=1}^n\sum_{s,a_i}\log\pi_{\theta_i}(a_i|s)\!\right)\right\|_2^2 \!\!=\! \sum_{i=1}^n|\cA_i|^2\sum_{s}\sum_{a_i}\left(\pi_{\theta_i'}(a_i|s)\!\! -\! \pi_{\theta_i}(a_i|s)\right)^2\\
    & \le\sum_{i=1}^n\!|\cA_i|^2\sum_{s} \|\pi_{\theta_{i,s}'}\!\!\!-\!\pi_{\theta_{i,s}}\|_1^2\stackrel{ \mbox{(Corollary \ref{coro:smoothness-auxillary-2})}}{\le} 4\sum_{i=1}^n|\cA_i|^2\sum_{s}\|\theta_{i,s}' \!\!\!-\! \theta_{i,s}\|_2^2\le 4\max_i|\cA_i|^2 \|\theta'\!\!-\!\theta\|_2^2
\end{align*}
Thus
\begin{align*}
    &\quad \left\|\nabla_\theta\widetilde\Phi(\theta') -\nabla_\theta\widetilde\Phi(\theta)\right\|_2 \\
    &\le  \left\|\nabla_\theta\Phi(\theta') -\nabla_\theta\Phi(\theta)\right\|_2 +  \lambda\left\|\nabla_\theta \left(\sum_{i=1}^n\sum_{s,a_i}\log\pi_{\theta_i'}(a_i|s) - \sum_{i=1}^n\sum_{s,a_i}\log\pi_{\theta_i}(a_i|s)\right)\right\|_2\\
    &\le \left(\frac{6n}{(1-\gamma)^3} + 2\lambda\max_i|\cA_i|\right)\|\theta'-\theta\|_2. \qedhere
\end{align*}
\end{proof}
\end{lemma}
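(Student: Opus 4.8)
The plan is to exploit the additive decomposition $\widetilde\Phi(\theta) = \Phi(\theta) + \lambda\sum_{i=1}^n\sum_{s,a_i}\log\pi_{\theta_i}(a_i|s)$ and reduce the claim to (i) the smoothness of $\Phi$, which is already available from Lemma \ref{lemma:smoothness-unregularized}, and (ii) the Lipschitz continuity of the gradient of the log-barrier penalty. By the triangle inequality applied to $\nabla_\theta\widetilde\Phi(\theta') - \nabla_\theta\widetilde\Phi(\theta)$, it suffices to control the penalty term separately, since the $\Phi$ contribution is bounded by $\frac{6n}{(1-\gamma)^3}\|\theta'-\theta\|_2$.

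For the penalty, I would first compute its gradient coordinatewise. A direct calculation from the softmax parameterization gives $\frac{\partial}{\partial\theta_{s,a_i}}\sum_{i=1}^n\sum_{s,a_i}\log\pi_{\theta_i}(a_i|s) = 1 - |\cA_i|\pi_{\theta_i}(a_i|s)$. The key observation is that when forming the difference of these gradients at $\theta'$ and $\theta$, the constant coordinate $1$ cancels, so the difference is entirely governed by $|\cA_i|\bigl(\pi_{\theta_i'}(a_i|s) - \pi_{\theta_i}(a_i|s)\bigr)$. Squaring and summing over all coordinates yields $\sum_{i=1}^n|\cA_i|^2\sum_s\sum_{a_i}\bigl(\pi_{\theta_i'}(a_i|s)-\pi_{\theta_i}(a_i|s)\bigr)^2$.

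The next step is to pass from this $\ell_2$ expression over actions to an $\ell_1$ bound, using $\sum_{a_i}(\cdot)^2 \le (\sum_{a_i}|\cdot|)^2 = \|\pi_{\theta_{i,s}'}-\pi_{\theta_{i,s}}\|_1^2$, and then invoke the softmax Lipschitz estimate from Corollary \ref{coro:smoothness-auxillary-2}, namely $\|\pi_{\theta_{i,s}'}-\pi_{\theta_{i,s}}\|_1 \le 2\|\theta_{i,s}'-\theta_{i,s}\|_2$. Substituting gives an upper bound of $4\sum_{i=1}^n|\cA_i|^2\sum_s\|\theta_{i,s}'-\theta_{i,s}\|_2^2 \le 4\max_i|\cA_i|^2\,\|\theta'-\theta\|_2^2$, and taking square roots produces a Lipschitz constant of $2\max_i|\cA_i|$ for the penalty gradient. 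Adding this to the $\frac{6n}{(1-\gamma)^3}$ constant from $\Phi$ via the triangle inequality closes the proof.

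I do not anticipate any genuine obstacle here: the argument is a routine smoothness estimate once the correct gradient formula for the log-barrier is written down. The only point requiring mild care is keeping the norm conversions pointed in the right direction (the $\ell_2$-over-actions quantity is bounded by the $\ell_1$ norm, which is then bounded via softmax Lipschitzness), and verifying that summing $\sum_s\|\theta_{i,s}'-\theta_{i,s}\|_2^2 = \|\theta_i'-\theta_i\|_2^2$ and then over $i$ recovers the global $\|\theta'-\theta\|_2^2$ without any cross terms, so that the $\max_i|\cA_i|^2$ factor pulls out cleanly.
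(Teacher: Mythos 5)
Your proposal follows the paper's proof step for step: the same additive decomposition and triangle inequality, the same coordinatewise gradient formula $1 - |\cA_i|\pi_{\theta_i}(a_i|s)$ with cancellation of the constant term, the same $\ell_2$-to-$\ell_1$ bound over actions followed by Corollary \ref{coro:smoothness-auxillary-2}, and the same final assembly of the two Lipschitz constants. It is correct and there is nothing to add.
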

\begin{lemma}\label{lemma:smoothness-unregularized-auxillary}
\begin{align*}
     \left\|\nabla_{\theta_i}\Phi(\theta') -\nabla_{\theta_i}\Phi(\theta)\right\|_1  \le \frac{6}{(1-\gamma)^3}\sum_{i=1}^n\|\theta_i'-\theta_i\|_2
\end{align*}
\begin{proof}
\vspace{-15pt}
\begin{align*}
     &\left\|\nabla_{\theta_i}\Phi(\theta') -\nabla_{\theta_i}\Phi(\theta)\right\|_1 
     =\frac{1}{1-\gamma}\sum_{s,a_i}\left|d_{\theta'}(s)\pi_{\theta_i'}(a_i|s)\overline{A_i^{\theta'}}(s,a_i) - d_{\theta}(s)\pi_{\theta_i}(a_i|s)\overline{A_i^{\theta}}(s,a_i)\right|\\
     &= \frac{1}{1\!-\!\gamma}\!\sum_{s,a_i}\!\left|d_{\theta'}\!(s)\pi_{\theta_i'}\!(a_i|s)\sum_{a_{-\!i}}\!\pi_{\theta_{\!-\!i}'}(a_{-\!i}|s)A_i^{\theta'}(s,a_i,a_{-\!i}) \!-\! d_{\theta}(s)\pi_{\theta_i}(a_i|s)\!\sum_{a_{-\!i}}\!\pi_{\theta_{\!-\!i}}\!(a_{-\!i}|s)A_i^{\theta}(s,a_i)\right|\\
     &\le \frac{1}{1-\gamma}\sum_{s,a}\left|d_{\theta'}(s)\pi_{\theta'}(a|s)A_i^{\theta'}(s,a) - d_{\theta}(s)\pi_{\theta}(a|s)A_i^{\theta}(s,a)\right|\\
     &\le \frac{1}{1\!-\!\gamma}\!\left(\!\sum_{s,a}\!\left|d_{\theta'}\!(s)\pi_{\theta'}\!(a|s) \!-\! d_{\theta}(s)\pi_{\theta}(a|s)\right|\left|A_i^{\theta'}\!(s,a)\right| \!+\! \sum_{s,a}d_{\theta}(s)\pi_{\theta}(a|s)\left|A_i^{\theta'}(s,a) \!-\! A_i^{\theta}(s,a)\right| \right)\\
     &\le \frac{1}{1-\gamma}\left(\frac{1}{1-\gamma}\sum_{s,a}\left|d_{\theta'}(s)\pi_{\theta'}(a|s) - d_{\theta}(s)\pi_{\theta}(a|s)\right| + \max_{s,a_i}\left|A_i^{\theta'}(s,a) - A_i^{\theta}(s,a)\right| \right)
\end{align*}
From Lemma \ref{lemma:NPG-log-barrier-auxiliary-1} and Corollary \ref{coro:smoothness-auxiliary}, we have that
\begin{align*}
    \left\|\nabla_{\theta_i}\Phi(\theta') -\nabla_{\theta_i}\Phi(\theta)\right\|_1  &\le \frac{3}{(1-\gamma)^3}\max_s\|\pi_{\theta_s'}-\pi_{\theta_s}\|_1\\
    &\le \frac{3}{(1-\gamma)^3}\max_s\sum\|\pi_{\theta_{i,s}'}-\pi_{\theta_{i,s}}\|_1
\end{align*}
From Corollary \ref{coro:smoothness-auxillary-2} we have that
\begin{align*}
    \left\|\nabla_{\theta_i}\Phi(\theta') -\nabla_{\theta_i}\Phi(\theta)\right\|_1  
    &\le \frac{3}{(1-\gamma)^3}\max_s\sum\|\pi_{\theta_{i,s}'}-\pi_{\theta_{i,s}}\|_1\\
    &\le \frac{6}{(1-\gamma)^3}\sum_{i=1}^{n}\|\theta_i-\theta_i\|_2. \qedhere
\end{align*}
\end{proof}
\end{lemma}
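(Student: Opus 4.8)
The plan is to start from the policy-gradient expression for the potential, $\frac{\partial\Phi(\theta)}{\partial\theta_{s,a_i}}=\frac{1}{1-\gamma}d_\theta(s)\pi_{\theta_i}(a_i|s)\overline{A_i^\theta}(s,a_i)$, and to bound the $\ell_1$ difference entry by entry. The first move is to absorb the averaged advantage back into the full joint action: since $\pi_{\theta_i}(a_i|s)\pi_{\theta_{-i}}(a_{-i}|s)=\pi_\theta(a|s)$ and $\overline{A_i^\theta}(s,a_i)=\sum_{a_{-i}}\pi_{\theta_{-i}}(a_{-i}|s)A_i^\theta(s,a_i,a_{-i})$, each summand equals $\sum_{a_{-i}}d_\theta(s)\pi_\theta(a|s)A_i^\theta(s,a)$. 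Applying the triangle inequality over $a_{-i}$ and then summing over $(s,a_i)$ upgrades the sum over $\cA_i$ to a sum over the full joint action set $\cA$, yielding
\[
\left\|\nabla_{\theta_i}\Phi(\theta')-\nabla_{\theta_i}\Phi(\theta)\right\|_1\le\frac{1}{1-\gamma}\sum_{s,a}\left|d_{\theta'}(s)\pi_{\theta'}(a|s)A_i^{\theta'}(s,a)-d_\theta(s)\pi_\theta(a|s)A_i^\theta(s,a)\right|.
\]

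Next I would decouple the two sources of variation by adding and subtracting $d_\theta(s)\pi_\theta(a|s)A_i^{\theta'}(s,a)$, which splits the summand into a term measuring the change in the discounted state-action occupancy and a term measuring the change in the advantage. Using the uniform bound $|A_i^{\theta'}(s,a)|\le\frac{1}{1-\gamma}$ (rewards lie in $[0,1]$) on the first term and $\sum_{s,a}d_\theta(s)\pi_\theta(a|s)=1$ on the second, this gives
\[
\frac{1}{(1-\gamma)^2}\sum_{s,a}\left|d_{\theta'}(s)\pi_{\theta'}(a|s)-d_\theta(s)\pi_\theta(a|s)\right|+\frac{1}{1-\gamma}\max_{s,a}\left|A_i^{\theta'}(s,a)-A_i^\theta(s,a)\right|.
\]
I would then invoke two continuity estimates, each controlling one term by the total variation of the joint policy: the occupancy measure $d_\theta\pi_\theta$ and the advantage $A_i^\theta$ are both Lipschitz in the policy, bounded by $\max_s\|\pi_{\theta'_s}-\pi_{\theta_s}\|_1$ up to a power of $\frac{1}{1-\gamma}$. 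Combining them collapses everything into a bound of the form $\frac{3}{(1-\gamma)^3}\max_s\|\pi_{\theta'_s}-\pi_{\theta_s}\|_1$. Finally, because the joint policy factorizes, $\|\pi_{\theta'_s}-\pi_{\theta_s}\|_1\le\sum_{j}\|\pi_{\theta'_{j,s}}-\pi_{\theta_{j,s}}\|_1$, and softmax Lipschitzness gives $\|\pi_{\theta'_{j,s}}-\pi_{\theta_{j,s}}\|_1\le 2\|\theta'_{j,s}-\theta_{j,s}\|_2\le 2\|\theta'_j-\theta_j\|_2$; the factors $3$ and $2$ produce the claimed constant $\frac{6}{(1-\gamma)^3}$.

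The routine part is the triangle-inequality bookkeeping above; the substantive work lies in the two continuity estimates, namely that the discounted state-action occupancy and the advantage (equivalently $Q$ and $V$) depend Lipschitz-continuously on the joint policy with the correct powers of $(1-\gamma)$. I expect this to be the main obstacle: these follow from a simulation-lemma / geometric-series argument, since the occupancy is a resolvent-type object $(I-\gamma P_\theta)^{-1}$ whose perturbation is governed by the perturbation of the transition kernel $P_\theta$, which is in turn linear in the policy; but tracking the constants and confirming that the averaging over $a_{-i}$ introduces no dependence on $\theta_{-i}$ beyond what the product structure already captures requires care. I would therefore isolate the value-function continuity bound and the softmax-to-parameter Lipschitz bound as separate auxiliary lemmas and keep the present argument confined to the assembly sketched above, giving $\left\|\nabla_{\theta_i}\Phi(\theta')-\nabla_{\theta_i}\Phi(\theta)\right\|_1\le\frac{6}{(1-\gamma)^3}\sum_{j=1}^n\|\theta'_j-\theta_j\|_2$.
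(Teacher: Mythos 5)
Your proposal follows the paper's proof essentially step for step: the same lifting of the averaged advantage to the joint action space, the same add-and-subtract decomposition into an occupancy-perturbation term and an advantage-perturbation term, the same two continuity lemmas (the paper's Lemma~\ref{lemma:NPG-log-barrier-auxiliary-1} and Corollary~\ref{coro:smoothness-auxiliary}), and the same final factorization of the joint policy's total variation into per-agent terms combined with the softmax Lipschitz bound of Corollary~\ref{coro:smoothness-auxillary-2}, arriving at the identical constant $\frac{6}{(1-\gamma)^3}$. The argument is correct and matches the paper's.
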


\section{Some Useful Lemmas}\label{apdx:useful-lemmas}
\begin{lemma}\label{lemma:NPG-log-barrier-auxiliary-1}
\begin{align*}
\vspace{-20pt}
    \left|Q^{\theta'}(s,a)-Q^\theta(s,a)\right| &\le \frac{1}{(1-\gamma)^2}\max_s \|\pi_{\theta'_s}-\pi_{\theta_s}\|_1\\
    \left|V^{\theta'}(s)-V^\theta(s)\right| &\le \frac{1}{(1-\gamma)^2}\max_s \|\pi_{\theta'_s}-\pi_{\theta_s}\|_1,
\end{align*}
and thus
\begin{equation*}
    \left|A^{\theta'}(s,a)-A^\theta(s,a)\right| \le \frac{2}{(1-\gamma)^2}\max_s \|\pi_{\theta'_s}-\pi_{\theta_s}\|_1
\end{equation*}
\begin{proof}
From performance difference lemma we have that
\begin{align*}
    &\quad \left|Q^{\theta'}(s,a) -Q^\theta(s,a)\right|\\
    &= \left|\sum_{t=1}^{+\infty} \gamma^t \sum_{s'}\textup{Pr}^{\theta'}(s(t) = s'|s(0)=s,a(0)=a) \sum_{a'}\left(\pi_{\theta'}(a'|s')-\pi_{\theta}(a'|s')\right)Q^{\theta}(s',a')\right|\\
    &\le \left|\sum_{t=1}^{+\infty} \gamma^t \sum_{s'}\textup{Pr}^{\theta'}(s(t) = s'|s(0)=s,a(0)=a) \sum_{a'}\left|\pi_{\theta'}(a'|s')-\pi_{\theta}(a'|s')\right|\left|Q^{\theta}(s',a')\right|\right|\\
    &\le \left|\sum_{t=1}^{+\infty} \gamma^t \max_{s'} \sum_{a'}\left|\pi_{\theta'}(a'|s')-\pi_{\theta}(a'|s')\right|\frac{1}{1-\gamma}\right|\\
    & = \frac{1}{(1-\gamma)^2}\max_s \sum_{a}\left|\pi_{\theta'}(a|s)-\pi_{\theta}(a|s)\right|\\
    &=\frac{1}{(1-\gamma)^2}\max_s \|\pi_{\theta'_s}-\pi_{\theta_s}\|_1
\end{align*}
Same argument also holds for $\left|V^{\theta'}(s)-V^\theta(s)\right|$, and thus
\begin{equation*}
    \left|A^{\theta'}(s,a)-A^\theta(s,a)\right| \le  \left|Q^{\theta'}(s,a)-Q^\theta(s,a)\right| + \left|V^{\theta'}(s)-V^\theta(s)\right|\le \frac{2}{(1-\gamma)^2}\max_s \|\pi_{\theta'_s}-\pi_{\theta_s}\|_1. \qedhere
\end{equation*}
\end{proof}
\end{lemma}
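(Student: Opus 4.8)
The plan is to reduce everything to a performance-difference identity for the $Q$-function and then bound each ingredient crudely. First I would establish that for any fixed state-action pair $(s,a)$,
\[
Q^{\theta'}(s,a)-Q^\theta(s,a)=\sum_{t=1}^{\infty}\gamma^t\sum_{s'}\textup{Pr}^{\theta'}(s(t)=s'\,|\,s(0)=s,a(0)=a)\sum_{a'}\bigl(\pi_{\theta'}(a'|s')-\pi_\theta(a'|s')\bigr)Q^\theta(s',a').
\]
This identity is the $Q$-function analogue of the performance difference lemma (Lemma~\ref{lemma:performance-difference-lemma}, \citep{Kakade02}): since the initial action $a(0)=a$ is policy-independent, the two $Q$-values share the immediate reward and the $t=0$ term drops out, while from $t\ge 1$ the trajectory is generated under $\pi_{\theta'}$. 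I would derive it by unrolling the Bellman recursion satisfied by $Q^{\theta'}$, comparing against $Q^\theta$ term by term, and repeatedly using $V^\theta(s')=\sum_{a'}\pi_\theta(a'|s')Q^\theta(s',a')$ to convert each one-step policy mismatch into the bracketed difference $\sum_{a'}(\pi_{\theta'}-\pi_\theta)Q^\theta$.

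Given this identity, the bound is immediate. Each factor $|Q^\theta(s',a')|$ is at most $\tfrac{1}{1-\gamma}$ because the stage rewards lie in $[0,1]$, so by the $\ell_1$--$\ell_\infty$ H\"older inequality the inner sum over $a'$ is bounded by $\tfrac{1}{1-\gamma}\|\pi_{\theta'_{s'}}-\pi_{\theta_{s'}}\|_1$. Pulling out $\max_{s}\|\pi_{\theta'_s}-\pi_{\theta_s}\|_1$, using $\sum_{s'}\textup{Pr}^{\theta'}(\cdot)=1$, and summing the geometric series $\sum_{t\ge 1}\gamma^t\le\tfrac{1}{1-\gamma}$ produces the claimed factor $\tfrac{1}{(1-\gamma)^2}$.

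For the value function I would run the same argument starting from a fixed initial state $s$ rather than a state-action pair; the telescoping identity has the identical form and the same crude estimates yield the same $\tfrac{1}{(1-\gamma)^2}$ constant. Finally, the advantage bound follows from the triangle inequality $|A^{\theta'}(s,a)-A^\theta(s,a)|\le|Q^{\theta'}(s,a)-Q^\theta(s,a)|+|V^{\theta'}(s)-V^\theta(s)|$, which simply adds the two bounds and gives the factor $\tfrac{2}{(1-\gamma)^2}$.

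The main obstacle is the first step: writing down and verifying the telescoping $Q$-function identity cleanly, since this is where the structure of the proof lives. Once that identity is in place, the remaining estimates are entirely mechanical (boundedness of the rewards, H\"older, a geometric sum), and I expect no trouble with the constants: even replacing the tighter $\sum_{t\ge 1}\gamma^t=\tfrac{\gamma}{1-\gamma}$ by the looser $\tfrac{1}{1-\gamma}$ already yields the stated bound with room to spare.
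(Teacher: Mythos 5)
Your proposal is correct and follows essentially the same route as the paper's proof: the telescoping $Q$-function identity from the performance difference lemma, the crude bound $|Q^\theta(s',a')|\le\frac{1}{1-\gamma}$ combined with the $\ell_1$--$\ell_\infty$ estimate and the geometric series, the identical argument for $V$, and the triangle inequality for $A$. No gaps.
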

\begin{lemma}\label{lemma:auxillary-d-pi-r}
\begin{equation*}
    \frac{1}{1-\gamma} \sum_{s,a}\left(d_{\theta'}(s)\pi_{\theta'}(a|s) - d_{\theta}(s)\pi_{\theta}(a|s)\right)r(s,a) \le \frac{1}{(1-\gamma)^2}\|r\|_\infty\max_s\|\pi_{\theta_s'}-\pi_{\theta_s}\|_1,
\end{equation*}
where $\|r\|_\infty = \max_{s,a}|r(s,a)|$.
\begin{proof}
For any reward function $r(s,a)$, we can define its value function $V^{\theta}(s)$ and $Q^{\theta}(s,a)$ correspondingly. Using performance difference lemma we have that
\begin{align*}
     &\quad \frac{1}{1-\gamma} \sum_{s,a}\left(d_{\theta'}(s)\pi_{\theta'}(a|s) - d_{\theta}(s)\pi_{\theta}(a|s)\right)r(s,a)\\
     &= \sum_{s}\rho(s)(V^{\theta'}(s)-V^\theta(s)) \\
    &=\frac{1}{1-\gamma}\sum_s d_{\theta'}(s)\sum_a\left(\pi_{\theta'}(a|s)-\pi_\theta(a|s)\right)Q^{\theta}(s,a)\\
    &\le \frac{1}{(1-\gamma)^2}\|r\|_\infty\sum_sd_{\theta'}(s)\|\pi_{\theta_s'}-\pi_{\theta_s}\|_1\\
    &\le \frac{1}{(1-\gamma)^2}\|r\|_\infty\max_s\|\pi_{\theta_s'}-\pi_{\theta_s}\|_1. \qedhere
\end{align*}
\end{proof}
\end{lemma}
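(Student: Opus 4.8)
The plan is to recognize the left-hand side as a difference of $\rho$-averaged value functions and then invoke the performance difference lemma. For the given reward $r$, define the associated value and $Q$-functions $V^\theta(s) := \bE[\sum_{t=0}^\infty\gamma^t r(s(t),a(t))\,|\,\pi_\theta, s(0)=s]$ and $Q^\theta(s,a)$ in the usual way. The first step is the elementary identity $\frac{1}{1-\gamma}\sum_{s,a}d_\theta(s)\pi_\theta(a|s)r(s,a) = \sum_s\rho(s)V^\theta(s)$, which follows directly from the definition of the discounted state visitation distribution $d_\theta$ in \eqref{eq:discounted state visitation distribution}. Applying this to both $\theta'$ and $\theta$ rewrites the left-hand side as $\sum_s\rho(s)\left(V^{\theta'}(s)-V^\theta(s)\right)$.

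The second step is to expand this difference via the performance difference lemma \citep{Kakade02}, which gives
\[
\sum_s\rho(s)\left(V^{\theta'}(s)-V^\theta(s)\right) = \frac{1}{1-\gamma}\sum_s d_{\theta'}(s)\sum_a\left(\pi_{\theta'}(a|s)-\pi_\theta(a|s)\right)Q^\theta(s,a),
\]
where I use $\sum_a\pi_\theta(a|s)Q^\theta(s,a)=V^\theta(s)$ to replace the advantage term by the $Q$-function weighted against the shifted policy.

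The third step is a Hölder bound. For each fixed $s$, the inner sum is at most $\max_a|Q^\theta(s,a)|\,\|\pi_{\theta_s'}-\pi_{\theta_s}\|_1$, and since the reward is bounded by $\|r\|_\infty$ the discounted return satisfies $\max_a|Q^\theta(s,a)|\le \frac{\|r\|_\infty}{1-\gamma}$. Pulling out the constant $\frac{\|r\|_\infty}{(1-\gamma)^2}$ and using that $d_{\theta'}$ is a probability distribution (so $\sum_s d_{\theta'}(s)=1$) reduces $\sum_s d_{\theta'}(s)\|\pi_{\theta_s'}-\pi_{\theta_s}\|_1$ to at most $\max_s\|\pi_{\theta_s'}-\pi_{\theta_s}\|_1$, which yields the claimed bound.

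This argument is essentially routine and I do not anticipate a serious obstacle. The only points requiring a little care are the bookkeeping identity linking the visitation-weighted reward sum to $\sum_s\rho(s)V^\theta(s)$, and applying the $\ell_1$–$\ell_\infty$ split to the bounded quantity $Q^\theta$ rather than to the per-state reward directly. I note also that the same chain of inequalities in fact controls the absolute value of the left-hand side, since each step is an absolute-value estimate; the one-sided statement is merely the weaker form needed downstream.
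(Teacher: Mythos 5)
Your proposal is correct and follows essentially the same route as the paper's proof: rewrite the left-hand side as $\sum_s\rho(s)(V^{\theta'}(s)-V^{\theta}(s))$, apply the performance difference lemma, and conclude with the $\ell_1$--$\ell_\infty$ bound using $|Q^\theta(s,a)|\le \|r\|_\infty/(1-\gamma)$ and $\sum_s d_{\theta'}(s)=1$. The additional remarks (replacing the advantage by the $Q$-function since the policy difference sums to zero, and the observation that the argument actually bounds the absolute value) are accurate and consistent with how the paper uses this lemma in its corollaries.
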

We have the following two corollaries for Lemma \ref{lemma:auxillary-d-pi-r}.
\begin{corollary}\label{coro:NPG-log-barrier-auxiliary-2}
\begin{equation*}
    \frac{1}{1-\gamma} \sum_{s}\left|d_{\theta'}(s)-d_\theta(s)\right| \le \frac{1}{(1-\gamma)^2}\max_s\|\pi_{\theta_s'}-\pi_{\theta_s}\|_1
\end{equation*}
\begin{proof}
\begin{align*}
     \frac{1}{1-\gamma} \sum_{s}\left|d_{\theta'}(s)-d_\theta(s)\right| &= \max_{-1\le r(s)\le 1}\frac{1}{1-\gamma}\sum_{s,a}(d_{\theta'}(s)\pi_{\theta'}(a|s) - d_{\theta}(s)\pi_{\theta}(a|s))r(s) \\
    &\le \frac{1}{(1-\gamma)^2}\max_s\|\pi_{\theta_s'}-\pi_{\theta_s}\|_1. \qedhere
\end{align*}
\end{proof}
\end{corollary}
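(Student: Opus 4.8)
The plan is to obtain this as an immediate consequence of Lemma \ref{lemma:auxillary-d-pi-r} via the dual characterization of the $\ell_1$ norm. The key observation is that the $\ell_1$ distance between the two discounted state visitation distributions admits the variational representation
\[
    \frac{1}{1-\gamma}\sum_s \left| d_{\theta'}(s) - d_\theta(s)\right| = \max_{r:\,-1\le r(s)\le 1}\ \frac{1}{1-\gamma}\sum_s r(s)\left(d_{\theta'}(s) - d_\theta(s)\right),
\]
where the maximizing test function $r(\cdot)$ is the sign of $d_{\theta'}(\cdot) - d_\theta(\cdot)$.

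First I would lift each admissible state-function $r(\cdot)$ to an action-independent reward $r(s,a) := r(s)$ on $\cS\times\cA$. Because $\pi_{\theta'}(\cdot|s)$ and $\pi_\theta(\cdot|s)$ are both probability distributions, marginalizing over the action is exact, so
\[
    \sum_s r(s)\left(d_{\theta'}(s) - d_\theta(s)\right) = \sum_{s,a} r(s)\left(d_{\theta'}(s)\pi_{\theta'}(a|s) - d_\theta(s)\pi_\theta(a|s)\right),
\]
and $\|r\|_\infty = \max_{s,a}|r(s,a)| = \max_s|r(s)| \le 1$. This rewrites the inner objective in exactly the form handled by Lemma \ref{lemma:auxillary-d-pi-r}.

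Next I would apply Lemma \ref{lemma:auxillary-d-pi-r} to this reward, which bounds the inner expression by $\frac{1}{(1-\gamma)^2}\|r\|_\infty\max_s\|\pi_{\theta_s'}-\pi_{\theta_s}\|_1 \le \frac{1}{(1-\gamma)^2}\max_s\|\pi_{\theta_s'}-\pi_{\theta_s}\|_1$, uniformly over all admissible $r$. Taking the maximum over $r$ on the left-hand side of the variational identity then yields the claimed inequality.

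This corollary is routine, so there is no substantial obstacle; the only point deserving care is the reduction step, namely recognizing that the $\ell_1$ deviation of the marginal state distributions is realized by an action-independent reward with sup-norm at most one, which is precisely what allows Lemma \ref{lemma:auxillary-d-pi-r} (stated for general $r(s,a)$) to be invoked with no loss.
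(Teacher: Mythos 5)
Your proposal is correct and follows essentially the same route as the paper: both use the dual characterization of the $\ell_1$ norm with an action-independent test function $r(s,a)=r(s)$, $\|r\|_\infty\le 1$, and then invoke Lemma \ref{lemma:auxillary-d-pi-r}. Your write-up simply makes the marginalization-over-actions step explicit, which the paper leaves implicit.
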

\begin{corollary}\label{coro:smoothness-auxiliary}
\begin{equation*}
    \frac{1}{1-\gamma} \sum_{s}\left|d_{\theta'}(s)\pi_{\theta'}(a|s) - d_{\theta}(s)\pi_{\theta}(a|s)\right| \le \frac{1}{(1-\gamma)^2}\max_s\|\pi_{\theta_s'}-\pi_{\theta_s}\|_1.
\end{equation*}
\begin{proof}
\begin{align*}
     &\quad \frac{1}{1-\gamma} \sum_{s}\left|d_{\theta'}(s)\pi_{\theta'}(a|s) - d_{\theta}(s)\pi_{\theta}(a|s)\right| \\
     &= \max_{-1\le r(s,a)\le 1}\frac{1}{1-\gamma}\sum_{s,a}(d_{\theta'}(s)\pi_{\theta'}(a|s) - d_{\theta}(s)\pi_{\theta}(a|s))r(s,a) \\
    &\le \frac{1}{(1-\gamma)^2}\max_s\|\pi_{\theta_s'}-\pi_{\theta_s}\|_1. \qedhere
\end{align*}
\end{proof}
\end{corollary}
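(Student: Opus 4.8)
The plan is to obtain Corollary~\ref{coro:smoothness-auxiliary} as an immediate consequence of Lemma~\ref{lemma:auxillary-d-pi-r} through the variational (dual) characterization of the $\ell_1$ norm. Writing $x(s,a):=d_{\theta'}(s)\pi_{\theta'}(a|s)-d_\theta(s)\pi_\theta(a|s)$ for the signed difference of the two discounted state--action occupancy measures, the left-hand side is a sum of absolute values of $x$, and every such sum admits the representation $\sum_{s,a}\lvert x(s,a)\rvert=\max_{\lVert r\rVert_\infty\le 1}\sum_{s,a}x(s,a)\,r(s,a)$, with the maximum attained by the sign pattern $r(s,a)=\mathrm{sign}(x(s,a))$. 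The point is that the right-hand objective, after multiplying by $\frac{1}{1-\gamma}$, is exactly the expression that Lemma~\ref{lemma:auxillary-d-pi-r} already controls once $r$ is viewed as a stage reward.

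First I would fix an arbitrary test function $r$ with $-1\le r(s,a)\le 1$, so that $\lVert r\rVert_\infty\le 1$, and treat it as a bounded reward. Lemma~\ref{lemma:auxillary-d-pi-r}, whose proof runs through the performance difference lemma, then yields $\frac{1}{1-\gamma}\sum_{s,a}x(s,a)\,r(s,a)\le\frac{1}{(1-\gamma)^2}\lVert r\rVert_\infty\max_s\lVert\pi_{\theta'_s}-\pi_{\theta_s}\rVert_1\le\frac{1}{(1-\gamma)^2}\max_s\lVert\pi_{\theta'_s}-\pi_{\theta_s}\rVert_1$. Crucially this bound is uniform in $r$: once $\lVert r\rVert_\infty\le 1$ is imposed it no longer depends on the test function. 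Taking the maximum over all admissible $r$, the left-hand side of that maximum equals $\frac{1}{1-\gamma}\sum_{s,a}\lvert x(s,a)\rvert$, the quantity to be bounded, so it inherits the same right-hand side. This mirrors the state-only reduction already used in Corollary~\ref{coro:NPG-log-barrier-auxiliary-2}; the sole change is that here the test function is allowed to depend on the action, so the dual representation recovers the total variation of the state--action occupancy rather than of the state occupancy alone. (If the statement is instead read with $a$ held fixed and the sum over $s$ only, the identical argument applies by restricting $r$ to be supported on that single action, which is still a legitimate reward with $\lVert r\rVert_\infty\le 1$.)

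I do not anticipate a genuine obstacle, because all the analytic content --- controlling how the discounted occupancy $d_\theta(s)\pi_\theta(a|s)$ shifts with the policy, with the sharp $\frac{1}{(1-\gamma)^2}$ factor --- has already been packaged inside Lemma~\ref{lemma:auxillary-d-pi-r}. The only step needing care is the duality identity: one must check that the supremum over bounded test functions is attained (it is, by taking $r$ equal to the sign of $x$) and that this $r$ is an admissible reward, so that Lemma~\ref{lemma:auxillary-d-pi-r} applies verbatim. One could avoid duality altogether by splitting $x(s,a)=d_{\theta'}(s)\bigl(\pi_{\theta'}(a|s)-\pi_\theta(a|s)\bigr)+\bigl(d_{\theta'}(s)-d_\theta(s)\bigr)\pi_\theta(a|s)$ and applying the triangle inequality together with $\sum_s d_{\theta'}(s)=1$ and Corollary~\ref{coro:NPG-log-barrier-auxiliary-2}; however, this route yields the slightly looser constant $\frac{2-\gamma}{(1-\gamma)^2}$, which is precisely why the duality argument --- inheriting the tight constant of Lemma~\ref{lemma:auxillary-d-pi-r} --- is preferable.
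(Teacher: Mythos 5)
Your proposal is correct and matches the paper's proof: both bound the sum of absolute values by the dual characterization $\sum_{s,a}|x(s,a)| = \max_{\|r\|_\infty\le 1}\sum_{s,a}x(s,a)r(s,a)$ and then apply Lemma~\ref{lemma:auxillary-d-pi-r} with $r$ viewed as a bounded reward. Your extra remarks (attainment of the maximum by the sign pattern, the reading of the sum as over $s,a$, and the looser triangle-inequality alternative) are all consistent with, and slightly more careful than, the paper's one-line argument.
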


\begin{lemma}\label{lemma:smoothness-auxillary-2}
\begin{equation*}
   \sum_{a_i}\left(\pi_{\theta_i'}(a_i|s) - \pi_{\theta_i}(a_i|s)\right)f(a_i)\le 2\|f\|_\infty\|\theta_{i,s}'-\theta_{i,s}\|_2
\end{equation*}
\begin{proof}
It suffices to show that
\begin{equation*}
    \left\|\nabla_{\theta_{i,s}}\sum_{a_i}\pi_{\theta_i}(a_i|s)f(a_i)\right\|_2\le 2\|f\|_\infty,~\forall \theta,
\end{equation*}
then by Lagrange mean value theorem,
\begin{align*}
    &\quad \sum_{a_i}\left(\pi_{\theta_i'}(a_i|s) - \pi_{\theta_i}(a_i|s)\right)f(a_i)\\
    &\le \max_{t, \bar \theta = t\theta + (1-t)\theta'} \left\|\nabla_{\theta_{i,s}}\sum_{a_i}\pi_{\bar \theta_i}(a_i|s)f(a_i)\right\|_2\|\theta_{i,s}'-\theta_{i,s}\|_2\le 2\|f\|_\infty\|\theta_{i,s}'-\theta_{i,s}\|_2.
\end{align*}
Since 
\begin{equation*}
    \frac{\partial \sum_{a_i}\pi_{ \theta_i}(a_i|s)f(a_i)}{\partial \theta_{a_i,s}} = \pi_{\theta_i}(a_i|s)(f(a_i) - \bar f), ~\textup{where } \bar f = \sum_{a_i} \pi_{\theta_i}(a_i|s)f(a_i),
\end{equation*}
we have
\begin{align*}
    \left\|\nabla_{\theta_{i,s}}\sum_{a_i}\pi_{\theta_i}(a_i|s)f(a_i)\right\|_2^2 = \sum_{a_i}\pi_{\theta_i}(a_i|s)^2(f(a_i) - \bar f)^2\le \sum_{a_i}\pi_{\theta_i}(a_i|s)^2(2\|f\|_\infty)^2\le 4\|f\|_\infty^2,
\end{align*}
which completes the proof.
\end{proof}
\end{lemma}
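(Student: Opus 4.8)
The plan is to read the left-hand side as the increment of the scalar map $g(\theta_{i,s}) := \sum_{a_i}\pi_{\theta_i}(a_i|s)f(a_i)$ — the expectation of $f$ under the softmax policy at state $s$ — and to control that increment by a uniform gradient bound together with the mean value theorem. Concretely, for any two logit vectors $\theta_{i,s},\theta_{i,s}'$, applying the Lagrange mean value theorem to $t\mapsto g\bigl(t\theta_{i,s}'+(1-t)\theta_{i,s}\bigr)$ produces a point $\bar\theta$ on the connecting segment with $g(\theta_{i,s}')-g(\theta_{i,s}) = \langle\nabla_{\theta_{i,s}}g(\bar\theta),\,\theta_{i,s}'-\theta_{i,s}\rangle$, and Cauchy--Schwarz bounds this by $\|\nabla_{\theta_{i,s}}g(\bar\theta)\|_2\,\|\theta_{i,s}'-\theta_{i,s}\|_2$. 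Hence it suffices to establish the uniform bound $\|\nabla_{\theta_{i,s}}g(\theta)\|_2\le 2\|f\|_\infty$ for every $\theta$.

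First I would compute the gradient explicitly. Using the standard softmax derivative $\partial\pi_{\theta_i}(a_i|s)/\partial\theta_{a_i',s} = \pi_{\theta_i}(a_i|s)(\mathbf{1}\{a_i=a_i'\}-\pi_{\theta_i}(a_i'|s))$ (already recorded in the proof of Lemma~\ref{lemma:fisher-info-matrix}), the sum over $a_i$ collapses to
\[
\frac{\partial g}{\partial\theta_{a_i',s}} = \pi_{\theta_i}(a_i'|s)\bigl(f(a_i')-\bar f\bigr),\qquad \bar f := \sum_{a_i}\pi_{\theta_i}(a_i|s)f(a_i),
\]
so that each coordinate of the gradient is a probability-weighted, centered value of $f$.

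Next I would bound the gradient norm uniformly in $\theta$. Squaring and summing the coordinates gives $\|\nabla_{\theta_{i,s}}g\|_2^2 = \sum_{a_i}\pi_{\theta_i}(a_i|s)^2\bigl(f(a_i)-\bar f\bigr)^2$. Two elementary facts then finish the estimate: the centered value obeys $|f(a_i)-\bar f|\le 2\|f\|_\infty$, since both $|f(a_i)|$ and the convex average $|\bar f|$ are at most $\|f\|_\infty$; and because each $\pi_{\theta_i}(a_i|s)\in[0,1]$ we have $\sum_{a_i}\pi_{\theta_i}(a_i|s)^2\le\sum_{a_i}\pi_{\theta_i}(a_i|s)=1$. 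Combining these yields $\|\nabla_{\theta_{i,s}}g\|_2^2\le 4\|f\|_\infty^2$, i.e. $\|\nabla_{\theta_{i,s}}g\|_2\le 2\|f\|_\infty$, and feeding this back into the mean value step delivers the claimed bound.

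This argument is essentially routine and presents no serious obstacle; the only point requiring a moment of care is the uniform-over-$\theta$ gradient bound. One must exploit that the softmax weights are bounded by one — so that $\sum_{a_i}\pi_{\theta_i}(a_i|s)^2\le 1$ rather than merely $\le|\cA_i|$ — and that centering $f$ at most doubles its sup-norm; it is precisely these two observations that pin the constant to the dimension-free value $2$ instead of a factor growing with the action set size.
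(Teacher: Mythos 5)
Your proposal is correct and follows essentially the same route as the paper's own proof: a Lagrange mean value theorem reduction to the uniform gradient bound $\left\|\nabla_{\theta_{i,s}}\sum_{a_i}\pi_{\theta_i}(a_i|s)f(a_i)\right\|_2\le 2\|f\|_\infty$, obtained via the softmax derivative identity $\frac{\partial}{\partial\theta_{a_i',s}}\sum_{a_i}\pi_{\theta_i}(a_i|s)f(a_i) = \pi_{\theta_i}(a_i'|s)(f(a_i')-\bar f)$ and the two estimates $|f(a_i)-\bar f|\le 2\|f\|_\infty$ and $\sum_{a_i}\pi_{\theta_i}(a_i|s)^2\le 1$. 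Your write-up is if anything slightly more explicit than the paper (spelling out Cauchy--Schwarz and the source of the dimension-free constant), but the argument is the same.
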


\begin{corollary}\label{coro:smoothness-auxillary-2}(of Lemma \ref{lemma:smoothness-auxillary-2})
\begin{equation*}
    \|\pi_{\theta_{i,s}'}-\pi_{\theta_{i,s}}\|_1 \le2 \|\theta_{i,s}' - \theta_{i,s}\|_2\le 2\|\theta_i'-\theta_i\|_2
\end{equation*}
\begin{proof}
\begin{equation*}
    \|\pi_{\theta_{i,s}'}-\pi_{\theta_{i,s}}\|_1 = \max_{f: \|f\|_\infty\le 1} \sum_{a_i}\left(\pi_{\theta_i'}(a_i|s) - \pi_{\theta_i}(a_i|s)\right)f(a_i)\le 2\|\theta_{i,s}'-\theta_{i,s}\|_2. \qedhere
\end{equation*}
\end{proof}
\end{corollary}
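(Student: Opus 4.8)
The plan is to obtain Corollary \ref{coro:smoothness-auxillary-2} as a direct consequence of Lemma \ref{lemma:smoothness-auxillary-2} by invoking the dual (variational) characterization of the $\ell_1$ norm. The key observation is that for any two vectors indexed by $a_i\in\cA_i$ one has $\|u-v\|_1=\max_{f:\|f\|_\infty\le 1}\sum_{a_i}(u(a_i)-v(a_i))f(a_i)$, with the maximum attained by the sign vector $f(a_i)=\mathrm{sign}(u(a_i)-v(a_i))$. Applying this with $u=\pi_{\theta_i'}(\cdot|s)$ and $v=\pi_{\theta_i}(\cdot|s)$ rewrites the left-hand side $\|\pi_{\theta_{i,s}'}-\pi_{\theta_{i,s}}\|_1$ as a supremum over test functions $f$ subject only to $\|f\|_\infty\le 1$, which is exactly the quantity that Lemma \ref{lemma:smoothness-auxillary-2} already controls.

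First I would fix an arbitrary admissible $f$ (i.e. $\|f\|_\infty\le 1$) and apply Lemma \ref{lemma:smoothness-auxillary-2} to bound $\sum_{a_i}(\pi_{\theta_i'}(a_i|s)-\pi_{\theta_i}(a_i|s))f(a_i)\le 2\|f\|_\infty\,\|\theta_{i,s}'-\theta_{i,s}\|_2\le 2\|\theta_{i,s}'-\theta_{i,s}\|_2$. Because this upper bound is uniform over the admissible set and no longer depends on $f$, I can take the supremum over all such $f$ on the left while leaving the right-hand side unchanged, yielding the first inequality $\|\pi_{\theta_{i,s}'}-\pi_{\theta_{i,s}}\|_1\le 2\|\theta_{i,s}'-\theta_{i,s}\|_2$. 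For the second inequality I would simply note that $\theta_{i,s}=\{\theta_{s,a_i}\}_{a_i\in\cA_i}$ is the block of coordinates of the full agent-$i$ parameter $\theta_i=\{\theta_{s,a_i}\}_{s\in\cS,a_i\in\cA_i}$ associated with the single state $s$, so that $\|\theta_{i,s}'-\theta_{i,s}\|_2^2$ is one summand of $\|\theta_i'-\theta_i\|_2^2=\sum_{s}\|\theta_{i,s}'-\theta_{i,s}\|_2^2$; hence $\|\theta_{i,s}'-\theta_{i,s}\|_2\le\|\theta_i'-\theta_i\|_2$. Chaining the two inequalities finishes the argument.

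Since Lemma \ref{lemma:smoothness-auxillary-2} is available, there is essentially no obstacle in this corollary; the only point requiring (minor) care is verifying that the bound obtained for each fixed $f$ is genuinely independent of $f$, so that passing to the $\ell_1$ norm via the supremum is legitimate. The genuine work lives upstream in Lemma \ref{lemma:smoothness-auxillary-2} itself, whose proof rests on the gradient estimate $\|\nabla_{\theta_{i,s}}\sum_{a_i}\pi_{\theta_i}(a_i|s)f(a_i)\|_2\le 2\|f\|_\infty$ combined with a mean value theorem bound along the segment between $\theta$ and $\theta'$; but that estimate is already established, so the corollary reduces to the two routine steps above.
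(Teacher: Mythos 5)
Your proof is correct and follows exactly the paper's argument: both use the variational characterization $\|\pi_{\theta_{i,s}'}-\pi_{\theta_{i,s}}\|_1 = \max_{f:\|f\|_\infty\le 1}\sum_{a_i}(\pi_{\theta_i'}(a_i|s)-\pi_{\theta_i}(a_i|s))f(a_i)$ together with the uniform-in-$f$ bound from Lemma~\ref{lemma:smoothness-auxillary-2}, and the second inequality is the standard block-coordinate observation $\|\theta_{i,s}'-\theta_{i,s}\|_2\le\|\theta_i'-\theta_i\|_2$. No issues.
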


\end{document}